\theoremstyle{definition}
\newtheorem{definition}{Definition}[section]
\newtheorem{remark}[definition]{Remark}
\theoremstyle{plain}
\newtheorem{theorem}[definition]{Theorem}
\newtheorem{proposition}[definition]{Proposition}
\newtheorem{lemma}[definition]{Lemma}
\newtheorem{corollary}[definition]{Corollary}
\newtheorem{claim}[definition]{Claim}
\numberwithin{equation}{section}
\title[Virtual embeddings of braid groups]{On virtual embeddings of braid groups into mapping class groups of surfaces}
\author[T.~Katayama]{Takuya Katayama}
\address{
(Takuya Katayama)
Department of Mathematics, 
Faculty of Science, 
Gakushuin University, 
1-5-1 Mejiro, Toshima-ku, Tokyo 171-8588, Japan
}
\email{katayama@math.gakushuin.ac.jp}
\author[E.~Kuno]{Erika Kuno}
\address{
(Erika Kuno)
Department of Mathematics,
Graduate School of Science, 
Osaka University,
1-1 Machikaneyama-cho Toyonaka, Osaka 560-0043, Japan
}
\email{e-kuno@math.sci.osaka-u.ac.jp}
\date{\today}
\date{\today}
\keywords{Mapping class group; braid group; virtual embeddings; right-angled Artin groups} 
\subjclass[2020]{20F36, 20F65, 57K20}
\begin{document}

\begin{abstract}
In this article, we give a necessary and sufficient condition for embedding a finite index subgroup of Artin's braid group into the mapping class group of a connected orientable surface. 
\end{abstract}

\maketitle


\section{Introduction \label{Introduction_section}}

Let $S_{g, p}^{b}$ be a connected orientable surface of genus $g$ with $p$ punctures and $b$ boundary components. 
In the case where $b=0$ or $p=0$, we drop the suffix that denotes $0$, excepting $g$, from
$S_{g, p}^{b}$. 
For example, $S_{g, p}^{0}$ will simply be denoted as $S_{g, p}$. 
The {\it mapping class group} $\mathrm{Mod}(S_{g, p}^{b})$ of $S_{g, p}^{b}$ is the group of orientation-preserving homeomorphisms of $S_{g, p}^{b}$, fixing the boundary pointwise, up to isotopy relative to the boundary. 
We write $B_{n}$ for the {\it braid group} on $n$ strands, which is identified with $\mathrm{Mod}(S_{0, n}^{1})$. 
We define the Euler charcteristic of $S_{g, p}^{b}$ as 
$$ \chi_{g, p}^{b} := 2- 2g -p - b  .$$ 
We say that a group $H$ is {\it virtually embedded} in a group $G$ if $H$ has a finite index subgroup that injects into $G$. 
An injective homomorphism from a finite index subgroup of $H$ to $G$ is called a {\it virtual embedding} of $H$ (into $G$). 
In this paper we give a necessary and sufficient condition for the existence of virtual embeddings of braid groups into the mapping class groups of orientable surfaces. 

\begin{theorem}
Suppose $b=0$. 
Then the braid group $B_{n}$ is virtually embedded in $\mathrm{Mod}(S_{g, p})$ if and only if 
\begin{eqnarray*}
n \leq \left\{ \begin{array}{ll}
1 & ((g, p) \in \{ (0, 0), (0, 1), (0, 2), (0, 3) \}) \\
2 & ((g, p) \in \{ (0, 4), (1, 0), (1, 1) \}) \\
- \chi_{g, p} & (g=0, \ p \geq 5) \\
2 - \chi_{g, p} & (g \geq 2, \ p=0)  \\
1 - \chi_{g, p} & (\mbox{otherwise}). \\
\end{array} \right.
\end{eqnarray*}
\label{main_1}
\end{theorem}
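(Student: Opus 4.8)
The plan is to treat the two implications separately, and within each to follow the case division of the statement.

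\smallskip

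\textbf{Sufficiency.} Since $B_{m}\le B_{n}$ for $m\le n$, it suffices to realize each asserted upper bound by an honest embedding $B_{n}\hookrightarrow\mathrm{Mod}(S_{g,p})$. The finitely many small cases are checked directly: $\mathrm{Mod}(S_{1,0})\cong\mathrm{Mod}(S_{1,1})\cong\mathrm{SL}(2,\mathbb{Z})$ and $\mathrm{Mod}(S_{0,4})$ each contain $\mathbb{Z}\cong B_{2}$, and $B_{1}=1$ embeds everywhere. For the three infinite families I combine two standard devices: an incompressible subsurface $R\subseteq S_{g,p}$ with non-peripheral, pairwise non-isotopic boundary curves yields an injection $\mathrm{Mod}(R)\hookrightarrow\mathrm{Mod}(S_{g,p})$; and a chain of essential simple closed curves $c_{1},\dots,c_{k}$ with $i(c_{i},c_{i+1})=1$ and $i(c_{i},c_{j})=0$ otherwise, possibly prolonged by half-twist curves around consecutive pairs of punctures, supports a Perron--Vannier homomorphism $B_{k+1}\to\mathrm{Mod}(S_{g,p})$, $\sigma_{i}\mapsto T_{c_{i}}$, which by the chain relation carries the centre of $B_{k+1}$ to a power of the boundary twists of the subsurface $R$ filled by the configuration and is injective as soon as those twists remain non-trivial in $\mathrm{Mod}(S_{g,p})$. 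An Euler-characteristic count shows that in the generic situation such an $R$ has $\chi(R)=\chi_{g,p}+1$, so its complement has Euler characteristic $-1$ and can be taken to be a twice-punctured disc (equivalently a once-punctured annulus), hence essential; for a closed surface one does slightly better, arranging the complement to be an annulus, which accounts for the extra strand there. Concretely: for $g=0$, $p\ge5$ take $R\cong S_{0,p-2}^{1}\subset S_{0,p}$, giving $B_{-\chi_{0,p}}=B_{p-2}\hookrightarrow\mathrm{Mod}(S_{0,p})$; in the ``otherwise'' range take a pure or mixed chain of length $2g+p-2$ filling $R\cong S_{g,p-2}^{1}$ (with obvious variants when $p\le2$), giving $B_{1-\chi_{g,p}}=B_{2g+p-1}\hookrightarrow\mathrm{Mod}(S_{g,p})$; and for $g\ge2$, $p=0$ take a chain of length $2g-1$ filling $R\cong S_{g-1}^{2}$, whose two boundary curves are isotopic in $S_{g}$ yet essential, giving $B_{2-\chi_{g,0}}=B_{2g}\hookrightarrow\mathrm{Mod}(S_{g})$. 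In each case the injectivity claim reduces to a short comparison of centres.

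\smallskip

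\textbf{Necessity via classical invariants.} Suppose a finite-index $H\le B_{n}$ injects into $\mathrm{Mod}(S_{g,p})$. Then $n-1=\mathrm{vcd}(B_{n})\le\mathrm{vcd}(\mathrm{Mod}(S_{g,p}))$; by Harer's formulas this already gives $n\le p-2=-\chi_{0,p}$ for $g=0$, and rules out the forbidden $n$ for every $g\le1$ as well as for $(g,p)=(2,0)$, where $\mathrm{cd}(B_{5})=4>3=\mathrm{vcd}(\mathrm{Mod}(S_{2}))$. Likewise $\mathbb{Z}^{\,n-1}\le B_{n}$ forces $n-1\le 3g-3+p$, the maximal free-abelian rank in $\mathrm{Mod}(S_{g,p})$. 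Neither bound is sharp exactly when $g\ge2$ with $p\ge1$, or $g\ge3$ with $p=0$; this is the substantive case.

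\smallskip

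\textbf{The main point.} Here the argument runs through the centre, and this step is the main obstacle. Since $\phi\colon H\hookrightarrow\mathrm{Mod}(S_{g,p})$ is an isomorphism onto its image, it carries $Z(H)=H\cap Z(B_{n})\cong\mathbb{Z}$ onto $Z(\phi(H))$; as $\phi(H)$ has cohomological dimension $n-1\ge2$, the centraliser in $\mathrm{Mod}(S_{g,p})$ of the generator $f$ of $Z(\phi(H))$ is not virtually cyclic, so $f$ is reducible. Thus $\phi(H)$ lies in the centraliser of $f$, preserves the canonical reduction system $\mathcal{R}$ of $f$, and after a further finite-index reduction fixes $\mathcal{R}$ and each complementary component; the components on which $f$ is pseudo-Anosov carry only a virtually cyclic part of $\phi(H)$ and may be discarded. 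What survives is a virtual embedding of $B_{n}/Z(B_{n})$, hence of $B_{n-1}\le B_{n}/Z(B_{n})$, into the mapping class group of the disjoint union of the remaining components — a surface strictly smaller than $S_{g,p}$. Iterating, while tracking how far the complexity of the ambient surface must drop at each step (it drops by $|\mathcal{R}|$ together with the complexities of the discarded pseudo-Anosov pieces), and combining this with the constraint that every right-angled Artin subgroup of $B_{n}$ — notably those carried by chains — must still embed, so that the Koberda and Kim--Koberda criteria for right-angled Artin subgroups of mapping class groups are respected, one is forced down to $n\le1-\chi_{g,p}$ when $p\ge1$ and $n\le2-\chi_{g,0}$ when $p=0$. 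Making this bookkeeping tight — ruling out every nested system of reduction curves and complementary pieces in $S_{g,p}$ that could simultaneously host the cohomological dimension, the free-abelian rank, and the right-angled Artin subgroups of a finite-index subgroup of $B_{n}$ for $n$ beyond the bound — is where the real work lies.
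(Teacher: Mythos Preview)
Your sufficiency argument is essentially the paper's: Birman--Hilden chains together with subsurface inclusions. The paper packages the punctured cases a bit differently, introducing \emph{pseudo-annular extensions} of spheres to produce embeddings of $PB_n$ (which suffices for a virtual embedding), rather than arguing directly with mixed chains of Dehn twists and half-twists; your version is plausible but the injectivity claim for the ``mixed'' chains deserves more than the phrase ``a short comparison of centres''. Your observation that Harer's vcd already gives the sharp bound for all $g\le 1$ and for $(g,p)=(2,0)$ is correct and in fact cleaner than what the paper does in those ranges.

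The genuine gap is in your ``main point''. Two problems. First, after cutting along the canonical reduction system $\mathcal{R}$ of $f$ you obtain a map from a finite-index $H'\le B_n$ into a \emph{product} $\prod_i \mathrm{Mod}(S_i)$, and you then want to discard the pseudo-Anosov pieces. But projecting away from those pieces has kernel $\{h: h|_{S_j}=\mathrm{id}$ for all identity pieces $S_j\}$, which you have not shown to be virtually central in $H'$; the fact that its image in each pA factor is virtually cyclic does not make the kernel itself virtually cyclic, let alone central. So you do not yet have a virtual embedding of $B_n/Z(B_n)$ (or of $B_{n-1}$) into anything. Second, even granting such an embedding into $\prod_{\text{non-pA}} \mathrm{Mod}(S_i)$, you would need to show that $B_{n-1}$ virtually embeds into a \emph{single} factor of controlled complexity; braid groups do not split as products, so this step is not formal, and the ``bookkeeping'' you defer is exactly the content of the theorem. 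You acknowledge this yourself.

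The paper avoids this inductive descent entirely. Its necessity argument rests on a single obstruction: every finite-index subgroup of $B_n$ (for $n\ge 3$) contains the right-angled Artin group $A(C_{n+1}^c)\times\mathbb{Z}$, so it suffices to determine exactly when $A(C_{n+1}^c)\times\mathbb{Z}\hookrightarrow\mathrm{Mod}(S_{g,p})$. This is done by combining the Kim--Koberda normal form (any RAAG embedding can be taken to send generators to multitwists) with a detailed combinatorial analysis of ``cyclic chains'' in the curve graph --- sequences of curves realizing $C_m^c$ as a full subgraph of $\mathcal{C}(S)$ --- carried out via an induction on chained triples and $Y$-chained quadruples of arcs. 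That curve-graph analysis is where the real work lies, and it has no counterpart in your outline.
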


\begin{theorem}
Suppose $b \geq 1$. 
Then $B_{n}$ is virtually embedded in $\mathrm{Mod}(S_{g, p}^{b})$ if and only if 
\begin{eqnarray*}
n \leq \left\{ \begin{array}{ll}
2 - \chi_{g, p}^{b} & (g \geq 1, \ p+b \leq 2) \\
1 - \chi_{g, p}^{b} & (\mbox{otherwise}). \\
\end{array} \right.
\end{eqnarray*}
\label{main_2}
\end{theorem}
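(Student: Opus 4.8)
The plan is to prove the two implications separately, in each direction reducing the case $b\ge 1$ to the boundaryless case (Theorem~\ref{main_1}) as far as possible and treating a short list of residual cases by hand. Two preliminary remarks organize everything. First, $B_n$ embeds in $B_{n+1}$, and a finite-index subgroup of $B_{n+1}$ meets $B_n$ in a finite-index subgroup; hence for necessity it suffices to rule out $n$ one larger than the asserted bound, and for sufficiency it suffices to realize the bound exactly. Second, the twists about the boundary curves generate a central subgroup, giving central extensions $1\to\mathbb{Z}^{b}\to\mathrm{Mod}(S_{g,p}^{b})\to\mathrm{Mod}(S_{g,p+b})\to 1$ (cap every boundary with a once-punctured disk) and $1\to\mathbb{Z}\to\mathrm{Mod}(S_{g,p}^{b})\to\mathrm{Mod}(S_{g,p+1}^{b-1})\to 1$ (cap one boundary with a once-punctured disk), both of which will be used repeatedly.

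For sufficiency I would start from the observation that $H^{2}(B_n;\mathbb{Z})=0$: since $H_1(B_n;\mathbb{Z})\cong\mathbb{Z}$ is free and $H_2(B_n;\mathbb{Z})$ is finite, this is immediate from the universal coefficient theorem. Consequently every central extension of $B_n$ by a finitely generated free abelian group splits, and pulling back the extensions above along an embedding of $B_n$ shows that if $B_n$ embeds in $\mathrm{Mod}(S_{g,p+1}^{b-1})$ then it embeds in $\mathrm{Mod}(S_{g,p}^{b})$. Thus it is enough to produce embeddings into $\mathrm{Mod}(S_{g,q}^{1})$ with $q=p+b-1$. The ingredients I would combine are: the hyperelliptic (Birman--Hilden) embeddings $B_{2g+1}\hookrightarrow\mathrm{Mod}(S_g^{1})$ and $B_{2g+2}\hookrightarrow\mathrm{Mod}(S_g^{2})$ coming from double branched covers of a disk with $2g+1$, respectively $2g+2$, marked points, together with their variants in which a bounded number of unbranched punctures are added to the base; the injective subsurface inclusions $\mathrm{Mod}(S')\hookrightarrow\mathrm{Mod}(S)$ provided by the Paris--Rolfsen criterion; and the boundary-to-puncture conversion above. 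These handle $g=0$ trivially ($B_q\hookrightarrow\mathrm{Mod}(S_{0,q}^{1})$), the cases $p=0$, and the cases with $q$ small. The hard part will be constructing $B_{2g+q}\hookrightarrow\mathrm{Mod}(S_{g,q}^{1})$ for $g\ge 1$ and large $q$, for which a single branched cover of a disk no longer suffices; here I expect to use a mixed chain on $S_{g,q}^{1}$ --- a chain of Dehn-twist curves carrying the genus, prolonged by half-twists braiding the punctures --- which generates a quotient of $B_{2g+q}$, followed by a Koberda-style ping-pong argument showing that a finite-index subgroup injects.

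For necessity, given an injection of a finite-index $H\le B_n$ into $\mathrm{Mod}(S_{g,p}^{b})$, I would analyse the central subgroup $H\cap\mathbb{Z}^{b}$ of $H$. Since $Z(B_n)\cong\mathbb{Z}$ and, in the range of $n$ at issue, every finite-index subgroup of $B_n$ has centre equal to its intersection with $Z(B_n)$, this subgroup is either trivial or infinite cyclic. If it is trivial, $H$ itself embeds in $\mathrm{Mod}(S_{g,p+b})$, and Theorem~\ref{main_1} (whose bound for $p+b\ge 1$ punctures is at most $1-\chi_{g,p+b}=1-\chi_{g,p}^{b}$) gives $n\le 1-\chi_{g,p}^{b}$, which is the asserted value in the ``otherwise'' cases. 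If it is infinite cyclic, $H$ contains a power of a boundary twist; capping that boundary with a once-punctured disk kills an infinite cyclic central subgroup of $H$, so $B_{n-1}$ virtually embeds in $\mathrm{Mod}(S_{g,p+1}^{b-1})$, and one recurses on $b$ down to $b=0$. The main obstacle is that this second alternative, together with the induction, only delivers $n\le 2-\chi_{g,p}^{b}$, which is sharp precisely in the exceptional cases $p+b\le 2$, $g\ge1$ (where the hyperelliptic embeddings do lie in this alternative). Closing the residual gap of one in the ``otherwise'' cases is the crux: one must show that for those surfaces the infinite-cyclic alternative cannot occur at the top value $n=2-\chi_{g,p}^{b}$, which I would attack with a finer obstruction --- a Nielsen--Thurston analysis of how the top-rank free abelian subgroup of $B_n$, generated by Dehn twists along a nested system of curves, can sit inside $\mathrm{Mod}(S_{g,p}^{b})$ --- supplemented by a direct check of the small exceptional surfaces listed in the statement.
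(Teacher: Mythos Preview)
Your approach is genuinely different from the paper's, and both halves contain real gaps that the paper's methods avoid.

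\textbf{Necessity.} In your Case~2 the assertion that killing a central $\mathbb{Z}$ yields a virtual embedding of $B_{n-1}$ is not justified. The centre of a finite-index $H\le B_n$ is $H\cap Z(B_n)$, so $H/Z(H)$ is finite-index in $B_n/Z(B_n)\cong\mathrm{Mod}(S_{0,n+1})$, not in $B_{n-1}$; these are different groups. Moreover, a generator of the cyclic group $H\cap\mathbb{Z}^{b}$ need not be a power of a single boundary twist, so capping one chosen boundary need not kill it. Your proposed Nielsen--Thurston repair for the resulting off-by-one is left vague and would in effect have to reproduce the sharp obstruction the paper obtains by other means. The paper's route is cleaner and avoids any recursion: $A(C_{n+1}^{c})\times\mathbb{Z}$ embeds in $B_n$ (Theorem~\ref{cyc_plus_z_braid}), hence $A(C_{n+1}^{c})$ embeds in every finite-index subgroup of $B_n$; since $A(C_{n+1}^{c})$ has trivial centre it survives the full capping map intact (Lemma~\ref{cap_raag}), giving $A(C_{n+1}^{c})\hookrightarrow\mathrm{Mod}(S_{g,p+b})$. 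Theorem~\ref{cyc_mcg} then yields the exact bound in one stroke, with no residual gap.

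\textbf{Sufficiency.} Your splitting trick via $H^{2}(B_n;\mathbb{Z})=0$ transports only \emph{actual} embeddings of $B_n$ from $\mathrm{Mod}(S_{g,q}^{1})$ to $\mathrm{Mod}(S_{g,p}^{b})$. But in the hard range $g\ge 1$, $q\ge 2$, $n=2g+q$, your mixed-chain-plus-ping-pong proposal produces only a \emph{virtual} embedding (``a finite-index subgroup injects''), and for such subgroups, e.g.\ $PB_n$, one has $H^{2}\neq 0$, so the pulled-back extension need not split. In fact an actual embedding cannot exist here: if $B_{2g+3}\hookrightarrow\mathrm{Mod}(S_{g,3}^{1})$, then your own splitting argument would give $B_{2g+3}\hookrightarrow\mathrm{Mod}(S_g^{4})$, contradicting Castel's theorem (see the Remark after Theorem~\ref{main_2}). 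The paper sidesteps this entirely with a direct geometric construction: pseudo-annular extensions of spheres (Proposition~\ref{pse_ann_emb}) embed $PB_n=\mathrm{PMod}(S_{0,n}^{1})$ into $\mathrm{PMod}(S_{g,p}^{b})$ for the required $n$ and for arbitrary $b$, by gluing the compactification $S_0^{2g+p+b}$ with $g$ annuli (creating genus), $p$ once-punctured disks, and $b-1$ free-ended annuli. No reduction to $b=1$ and no cohomological splitting is needed.
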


If a group $L$ is virtually embedded in a group $H$ and $H$ is virtually embedded in a group $G$, then $L$ is virtually embedded in $G$. 
Note as well that virtual embeddings are closely related to homomorphisms with finite kernel. 
In fact, the mapping class group $\mathrm{Mod}(S)$ is virtually embedded in a group $G$ if and only if there is a homomorphism with finite kernel from a finite index subgroup of $\mathrm{Mod}(S)$ to $G$, because $\mathrm{Mod}(S)$ is virtually torsion-free by Serre's theorem. 
We are often interested in the {\it pure mapping class group}, denoted by $\mathrm{PMod}(S_{g, p}^{b})$, which is a subgroup of $\mathrm{Mod}(S_{g, p}^{b})$ and consists of the mapping classes fixing the punctures. 
We denote the {\it pure braid group} $\mathrm{PMod}(S_{0, n}^{1})$ by $PB_n$; this is an index $n!$ subgroup of $B_n$. 

The study of injective homomorphisms between the mapping class groups of surfaces has been extensively developed by various researchers. 
Aramayona--Leininger--Souto \cite{Aramayona--Leininger--Souto06}, Birman--Hilden \cite{Birman--Hilden} and Paris--Rolfsen \cite{Paris--Rolfsen00} gave injective homomorphisms induced by inclusion maps and (possibly branched) coverings of surfaces in an ingenious manner. 
Aramayona--Souto \cite{Aramayona--Souto} proved that $\mathrm{PMod}(S_{g, p}^{b})$ cannot be embedded in $\mathrm{PMod}(S_{g', p'}^{b'})$ if $g \geq 6$ and either $g' \leq 2g-2$ or $g'=2g-1$ and $p'+b' \geq 1$. 
Ivanov--McCarthy \cite{Ivanov--McCarthy} proved that every injective homomorphism between the mapping class groups of surfaces with almost the same topological complexity is necessarily an isomorphism. 
Shackleton \cite{Shackleton} proved that Ivanov--McCarthy's result is also true for virtual embeddings. 
The topological complexity of surfaces \cite{Birman--Lubotzky--McCarthy} and virtual co-homological dimension \cite{Harer} are known to be obstructions to the existence of virtual embeddings. 
It follows from these results that, apart from a few exceptions, two mapping class groups are abstractly commensurable if and only if the defining surfaces are homeomorphic. 
Moreover, the rigidity result of the mapping class groups due to Behrstock--Kleiner--Minsky--Mosher \cite{Behrstock--Kleiner--Minsky--Mosher} and Hamenst\"adt \cite{Hamenstaedt} implies that two mapping class groups are quasi-isometric if and only if they are abstractly commensurable. 
Therefore, finite index subgroups of the mapping class groups of surfaces reflect the homeomorphism types of the surfaces. 
As for braid groups, Castel \cite{Castel16} characterized homomorphisms from braid groups to the mapping class groups of surfaces without punctures and proved that every injective homomorphism is a transvection of a geometric monodromy. 

\begin{remark}
From Castel's work \cite{Castel16}, it follows that $B_{2g+3}$ is not embedded in $\mathrm{Mod}(S_{g}^{b})$ for any $g \geq 0$ and $b \geq 0$. 
On the other hand, Theorem \ref{main_2} says that $B_{2g+3}$ has a finite index subgroup embedded in $\mathrm{Mod}(S_{g}^{b})$ if $b \geq 4$. 
Therefore, if $b \geq 4$, then there is a virtual embedding of $B_{2g+3}$ into $\mathrm{Mod}(S_{g}^{b})$ that does not extend to an embedding of the entire group $B_{2g+3}$. 
\end{remark}

Recently, Chen--Mukherjea in \cite{Chen--Mukherjea} proved that any non-trivial homomorphism from a braid group to the pure mapping class groups of a surface with punctures is a transvection of a geometric monodromy if the genus of the surface lies in a certain range. 

This paper is organized as follows. 
In Section \ref{const_embeddings}, we review the embeddings between the mapping class groups of surfaces that have been obtained from the Birman--Hilden theory and Paris--Rolfsen's result. 
We then introduce the concept of ``psuedo-annular extensions" of surfaces and prove that ``pseudo-annular extensions" of spheres induce virtual embeddings of the mapping class groups of the spheres. 
These virtual embeddings of the mapping class groups of surfaces enable us to prove the ``if part" of Theorems \ref{main_1} and \ref{main_2} in the last section of this paper. 
On the other hand, the ``only if part" of Theorems \ref{main_1} and \ref{main_2} will be proved by comparing right-angled Artin subgroups in the mapping class groups of surfaces. 
In Sections \ref{cyc_chain_surf} and \ref{raags_in_mcgs}, we determine whether a right-angled Artin group $A$ of specific type is embedded in the mapping class group of a surface $S$. 
More precisely, in Section \ref{cyc_chain_surf}, we decide whether the defining graph of $A$ and its companions are full subgraphs of the $1$-skeleton of the Harvey's curve complex of $S$. 
In Section \ref{raags_in_mcgs}, we prove that $A$ is embedded in the mapping class group of $S$ if and only if the defining graph of $A$ is a full subgraph of the $1$-skeleton of Harvey's the curve complex of $S$ (Remark \ref{iff_raag}). 
The last section of this paper is devoted to proving Theorems \ref{main_1} and \ref{main_2}.

\subsection*{Acknowledgements}
The authors are grateful to Donggyun Seo and Dan Margalit for helpful comments. 
The first author was supported by JSPS KAKENHI through a grant, number 20J01431. 
The second author was supported by JSPS KAKENHI Grant-in-Aid for Young Scientists, number 21K13791.

\section{Constructions of virtual embeddings \label{const_embeddings}}

In this section, we explain how to obtain virtual embeddings of braid groups into the mapping class groups of surfaces. 
Our constructions of the virtual embeddings are based on extensions and coverings of surfaces. 

First, let us review homomorphisms induced by embeddings of surfaces. 
Let $i \colon S \rightarrow F$ be an inclusion map between two connected orientable surfaces of finite type. 
We call $i$ an {\it extension} of $S$ for convenience sake. 
The extension $i$ is said to be {\it admissible} if $i(S)$ is a closed subset of $F$ and no component of $i(\partial S)$ is parallel to a component of $\partial F$. 
Additionally, we say that an admissible extension $i$ is {\it annular} if $F \setminus \mathrm{Int}(i(S))$ is a disjoint union of annuli.   

\begin{proposition} 
Suppose that $i \colon S \rightarrow F$ is an admissible extension. 
\begin{enumerate}
 \item[(1)] If the extension $i$ is annular, then the kernel of the induced homomorphism $\mathrm{Mod}(S) \rightarrow \mathrm{Mod}(F)$ is a free abelian group generated by 
$$\mathcal{A} = \{ [T_{c_1}] [T_{c_2}]^{-1} \mid c_1, c_2 \mbox{ are the boundary components of an outer annulus} \}.$$
 \item[(2)] If $F \setminus \mathrm{Int}(i(S))$ is a disjoint union of annuli and once-punctured disks, then the kernel of the induced homomorphism $\mathrm{Mod}(S) \rightarrow \mathrm{Mod}(F)$ is a free abelian group generated by $\mathcal{A}$ and 
$$\{ [T_{c}] \mid c \mbox{ is the boundary component of an outer once-punctured disk} \}.$$
\end{enumerate}
Here, $T_{c}$ is a Dehn twist about a curve $c$. 
\label{annular_kernel}
\end{proposition}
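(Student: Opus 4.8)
The plan is to analyze the induced homomorphism $\iota_* \colon \mathrm{Mod}(S) \rightarrow \mathrm{Mod}(F)$ by building on the well-known description of kernels of homomorphisms induced by subsurface inclusions (going back to Paris--Rolfsen). First I would reduce to the annular case (1) and then deduce (2) with minor modifications, since a once-punctured disk behaves, from the point of view of the capping homomorphism, like half of an annulus: capping a boundary curve $c$ with a once-punctured disk kills precisely $[T_c]$, while capping the two ends of an outer annulus with the rest of $F$ identifies the two boundary curves and thus kills $[T_{c_1}][T_{c_2}]^{-1}$. So the heart of the matter is (1).

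For (1), I would factor $\iota$ as a composition of elementary extensions, each of which glues a single outer annulus (i.e.\ identifies a pair of boundary components of the current surface, or glues an annulus connecting two distinct boundary components, or — in the degenerate case — glues an annulus with both ends on the same component, producing a new genus or a new boundary curve). At each stage the kernel of the gluing map is understood: gluing an annulus whose two boundary curves $c_1, c_2$ become parallel in the larger surface introduces exactly the relation $[T_{c_1}] = [T_{c_2}]$, so the kernel of that single step is the cyclic group generated by $[T_{c_1}][T_{c_2}]^{-1}$, and these Dehn twists are central in $\mathrm{Mod}(S)$ (being supported near the boundary). One then shows the kernels of the successive steps combine to give a free abelian group on the full set $\mathcal{A}$: freeness and the rank count follow because the classes $[T_{c_1}][T_{c_2}]^{-1}$ for distinct outer annuli are supported on disjoint curves and can be detected independently — for instance via the rank of the corresponding subgroup of $H_1$ of the relevant surface, or by pulling back to a finite cover / using the fact that boundary Dehn twists generate a free abelian group of the expected rank in $\mathrm{Mod}(S)$. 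The key point is that no nontrivial product of the $[T_{c_i}]$ can die in $\mathrm{Mod}(F)$ except the ones forced by parallelism of the $c_i$ in $F$, which is exactly the relations defining the free abelian group generated by $\mathcal{A}$.

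To make the kernel computation rigorous I would invoke the exact sequences for capping: if $F'$ is obtained from $F$ by capping a boundary component $c$ with a once-punctured disk, there is an exact sequence $1 \to \langle [T_c]\rangle \to \mathrm{Mod}(F) \to \mathrm{Mod}(F') \to 1$, and if instead one caps with a disk the kernel is generated by $[T_c]$ but may be trivial when $c$ bounds a disk. Iterating, the composite $\mathrm{Mod}(S) \to \mathrm{Mod}(F)$ has kernel generated by the indicated boundary twist classes and their pairwise ratios; the subtlety is bookkeeping which boundary curves of $S$ survive (as honest curves, as punctures, or as nothing) in $F$. I expect the main obstacle to be exactly this bookkeeping together with verifying freeness of the resulting abelian kernel: one must rule out hidden relations among the generators of $\mathcal{A}$, which I would handle by choosing a subsurface of $F$ on which the relevant curves remain disjoint and nonseparating-enough that their Dehn twists span a free abelian subgroup of the right rank (using that $\mathrm{Mod}$ of a surface contains the free abelian group generated by twists about a multicurve of the appropriate size), and then noting that $\iota_*$ restricted to $\langle \mathcal{A}\rangle$ is determined by a linear map between these free abelian groups whose kernel is visibly spanned by $\mathcal{A}$.
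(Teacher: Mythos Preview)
The paper does not give its own proof of this proposition: immediately after stating it (together with the companion Proposition on hyperbolic extensions) the authors write ``Propositions \ref{annular_kernel} and \ref{hyp_emb} were proved by Paris--Rolfsen in \cite{Paris--Rolfsen00}.'' So there is nothing to compare against beyond that citation.

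Your sketch is broadly in the spirit of how Paris--Rolfsen actually establish the result: analyze the inclusion-induced map by peeling off the outer pieces one at a time and use that gluing an annulus along two boundary curves $c_1,c_2$ forces exactly the relation $[T_{c_1}]=[T_{c_2}]$, while capping by a once-punctured disk kills exactly $[T_c]$. Two places in your outline would need tightening if you wanted a self-contained argument. First, your factorisation step mixes several geometrically different operations (identifying two boundary circles, gluing an annulus with a free end, creating new genus); for an \emph{admissible} annular extension only the first actually occurs, since an outer annulus with one end on $\partial F$ would make a component of $i(\partial S)$ parallel to $\partial F$. Clarifying this removes the vague ``degenerate case'' you mention. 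Second, the freeness of the kernel is the genuinely nontrivial point, and your proposed detection arguments (``rank of $H_1$'', ``pulling back to a finite cover'') are not quite the right tools here; what one really uses is that Dehn twists about the boundary components of $S$ already generate a free abelian subgroup of $\mathrm{Mod}(S)$ of the expected rank, so the map from that free abelian group to the corresponding twist subgroup in $\mathrm{Mod}(F)$ is a homomorphism between free abelian groups whose kernel is exactly the span of the relations $[T_{c_1}][T_{c_2}]^{-1}$. That is essentially what Paris--Rolfsen do, and once stated cleanly it avoids the bookkeeping you flag as the main obstacle.
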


When each component of the exterior of an admissible extension is sufficiently complicated, the extension induces an embedding of the mapping class group. 

\begin{proposition} 
Suppose that $F \setminus i(S)$ is a disjoint union of surfaces of negative Euler characteristics. 
Then $\mathrm{Mod}(S) \hookrightarrow \mathrm{Mod}(F)$. 
\label{hyp_emb}
\end{proposition}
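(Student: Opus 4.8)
The plan is to prove that the induced homomorphism $i_* \colon \mathrm{Mod}(S) \to \mathrm{Mod}(F)$ is injective by producing a generating set for $\ker i_*$ and checking that the hypothesis on $F \setminus i(S)$ annihilates every generator. Write $F \setminus i(S) = R_1 \sqcup \dots \sqcup R_k$, with each $\chi(R_j) < 0$ (the case $F \setminus i(S) = \emptyset$ is trivial). First I would dispose of the possibilities in which $S$ is a disk, a once-punctured disk, or an annulus: for a disk and a once-punctured disk $\mathrm{Mod}(S)$ is trivial and there is nothing to prove, while for an annulus $\mathrm{Mod}(S) \cong \mathbb{Z}$ is generated by the Dehn twist $T$ about the core $\gamma$, and one checks directly that $i_*(T) = T_\gamma$ is non-trivial in $\mathrm{Mod}(F)$. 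Indeed, if $\gamma$ bounded a disk or a once-punctured disk $D$ in $F$, then, since $i(S)$ and both of its boundary curves are all isotopic to $\gamma$ and the annulus $i(S)$ lies on the opposite side of that curve from $D$, the region $D$ would be a union of complementary pieces $R_j$ glued along circles; additivity of Euler characteristic under gluing along circles would then give $\sum_{R_j \subseteq D}\chi(R_j) = 1$ or $0$ over a set of pieces all of negative Euler characteristic, which is impossible. So from now on we may assume $\chi(S) < 0$.

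The key input is the structure of the kernel of an inclusion homomorphism between mapping class groups, which is the general form of Proposition~\ref{annular_kernel} due to Paris--Rolfsen \cite{Paris--Rolfsen00}: $\ker i_*$ is generated by the classes $[T_c]$, where $c$ runs over the components of $i(\partial S)$ that bound a disk or a once-punctured disk in $F$, together with the classes $[T_{c_1}][T_{c_2}]^{-1}$, where $\{c_1, c_2\}$ runs over the pairs of components of $i(\partial S)$ that are isotopic in $F$ (equivalently, cobound an embedded annulus in $F$); admissibility of $i$ furthermore guarantees that no component of $i(\partial S)$ is isotopic in $F$ to a component of $\partial F$. I claim that $\chi(R_j) < 0$ for all $j$ kills all of these generators. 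A component $c$ of $i(\partial S)$ bounding a disk (resp. once-punctured disk) $D$ in $F$ has $D$ disjoint from $\mathrm{Int}(i(S))$, because $i(S)$ is closed and connected and $c \subset \partial i(S)$; hence $D$ is a union of complementary pieces $R_j$ glued along circles, and additivity of Euler characteristic gives $\sum_{R_j \subseteq D}\chi(R_j) = \chi(D) = 1$ (resp. $0$), contradicting $\chi(R_j) < 0$. Similarly, if two components $c_1, c_2$ of $i(\partial S)$ were isotopic in $F$, the embedded annulus $A$ between them could, after standard surgeries removing inessential intersections of $A$ with $i(\partial S)$, be taken to lie either inside a single complementary piece $R_j$ — forcing $R_j$ to be an annulus, so $\chi(R_j) = 0$, impossible — or inside $i(S)$ — forcing $S$ itself to be an annulus, which we have already excluded. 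Hence $\ker i_* = 1$ and $\mathrm{Mod}(S) \hookrightarrow \mathrm{Mod}(F)$.

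The step I expect to be the main obstacle is the kernel structure theorem itself, specifically the assertion that $\ker i_*$ contains no ``hidden'' elements beyond the two families listed — this is precisely where one must rule out two boundary components of $i(S)$ becoming isotopic through $i(S)$ rather than through a complementary annulus, which is why the annulus case for $S$ had to be isolated first, and it is established via the cut-and-reglue description of $\mathrm{Mod}(F)$ along the multicurve $i(\partial S)$ together with the connectedness of $S$. An alternative, and perhaps cleaner, way to organize the proof is to factor $i$ as a finite chain of admissible extensions $S = S_0 \subset S_1 \subset \dots \subset S_k = F$, where $S_j$ is obtained from $S_{j-1}$ by gluing back the single complementary piece $R_j$, and then run the single-piece version of the argument above at each stage; this reduces the combinatorics to capping one piece at a time, and at every stage the only thing to verify — that the relevant curves of $i(\partial S_{j-1})$ remain essential and pairwise non-isotopic in $S_j$ — again follows from $\chi(R_j) < 0$.
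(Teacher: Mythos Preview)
The paper does not give its own proof of this proposition; immediately after stating Propositions~\ref{annular_kernel} and~\ref{hyp_emb} it simply records that both were proved by Paris--Rolfsen \cite{Paris--Rolfsen00}. Your strategy---invoking the full Paris--Rolfsen description of $\ker i_*$ (the general theorem of which Proposition~\ref{annular_kernel} is a special case) and then checking that the hypothesis $\chi(R_j)<0$ forces the generating set to be empty---is exactly how the result is obtained in that reference, so your approach is the intended one and is correct in outline.

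Two places deserve a little more care. First, when you rule out a boundary circle $c$ of $i(S)$ bounding a disk or once-punctured disk $D\subset F$, the claim that $D$ is disjoint from $\mathrm{Int}(i(S))$ is not automatic: connectedness of $i(S)$ only tells you that $i(S)\setminus c$ lies entirely on one side of $c$, and that side could be $D$. In that case $i(S)\subset D$ and additivity gives $\chi(D)=\chi(i(S))+\sum_{R_j\subset D}\chi(R_j)$; your standing assumption $\chi(S)<0$ then still yields the contradiction, but you should make this case explicit rather than asserting disjointness. Second, in the isotopic-pair step the ``standard surgeries removing inessential intersections'' are in fact vacuous: any circle of $A\cap i(\partial S)$ bounding a disk in $A$ would be a component of $i(\partial S)$ bounding a disk in $F$, which you have just excluded, so every intersection circle is core-parallel in $A$ and passing to an innermost sub-annulus (with boundary in $i(\partial S)$ and interior disjoint from it) immediately puts you in one of your two terminal cases. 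Saying this removes the appearance of a hand-wave.
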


Propositions \ref{annular_kernel} and \ref{hyp_emb} were proved by Paris--Rolfsen in \cite{Paris--Rolfsen00}. 
We say that an admissible extension $i$ is {\it hyperbolic} if $i$ satisfies the assumption in Proposition \ref{hyp_emb}. 
The embedding given in the following corollary is induced by a hyperbolic extension $S_{0, n}^{1} \rightarrow S_{0, m}^{1}$. 

\begin{corollary}
If $n \leq m$, then $B_n \hookrightarrow B_m$. 
\label{braid_braid}
\end{corollary}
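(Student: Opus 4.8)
The plan is to realize the inclusion $B_{n}\hookrightarrow B_{m}$ as the homomorphism on mapping class groups induced by a concrete hyperbolic extension $i\colon S_{0,n}^{1}\to S_{0,m}^{1}$, and then quote Proposition \ref{hyp_emb}. If $n=m$ there is nothing to do, since the identity of $S_{0,n}^{1}$ induces the identity of $B_{n}=\mathrm{Mod}(S_{0,n}^{1})$; so I would assume $n<m$ from now on.

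For the construction I would take a model of $S_{0,m}^{1}$ as a closed disk $D$ with $m$ punctures $x_{1},\dots,x_{m}$ in its interior, choose a smaller closed subdisk $D'\subset\mathrm{Int}(D)$ whose interior contains exactly $x_{1},\dots,x_{n}$, and let $i$ be the inclusion of $D'\cong S_{0,n}^{1}$ into $D$. First I would verify that $i$ is admissible: $i(S_{0,n}^{1})=D'$ is closed in $D$, and the unique component $\partial D'$ of $i(\partial S_{0,n}^{1})$ is not parallel to $\partial D$, because the subsurface they cobound, namely $D\setminus\mathrm{Int}(D')$, contains the $m-n\ge 1$ punctures $x_{n+1},\dots,x_{m}$ and hence is not a clean annulus. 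Next I would identify the exterior: $F\setminus i(S)=D\setminus D'$ is a single connected surface, an annulus with $m-n$ punctures, i.e.\ a copy of $S_{0,m-n}^{2}$, with $\chi_{0,m-n}^{2}=-(m-n)<0$. Thus $F\setminus i(S)$ is a disjoint union of surfaces of negative Euler characteristic, so $i$ is a hyperbolic extension, and Proposition \ref{hyp_emb} gives $B_{n}=\mathrm{Mod}(S_{0,n}^{1})\hookrightarrow\mathrm{Mod}(S_{0,m}^{1})=B_{m}$.

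I do not expect a genuine obstacle here: essentially everything is forced once the model is fixed, and Proposition \ref{hyp_emb} does the real work. The only point that needs a little care is the non-parallelism clause of admissibility, where one must use that $n<m$ so that the complementary annulus actually carries a puncture; this is also exactly why the case $n=m$ has to be separated off. As a sanity check, the resulting embedding is the classical ``add $m-n$ trivial strands'' homomorphism $B_{n}\to B_{m}$, $\sigma_{i}\mapsto\sigma_{i}$ for $1\le i\le n-1$.
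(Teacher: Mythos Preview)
Your proof is correct and follows exactly the approach the paper indicates: the sentence preceding the corollary says only that the embedding is induced by a hyperbolic extension $S_{0,n}^{1}\to S_{0,m}^{1}$, and you have written out precisely that extension and checked the hypotheses of Proposition~\ref{hyp_emb}. Your separation of the case $n=m$ is appropriate, since in that case $\partial D'$ would coincide with $\partial D$ and the extension would fail to be admissible.
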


Now, let us recall some injective homomorphisms induced by coverings of surfaces. 
Consider the double-branched covering $S_{g-1, 0}^{2} \rightarrow S_{0, 2g}^{1}$ induced by the hyperelliptic involution of $S_{g-1, 0}^{2}$. 
According to Birman--Hilden theory, it follows that $B_{2g}$ is embedded in $\mathrm{Mod}(S_{g-1}^{2})$ as the symmetric mapping class subgroup with respect to the hyperelliptic involution. 
By gluing $S_{g-1}^{2}$ and $S_{0, p}^{b+2}$ along their boundaries, we obtain an admissible extension $S_{g-1}^{2} \rightarrow S_{g, p}^{b}$. 
This extension is either annular or hyperbolic, thereby inducing a homomorphism $\mathrm{Mod}(S_{g-1}^{2}) \rightarrow \mathrm{Mod}(S_{g, p}^{b})$, whose restriction to the subgroup $B_{2g}$ is injective. 
Therefore, $B_{2g}$ is embedded in $\mathrm{Mod}(S_{g, p}^{b})$. 
For more information about Birman--Hilden theory, see \cite{Birman--Hilden} and \cite{Margalit--Winarski}. 

\begin{proposition}
Suppose $g \geq 1$, $p \geq 0$ and $b\geq 0$. 
Then $B_{2g} \hookrightarrow \mathrm{Mod}(S_{g, p}^{b})$. 
\end{proposition}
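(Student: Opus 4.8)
The plan is to obtain the embedding as a composition: first embed $B_{2g}$ into $\mathrm{Mod}(S_{g-1}^{2})$ by Birman--Hilden theory, then push forward along the homomorphism $\mathrm{Mod}(S_{g-1}^{2})\to\mathrm{Mod}(S_{g,p}^{b})$ induced by a carefully chosen extension, and finally verify that this composition is injective in every case. Recall from Birman--Hilden theory that the double-branched covering $S_{g-1}^{2}\to S_{0,2g}^{1}$ associated with the hyperelliptic involution $\iota$ of $S_{g-1}^{2}$ realizes $B_{2g}=\mathrm{Mod}(S_{0,2g}^{1})$ as the symmetric mapping class subgroup $\mathrm{SMod}(S_{g-1}^{2})\le\mathrm{Mod}(S_{g-1}^{2})$; every element of $\mathrm{SMod}(S_{g-1}^{2})$ is represented by a homeomorphism of $S_{g-1}^{2}$ commuting with $\iota$. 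A key feature for the argument below is that $\iota$ interchanges the two boundary components $c_{1}$ and $c_{2}$ of $S_{g-1}^{2}$, since these are the two lifts of the single boundary circle of $S_{0,2g}^{1}$.

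Next I would build the extension. Glue $S_{0,p}^{b+2}$ to $S_{g-1}^{2}$ by identifying two of the $b+2$ boundary circles of $S_{0,p}^{b+2}$ with $c_{1}$ and $c_{2}$; a standard count of genus, boundary components, and punctures shows the result is $S_{g,p}^{b}$, so we obtain an extension $i\colon S_{g-1}^{2}\to S_{g,p}^{b}$. The complement of $i(S_{g-1}^{2})$ is a single surface of Euler characteristic $-p-b$; it is the annulus lying between $c_{1}$ and $c_{2}$ precisely when $p=b=0$, and a surface of negative Euler characteristic precisely when $p+b\ge 1$. One checks that $i$ is admissible: $i(S_{g-1}^{2})$ is closed, and a component of $i(\partial S_{g-1}^{2})$ could be parallel to a component of $\partial S_{g,p}^{b}$ only if $S_{0,p}^{b+2}$ were an annulus, i.e.\ only when $p=b=0$, in which case $\partial S_{g,p}^{b}=\emptyset$ and the condition is vacuous. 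Accordingly $i$ is annular when $p=b=0$ and hyperbolic when $p+b\ge 1$. In the hyperbolic case, Proposition \ref{hyp_emb} gives $\mathrm{Mod}(S_{g-1}^{2})\hookrightarrow\mathrm{Mod}(S_{g,p}^{b})$, and restricting to $B_{2g}=\mathrm{SMod}(S_{g-1}^{2})$ completes that case.

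The remaining case $p=b=0$, in which $S_{g,p}^{b}=S_{g}$ is closed and the exterior of $i$ is a single annulus, is the one requiring care, and I expect it to be the main obstacle. Here $i$ is annular, so by Proposition \ref{annular_kernel}(1) the kernel $K$ of $\mathrm{Mod}(S_{g-1}^{2})\to\mathrm{Mod}(S_{g})$ is the free abelian group of rank one generated by $z:=[T_{c_{1}}][T_{c_{2}}]^{-1}$, and it remains to show $\mathrm{SMod}(S_{g-1}^{2})\cap K=\{1\}$. The idea is to exploit $\iota$ as a symmetry. Conjugation by a representative of $\iota$ carries a homeomorphism fixing $\partial S_{g-1}^{2}$ pointwise to another such homeomorphism (because $\iota$ permutes $c_{1}$ and $c_{2}$), hence induces an automorphism $\theta$ of $\mathrm{Mod}(S_{g-1}^{2})$; moreover $\theta$ interchanges $[T_{c_{1}}]$ and $[T_{c_{2}}]$, so $\theta(z)=z^{-1}$. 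On the other hand, every element of $\mathrm{SMod}(S_{g-1}^{2})$ is represented by a homeomorphism commuting with $\iota$, hence is fixed by $\theta$. Thus any element of $\mathrm{SMod}(S_{g-1}^{2})\cap K$ is a power $z^{k}$ with $z^{k}=z^{-k}$, forcing $z^{2k}=1$ and therefore $k=0$ since $K$ is torsion-free. Hence the composition $B_{2g}\hookrightarrow\mathrm{Mod}(S_{g-1}^{2})\to\mathrm{Mod}(S_{g})$ is injective. (When $g=1$ the surface $S_{g-1}^{2}$ degenerates to an annulus and $c_{1},c_{2}$ become isotopic, so $K$ is already trivial and the case is immediate.)
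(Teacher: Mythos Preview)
Your proof is correct and follows the same strategy as the paper: embed $B_{2g}$ in $\mathrm{Mod}(S_{g-1}^{2})$ via Birman--Hilden, then push forward along the admissible extension $S_{g-1}^{2}\hookrightarrow S_{g,p}^{b}$ obtained by gluing on $S_{0,p}^{b+2}$, which is hyperbolic when $p+b\ge 1$ and annular when $p=b=0$. The paper simply asserts that the restriction to $B_{2g}$ is injective in the annular case, whereas you supply a clean justification using the $\iota$-conjugation automorphism $\theta$ (well-defined since $\iota f\iota^{-1}$ still fixes $\partial S_{g-1}^{2}$ pointwise) to force $z^{k}=z^{-k}$ in the infinite cyclic kernel; this added detail is welcome and the argument is sound.
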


Similarly, from the double-branched covering $S_{g}^{1} \rightarrow S_{0, 2g+1}^{1}$, we obtain $B_{2g+1} \hookrightarrow \mathrm{Mod}(S_{g}^{1})$. 
The target of this embedding can be extended as follows. 

\begin{proposition}
Suppose $g \geq 1$ and $b+p \geq 2$. 
Then $B_{2g+1} \hookrightarrow \mathrm{Mod}(S_{g, p}^{b})$. 
\end{proposition}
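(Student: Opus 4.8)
The plan is to reproduce, for $B_{2g+1}$, the construction used above for $B_{2g}$: first embed $B_{2g+1}$ into $\mathrm{Mod}(S_g^1)$ via Birman--Hilden theory, and then enlarge the target surface by an admissible extension. As already recalled, the double-branched covering $S_g^1 \to S_{0,2g+1}^1$ associated with the hyperelliptic involution embeds $B_{2g+1}$ into $\mathrm{Mod}(S_g^1)$ as the symmetric mapping class subgroup, so it suffices to construct an embedding $\mathrm{Mod}(S_g^1) \hookrightarrow \mathrm{Mod}(S_{g,p}^b)$ and compose.

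To build this embedding, I would glue a copy of the planar surface $S_{0,p}^{b+1}$ to $S_g^1$, identifying one boundary component of $S_{0,p}^{b+1}$ with the unique boundary component of $S_g^1$. Counting genus, punctures, boundary components, and Euler characteristic (the gluing curve is separating, so no handle is created: $(1-2g)+(1-p-b)=2-2g-p-b$, and the genus, puncture, and boundary data match), the resulting surface is $S_{g,p}^b$, and we obtain an extension $i\colon S_g^1 \to S_{g,p}^b$ whose closed exterior $S_{g,p}^b \setminus \mathrm{Int}(i(S_g^1))$ is the single connected surface $S_{0,p}^{b+1}$, of Euler characteristic $1-(p+b)$. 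The hypothesis $p+b \geq 2$ enters precisely here: it forces this exterior surface to have negative Euler characteristic, hence to be neither an annulus nor a disk nor a once-punctured disk. Thus $i$ is admissible and hyperbolic, and Proposition \ref{hyp_emb} yields $\mathrm{Mod}(S_g^1) \hookrightarrow \mathrm{Mod}(S_{g,p}^b)$; restricting to the subgroup $B_{2g+1} \leq \mathrm{Mod}(S_g^1)$ gives $B_{2g+1} \hookrightarrow \mathrm{Mod}(S_{g,p}^b)$.

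I do not expect a serious obstacle: the argument is a direct analogue of the preceding $B_{2g}$ construction, and since the glued exterior always has negative Euler characteristic the extension is genuinely hyperbolic (rather than merely annular). The one point that needs checking is the admissibility of $i$, namely that no component of $i(\partial S_g^1)$ is parallel to a component of $\partial S_{g,p}^b$. This is automatic once $p+b \geq 2$: the complementary surface $S_{0,p}^{b+1}$ then has negative Euler characteristic, so it is not an annulus and no two of its boundary components cobound an embedded annulus inside it; since $i(\partial S_g^1)$ separates $S_{g,p}^b$ into $i(S_g^1)$ and this complementary surface, it cannot be parallel to any component of $\partial S_{g,p}^b$. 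Hence the bound $p+b \geq 2$, which appears in the statement, is exactly what the construction requires, and the proof goes through.
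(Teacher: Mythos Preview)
Your proof is correct and follows essentially the same approach as the paper: embed $B_{2g+1}$ into $\mathrm{Mod}(S_g^1)$ via Birman--Hilden theory, then use the hyperbolic extension $S_g^1 \to S_{g,p}^b$ (guaranteed by $p+b \geq 2$) and Proposition~\ref{hyp_emb} to embed $\mathrm{Mod}(S_g^1)$ into $\mathrm{Mod}(S_{g,p}^b)$. Your version simply supplies more detail on the construction and verification of the hyperbolic extension than the paper's terse one-line argument.
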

\begin{proof}
Since $b+p \geq 2$, an extension $S_{g}^{1} \rightarrow S_{g, p}^{b}$ is hyperbolic. 
Hence, we have $B_{2g+1} \hookrightarrow \mathrm{Mod}(S_{g}^{1}) \hookrightarrow \mathrm{Mod}(S_{g, p}^{b})$. 
\end{proof}

In the case where $g \geq 1$, $b=1$ and $p=1$, we can find a larger braid group in $\mathrm{Mod}(S_{g, 1}^{1})$. 

\begin{proposition} 
Suppose $g \geq 1$. 
Then $B_{2g+2} \hookrightarrow \mathrm{Mod}(S_{g, 1}^{1})$. 
 \label{for_ex_case}
\end{proposition}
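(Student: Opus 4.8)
The plan is to realize $B_{2g+2}$ inside $\mathrm{Mod}(S_{g,1}^1)$ as the image of a Birman--Hilden embedding composed with a capping homomorphism, and to check injectivity of the composite by pinning down where the centre of $B_{2g+2}$ lands.

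First I would recall the embedding into the two-holed genus $g$ surface coming from the hyperelliptic double branched cover $S_g^2 \to S_{0,2g+2}^1$: Birman--Hilden theory gives an embedding $\rho\colon B_{2g+2}\hookrightarrow\mathrm{Mod}(S_g^2)$ sending the standard generators $\sigma_1,\dots,\sigma_{2g+1}$ to Dehn twists $T_{c_1},\dots,T_{c_{2g+1}}$ about a chain $c_1,\dots,c_{2g+1}$ of simple closed curves filling $S_g^2$. Since this chain has odd length $2g+1$ and a regular neighbourhood of $c_1\cup\cdots\cup c_{2g+1}$ is all of $S_g^2$, the chain relation yields $(T_{c_1}\cdots T_{c_{2g+1}})^{2g+2}=T_{\partial_1}T_{\partial_2}$, where $\partial_1,\partial_2$ denote the two boundary components of $S_g^2$. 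Because $2g+2\ge 3$, the centre of $B_{2g+2}$ is the infinite cyclic group generated by $(\sigma_1\cdots\sigma_{2g+1})^{2g+2}$; hence the image $\mathcal H:=\rho(B_{2g+2})$, which is isomorphic to $B_{2g+2}$, has centre $Z(\mathcal H)=\langle T_{\partial_1}T_{\partial_2}\rangle$.

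Next I would cap the boundary component $\partial_2$ of $S_g^2$ with a once-punctured disk; the result is $S_{g,1}^1$, the exterior $S_{g,1}^1\setminus\mathrm{Int}(S_g^2)$ of the resulting extension is a single once-punctured disk, and so by Proposition \ref{annular_kernel}(2) the induced homomorphism $\varphi\colon\mathrm{Mod}(S_g^2)\to\mathrm{Mod}(S_{g,1}^1)$ has kernel $\langle T_{\partial_2}\rangle$. It then remains to check that $\varphi\circ\rho$ is injective, i.e. that $\mathcal H\cap\langle T_{\partial_2}\rangle=\{1\}$. For this I would take $T_{\partial_2}^m\in\mathcal H$; since $T_{\partial_2}$ is central in $\mathrm{Mod}(S_g^2)$ it is central in $\mathcal H$, so $T_{\partial_2}^m\in Z(\mathcal H)=\langle T_{\partial_1}T_{\partial_2}\rangle$, say $T_{\partial_2}^m=(T_{\partial_1}T_{\partial_2})^k$, whence $T_{\partial_1}^{k}=T_{\partial_2}^{m-k}$ in $\mathrm{Mod}(S_g^2)$. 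As $g\ge 1$, the curves $\partial_1$ and $\partial_2$ are disjoint and non-isotopic, so $\langle T_{\partial_1},T_{\partial_2}\rangle\cong\mathbb Z^2$; therefore $k=0$ and $m=0$, and $\varphi\circ\rho\colon B_{2g+2}\to\mathrm{Mod}(S_{g,1}^1)$ is injective.

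I expect the main obstacle to be the middle step, namely the correct identification of $Z(\mathcal H)$: this requires both that $\rho$ carries the standard braid generators to the chain twists (Birman--Hilden) and that the chain relation converts the full twist $(\sigma_1\cdots\sigma_{2g+1})^{2g+2}$ into the product $T_{\partial_1}T_{\partial_2}$ of the two boundary twists. Once that identification is in hand, the remaining verification is a short computation in $\mathrm{Mod}(S_g^2)$, using that boundary twists are central and that disjoint non-isotopic curves give independent Dehn twists; the asymmetry in capping $\partial_2$ rather than $\partial_1$ is immaterial by symmetry.
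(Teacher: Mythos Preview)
Your proof is correct and follows the same overall strategy as the paper: embed $B_{2g+2}$ into $\mathrm{Mod}(S_g^2)$ via Birman--Hilden, cap one boundary to a puncture, and check that the image meets the kernel $\langle T_{\partial_2}\rangle$ trivially. The only difference is in the last step: the paper argues that no non-trivial power of $T_{\partial_2}$ is fiber-preserving for the hyperelliptic cover (since the involution swaps $\partial_1$ and $\partial_2$), whereas you identify $Z(\rho(B_{2g+2}))=\langle T_{\partial_1}T_{\partial_2}\rangle$ via the chain relation and then use $\langle T_{\partial_1},T_{\partial_2}\rangle\cong\mathbb Z^2$. Your route is slightly more self-contained in that it does not invoke the description of the image as the full symmetric mapping class group, while the paper's route avoids the chain relation; both yield the same conclusion in a line or two.
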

\begin{proof}
We use the embedding $B_{2g+2} \hookrightarrow \mathrm{Mod}(S_{g}^{2})$ induced by the covering described above and the restriction of the capping homomorphism $\varphi \colon \mathrm{Mod}(S_{g}^{2}) \rightarrow \mathrm{Mod}(S_{g, 1}^{1})$ to $B_{2g+2}$. 
Since no non-trivial element of $\mathrm{Ker}(\varphi)$ preserves the fiber of a point in $S_{0, 2g+1}^{1}$, the restriction is injective. 
\end{proof}

The following concept is fairly important for finding the largest virtual embeddings of braid groups into mapping class groups. 

\begin{definition}
Let $S$ be a surface with punctures and $F$ a surface. 
By $\overline{S}$, we denote the compactification of $S$ that has circles at infinity (if $S \cong S_{g, p}^{b}$, then $\overline{S} \cong S_{g}^{b+p}$). 
An extension $i \colon S \rightarrow F$ is said to be {\it pseudo-annular} if $i$ is obtained from a disjoint union of $\overline{S}$ and copies of an annulus and a once-punctured disk by gluing each annulus $A$ (resp.\ once-punctured disk $D$) to $\overline{S}$ along their boundaries so that one boundary component of $A$ (resp.\ the boundary of $D$) is identified with a component of $\overline{S} - S$ and that another boundary component of $A$ is either identified with a component of $\partial \overline{S}$ or is unattached. 
Note that no pseudo-annular extension is admissible, because its image is not a closed subset. 
However, $\overline{S} \subset F$ is admissible. 
\end{definition}

The pure mapping class group of a sphere is naturally embedded in the mapping class group of its compactification as follows. 

\begin{lemma}
Suppose $p+b \geq 3$. 
Then $\mathrm{PMod}(S_{0, p}^{b}) \times \mathbb{Z}^{p} \cong \mathrm{PMod}(S_{0}^{p+b})$. 
Moreover, each standard generator of the direct factor $\mathbb{Z}^{p}$ corresponds to a Dehn twist about a closed curve parallel to a boundary component of $S_{0}^{p+b}$. 
 \label{p_to_b_sphere}
\end{lemma}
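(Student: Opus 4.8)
The plan is to realize $\mathrm{PMod}(S_0^{p+b})$ as a central extension of $\mathrm{PMod}(S_{0,p}^b)$ by $\mathbb Z^p$, and then to split this extension as a direct product by producing a homomorphism from $\mathrm{PMod}(S_0^{p+b})$ onto $\mathbb Z^p$ that restricts to an isomorphism on the kernel. We may assume $p\ge 1$, since for $p=0$ both sides equal $\mathrm{PMod}(S_0^{b})$.

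First I would set up the capping homomorphism. Realize $S_{0,p}^b$ as the surface obtained from $S_0^{p+b}$ by capping $p$ of its boundary components $\delta_1,\dots,\delta_p$ with once-punctured disks, keeping the remaining $b$ boundary components fixed, and let $x_i$ be the puncture in the cap of $\delta_i$. Applying the capping exact sequence once for each $\delta_i$ (cf.\ \cite{Paris--Rolfsen00}), and noting that at every intermediate stage the curve being twisted bounds on its far side a subsurface carrying at least $p+b-1\ge 2$ punctures and boundary components, hence supports an infinite-order Dehn twist, one obtains a short exact sequence
\[
1\longrightarrow K\longrightarrow \mathrm{PMod}(S_0^{p+b})\overset{c}{\longrightarrow}\mathrm{PMod}(S_{0,p}^b)\longrightarrow 1,\qquad K=\langle T_{\delta_1},\dots,T_{\delta_p}\rangle .
\]
The subgroup $K$ is abelian, as the $\delta_i$ are pairwise disjoint, and central, since every element of $\mathrm{PMod}(S_0^{p+b})$ fixes $\partial S_0^{p+b}$ pointwise and so preserves each boundary-parallel curve $\delta_i$ up to isotopy.

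The crux is to construct a homomorphism $\rho=(\rho_1,\dots,\rho_p)\colon \mathrm{PMod}(S_0^{p+b})\to\mathbb Z^p$ with $\rho(T_{\delta_i})=e_i$. Fix $i$ and cap every boundary component of $S_0^{p+b}$ except $\delta_i$ with once-punctured disks; the result is a disk with $p+b-1\ge 2$ marked points, whose pure mapping class group is the pure braid group $PB_{p+b-1}$. Under this capping $T_{\delta_i}$ maps to the full twist $\Delta^2$ --- the generator of the centre of $PB_{p+b-1}$, i.e.\ the Dehn twist about a curve parallel to the boundary of the disk --- while $T_{\delta_j}\mapsto 1$ for every $j\ne i$. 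Composing with the linking-number homomorphism $PB_{p+b-1}\to\mathbb Z$ of a fixed ordered pair of strands, which sends $\Delta^2$ to $1$, yields $\rho_i$ with $\rho_i(T_{\delta_j})=\delta_{ij}$. Then $\rho|_K$ is surjective; it is also injective because $K$ is generated by the $T_{\delta_i}$, which $\rho$ sends to the standard basis of $\mathbb Z^p$; hence $\rho|_K$ is an isomorphism and $K\cong\mathbb Z^p$.

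It now follows formally that $(c,\rho)\colon\mathrm{PMod}(S_0^{p+b})\to\mathrm{PMod}(S_{0,p}^b)\times\mathbb Z^p$ is an isomorphism: injectivity holds because its kernel lies in $\ker c=K$ and $\rho|_K$ is injective, and surjectivity because, given $(h,v)$, a $c$-preimage $g_0$ of $h$ can be corrected by the unique element $k\in K$ with $\rho(k)=v-\rho(g_0)$, so that $c(g_0k)=h$ and $\rho(g_0k)=v$. Since $T_{\delta_i}\mapsto(1,e_i)$ under this isomorphism, the standard generators of the $\mathbb Z^p$-factor are exactly the Dehn twists $T_{\delta_i}$ about the curves $\delta_i$, each parallel to a boundary component of $S_0^{p+b}$, which is the ``moreover'' statement. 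The main obstacle is the construction of $\rho$: there is no canonical section of a puncture-capping map, and such central extensions need not split in general, so each boundary twist must be detected by an explicit integer invariant. Routing through a pure braid group and reading off a pairwise linking number of strands --- for which the full twist counts once --- is what does the job, and the hypothesis $p+b\ge 3$ is precisely what keeps each intermediate capping sequence in the required form.
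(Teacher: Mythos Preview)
Your argument is correct. The paper does not actually prove this lemma: its entire proof is the sentence ``See \cite[Theorem~10]{Clay--Leininger--Margalit}.'' So there is no in-paper proof to compare against, and what you have supplied is strictly more than the paper provides.

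What you do is the natural approach and essentially the one underlying the cited result: realize $\mathrm{PMod}(S_0^{p+b})$ as a central $\mathbb{Z}^p$-extension of $\mathrm{PMod}(S_{0,p}^b)$ via iterated capping, and then split it. Your splitting via the maps $\rho_i$ (cap all but one boundary to land in $PB_{p+b-1}$ and read off a pairwise linking number) is clean and makes explicit why $p+b\ge 3$ is needed, namely so that $PB_{p+b-1}$ has at least two strands and hence a nontrivial linking homomorphism sending $\Delta^2$ to $1$. One small stylistic point: the parenthetical about ``at every intermediate stage the curve being twisted bounds on its far side a subsurface carrying at least $p+b-1\ge 2$ punctures and boundary components'' is more than you need; in $\mathrm{PMod}(S_0^{p+b})$ the boundary-parallel Dehn twists $T_{\delta_i}$ are automatically of infinite order once $p+b\ge 2$, since the mapping class group is taken rel boundary. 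But this does not affect correctness.
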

\begin{proof}
See \cite[Theorem 10]{Clay--Leininger--Margalit}. 
\end{proof}

A pseudo-annular extension of a sphere induces an embedding of the pure mapping class group of the sphere. 

\begin{proposition} 
Let $S \rightarrow F$ be a pseudo-annular extension. 
If $S$ is a sphere, then $\mathrm{PMod}(S)$ is embedded in $\mathrm{PMod}(F)$. 
 \label{pse_ann_emb}
\end{proposition}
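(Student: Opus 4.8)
The plan is to realize the desired embedding as the composite of the splitting $\mathrm{PMod}(S)\hookrightarrow\mathrm{PMod}(\overline S)$ coming from Lemma \ref{p_to_b_sphere} with the homomorphism $\psi\colon\mathrm{Mod}(\overline S)\to\mathrm{Mod}(F)$ induced by the admissible extension $\overline S\subset F$, and then to check that $\ker\psi$ meets $\mathrm{PMod}(S)$ trivially inside $\mathrm{PMod}(\overline S)$. Write $S=S_{0,p}^b$. If $p=0$ then $\overline S=S$ and the only pseudo-annular extension is the identity, so there is nothing to prove; if $p\ge1$ and $p+b\le2$ then $\mathrm{PMod}(S)=1$; hence I may assume $p\ge1$ and $p+b\ge3$. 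Let $\delta_1,\dots,\delta_p$ be the circles at infinity, that is, the components of $\overline S\setminus S$, and let $\beta_1,\dots,\beta_b$ be the components of $\partial S\subset\partial\overline S$. By Lemma \ref{p_to_b_sphere} there is a direct product decomposition $\mathrm{PMod}(\overline S)\cong\mathrm{PMod}(S)\times\mathbb Z^p$ in which the factor $\mathbb Z^p$ is freely generated by $T_{\delta_1},\dots,T_{\delta_p}$; write $\pi\colon\mathrm{PMod}(\overline S)\to\mathbb Z^p$ for the projection onto this factor, so that $\pi$ vanishes on $\mathrm{PMod}(S)$ and $\ker\pi=\mathrm{PMod}(S)$.

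Since $\overline S\subset F$ is admissible and $F\setminus\mathrm{Int}(\overline S)$ is the disjoint union of the annuli and once-punctured disks used in the pseudo-annular construction, Proposition \ref{annular_kernel}(2) applies to $\psi\colon\mathrm{Mod}(\overline S)\to\mathrm{Mod}(F)$; note also that $\psi$ takes values in $\mathrm{PMod}(F)$, because a mapping class of $\overline S$ extends over $F$ by the identity on the glued pieces and hence fixes the punctures of $F$. The proposed embedding is the composite $\mathrm{PMod}(S)\hookrightarrow\mathrm{PMod}(\overline S)\xrightarrow{\ \psi\ }\mathrm{PMod}(F)$, so it suffices to prove $\ker\psi\cap\mathrm{PMod}(S)=1$.

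I then read off $\ker\psi$ from the pseudo-annular data. By Proposition \ref{annular_kernel}(2) it is the free abelian group generated by the elements $T_{c_1}T_{c_2}^{-1}$, where $c_1,c_2$ bound an outer annulus lying over $\overline S$ at both ends, together with the elements $T_c$, where $c$ bounds an outer once-punctured disk; an outer annulus with a free end contributes nothing, since one of its boundary curves lies on $\partial F$ rather than on $\overline S$. In a pseudo-annular extension every such piece is attached along one of the circles $\delta_i$, and the second end of an outer annulus over $\overline S$ is another $\delta_j$ or a curve $\beta_j$; thus each generator of $\ker\psi$ has the form $T_{\delta_i}$, $T_{\delta_i}T_{\delta_j}^{-1}$, or $T_{\delta_i}T_{\beta_j}^{-1}$. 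Now the key combinatorial point of the definition is that each circle $\delta_i$, and each boundary curve $\beta_j$, is used by at most one glued piece. Consequently, viewing $\ker\psi$ inside the free abelian group $\langle T_{\delta_1},\dots,T_{\delta_p},T_{\beta_1},\dots,T_{\beta_b}\rangle$, each generator of $\ker\psi$ has a coordinate $\delta_i$ equal to $1$ in which every other generator vanishes. Hence the images under $\pi$ of the generators of $\ker\psi$ are linearly independent in $\mathbb Z^p$, so $\pi$ is injective on $\ker\psi$, and therefore $\ker\psi\cap\mathrm{PMod}(S)=\ker\psi\cap\ker\pi=1$. This gives injectivity of the composite and proves the proposition.

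The step I expect to be the main obstacle is the last one: pinning down exactly which Dehn twists occur among the generators of $\ker\psi$ in terms of the pseudo-annular structure, and then turning the bookkeeping fact that each circle at infinity carries at most one glued annulus or once-punctured disk into the statement that $\ker\psi$ injects into the $\mathbb Z^p$-factor complementary to $\mathrm{PMod}(S)$. Handling the annuli with a free outer end and the degenerate cases $p+b\le2$ is routine by comparison.
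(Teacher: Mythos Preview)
Your argument is correct and follows the same route as the paper's proof: factor through $\mathrm{PMod}(\overline S)$ via Lemma~\ref{p_to_b_sphere}, apply Proposition~\ref{annular_kernel} to the admissible inclusion $\overline S\subset F$, and check that $\ker\psi\cap\mathrm{PMod}(S)=1$. The paper leaves that last intersection as a one-line appeal to the two cited results, whereas you carry it out explicitly by showing that the projection $\pi$ onto the $\mathbb Z^p$-factor is injective on $\ker\psi$; your observation that each kernel generator has a $\delta_i$-coordinate not shared by any other generator (because each circle at infinity is glued to at most one piece) is exactly the combinatorial content the paper is invoking implicitly.
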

\begin{proof}
The admissible extension $\overline{S} \subset F$ induces an embedding $\phi \colon \mathrm{Homeo}(\overline{S}) \hookrightarrow \mathrm{Homeo}(F)$. 
Then $\phi$ gives a well-defined homomorphism $\varphi \colon \mathrm{Mod}(\overline{S}) \rightarrow \mathrm{Mod}(F)$ whose kernel is generated by Dehn twists corresponding to the attached once-punctured disks and products of Dehn twists corresponding to the attached annuli. 
Since $S$ is a sphere, Lemma \ref{p_to_b_sphere} implies that $\mathrm{PMod}(S) \hookrightarrow \mathrm{Mod}(\overline{S})$. 
Since $S \rightarrow F$ is pseudo-annular, Proposition \ref{annular_kernel} together with Lemma \ref{p_to_b_sphere} implies that the intersection of $\mathrm{Ker}(\varphi)$ and $\mathrm{PMod}(S)$ is trivial. 
Therefore, the restriction of $\varphi$ to $\mathrm{PMod}(S)$ is injective. 
\end{proof}

\section{Cyclic chains on Surfaces \label{cyc_chain_surf}}
In this section we investigate combinatorial obstructions, contained in the $1$-skeleta of Harvey's curve complexes, to embedding certain right-angled Artin groups into the mapping class groups of surfaces. 
More precisely, we give a necessary and sufficient condition for embedding $C_m^c$, $C_m^c * K_1$ and some graphs similar to these into $\mathcal{C}(S)$ as ``full subgraphs"; see Lemmas \ref{cyclic_full_subgraph}--\ref{2g_p}. 
Here, $C_m$ is a cyclic graph of length $m$ (the geometric realization is homeomorphic to the unit circle), $\Gamma^c$ is the complement graph of a given graph $\Gamma$, $K_m$ is a complete graph on $m$ vertices, and $\mathcal{C}(S)$ is the curve graph of a surface $S$ that is the $1$-skeleton of Harvey's curve complex. 

An arc or closed curve on a surface is called {\it simple} if it has no self-intersection. 
An arc $\delta$ on a surface $S$ is {\it properly embedded} if $\partial \delta \subseteq \partial S$ and $\delta$ is a transversal to $\partial S$. 
A properly embedded arc $\delta$ on $S$ is called {\it essential} if it is not isotopic to $\partial S$.
A simple closed curve $\alpha$ on $S$ is called {\it essential} if it does not bound a disk and it is not isotopic to a boundary component of $S$. 
For short, we use will the term arc (resp.\ closed curve) to mean a simple, properly embedded and essential arc (resp.\ closed curve) unless otherwise noted. 

\begin{definition}
A sequence $( \alpha_1, \ldots, \alpha_m )$ of closed curves on a surface $S$ is called a {\it semi-cyclic chain} if any two consecutive curves $\alpha_{i}$ and $\alpha_{i+1}$ are in minimal position and not disjoint ($i= 1, \ldots, m$ mod $m$). 
If $( \alpha_1, \ldots, \alpha_m )$ is a semi-cyclic chain, we call $m$ its {\it length}. 
We say that a semi-cyclic chain $(\alpha_1, \ldots, \alpha_m )$ on a surface has a {\it chord} if there are non-consecutive curves $\alpha_{i}$ and $\alpha_{j}$ that are in minimal position and not disjoint ($j \neq i, j \neq i \pm 1$ mod $m$). 
A semi-cyclic chain without a chord is called a {\it cyclic chain}. 
Note that if $( \alpha_1, \ldots, \alpha_m )$ is a cyclic chain, then every length $(m-1)$ subsequence of the form $( \alpha_{i+1}, \ldots, \alpha_{m}, \alpha_1, \ldots \alpha_{i-1} )$ is a linear chain ($i=1, \ldots, m$ mod $m$) as defined in \cite[Section 2]{Katayama--Kuno18}.  
\end{definition}

Let $\Lambda$ and $\Gamma$ be graphs. 
We denote the vertex set of the graph $\Lambda$ (resp.\ the edge set of $\Lambda$) by $V(\Lambda)$ (resp.\ $E(\Lambda)$). 
We use the term map, from a graph $\Lambda$ to a graph $\Gamma$, to mean a map from $V(\Lambda)$ to $V(\Gamma)$.  
A map $\phi \colon \Lambda \rightarrow \Gamma$ is called a ({\it graph}) {\it homomorphism} if it is a product map from $\Lambda= (V(\Lambda), E(\Lambda))$ to $\Gamma = (V(\Gamma), E(\Gamma))$ such that $\phi([v_1, v_2]) = [\phi(v_1), \phi(v_2)]$ for all pair of adjacent vertices $v_1 , v_2$ of $\Lambda$. 
Here, we use the notation $[\bullet, \bullet]$ to mean the edge joining two vertices in the bracket. 
A {\it (graph)} {\it embedding} is an injective graph homomorphism. 
The image of a graph embedding is called a {\it subgraph}. 
A map $\phi \colon \Lambda \rightarrow \Gamma$ is said to be {\it full} if it is a product map from $\Lambda$ to $\Gamma$ such that $\phi([v_1, v_2]) = [u_1, u_2]$ for all pairs of adjacent vertices $u_1 , u_2$ of $\Gamma$ and for all $v_1 \in \phi^{-1}(u_1)$ and $v_2 \in \phi^{-1}(u_2)$. 
We write $\Lambda \leq \Gamma$ if there is a full embedding of $\Lambda$ into $\Gamma$. 
In this case, both $\Lambda$ and the image of the full embedding are called a {\it full subgraph} of $\Gamma$. 
Note that any full subgraph of a graph is uniquely determined by its vertex set. 
For a subset $G \subset \Gamma$, the {\it full subgraph induced by $G$} is the full subgraph of $\Gamma$, whose vertex set is identical to that of $G$. 

By definition, we have the following. 

\begin{proposition}
There is a cyclic chain of length $m$ on a surface $S$ of negative Euler characteristic if and only if $C_m^c \leq \mathcal{C}(S)$. 
\end{proposition}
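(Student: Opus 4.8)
The plan is to exhibit a direct dictionary between cyclic chains of length $m$ on $S$ and full embeddings of $C_m^c$ into $\mathcal{C}(S)$. It is convenient to identify $C_m^c$ with the graph whose vertex set is $\mathbb{Z}/m\mathbb{Z}$ and whose edges are exactly the non-consecutive pairs $\{i,j\}$, so that the \emph{non}-edges of $C_m^c$ are precisely the pairs $\{i,i+1\}$, i.e.\ the edges of $C_m$. Recall also that two distinct vertices of $\mathcal{C}(S)$ are joined by an edge if and only if the corresponding isotopy classes have disjoint representatives, equivalently have vanishing geometric intersection number.

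For the forward implication, let $(\alpha_1,\dots,\alpha_m)$ be a cyclic chain on $S$. First I record that $[\alpha_1],\dots,[\alpha_m]$ are $m$ pairwise distinct vertices of $\mathcal{C}(S)$: consecutive curves are not disjoint and so non-isotopic, while for non-consecutive curves distinctness follows from the definition of a cyclic chain (equivalently, from the fact that every length-$(m-1)$ subsequence of the chain is a linear chain, whose terms are pairwise non-isotopic). Define $\phi\colon C_m^c\to\mathcal{C}(S)$ on vertices by $\phi(i)=[\alpha_i]$; it is injective. If $\{i,j\}\in E(C_m^c)$ then $\alpha_i$ and $\alpha_j$ are non-consecutive, so by the no-chord condition their geometric intersection number is $0$ and $\phi(i),\phi(j)$ are adjacent in $\mathcal{C}(S)$; if $\{i,j\}\notin E(C_m^c)$ then $\alpha_i,\alpha_j$ are consecutive, hence not disjoint, so their geometric intersection number is positive and $\phi(i),\phi(j)$ are non-adjacent. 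Thus $\phi$ is a full embedding and $C_m^c\le\mathcal{C}(S)$.

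For the reverse implication, let $\psi\colon C_m^c\to\mathcal{C}(S)$ be a full embedding and set $v_i=\psi(i)$ for $i\in\mathbb{Z}/m\mathbb{Z}$; these are $m$ distinct vertices. The one step requiring care is to represent them by curves realizing \emph{all} the required intersection relations simultaneously, not merely pairwise. Since $S$ has negative Euler characteristic, the classes $v_1,\dots,v_m$ can be realized at once in pairwise minimal position --- for instance by their geodesic representatives for a fixed complete hyperbolic structure on $S$ --- by curves $\beta_1,\dots,\beta_m$, for which $|\beta_i\cap\beta_j|$ equals the geometric intersection number of $v_i$ and $v_j$ for every $i\neq j$. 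By fullness of $\psi$, each consecutive pair $\{i,i+1\}$ is a non-edge of $C_m^c$, so $v_i,v_{i+1}$ are non-adjacent in $\mathcal{C}(S)$ and $\beta_i,\beta_{i+1}$ are in minimal position and not disjoint; each non-consecutive pair $\{i,j\}$ is an edge of $C_m^c$, so $v_i,v_j$ are adjacent and $\beta_i\cap\beta_j=\emptyset$. Therefore $(\beta_1,\dots,\beta_m)$ is a semi-cyclic chain with no chord, i.e.\ a cyclic chain of length $m$ on $S$.

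The only genuine obstacle is this realization step in the reverse direction; it is dispatched by passing to geodesic representatives for a fixed hyperbolic metric (which exists precisely because $\chi<0$) together with the fact that distinct geodesics are automatically in minimal position. After that, and after noting in the forward direction that the curves of a cyclic chain are pairwise non-isotopic, the equivalence reduces to a line-by-line translation between the defining conditions of a cyclic chain and adjacency/non-adjacency in $\mathcal{C}(S)$.
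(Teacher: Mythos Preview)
Your proof is correct and matches the paper's approach: the paper simply prefaces the proposition with ``By definition, we have the following'' and gives no further argument, so your explicit dictionary between the two notions is precisely what the paper leaves implicit. Your added care in the reverse direction---realizing all classes simultaneously in pairwise minimal position via geodesic representatives for a hyperbolic metric---is exactly the point one needs to make the translation rigorous, and the paper elides it entirely.
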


For the purpose of proving the non-existence of cyclic chains on surfaces, we introduce the following concept. 

\begin{definition}
Let $\delta_{a}$ and $\delta_{b}$ be arcs on a surface $S$, and $( \alpha_1, \ldots, \alpha_m )$ a linear chain on $S$. 
A triple $(\delta_{a}, (\alpha_1, \ldots, \alpha_m ), \delta_{b})$ is called {\it chained} if the triple satisfies the following conditions: 
\begin{enumerate}
 \item[(1)] $\delta_{a}$ intersects $\alpha_1$ non-trivially in minimal position but is disjoint from $\alpha_2, \ldots, \alpha_m$ and $\delta_{b}$. 
 \item[(2)] $\delta_{b}$ intersects $\alpha_m$ non-trivially in minimal position but is disjoint from $\alpha_1, \ldots, \alpha_{m-1}$ and $\delta_{a}$. 
\end{enumerate}
We call $m$ the {\it length} of the chained triple. 
We say that an arc on $S$ is {\it recursive} if the arc joins a single boundary component of $S$ to itself. 
Note that a separating arc on a surface (i.e., the surface minus the arc is not connected) is recursive. 
A chained triple $(\delta_{a}, (\alpha_1, \ldots, \alpha_m ), \delta_{b})$ on a surface $S$ satisfies condition ($*$) if the following holds: for any boundary component $C$ of $S$, we have either $\delta_a \cap C = \emptyset$ or $\delta_b \cap \emptyset$. 
\end{definition}

We will frequently use the following two lemmas. 

\begin{lemma}
If $\delta$ is an arc on $S_{g}^{p}$, then 
 \begin{eqnarray*}
S_{g}^{p} \setminus \mathrm{Int}N(\delta) \cong \left\{ \begin{array}{ll}
S_{g-1}^{p+1} \ & (\delta: \mbox{ non-separating}), \\
S_{g_1}^{p_1} \sqcup S_{g_2}^{p_2}  \ & (\delta: \mbox{ separating}), \\ 
\end{array} \right.
\end{eqnarray*}
where $g_{1}$, $p_{1}$, $g_{2}$ and $p_{2}$ are natural numbers satisfying 
 \begin{itemize}
 \item[$\bullet$] $g_1 + g_2 = g$, $g_1 \geq 0$, $g_2 \geq 0$,
 \item[$\bullet$] $p_1 + p_2 = p + 1$, $p_1 \geq 1$, $p_2 \geq 1$,
  \item[$\bullet$] if $g_i=0$, then $p_i \geq 2$ {\rm (}$i=1,2${\rm )}.
 \end{itemize}
\label{cutting_surfaces_arc}
\end{lemma}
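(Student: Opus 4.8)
## Proof proposal for Lemma \ref{cutting_surfaces_arc}

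The plan is to cut the surface $S_g^p$ along the arc $\delta$ and track how the genus, boundary components, and Euler characteristic transform. The key invariant I will use throughout is the Euler characteristic: cutting a surface along a properly embedded arc does not change its Euler characteristic, since $N(\delta)$ is a disk glued along part of the boundary, so removing its interior and identifying the two copies of $\delta$ on the resulting boundary preserves $\chi$. Concretely, $\chi(S_g^p) = 2 - 2g - p$, and this must equal $\chi$ of whatever surface (or disjoint union of surfaces) we obtain.

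First I would handle the non-separating case. Here $S_g^p \setminus \mathrm{Int}N(\delta)$ is connected; cutting along $\delta$ merges the structure near $\delta$ so that the number of boundary components changes by exactly $\pm 1$. Since $\delta$ has both endpoints on $\partial S$, one checks by a local picture (a neighborhood of $\delta$ together with the boundary arcs it meets) that cutting either splits one boundary circle into two or joins two boundary circles into one; the non-separating hypothesis, combined with the requirement that the result be connected, forces the count. I would argue that the result has $p+1$ boundary components: cutting a non-separating arc whose endpoints lie on boundary components (possibly the same one) and keeping connectivity increases the boundary count by one, and then $\chi = 2 - 2g - p$ together with $p' = p+1$ forces $g' = g-1$. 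Hence the cut surface is $S_{g-1}^{p+1}$.

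Next, the separating case: $S_g^p \setminus \mathrm{Int}N(\delta) = S_{g_1}^{p_1} \sqcup S_{g_2}^{p_2}$. Additivity of Euler characteristic gives $(2 - 2g_1 - p_1) + (2 - 2g_2 - p_2) = 2 - 2g - p$, i.e. $2(g_1+g_2) + (p_1+p_2) = 2 + 2g + p$. Genus is additive under this kind of splitting, so $g_1 + g_2 = g$, which then forces $p_1 + p_2 = p + 2$. I should double-check the claimed $p_1+p_2 = p+1$ in the statement against this computation — I believe the correct count is that a separating arc cuts one boundary circle into two on the two sides or connects two boundary circles, and a careful local analysis pins down $p_1 + p_2 = p + 1$ when the arc is recursive/separating with both endpoints on the \emph{same} boundary component (the standard situation for a separating arc). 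The inequalities $g_i \ge 0$ are automatic; $p_i \ge 1$ holds because each piece inherits at least the copy of $\delta$ as part of its boundary; and if $g_i = 0$ then $p_i \ge 2$ is exactly the essentiality condition on $\delta$ — a piece that is a genus-zero surface with a single boundary component would be a disk, so $\delta$ would be boundary-parallel, contradicting essentiality.

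The main obstacle I anticipate is getting the boundary-component bookkeeping exactly right in each case, since the behavior of cutting along $\delta$ genuinely depends on whether $\delta$'s two endpoints lie on the same boundary component or on two distinct ones; a clean way to organize this is to pass to the closed surface $\widehat{S}$ obtained by capping each boundary circle with a disk (placing a marked point in each), observe that cutting along $\delta$ corresponds on $\widehat{S}$ to cutting along an arc between marked points and then recapping, and use the well-understood classification of surfaces obtained by such surgeries. Once the endpoint cases are enumerated, the genus and boundary counts in both the separating and non-separating cases follow by combining the local picture with the Euler characteristic constraint, and the stated list of conditions on $g_i, p_i$ drops out. I would then remark that the final constraint ``$g_i = 0 \Rightarrow p_i \ge 2$'' is precisely what rules out the degenerate disk piece and is equivalent to $\delta$ being essential and properly embedded.
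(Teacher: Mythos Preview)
The paper states this lemma without proof, treating it as a standard topological fact, so there is no argument in the paper to compare against.

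Your approach via Euler characteristic is the right idea, but your central claim is wrong: cutting along a properly embedded arc does \emph{not} preserve $\chi$; it raises it by exactly $1$. The neighborhood $N(\delta)$ is a disk meeting $\overline{S\setminus\mathrm{Int}\,N(\delta)}$ along two arcs, so $\chi(S)=\chi(N(\delta))+\chi(S\setminus\mathrm{Int}\,N(\delta))-2$, giving $\chi(S\setminus\mathrm{Int}\,N(\delta))=\chi(S)+1$. You can check this against the asserted conclusion: $\chi(S_{g-1}^{p+1})=3-2g-p=\chi(S_g^p)+1$. This is exactly why your separating computation produced $p_1+p_2=p+2$; the discrepancy you flagged is your own error, not a problem with the statement. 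With the corrected relation $\chi'=\chi+1$, together with the observation that a \emph{separating} arc necessarily has both endpoints on a single boundary circle (which then splits in two, one piece going to each side, so genus is additive), you recover $g_1+g_2=g$ and $p_1+p_2=p+1$ as stated.

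Your non-separating argument is also too vague to stand. The sentence ``the non-separating hypothesis \ldots forces the count'' is not justified, and in fact it is false in general: if $\delta$ joins two \emph{distinct} boundary circles, cutting merges them into one and yields $S_g^{p-1}$, not $S_{g-1}^{p+1}$. Only when $\delta$ is recursive (both endpoints on one boundary circle) does cutting split that circle in two and give $S_{g-1}^{p+1}$. You need to treat these two sub-cases separately and say explicitly which one matches the lemma's conclusion; as written, your proposal does not establish the claimed homeomorphism type in the non-separating case.
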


\begin{lemma}
Let $\mathcal{T} = (\delta_{1}, ( \alpha_1, \ldots, \alpha_m ), \delta_{2})$ be a chained triple on $S_{g}^{p}$. 
Suppose that $S'$ is a connected component of $S_{g}^{p} \setminus \mathrm{Int}N(\delta_{2})$, which contains a (possibly not chained) triple $\mathcal{T}' = (\delta_{1}, ( \alpha_1, \ldots, \alpha_{m-1} ), \delta'_2)$, where $\delta'_{2}$ is a connected component of $\alpha_m \cap S'$ intersecting $\alpha_{m-1}$. 
Then, we have the following. 
\begin{enumerate}
 \item[(1)] If there is a closed curve $\alpha_i$ which is parallel to a boundary component of $S'$, then $i=1$ and $m=2$. 
 \item[(2)] If $m \geq 3$, then $\mathcal{T}' = (\delta_{1}, (\alpha_1, \ldots, \alpha_{m-1} ), \delta'_{2})$ is a chained triple on  $S'$. 
 \item[(3)] If $\delta_1$ is recursive and $\delta_2$ is either separating (i.e., $S_{g}^{p} \setminus \mathrm{Int}N(\delta_{2})$ is not connected) or non-separating and not recursive on $S_{g}^{p}$, then $\delta_1$ is recursive on $S'$. 
 \item[(4)] If $\delta_1$ is either separating or non-separating and not recursive, then $\delta'_2$ is recursive on $S'$. 
 \item[(5)] If $\mathcal{T}$ is chained triple satisfying condition ($*$), then for every boundary component $C$ of $S'$, we have either $\delta_1 \cap C = \emptyset$ or $\delta'_2 \cap C = \emptyset$. 
\end{enumerate}
\label{key_property_of_chained_triples}
\end{lemma}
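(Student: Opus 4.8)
The plan is to analyse, once and for all, the surface $S'$ obtained by cutting $S_{g}^{p}$ along $\delta_{2}$ together with a description of $\partial S'$, and then to read off the five assertions. Writing $N(\delta_{2})$ for a regular neighbourhood of $\delta_{2}$ (a band), $\partial S'$ is the union of the arcs of $\partial S_{g}^{p}$ lying outside $N(\delta_{2})$ together with two parallel copies $\delta_{2}^{+},\delta_{2}^{-}$ of $\delta_{2}$, and Lemma~\ref{cutting_surfaces_arc} pins down the genus and punctures of the piece(s). I would record at the outset the trichotomy for how the cut affects $\partial S_{g}^{p}$: (a) the endpoints of $\delta_{2}$ lie on distinct components $D\ne D'$ — then $\delta_{2}$ is non-separating and $D,D',\delta_{2}^{+},\delta_{2}^{-}$ fuse into one component of $\partial S'$; (b) both endpoints lie on one component $D$ and $\delta_{2}$ is separating — then $D$ splits as $D_{1}\sqcup D_{2}$, one in each piece of $S_{g}^{p}\setminus\mathrm{Int}N(\delta_{2})$, with $\delta_{2}^{+}$ alone on $D_{1}$ and $\delta_{2}^{-}$ alone on $D_{2}$; (c) both endpoints lie on $D$ and $\delta_{2}$ is non-separating — then a parity count on Euler characteristics shows $D$ again splits into $D_{1}\ni\delta_{2}^{+}$ and $D_{2}\ni\delta_{2}^{-}$, but now both lie in the single surface $S'$. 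I use throughout that minimal position, disjointness, essentiality of curves, and non-triviality of intersections pass from $S_{g}^{p}$ to the subsurface $S'\subseteq S_{g}^{p}$, since a bigon in $S'$ is a bigon in $S_{g}^{p}$.

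For (1) I would run an innermost-disc argument. Suppose $\alpha_{i}$ with $1\le i\le m-1$ is parallel to a component $C'$ of $\partial S'$, and let $A$ be the collar annulus between $\alpha_{i}$ and $C'$. If $i\ge 2$, then $\alpha_{i-1}$ is an essential closed curve of $S'$ meeting $\alpha_{i}$ in minimal position; being disjoint from $\partial S'$, every component of $\alpha_{i-1}\cap A$ has both endpoints on $\alpha_{i}$, so an innermost one bounds a bigon between $\alpha_{i-1}$ and $\alpha_{i}$ inside $A$, a contradiction. Hence $i=1$; if moreover $m\ge 3$, the same argument applied to $\alpha_{2}$ (still intact in $S'$) gives a contradiction, so $m=2$.

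For (2), assuming $m\ge 3$, I would verify the definition of a chained triple for $\mathcal{T}'$ on $S'$ directly. The curves $\alpha_{1},\dots,\alpha_{m-1}$ are essential in $S'$ (not null-homotopic, and not boundary-parallel by (1) since $m\ne 2$) and inherit the chain intersection pattern; $\delta_{1}$ and $\delta'_{2}$ are properly embedded arcs of $S'$ whose endpoints lie on $\partial S'$ (those of $\delta'_{2}$ on $\delta_{2}^{+}\cup\delta_{2}^{-}$); they are essential, for if, say, $\delta'_{2}$ cut off a disc from $S'$ then an innermost component of $\alpha_{m-1}\cap(\text{that disc})$ would be a bigon, contradicting the inherited minimal position, and likewise for $\delta_{1}$ with $\alpha_{1}$. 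The remaining disjointness requirements hold because $\delta'_{2}\subseteq\alpha_{m}$ is disjoint from $\delta_{1}$ and from $\alpha_{1},\dots,\alpha_{m-2}$, while $\delta_{1}$ is disjoint from $\alpha_{2},\dots,\alpha_{m-1}$; hence $\mathcal{T}'$ is a chained triple on $S'$.

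For (3), (4), (5) the work is bookkeeping of boundary components through the cut. For (5): under $(*)$ the components of $\partial S_{g}^{p}$ meeting $\delta_{1}$ are disjoint from those meeting $\delta_{2}$, and the cut alters only the latter, so the components of $\partial S'$ meeting $\delta_{1}$ remain disjoint from those containing $\delta_{2}^{+}\cup\delta_{2}^{-}\supseteq\partial\delta'_{2}$. For (3): the endpoints of the recursive arc $\delta_{1}$ lie on one component $D_{0}$ of $\partial S_{g}^{p}$; if $D_{0}$ carries no endpoint of $\delta_{2}$ it survives, if $\delta_{2}$ joins $D_{0}$ to another component they fuse, and if both endpoints of $\delta_{2}$ lie on $D_{0}$ then, $\delta_{2}$ being assumed type A, it is separating, so $D_{0}$ splits but $\partial\delta_{1}$ passes entirely into the piece containing $\delta_{1}$ — onto one circle — and $\delta_{1}$ stays recursive. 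For (4): using that a component of $\alpha_{m}\cap S'$ has both endpoints on $\delta_{2}^{+}\cup\delta_{2}^{-}$, cases (a) and (b) immediately put $\partial\delta'_{2}$ on one component of $\partial S'$ (in (b) the curve $\alpha_{m}$ can only enter a given piece through one copy of $\delta_{2}$, so $\delta'_{2}$ has both endpoints on it); the only way $\delta'_{2}$ could fail to be recursive is in case (c) with $\delta'_{2}$ running from $D_{1}$ to $D_{2}$, and one must show this forces $\alpha_{m}$ together with a subarc of $\delta_{2}$ to be a non-separating loop disjoint from $\delta_{1}$, which in turn forces $\delta_{1}$ to be non-separating and recursive, contrary to hypothesis. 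I expect this last implication in (4) to be the main obstacle: everything else reduces to inherited minimal-position facts and the boundary trichotomy, whereas excluding configuration (c) genuinely uses the full chained structure — in particular the disjointness of $\delta_{1}$ from $\alpha_{m}$ — to propagate non-separatingness back to $\delta_{1}$.
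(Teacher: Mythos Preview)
Your treatments of (1), (2), (3), and (5) are correct and essentially parallel the paper's, with two pleasant additions: in (1) you run a clean innermost--bigon argument inside the collar annulus (the paper instead argues that any curve meeting $\alpha_i$ is forced to meet $\delta_2$), and in (2) you explicitly check that $\delta_1$ and $\delta'_2$ remain \emph{essential} arcs on $S'$, a point the paper leaves implicit.

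The trouble is in (4). Your ``main obstacle''---case (c), where $\delta_2$ is non-separating and recursive---arises only because you are following the printed hypothesis on $\delta_1$. The paper's own proof of (4) opens with ``Suppose that $\delta_2$ is either separating or non-separating and not recursive on $S_g^p$,'' and every subsequent application of (4) in the paper invokes it under an assumption on $\delta_2$ (e.g.\ ``$\delta_2$ is separating''), never on $\delta_1$. With the intended hypothesis on $\delta_2$, case (c) is simply excluded, and (4) is exactly your easy observation for cases (a) and (b): each component of $S_g^p\setminus\mathrm{Int}\,N(\delta_2)$ carries only one of $\delta_2^{+},\delta_2^{-}$ in its boundary, so $\partial\delta'_2\subset\delta_2^{+}\cup\delta_2^{-}$ lies on a single circle. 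Your sketch for (c) under the $\delta_1$-hypothesis does not close as written: the step ``a non-separating loop disjoint from $\delta_1$ forces $\delta_1$ to be non-separating and recursive'' is not a valid implication in general---a separating arc can be disjoint from a non-separating simple closed curve---so even granting that $\delta'_2$ together with a subarc of $\delta_2$ is non-separating, nothing yet contradicts $\delta_1$ being separating. In short, align the hypothesis of (4) with the paper's proof (on $\delta_2$), and the case you flagged as hardest disappears.
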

\begin{proof}
(1) Suppose that a curve $\alpha_i$ is isotopic to a boundary component of $S'$. 
Then, since $\alpha_i$ is essential in $S_{g}^{p}$, $\alpha_i$ is isotopic to $\partial S_{g}^{p} \cup \delta_{2}$. 
Then, depending on whether $\partial \delta_{2}$ lies in a single boundary component of $S_{g}^{p}$ or $\partial \delta_{2}$ is not contained in a single boundary component, we have the following. 
\begin{enumerate}
 \item[(a)] If $\delta_{2}$ joins a boundary component $C$ to itself, then there is an annulus $A$ in $S_{g}^{p}$ such that $\partial A \subset \alpha_i \sqcup (\delta_{2} \cup  \partial S_{g}^{p})$ (see Figure \ref{annulus_disk} (a)). 
 \item[(b)] If $\delta_{2}$ joins a boundary component $C_1$ to another boundary component $C_2$, then there is a two-holed disk $D$ in $S_{g}^{p}$ such that $ \partial D = \alpha_i \sqcup C_1 \sqcup C_2$ (see Figure \ref{annulus_disk} (b)). 
\end{enumerate}
\begin{figure}
\centering
\includegraphics[clip, scale=0.35]{./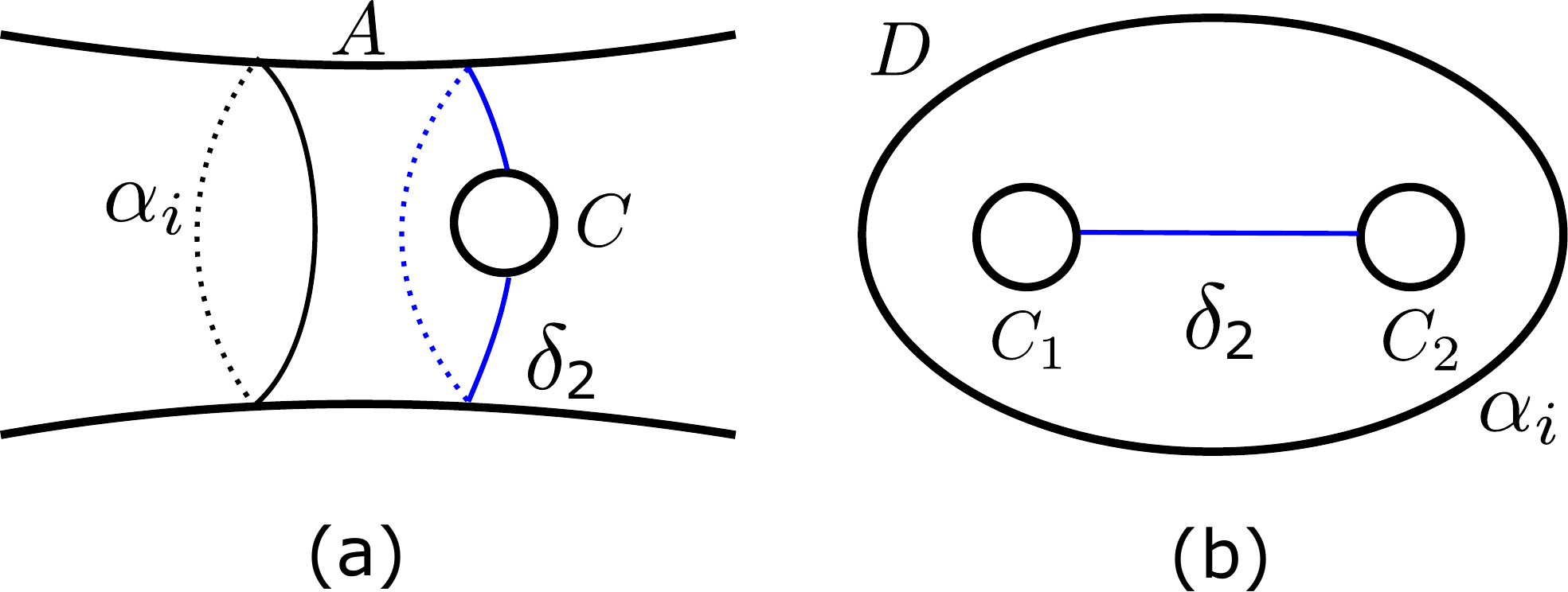}
\caption{(a) the arc $\delta_2$ joins $C$ to itself, (b) $\delta_2$ joins two distinct boundary components. \label{annulus_disk}}
\end{figure}
In each case, any closed curve in $S_{g}^{p}$ that intersects $\alpha_i$ non-trivially and minimally must intersect with $\delta_{2}$. 
Hence, we have $i=1$ and $m=2$. 

(2) Suppose that $m \geq 3$. 
Then, any closed curve in $( \alpha_1, \ldots, \alpha_{m-1} )$ is not boundary parallel (by (1)) and bounds no disk, and hence, it is essential in $S'$. 
In addition, consecutive closed curves $\alpha_i$ and $\alpha_{i+1}$ intersect minimally and non-trivially, and non-consecutive closed curves are disjoint. 
Therefore, $( \alpha_1, \ldots, \alpha_{m-1} )$ is a linear chain on $S'$. 
From our assumption that $(\delta_{a}, ( \alpha_1, \ldots, \alpha_m ), \delta_{2})$ is a chained triple and $\delta_b' \subset \alpha_m$, the arc $\delta_{1}$ intersects with $\alpha_1$ minimally and non-trivially but does not intersect $\alpha_2, \ldots, \alpha_{m-1}$ or $\delta'_{2}$. 
Similarly, the arc $\delta'_{2}$ intersects $\alpha_{m-1}$ minimally and non-trivially but does not intersect $\alpha_1, \ldots, \alpha_{m-2}$ or $\delta_{1}$. 
Thus, $(\delta_{1}, ( \alpha_1, \ldots, \alpha_{m-1} ), \delta'_{2})$ is a chained triple on $S'$. 

(3)  Suppose that $\delta_2$ is either separating or non-separating and not recursive on $S_{g}^{p}$. 
Then, in either case, for each component $S''$ of $S_{g}^{p} \setminus \mathrm{Int}N(\delta_2)$, the number of boundary components of $S''$ that are derived from $\delta_2$ (and a boundary component of $S_{g}^{p}$) is exactly one. 
Hence, this fact together with the assumption that $\delta_1$ is recursive on $S_{g}^{p}$ implies that $\delta_1$ is recursive on $S'$. 

(4) Suppose that $\delta_2$ is either separating or non-separating and not recursive on $S_{g}^{p}$. 
Then, for each component $S''$ of $S_{g}^{p} \setminus \mathrm{Int}N(\delta_2)$, the number of boundary components of $S''$ that are derived from $\delta_2$ (and a boundary component of $S_{g}^{p}$) is one. 
Hence, since $\partial \delta'_2 \subset \delta_2$, it follows that $\partial \delta'_2$ is contained in a single boundary component. 
Therefore $\delta'_2$ is recursive on $S'$. 

(5) Suppose that $\mathcal{T}$ satisfies condition ($*$). 
Pick a boundary component $C$ of $S'$. 
Then, the following holds. 
\begin{align*}
& C \mbox{ has an endpoint of } \delta'_2. \\
& \Leftrightarrow \ C \mbox{ is derived from } \delta_2 \mbox{ and the boundary component(s) of } S_{g}^{p}. 
\end{align*}
Condition ($*$) implies that $C$ is not derived from $\delta_2$ or the boundary component(s) of $S_{g}^{p}$ if $C$ contains an endpoint of $\delta_1$. 
Hence, either $\delta_1 \cap C = \emptyset$ or $\delta'_2 \cap C = \emptyset$ holds. 
\end{proof}

Now, let us compute the maximal lengths of chained triples on holed spheres. 

\begin{lemma}
Suppose that $(\delta_{1}, ( \alpha_1, \ldots, \alpha_m ), \delta_{2})$ is a chained triple on $S_{0}^{3}$. 
Then $m=0$. 
 \label{triple_03}
\end{lemma}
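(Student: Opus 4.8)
The plan is to exploit the extreme simplicity of $S_0^3$, the pair of pants. Recall that every essential simple closed curve on $S_0^3$ is isotopic to one of the three boundary-parallel curves, hence bounds (with the ambient boundary) an annulus; in particular no two essential closed curves on $S_0^3$ can be placed in minimal position and still intersect — they can always be isotoped off one another. First I would observe that if $m \geq 1$, then the chained triple contains the curve $\alpha_1$, which is essential in $S_0^3$; this immediately forces the curves $\alpha_1, \ldots, \alpha_m$ to exist as essential simple closed curves on $S_0^3$. If $m \geq 2$, the definition of a linear chain demands that $\alpha_1$ and $\alpha_2$ be in minimal position and not disjoint, which is impossible on $S_0^3$ by the preceding remark. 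So the only cases to rule out are $m=1$ and (vacuously handled) $m = 0$, and we want to show $m=1$ also cannot occur.

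For the case $m = 1$, the chained triple is $(\delta_1, (\alpha_1), \delta_2)$ with $\alpha_1$ an essential simple closed curve on $S_0^3$, and $\delta_1, \delta_2$ essential properly embedded arcs each intersecting $\alpha_1$ nontrivially in minimal position and disjoint from each other. The key step is to cut $S_0^3$ along $\alpha_1$: since $\alpha_1$ is isotopic to a boundary-parallel curve, $S_0^3 \setminus \mathrm{Int}\,N(\alpha_1)$ is a disjoint union of an annulus (the one cobounded by $\alpha_1$ and the parallel boundary component) and a copy of $S_0^3$ again — more simply, $\alpha_1$ separates off an annular neighborhood of one boundary component. An essential arc $\delta_1$ that meets $\alpha_1$ minimally and nontrivially must, after cutting, have a subarc running into this annulus and back; but an annulus contains no essential arc with both endpoints on the same boundary circle that is nontrivial, and I would use this to derive a contradiction with the essentiality or minimality of $\delta_1$ on $S_0^3$. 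Alternatively, and perhaps more cleanly, I would argue directly: on $S_0^3$ every essential arc is (up to isotopy) one of the finitely many arcs connecting boundary components, and a case check shows each such arc can be isotoped disjoint from any given boundary-parallel $\alpha_1$ — contradicting the requirement $\delta_1 \cap \alpha_1 \neq \emptyset$ in minimal position.

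The main obstacle I anticipate is purely bookkeeping: one must be careful that "minimal position" and "essential" are used correctly, so that the statement "no essential closed curve on $S_0^3$ intersects another essential closed curve" and "no essential arc on $S_0^3$ essentially intersects a boundary-parallel curve" are applied with the right notion of isotopy (free isotopy of curves, isotopy rel boundary for arcs). Once those standard facts about the pair of pants are in hand, the argument is immediate: $m \geq 2$ dies because consecutive chain curves cannot intersect, and $m = 1$ dies because no arc can essentially cross a boundary-parallel curve. Hence $m = 0$, which is exactly the assertion. I would present this as two short paragraphs: one disposing of $m \geq 2$, one disposing of $m = 1$, each invoking the classification of essential curves and arcs on $S_0^3$.
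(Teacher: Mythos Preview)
Your argument ultimately works, but it rests on a misreading of the paper's definition of \emph{essential}. In the paper, a simple closed curve is essential only if it does \emph{not} bound a disk \emph{and} is \emph{not} isotopic to a boundary component. On the pair of pants $S_0^3$ every simple closed curve either bounds a disk or is boundary-parallel, so there are \emph{no} essential simple closed curves at all. Since the curves $\alpha_1,\ldots,\alpha_m$ in a chained triple are, by the paper's standing convention, essential closed curves, we get $m=0$ immediately. That is exactly the paper's one-line proof.

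Your two-case analysis (ruling out $m\ge 2$ via disjointness of boundary-parallel curves, and $m=1$ via the fact that essential arcs can be isotoped off a boundary-parallel curve) is correct as far as it goes, but it is doing extra work to handle objects that the paper's definition has already excluded. Once you adopt the paper's convention, the cases $m\ge 1$ never arise and there is nothing to check.
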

\begin{proof}
There is no essential simple closed curve in $S_{0}^{3}$. 
Hence, $m$ must be $0$. 
\end{proof}

\begin{lemma}
Let $p$ be an integer $\geq 4$. 
Suppose that $(\delta_{1}, (\alpha_1, \ldots, \alpha_m), \delta_{2})$ is a chained triple on $S_{0}^{p}$. 
Then $m \leq p-2$. 
 \label{triple_0p}
\end{lemma}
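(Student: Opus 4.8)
The plan is to induct on $p$, using the arc $\delta_2$ to pass to a simpler surface. The base case $p = 4$ should be checked directly: a chained triple $(\delta_1, (\alpha_1,\ldots,\alpha_m),\delta_2)$ on $S_0^4$ has $m \le 2$, which one can see by noting that any two essential simple closed curves on $S_0^4$ that are not disjoint must intersect in at least two points, and the combinatorial constraint of being a chain (consecutive curves intersecting, non-consecutive disjoint) together with the presence of the two arcs $\delta_1, \delta_2$ forces $m \le 2$; this can be argued via an Euler characteristic or intersection-number count, or by appealing to the structure of the curve complex of $S_0^4$ (which is a Farey graph).

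For the inductive step, suppose $p \ge 5$ and let $(\delta_1, (\alpha_1,\ldots,\alpha_m),\delta_2)$ be a chained triple on $S_0^p$. First I would cut along a neighborhood of $\delta_2$: by Lemma~\ref{cutting_surfaces_arc}, $S_0^p \setminus \mathrm{Int}\,N(\delta_2)$ is either $S_0^{p+1}$ (if $\delta_2$ is recursive/separating within a sphere — but a sphere has no non-separating arcs, so a separating $\delta_2$ gives a disjoint union $S_0^{p_1} \sqcup S_0^{p_2}$ with $p_1 + p_2 = p+1$ and each $p_i \ge 2$; a non-recursive arc joining distinct boundary components gives connected $S_0^{p-1}$). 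Let $S'$ be the component containing $\alpha_{m-1}$, and let $\delta_2'$ be the component of $\alpha_m \cap S'$ meeting $\alpha_{m-1}$, as in Lemma~\ref{key_property_of_chained_triples}. If $m \le 2$ we are already done since $p - 2 \ge 3$, so assume $m \ge 3$; then by Lemma~\ref{key_property_of_chained_triples}(2), $(\delta_1, (\alpha_1,\ldots,\alpha_{m-1}), \delta_2')$ is a chained triple on $S'$, of length $m-1$.

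The next step is to bound the complexity of $S'$. When $\delta_2$ is non-recursive (joining two distinct boundary components of $S_0^p$), $S' \cong S_0^{p-1}$ and the inductive hypothesis gives $m - 1 \le (p-1) - 2 = p - 3$, hence $m \le p - 2$. When $\delta_2$ is recursive, there are two subcases. If $\delta_2$ is separating, $S_0^p$ is cut into $S_0^{p_1} \sqcup S_0^{p_2}$ with $p_1 + p_2 = p + 1$; the component $S'$ containing $\alpha_{m-1}$ is one of these, say with $p_1 \le p - 1$ boundary components (since $p_2 \ge 2$), and the inductive hypothesis on $S' \cong S_0^{p_1}$ gives $m - 1 \le p_1 - 2 \le p - 3$, so $m \le p - 2$. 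If instead $\delta_2$ is recursive but non-separating — impossible on a sphere — so this case does not arise. (If the excerpt's setup allows a recursive non-separating arc on a sphere, it doesn't, so the case list is complete.) In every case we conclude $m \le p - 2$.

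I expect the main obstacle to be the base case and, more subtly, making sure the inductive hypothesis genuinely applies: one must verify that $S'$ is itself a holed sphere of the claimed type (no genus is created, which holds since cutting a sphere along an arc never produces genus), that $\delta_2'$ really is an essential, properly embedded arc on $S'$ (it is a sub-arc of $\alpha_m$ between consecutive intersections with $\partial N(\delta_2)$, and one must rule out its being boundary-parallel — here Lemma~\ref{key_property_of_chained_triples}(1) is the key input, guaranteeing essentiality once $m \ge 3$), and that $\delta_1$ survives intact in $S'$ with the required disjointness. The potential annoyance is bookkeeping the boundary components: $S'$ could have fewer boundary components than expected if $\delta_2$ and a boundary component of $S_0^p$ become parallel, but the admissibility/essentiality conventions fixed in the section's preamble exclude this. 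Once these verifications are in place the induction closes cleanly.
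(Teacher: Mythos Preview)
Your inductive argument is correct and would close once the base case is nailed down (which your own cutting technique already handles: for $p=4$, assume $m\ge 3$, cut along $\delta_2$, land in $S_0^2$ or $S_0^3$, and invoke Lemma~\ref{triple_03} for a contradiction---no Farey-graph digression needed). The only other loose end is the one you flag yourself: when $\delta_2$ is recursive and the component $S'\cong S_0^{p_1}$ has $p_1\le 3$, the inductive hypothesis does not literally apply, but Lemma~\ref{triple_03} (or the triviality of $\mathcal{C}(S_0^2)$) gives $m-1\le 0$, which is even stronger.

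That said, your route is genuinely different from the paper's. The paper does not induct at all: it simply observes that forgetting $\delta_1$ leaves a \emph{chained pair} $((\alpha_1,\ldots,\alpha_m),\delta_2)$ on $S_0^p$, and then quotes the bound $m\le p-2$ for chained pairs already established in \cite[Lemmas~2.5 and~2.6]{Katayama--Kuno18}. So the paper's proof is a one-line reduction to prior work and, notably, shows that $\delta_1$ plays no role in this particular bound. Your approach, by contrast, carries $\delta_1$ through the induction because the inductive hypothesis is phrased for chained \emph{triples}; this makes the argument self-contained within the present paper and rehearses exactly the cutting machinery (Lemma~\ref{key_property_of_chained_triples}) that drives the harder lemmas later in the section. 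The trade-off is brevity versus self-containment: the paper's citation is shorter, while your argument would stand on its own without the earlier reference.
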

\begin{proof}
Note that $((\alpha_1, \ldots, \alpha_m), \delta_{2})$ is a ``chained pair" on $S_{0}^{p}$ (a chained pair is defined in \cite[Section 2]{Katayama--Kuno18}). 
By \cite[Lemmas 2.5 and 2.6]{Katayama--Kuno18}, we have $m \leq p-2$. 
\end{proof}

Next, we compute the maximal lengths of chained triples satisfying condition ($*$) on spheres with holes in Lemmas \ref{pre_induction}--\ref{sep_triple_0p} below. 

\begin{lemma}
Suppose that $2 \leq p \leq 3$. 
Then there is no chained triple satisfying condition ($*$) on $S_{0}^{p}$. 
 \label{pre_induction}
\end{lemma}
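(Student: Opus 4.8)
The plan is to dispose of the two cases $p=2$ and $p=3$ separately, using the structural constraints that a chained triple satisfying condition $(*)$ imposes on where the endpoints of $\delta_1$ and $\delta_2$ can sit. The key observation is that $\delta_1$ and $\delta_2$ are essential, properly embedded arcs on a small sphere with holes, and condition $(*)$ says: for every boundary component $C$, at least one of $\delta_1, \delta_2$ misses $C$. So the total "boundary footprint" of $\{\delta_1,\delta_2\}$ is severely limited.

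First I would treat $p=2$. On $S_0^2$ (an annulus) there is no essential properly embedded arc at all: any properly embedded arc either cuts off a disk or is isotopic into the boundary. Hence $\delta_1$ cannot exist, and there is no chained triple whatsoever (in particular none satisfying $(*)$). Next, for $p=3$, by Lemma \ref{triple_03} any chained triple on $S_0^3$ has $m=0$, i.e.\ it is a pair $(\delta_1,\emptyset,\delta_2)$ consisting of two disjoint essential arcs. An essential arc on $S_0^3$ has its two endpoints; if both endpoints lie on the same boundary component it is recursive and separates $S_0^3$ into a once-holed disk and a twice-holed disk, so it is automatically inessential or boundary-parallel — more carefully, one checks that every essential arc on $S_0^3$ joins two \emph{distinct} boundary components. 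Thus $\delta_1$ meets exactly two of the three boundary components, and likewise $\delta_2$ meets exactly two; by pigeonhole there is a boundary component $C$ met by both $\delta_1$ and $\delta_2$, contradicting condition $(*)$. This rules out $p=3$.

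Combining the two cases gives the lemma. I would write the argument as: "If $p=2$, then $S_0^2$ admits no essential properly embedded arc, so $\delta_1$ does not exist. If $p=3$, then by Lemma \ref{triple_03} we have $m=0$, and the two disjoint essential arcs $\delta_1,\delta_2$ each join two of the three boundary components of $S_0^3$; hence they share a boundary component, violating $(*)$." The only mild subtlety — the point I would be most careful about — is the claim that every essential arc on $S_0^3$ joins two distinct boundary components (equivalently, that $S_0^3$ has no essential recursive arc): a recursive arc on $S_0^3$ based at $C$ separates off a piece that is either a disk (so the arc is inessential) or an annulus (so one of the resulting boundary curves is isotopic to a boundary component of $S_0^3$, forcing the arc to be boundary-parallel). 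This is a one-line Euler-characteristic-plus-Lemma \ref{cutting_surfaces_arc} check, but it is the crux, so I would spell it out rather than leave it implicit.

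One should also note that condition $(*)$ as stated has a typo ("$\delta_b \cap \emptyset$" should read "$\delta_b \cap C = \emptyset$"), and I would interpret it in the evidently intended way when invoking it above.
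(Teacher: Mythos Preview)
Your overall strategy matches the paper's: reduce to a pair of disjoint essential arcs and then show they must share a boundary component, contradicting condition $(*)$. However, the key claim you single out as ``the crux'' is false under the paper's conventions, and this breaks both cases of your argument.

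For $p=2$: the annulus \emph{does} have an essential properly embedded arc, namely any arc joining the two boundary circles. Such an arc is not isotopic rel endpoints into $\partial S_0^2$, hence is essential by the paper's definition. What is true (and suffices) is that all essential arcs on $S_0^2$ are isotopic, so one cannot find two non-isotopic disjoint ones; alternatively, any essential arc meets both boundary components, so $(*)$ fails immediately.

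For $p=3$: it is \emph{not} true that every essential arc on $S_0^3$ joins two distinct boundary components. A recursive arc based at $C_1$ that separates $C_2$ from $C_3$ cuts $S_0^3$ into two annuli (this is exactly what Lemma~\ref{cutting_surfaces_arc} gives), and such an arc is essential: it does not cobound a disk with a subarc of $C_1$. Your justification confuses the boundary-parallelism of the \emph{curve} $\delta\cup(\text{part of }C_1)$ with boundary-parallelism of the \emph{arc} $\delta$. The correct argument---and this is what the paper does---uses condition $(*)$ at this step, not afterwards: if $\delta_1$ is recursive at $C_1$, then $(*)$ forces $\partial\delta_2\cap C_1=\emptyset$, so $\delta_2$ lies in one of the two annulus components of $S_0^3\setminus N(\delta_1)$ with both endpoints on a single boundary component of $S_0^3$, hence is inessential. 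Thus both $\delta_i$ must be non-recursive, and then your pigeonhole finishes (the paper phrases this last step as ``$S_0^3\setminus N(\delta_1)$ is an annulus'', which amounts to the same thing).
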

\begin{proof}
We will prove this lemma in the case where $p=3$. 
Let $\delta_1$ and $\delta_2$ be mutually non-isotopic disjoint arcs on $S_{0}^{3}$. 
If $\partial \delta_1$ is contained in a single boundary component, then $\delta_2$ cannot be essential. 
Hence, $\delta_1$ connects two boundary components. 
Similarly, $\delta_2$ connects two boundary components. 
Since $S_{0}^{3} \setminus \mathrm{Int}N(\delta_1)$ is an annulus, a boundary component of $S_{0}^{3}$ intersects both $\partial \delta_1$ and $\partial \delta_2$. 
Hence, no chained triple satisfies condition ($*$) on $S_{0}^{3}$. 
\end{proof}

\begin{lemma}
Suppose that $(\delta_{1}, ( \alpha_1, \ldots, \alpha_m ), \delta_{2})$ is a chained triple satisfying condition ($*$) on $S_{0}^{4}$. 
Then $m \leq 1$. 
 \label{induction_start}
\end{lemma}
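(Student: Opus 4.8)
The plan is to argue by contradiction: suppose $m \geq 2$, so that $(\delta_1, (\alpha_1, \ldots, \alpha_m), \delta_2)$ is a chained triple satisfying condition ($*$) on $S_0^4$ with $m \geq 2$. First I would apply Lemma \ref{triple_0p} to get $m \leq p - 2 = 2$, so necessarily $m = 2$ and the chain is $(\alpha_1, \alpha_2)$ with $\alpha_1, \alpha_2$ in minimal position and not disjoint. The main tool will be Lemma \ref{key_property_of_chained_triples} applied to cutting along $\delta_2$: since $m = 2$, I would pass to a component $S'$ of $S_0^4 \setminus \mathrm{Int}N(\delta_2)$ containing the (possibly non-chained) triple $\mathcal{T}' = (\delta_1, (\alpha_1), \delta_2')$, where $\delta_2'$ is a component of $\alpha_2 \cap S'$ meeting $\alpha_1$.

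Next I would enumerate the topological types of $\delta_2$ on $S_0^4$ using Lemma \ref{cutting_surfaces_arc} (with $g = 0$, $p = 4$): either $\delta_2$ is non-separating and $S' = S_0^5$ (here $S'$ is connected), or $\delta_2$ is separating and $S_0^4 \setminus \mathrm{Int}N(\delta_2) = S_0^{p_1} \sqcup S_0^{p_2}$ with $p_1 + p_2 = 5$ and $p_i \geq 2$, i.e. $\{p_1, p_2\} = \{2, 3\}$, so $S' \cong S_0^2$ or $S' \cong S_0^3$. Since $\alpha_1$ must be essential in $S'$, the case $S' \cong S_0^2$ is excluded (no essential curve), and the case $S' \cong S_0^3$ is excluded by Lemma \ref{triple_03} applied to the chained pair structure, leaving $\delta_2$ non-separating and $S' \cong S_0^5$. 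I would then note that $\delta_2$ non-separating on $S_0^4$ forces $\delta_2$ to join two distinct boundary components (an arc joining a single boundary component to itself on a planar surface is always separating), hence $\delta_2$ is not recursive. Now apply Lemma \ref{key_property_of_chained_triples}(4): since $\delta_1$ is also an arc on $S_0^4$ that is either separating or non-separating-and-not-recursive (again, every non-separating arc on a planar surface is non-recursive, and every recursive arc on a planar surface is separating — so in all cases the hypothesis of (4) is met), we conclude $\delta_2'$ is recursive on $S' \cong S_0^5$. Similarly, by part (5) of the same lemma, condition ($*$) descends: for every boundary component $C$ of $S'$, either $\delta_1 \cap C = \emptyset$ or $\delta_2' \cap C = \emptyset$.

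At this stage we have a chained triple — or rather the pair structure it induces — $(\delta_1, (\alpha_1), \delta_2')$ on $S_0^5$ where $\delta_1$ meets $\alpha_1$, $\delta_2'$ meets $\alpha_1$, $\delta_1$ and $\delta_2'$ are disjoint, $\delta_2'$ is recursive, and condition ($*$) holds. The finish is a direct planar argument: cutting $S_0^5$ along $\delta_1$ gives (by Lemma \ref{cutting_surfaces_arc}) a component $S''$ into which $\alpha_1$ and $\delta_2'$ survive; one checks that $\alpha_1$ being a single essential curve meeting both the recursive arc $\delta_2'$ and (after the cut) forced into a low-complexity surface leads to a contradiction with condition ($*$) exactly as in Lemmas \ref{pre_induction} and \ref{induction_start}'s base cases — concretely, a recursive arc $\delta_2'$ on the sphere separates it, and essentiality of $\alpha_1$ plus disjointness from $\delta_1$ pins down where the boundary points of $\delta_1$ and $\delta_2'$ sit, violating the alternative "$\delta_1 \cap C = \emptyset$ or $\delta_2' \cap C = \emptyset$" for some component $C$. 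I expect the main obstacle to be the careful bookkeeping of which boundary components of the cut surfaces carry endpoints of $\delta_1$ versus $\delta_2'$ (and of $\alpha_2 \setminus \delta_2'$), and verifying rigorously that on a holed sphere "non-separating $\Rightarrow$ not recursive" and "recursive $\Rightarrow$ separating", so that Lemma \ref{key_property_of_chained_triples}(4)–(5) always apply; once that is in place, the Euler characteristic / planarity constraints close the argument.
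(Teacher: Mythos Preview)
Your overall strategy (cut along $\delta_2$, apply Lemma~\ref{key_property_of_chained_triples}) matches the paper's, but the execution contains a topological error that sends you down a phantom case. You assert that if $\delta_2$ is non-separating on $S_0^4$ then $S'\cong S_0^5$; this is false. A non-separating arc on a planar surface joins two distinct boundary components (as you yourself note), and cutting along such an arc \emph{merges} those two components into one, yielding $S_0^3$, not $S_0^5$. The formula $S_{g-1}^{p+1}$ from Lemma~\ref{cutting_surfaces_arc} would give negative genus here and simply does not apply. Hence in every case the component $S'$ containing $\delta_1$ is homeomorphic to $S_0^2$ or $S_0^3$, and your entire $S_0^5$ analysis addresses a situation that never occurs.

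There is a second, smaller gap: you dismiss $S'\cong S_0^2$ and $S'\cong S_0^3$ by appealing to essentiality of $\alpha_1$ on $S'$ (directly, or via Lemma~\ref{triple_03}), but Lemma~\ref{key_property_of_chained_triples}(2) only guarantees that $\mathcal{T}'$ is a chained triple on $S'$ when $m\geq 3$, and part~(1) of that lemma explicitly allows $\alpha_1$ to become boundary-parallel on $S'$ when $m=2$. The paper avoids this issue by discarding $\alpha_1$ altogether: it observes that $(\delta_1,\emptyset,\delta_2')$ is a length-$0$ chained triple on $S'$ satisfying condition~($*$)---part~(5) transports~($*$) to $S'$, and condition~($*$) itself forces $\delta_1$ and $\delta_2'$ to be non-isotopic---and then invokes Lemma~\ref{pre_induction}, which says no such triple exists on $S_0^2$ or $S_0^3$. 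That length-$0$ reduction is the missing idea.
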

\begin{proof}
Suppose, on the contrary, that $m \geq 2$. 
Then, a component $S'$ of $S_{0}^{4} \setminus \mathrm{Int}N(\delta_{b})$ contains $\delta_{1}$. 
Pick a connected component $\delta'_2$ of $\alpha_m \cap S'$ intersecting $\alpha_{m-1}$. 
Since $m \geq 2$, the arc $\delta_1$ is disjoint from $\delta'_2$. 
By Lemma \ref{key_property_of_chained_triples} (5), it follows that $\delta_1$ is disjoint from $\delta'_2$ and is not isotopic to $\delta'_2$. 
This implies that $S'$ has $(\delta_1, \emptyset, \delta'_2)$, a chained triple of length $0$ satisfying condition $(*)$.  
This is absurd because $S'$ is homeomorphic to $S_{0}^{2}$ or $S_{0}^{3}$ and does not contain such a chained triple by Lemma \ref{pre_induction}. 
Thus, we have $m \leq 1$. 
\end{proof}

\begin{lemma}
Let $p$ be an integer $\geq 5$. 
Suppose that $(\delta_{1}, ( \alpha_1, \ldots, \alpha_m ), \delta_{2})$ is a chained triple satisfying condition ($*$) on $S_{0}^{p}$. 
Then $m \leq p-3$. 
 \label{sep_triple_0p}
\end{lemma}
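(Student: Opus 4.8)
The plan is to argue by induction on $p \geq 5$, using Lemma \ref{induction_start} (the case $p=4$, giving $m \leq 1 = p - 3$) as the base case. Suppose $(\delta_1, (\alpha_1, \ldots, \alpha_m), \delta_2)$ is a chained triple satisfying condition $(*)$ on $S_0^p$ with $p \geq 5$, and assume the bound holds for all holed spheres with fewer than $p$ boundary components. We may assume $m \geq 2$ (otherwise the bound $m \leq p-3$ is trivial since $p \geq 5$). Cut $S_0^p$ along $N(\delta_2)$ and let $S'$ be the component containing $\delta_1$; choose a component $\delta_2'$ of $\alpha_m \cap S'$ meeting $\alpha_{m-1}$. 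By Lemma \ref{cutting_surfaces_arc}, $S'$ is a holed sphere $S_0^{p'}$ with $p' \leq p$, and more precisely $p' \leq p - 1$ when $\delta_2$ is separating, while $p' = p + 1$ is impossible here because a non-separating arc on a sphere does not exist —indeed every arc on a holed sphere is separating, so in fact $\delta_2$ is separating and $S_0^p \setminus \mathrm{Int}N(\delta_2) = S_0^{p_1} \sqcup S_0^{p_2}$ with $p_1 + p_2 = p + 1$, $p_i \geq 2$.

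Next I would identify which piece is $S'$ and bound its boundary count. Since $\delta_1 \subset S'$ is essential, $S'$ cannot be $S_0^2$; so if $S' = S_0^{p_1}$ then $p_1 \geq 3$, hence $p_2 \leq p + 1 - 3 = p - 2$, i.e. $p_1 = p + 1 - p_2 \geq 3$ gives no immediate improvement — instead I use that the \emph{other} piece $S_0^{p_2}$ contains $\delta_2' \cap (\text{its side})$? Let me instead bound $p'=p_1$ directly: because condition $(*)$ forces the boundary components of $S_0^p$ to split between the two endpoints of $\delta_2$-side and $\delta_1$-side, and at least one original boundary component carries an endpoint of $\delta_1$ (as $\delta_1$ is essential and $p \geq 5$), the complementary piece is nonempty, so $p' = p_1 \leq p - 1$. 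Now apply Lemma \ref{key_property_of_chained_triples}: parts (2) and (5) give that $\mathcal{T}' = (\delta_1, (\alpha_1, \ldots, \alpha_{m-1}), \delta_2')$ is a chained triple on $S'$ satisfying condition $(*)$ (using $m \geq 3$ for part (2); the case $m = 2$ reduces, as in Lemma \ref{induction_start}, to finding a length-$0$ chained triple with $(*)$ on $S' \in \{S_0^2, S_0^3\}$, contradicting Lemma \ref{pre_induction}, so $m \geq 3$ and $p' \geq 4$).

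By the inductive hypothesis applied to $S' = S_0^{p'}$ with $p' \leq p - 1$, we get $m - 1 \leq p' - 3 \leq p - 4$, hence $m \leq p - 3$, as desired. The main obstacle I anticipate is the bookkeeping for $p'$: I must show $S'$ can be chosen with $p' \leq p-1$ and $p' \geq 4$, which requires carefully combining (i) that $\delta_2$ is separating (automatic on a sphere), (ii) that $\delta_1$ lies on one side and is essential there, forcing that side to have at least three boundary curves once $m \geq 3$ (indeed $\delta_1, \alpha_1, \delta_2'$ already need room), and (iii) that condition $(*)$ keeps the endpoints of $\delta_1$ and $\delta_2'$ on disjoint sets of boundary components so the gluing that produced $S'$ genuinely consumed at least one original boundary curve into the $\delta_2'$-side. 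Handling the degenerate split $p_1 = 2$ (so $S' = S_0^2$ has no essential arc) as a contradiction, and the threshold $p' = 4$ via Lemma \ref{induction_start} rather than the inductive hypothesis, are the delicate points; everything else is routine surface cut-and-paste.
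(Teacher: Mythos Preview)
Your approach is essentially the same as the paper's: induct on $p$, cut along $\delta_2$, and invoke Lemma~\ref{key_property_of_chained_triples}(2),(5) to obtain a shorter chained triple with condition~$(*)$ on a holed sphere with fewer boundary components. The paper's proof is terser but follows exactly this outline.

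However, there is a factual error. It is \emph{not} true that every arc on a holed sphere is separating. An arc joining two distinct boundary components of $S_0^p$ is non-separating; cutting along it yields the connected surface $S_0^{p-1}$. (What \emph{is} true is that every \emph{recursive} arc on a planar surface is separating, and that $S_0^p$ admits no non-separating recursive arc, since such an arc would reduce genus.) Fortunately this does not break the argument: in the separating case $S_0^p \setminus \mathrm{Int}N(\delta_2) \cong S_0^{p_1} \sqcup S_0^{p_2}$ with each $p_i \leq p-1$, and in the non-separating case $S_0^p \setminus \mathrm{Int}N(\delta_2) \cong S_0^{p-1}$, so either way the component $S'$ satisfies $p' \leq p-1$, which is all you need. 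You should replace the incorrect sentence with this dichotomy.

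A minor expository point: you may assume $m \geq 3$ from the start, since $m \leq 2 \leq p-3$ is automatic for $p \geq 5$; this eliminates the somewhat muddled digression on the $m=2$ case (which as written seems to seek a contradiction where none is needed). With $m \geq 3$, Lemma~\ref{key_property_of_chained_triples}(2) applies directly, and then Lemmas~\ref{pre_induction}, \ref{induction_start}, and the inductive hypothesis (according as $p' \leq 3$, $p' = 4$, or $5 \leq p' \leq p-1$) give $m-1 \leq p'-3 \leq p-4$, as desired.
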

\begin{proof}
{\bf Case of $p=5$}. 
Suppose, on the contrary, that $m \geq 3$. 
Then, by Lemma \ref{key_property_of_chained_triples} (2) and (5), a connected component $S'$ of $S_{0}^{5} \setminus \mathrm{Int}N(\delta_2)$ contains a chained triple $\mathcal{T}' = (\delta_{1}, (\alpha_1, \ldots, \alpha_{m-1} ), \delta'_{2})$ satisfying condition ($*$). 
Hence, $S'$ contains a length $m-1 \geq 2$ chained triple satisfying condition ($*$). 
This is a contradiction by Lemmas \ref{cutting_surfaces_arc}, \label{pre_induction} and \ref{induction_start}. 
Thus, we have $m \leq 2$. 

{\bf Case of $p \geq 6$}. 
We may assume that $m \geq 3$. 
A similar argument to the $p=5$ case, together with the induction hypothesis, completes the proof. 
\end{proof}

Next, we compute the maximal length of chained triples on a one-holed torus. 

\begin{lemma}
Suppose that $(\delta_{1}, \{ \alpha_1, \ldots, \alpha_m \}, \delta_{2})$ is a chained triple on $S_{1}^{1}$. 
Then $m \leq 2$. 
 \label{triple_11}
\end{lemma}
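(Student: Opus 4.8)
The plan is to establish the stronger fact that $S_{1}^{1}$ supports no linear chain of length $\geq 3$ at all; since the middle entry of a chained triple of length $m$ is a linear chain of length $m$, this immediately yields $m\leq 2$. Concretely, I will show that any two non-isotopic essential simple closed curves on $S_{1}^{1}$ must intersect, i.e.\ that $\mathcal{C}(S_{1}^{1})$ has no edges.

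First I would check that every essential simple closed curve $\alpha$ on $S_{1}^{1}$ is non-separating: if it were separating, then $S_{1}^{1}\setminus\mathrm{Int}N(\alpha)$ would be a disjoint union of two surfaces, one bounded by $\partial S_{1}^{1}$ and a copy of $\alpha$ and the other bounded by a single copy of $\alpha$; essentiality of $\alpha$ forces each piece to have Euler characteristic $\leq -1$, so the total is $\leq -2$, contradicting $\chi_{1,0}^{1}=-1$. Hence $S_{1}^{1}\setminus\mathrm{Int}N(\alpha)\cong S_{0}^{3}$ by the usual surgery along a non-separating curve. Now suppose $\beta$ is an essential simple closed curve on $S_{1}^{1}$ disjoint from $\alpha$ with $\beta\not\simeq\alpha$. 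Then $\beta$ lies in $S_{1}^{1}\setminus\mathrm{Int}N(\alpha)\cong S_{0}^{3}$ and is still essential there: it bounds no disk, it is not isotopic to a copy of $\alpha$ (since $\beta\not\simeq\alpha$ in $S_{1}^{1}$), and it is not isotopic to $\partial S_{1}^{1}$ (since $\beta$ is essential in $S_{1}^{1}$). This contradicts the fact, used already in Lemma \ref{triple_03}, that $S_{0}^{3}$ has no essential simple closed curve.

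To finish, assume $m\geq 3$. In the linear chain $(\alpha_{1},\ldots,\alpha_{m})$ the curves $\alpha_{1}$ and $\alpha_{m}$ are non-consecutive, hence disjoint; and they are non-isotopic, since $\delta_{1}$ meets $\alpha_{1}$ with nonzero geometric intersection number while being disjoint from $\alpha_{m}$ (so $\alpha_{1}\simeq\alpha_{m}$ is impossible). This contradicts the previous paragraph, and therefore $m\leq 2$.

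I do not expect a real obstacle here: unlike the holed-sphere cases, no induction on complexity is involved. The only delicate point is verifying in the second paragraph that the cut-open curve $\beta$ survives as an essential curve in $S_{0}^{3}$ rather than being absorbed into one of the new boundary circles, and this is exactly where the hypothesis $\beta\not\simeq\alpha$ (which in turn comes from the chained-triple conditions on $\delta_{1}$) is used.
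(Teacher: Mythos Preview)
Your argument is correct. Every step checks out: essential curves on $S_{1}^{1}$ are non-separating, cutting along one yields $S_{0}^{3}$, and a second disjoint non-isotopic essential curve would survive as an essential curve in $S_{0}^{3}$, which is impossible. Using $\delta_{1}$ to certify $\alpha_{1}\not\simeq\alpha_{m}$ is a safe way to finish, though in fact the definition of a linear chain already forces the $\alpha_{i}$ to be pairwise non-isotopic, so that step could be shortened.

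The route, however, differs from the paper's. The paper stays within the arc-cutting framework used throughout the section: assuming $m\geq 3$, it cuts $S_{1}^{1}$ along the \emph{arc} $\delta_{2}$ and invokes Lemma~\ref{key_property_of_chained_triples}(2) to obtain a chained triple of length $m-1\geq 2$ on the resulting surface, which is an annulus $S_{0}^{2}$; since an annulus carries no essential closed curve, this is an immediate contradiction. Your proof instead cuts along a \emph{closed curve} and reduces to $S_{0}^{3}$, thereby proving the stronger and well-known fact that $\mathcal{C}(S_{1}^{1})$ has no edges. Your approach is more self-contained (it does not appeal to Lemma~\ref{key_property_of_chained_triples}) and yields a statement of independent interest, while the paper's proof is a one-line application of the general inductive machinery already set up and mirrors the pattern used in the surrounding lemmas.
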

\begin{proof}
Annuli do not have a chained triple of length $\geq 0$. 
So Lemma \ref{key_property_of_chained_triples} (2) immediately implies the assertion. 
\end{proof}

Next, we compute the maximal lengths of a few different chained triples on a two-holed torus. 

\begin{lemma}
Suppose that $(\delta_{1}, ( \alpha_1, \ldots, \alpha_m ), \delta_{2})$ is a chained triple on $S_{1}^{2}$ satisfying condition $(*)$. 
Then $m \leq 2$. 
 \label{star_trple_12}
\end{lemma}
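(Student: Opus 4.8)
The plan is to reuse the cut-and-induct scheme that handled the holed-sphere cases (Lemmas~\ref{pre_induction}--\ref{sep_triple_0p}): assume for contradiction that $m\geq 3$, cut $S_1^2$ along a regular neighbourhood of $\delta_2$, and pass to the length $(m-1)$ chained triple supplied by Lemma~\ref{key_property_of_chained_triples}. First I would note that, by the definition of a chained triple, $\delta_2$ is disjoint from $\delta_1$ and from each of $\alpha_1,\dots,\alpha_{m-1}$, and that $\delta_1\cup\alpha_1\cup\cdots\cup\alpha_{m-1}$ is connected ($\delta_1$ meets $\alpha_1$, and consecutive $\alpha_i$ meet); hence this union lies in a single component $S'$ of $S_1^2\setminus\mathrm{Int}\,N(\delta_2)$. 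By Lemma~\ref{cutting_surfaces_arc} applied with $(g,p)=(1,2)$, the only possibilities are $S'\cong S_0^3$ (when $\delta_2$ is non-separating) and $S'\cong S_0^2$ or $S'\cong S_1^1$ (when $\delta_2$ is separating, since $g_1+g_2=1$, $p_1+p_2=3$ with $p_i\geq 1$ and $g_i=0\Rightarrow p_i\geq 2$ forces the splitting $S_0^2\sqcup S_1^1$).

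Next, using $m\geq 3$, I would invoke Lemma~\ref{key_property_of_chained_triples}(2) to conclude that $\mathcal{T}'=(\delta_1,(\alpha_1,\dots,\alpha_{m-1}),\delta_2')$ is an honest chained triple on $S'$, where $\delta_2'$ is a component of $\alpha_m\cap S'$ meeting $\alpha_{m-1}$, and Lemma~\ref{key_property_of_chained_triples}(5) to conclude that $\mathcal{T}'$ still satisfies condition~$(*)$ on $S'$. Its length is $m-1\geq 2$, so in particular $S'$ contains the essential simple closed curve $\alpha_1$. This immediately disposes of $S'\cong S_0^3$ and $S'\cong S_0^2$, since a pair of pants and an annulus carry no essential simple closed curve; alternatively one may cite Lemma~\ref{triple_03} together with the fact that an annulus admits no chained triple.

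The surviving case $S'\cong S_1^1$ is the only substantive point of the proof: a crude complexity count does not finish it off, because Lemma~\ref{triple_11} bounds the length of a chained triple on $S_1^1$ only by $2$, which is consistent with $m-1=2$. This is precisely where condition~$(*)$ is needed. The key observation is that $S_1^1$ has a single boundary component $C$; since $\delta_1$ and $\delta_2'$ are properly embedded arcs on $S'$, each has both endpoints on $C$, so $\delta_1\cap C\neq\emptyset$ and $\delta_2'\cap C\neq\emptyset$, contradicting condition~$(*)$ for $\mathcal{T}'$. Having excluded every possibility for $S'$, the assumption $m\geq 3$ is untenable, and therefore $m\leq 2$. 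I expect the whole argument to be short, the one thing to get right being the recognition that the one-holed torus is the exceptional piece left over after the obvious reductions, and the verification that condition~$(*)$ is exactly what eliminates it.
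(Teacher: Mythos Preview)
Your argument is correct but follows a different route from the paper's. The paper first observes (tersely, without spelling out the reason) that condition~$(*)$ on $S_1^2$ forces at least one of $\delta_1,\delta_2$ to be non-separating: a separating arc on $S_1^2$ is recursive, and by~$(*)$ two disjoint separating arcs would have to be recursive on distinct boundary components, but then one of them would lie in the annulus piece cut off by the other and hence fail to be essential. Having arranged, by the evident symmetry of chained triples, that $\delta_2$ is non-separating, the paper cuts along it to land directly in $S_0^3$ and finishes with Lemma~\ref{triple_03}. You instead cut along $\delta_2$ unconditionally and handle the separating case by propagating condition~$(*)$ to the cut surface via Lemma~\ref{key_property_of_chained_triples}(5), then noting that $(*)$ is automatically violated on the one-boundary piece $S_1^1$. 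Your approach avoids the unproved preliminary observation and stays entirely within the cut-and-induct machinery already set up; the paper's version is marginally shorter once one accepts the non-separating claim, but leaves that claim to the reader.
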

\begin{proof}
Now, let us assume that $m \geq 3$ and derive a contradiction. 
It follows that either $\delta_1$ or $\delta_2$ is non-separating. 
Since $\delta_2$ is non-separating, $S'=S_{1}^{2} \setminus \mathrm{Int}N(\delta_2)$ is homeomorphic to $S_{0}^{3}$. 
Since $m \geq 3$, $S'$ contains a chained triple of length $m-1$ by Lemma \ref{key_property_of_chained_triples} (2). 
By Lemma \ref{triple_03}, we have $m-1 \leq 0$, which is a contradiction. 
Thus, $m \leq 2$. 
\end{proof}

\begin{lemma}
Suppose that $(\delta_{1}, (\alpha_1, \ldots, \alpha_m ), \delta_{2})$ is a chained triple on $S_{1}^{2}$ such that $\delta_2$ is recursive. 
Then $m \leq 2$. 
 \label{R_trple_12}
\end{lemma}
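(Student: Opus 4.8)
The plan is to mimic the structure of the proof of Lemma~\ref{star_trple_12}, but now handle the case distinction on whether $\delta_2$ joins a single boundary component to itself (the recursive case). Assume for contradiction that $m \geq 3$. First I would cut $S_{1}^{2}$ along $\delta_2$. By Lemma~\ref{cutting_surfaces_arc} applied to the one-holed torus viewed with its boundary, a recursive arc $\delta_2$ on $S_{1}^{2}$ is either non-separating or separating; since $\delta_2$ is recursive, cutting along it yields either $S_{0}^{4}$ (if $\delta_2$ is non-separating, recursive) or a disjoint union in which the component $S'$ containing $\delta_1$ is a holed sphere or a holed torus of strictly smaller complexity. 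In every case, the component $S'$ of $S_{1}^{2}\setminus\mathrm{Int}N(\delta_2)$ containing $\delta_1$ has Euler characteristic strictly greater than that of $S_{1}^{2}$, so it is one of $S_{0}^{3}$, $S_{0}^{4}$, $S_{1}^{1}$, or $S_{0}^{2}$.

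Next, since $m \geq 3$, Lemma~\ref{key_property_of_chained_triples}(2) tells us that $S'$ contains a chained triple $\mathcal{T}' = (\delta_1, (\alpha_1,\ldots,\alpha_{m-1}), \delta'_2)$ of length $m-1 \geq 2$. I would then invoke the maximal-length results already established: Lemma~\ref{triple_03} kills the $S_{0}^{3}$ case (forces $m-1 \leq 0$), Lemma~\ref{triple_0p} with $p=4$ gives $m-1 \leq 2$ on $S_{0}^{4}$, and Lemma~\ref{triple_11} gives $m-1 \leq 2$ on $S_{1}^{1}$. The cases $S_{0}^{3}$ and $S_{0}^{2}$ immediately contradict $m-1 \geq 2$. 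The remaining cases, $S' \cong S_{0}^{4}$ and $S' \cong S_{1}^{1}$, force $m = 3$ exactly, so I must rule those out by a finer argument. Here I would use Lemma~\ref{key_property_of_chained_triples}(4): since $\delta_2$ is recursive on $S_{1}^{2}$ and (in the subcase that makes $S'$ one of the ``tight'' surfaces) $\delta_1$ is either separating or non-separating and not recursive, part~(4) shows $\delta'_2$ is recursive on $S'$. A chained triple of length exactly~$2$ with one recursive side on $S_{0}^{4}$ or $S_{1}^{1}$ then needs to be excluded directly, either by a short combinatorial/Euler-characteristic count or by cutting once more along $\delta'_2$ and appealing to Lemma~\ref{triple_03} or to the non-existence of chained triples of length~$\geq 0$ on annuli.

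The main obstacle I anticipate is the bookkeeping in the subcase where $\delta_1$ \emph{is} recursive on $S_{1}^{2}$, so that Lemma~\ref{key_property_of_chained_triples}(4) does not directly apply. In that situation I would instead apply Lemma~\ref{key_property_of_chained_triples}(3) to conclude $\delta_1$ remains recursive on $S'$, and then I would cut $S'$ along $\delta_1$ rather than along $\delta'_2$, reducing once more to a holed sphere of small complexity where Lemma~\ref{triple_03} or Lemma~\ref{triple_0p} gives the contradiction; alternatively, since $\delta_1$ recursive on a one-holed torus $S_{1}^{1}$ must be separating there (as $S_{1}^{1}$ has a single boundary component and any recursive arc separating it off a disk or a subsurface), one gets $S' \cong S_{0}^{3}$ and Lemma~\ref{triple_03} finishes it. So the proof is a case analysis driven entirely by cutting along $\delta_2$ (and occasionally once more), with the recursive/non-recursive dichotomy on $\delta_1$ and $\delta_2$ handled by parts~(3) and~(4) of Lemma~\ref{key_property_of_chained_triples}; the only real work is verifying that in each surviving case the length is forced down to at most~$2$.
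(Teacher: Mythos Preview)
Your plan diverges from the paper's: you cut along $\delta_2$, whereas the paper cuts along $\delta_1$. The paper's argument is essentially three lines: assuming $m\ge 3$, it asserts that $\delta_1$ may be taken non-separating (the point being that if $\delta_1$ were separating it would itself be recursive, and two disjoint essential recursive arcs on $S_1^2$ cannot both be separating, so after relabelling by the symmetry of chained triples the hypothesis persists and $\delta_1$ is non-separating); then $S_1^2\setminus\mathrm{Int}N(\delta_1)\cong S_0^3$ carries a chained triple of length $m-1\ge 2$ by Lemma~\ref{key_property_of_chained_triples}(2), contradicting Lemma~\ref{triple_03}.

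Your route has concrete errors. First, a harmless one: when $\delta_2$ is recursive and non-separating, cutting $S_1^2$ yields $S_0^3$, not $S_0^4$ (the Euler characteristic rises by $1$ to $-1$, and a recursive cut increases the boundary count by $1$, to $3$); this case is then immediate by Lemma~\ref{triple_03}. The real gap is in the separating case, where $S'\cong S_1^1$ and you must exclude $m=3$. Your proposed fixes there break down: every essential arc on $S_1^1$ is \emph{non}-separating, and cutting along one yields the annulus $S_0^2$, not $S_0^3$, so your ``alternative'' is simply false; invoking part~(4) of Lemma~\ref{key_property_of_chained_triples} to learn that $\delta'_2$ is recursive on $S_1^1$ is vacuous, since $S_1^1$ has only one boundary component; and you cannot reapply part~(2) of that lemma to the length-$2$ triple on $S_1^1$, since it requires length $\ge 3$. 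The clean salvage from your position is to return to $S_1^2$ and argue that $\delta_1$ must be non-separating there (if $\delta_1$ is non-recursive it lies in the annular component of $S_1^2\setminus\mathrm{Int}N(\delta_2)$, which forces $\alpha_1$ to be isotopic to the boundary of the $S_1^1$ piece and hence disjoint from $\alpha_2$; if $\delta_1$ is separating one checks directly that $\delta_2$ is then non-separating, contradicting the current case), and then cut along $\delta_1$ to reach $S_0^3$ --- which is exactly the paper's move.
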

\begin{proof} 
To the contrary, suppose that $m \geq 3$. 
Since $\delta_2$ is recursive, $\delta_1$ must be non-separating on $S_{1}^{2}$. 
Then $S_{0}^{3} \cong S_{1}^{2} \setminus \mathrm{Int}N(\delta_1)$ contains a chained triple of length $m-1 \geq 2$ by Lemma \ref{key_property_of_chained_triples} (2), this is a contradiction. 
Thus we have $m \leq 2$. 
\end{proof}

Now, let us compute the maximal lengths of chained triples on holed tori. 

\begin{lemma}
Let $p$ be an integer $\geq 3$. 
Suppose that $(\delta_{1}, (\alpha_1, \ldots, \alpha_m ), \delta_{2})$ is a chained triple on $S_{1}^{p}$. 
Then $m \leq p$. 
 \label{triple_1p}
\end{lemma}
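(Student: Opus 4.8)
The plan is to prove the lemma by induction on $p\geq 3$, following the reduction scheme already used for Lemmas \ref{triple_0p}--\ref{R_trple_12}. We may assume $m\geq 3$, since otherwise $m\leq 2<p$. The basic move is to cut $S_{1}^{p}$ open along one of the two end arcs of the chained triple; by Lemma \ref{key_property_of_chained_triples}(2) this yields, on each relevant component of the cut surface, a chained triple of length $m-1$, which we then hand to an earlier lemma or to the induction hypothesis. Which end arc we cut along, and how the cut surface decomposes, is dictated by Lemma \ref{cutting_surfaces_arc}.

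First I would handle the case in which one of the end arcs, say $\delta_{2}$, is non-separating on $S_{1}^{p}$; the case where $\delta_{1}$ is non-separating reduces to this one by running the argument on the reversed chained triple $(\delta_{2},(\alpha_{m},\ldots,\alpha_{1}),\delta_{1})$. By Lemma \ref{cutting_surfaces_arc} the surface $S':=S_{1}^{p}\setminus\mathrm{Int}N(\delta_{2})$ is connected and homeomorphic to $S_{0}^{p+1}$, so it carries the whole reduced triple $(\delta_{1},(\alpha_{1},\ldots,\alpha_{m-1}),\delta'_{2})$ supplied by Lemma \ref{key_property_of_chained_triples}(2). Since $p+1\geq 4$, Lemma \ref{triple_0p} gives $m-1\leq(p+1)-2$, hence $m\leq p$.

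It remains to treat the case where both $\delta_{1}$ and $\delta_{2}$ are separating. Then $S_{1}^{p}\setminus\mathrm{Int}N(\delta_{2})\cong S_{g_{1}}^{p_{1}}\sqcup S_{g_{2}}^{p_{2}}$ with $g_{1}+g_{2}=1$ and $p_{1}+p_{2}=p+1$, and the component $S'$ containing $\delta_{1}$ carries a length-$(m-1)$ chained triple $\mathcal{T}'$ by Lemma \ref{key_property_of_chained_triples}(2). If $S'$ has genus $0$ then $S'\cong S_{0}^{p_{1}}$ with $2\leq p_{1}\leq p$: the value $p_{1}=2$ is impossible since an annulus carries no essential closed curve, the value $p_{1}=3$ is excluded by Lemma \ref{triple_03}, and for $p_{1}\geq 4$ Lemma \ref{triple_0p} gives $m-1\leq p_{1}-2\leq p-2$. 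If $S'$ has genus $1$ then the complementary piece has genus $0$, so $p_{2}\geq 2$ and thus $p_{1}\leq p-1$: for $p_{1}=1$ we invoke Lemma \ref{triple_11}, for $p_{1}=2$ we invoke Lemma \ref{R_trple_12} (legitimately, because $\delta_{2}$ is separating here, so $\delta'_{2}$ is recursive on $S'$ by Lemma \ref{key_property_of_chained_triples}(4)), and for $3\leq p_{1}\leq p-1$ we apply the induction hypothesis; in every subcase $m\leq p$. As the genus-$1$ subcase with $p_{1}\geq 3$ cannot arise when $p=3$, the base case is subsumed and the induction closes.

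The step I expect to be the main obstacle is the genus-$1$, low-complexity piece $S_{1}^{2}$, for which there is no unconditional bound on the length of a chained triple: one must first upgrade $\mathcal{T}'$ using the geometry of the cut. The point is that specializing to separating $\delta_{2}$ is exactly what activates Lemma \ref{key_property_of_chained_triples}(4) and makes $\delta'_{2}$ recursive, after which Lemma \ref{R_trple_12} finishes; correspondingly, a non-separating $\delta_{1}$ is not reduced directly but via the ``reverse the triple'' device. A smaller but essential point to verify is that the induction is well-founded, i.e.\ that a genus-$1$ component of the cut always has strictly fewer than $p$ boundary components, which holds because its genus-$0$ complement is forced to have at least two.
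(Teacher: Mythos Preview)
Your proposal is correct and follows essentially the same approach as the paper: induction on $p$, cutting along an end arc via Lemma~\ref{key_property_of_chained_triples}(2), and feeding the shorter triple on the cut surface to Lemmas~\ref{triple_03}, \ref{triple_0p}, \ref{triple_11}, \ref{R_trple_12}, or the induction hypothesis. The organizational differences are minor---you fold the base case into the induction and split cases as ``some end arc non-separating'' versus ``both separating'' (using the reversal trick), whereas the paper splits only on $\delta_2$; you are also more explicit than the paper about the genus-$1$ subcases $p_1\in\{1,2\}$ in the inductive step, correctly invoking Lemma~\ref{key_property_of_chained_triples}(4) to get $\delta'_2$ recursive before applying Lemma~\ref{R_trple_12}.
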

\begin{proof}
Proof by induction on $p$. 

{\bf Case of $p = 3$}. 
Assuming $m \geq p = 3$, we will prove that $m = 3$. 
First, we treat the case where $\delta_2$ is non-separating. 
Since $m \geq 3$, the surface $S' = S_{1}^{3} \setminus \mathrm{Int}N(\delta_{2})$ contains a chained triple of length $m-1$ by Lemma \ref{key_property_of_chained_triples} (2), and is homeomorphic to $S_{0}^{p'}$, where $p' \leq 4$.  
Hence, by Lemmas \ref{triple_03} and \ref{triple_0p} we have $m-1 \leq 2$. 
Therefore $m = 3$. 
Next, we treat the case where $\delta_2$ is separating. 
By Lemma \ref{key_property_of_chained_triples} (2) and (4), there is a connected component $S'$ of $S_{1}^{3} \setminus \mathrm{Int}N(\delta_{2})$ that contains a chained triple $\mathcal{T}' = (\delta_{1}, ( \alpha_1, \ldots, \alpha_{m-1} ), \delta'_{2})$ such that $\delta'_2$ is recursive on $S'$. 
Since a sphere with at most three boundary components does not have a chained triple of positive length, $S'$ must be a torus with at most two boundary components. 
Hence, Lemma \ref{R_trple_12} implies $m-1 \leq 2$ i.e., $m \leq 3$. 

{\bf Case of $p \geq 4$}. 
We may assume that $m \geq p > 3$. 
First, we treat the case where $\delta_2$ is non-separating. 
Since $m \geq 3$, the surface $S' = S_{1}^{p} \setminus \mathrm{Int}N(\delta_{2})$ contains a chained triple of length $m-1$ by Lemma \ref{key_property_of_chained_triples} (2), and is homeomorphic to $S_{0}^{p'}$, where $p' \leq p+1$. 
Hence, by Lemmas \ref{triple_03} and \ref{triple_0p} we have $m-1 \leq (p+1)-2$, namely, $m \leq p$. 
Next, we treat the case where $\delta_2$ is separating. 
Then, a connected component $S'$ of $S_{1}^{p} \setminus \mathrm{Int}N(\delta_{2})$ contains a chained triple of length $m-1$ by Lemma \ref{key_property_of_chained_triples} (2). 
When $S'$ is a sphere with a boundary, an argument similar to the case where $\delta_2$ is non-separating implies $m \leq p - 1$. 
When $S'$ is a torus with a boundary, the induction hypothesis implies $m \leq p$. 
\end{proof}

\begin{remark}
Hyperbolic extensions of surfaces preserves chained triples. 
Hence, Lemma \ref{triple_1p} also implies that the maximal length of chained triples on $S_{1}^{2}$ is at most $3$. 
 \label{hyp_ext_ch_tr}
\end{remark}

Now, let us compute the maximal lengths of chained triples on higher genera surfaces. 

\begin{lemma}
Let $g$ be an integer $\geq 2$ and $p$ be an integer $\geq 1$. 
Suppose that $(\delta_{1}, ( \alpha_1, \ldots, \alpha_m ), \delta_{2})$ is a chained triple on $S_{g}^{p}$. 
Then \begin{eqnarray*}
m \leq \left\{ \begin{array}{ll}
2g+p-1 & (p \leq 2) \\
2g+p-2 & (p \geq 3) \\
\end{array} \right.
\end{eqnarray*}
Furthermore, if $p=2$ and $\delta_2$ is recursive, then $m \leq 2g$. 
 \label{triple_gp}
\end{lemma}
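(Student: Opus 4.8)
The plan is to argue by induction on $2g+p$. The base of the induction consists of the surfaces of genus at most $1$, for which the bounds we need are already recorded in Lemmas \ref{triple_03}, \ref{triple_0p}, \ref{triple_11}, \ref{R_trple_12}, and \ref{triple_1p}, together with Remark \ref{hyp_ext_ch_tr}. Since every bound asserted in the statement is at least $3$ when $g\geq 2$, we may assume $m\geq 3$; otherwise there is nothing to prove. The one engine of the induction is Lemma \ref{key_property_of_chained_triples}: the arcs $\delta_1,\alpha_1,\dots,\alpha_{m-1}$ are all disjoint from $\delta_2$, hence lie in a single component $S'$ of $S_{g}^{p}\setminus\mathrm{Int}N(\delta_2)$, and part (2) tells us that $\mathcal{T}'=(\delta_1,(\alpha_1,\dots,\alpha_{m-1}),\delta'_2)$ is a chained triple of length $m-1$ on $S'$, while parts (3) and (4) tell us that, whenever $\delta_2$ is separating, one of the two extreme arcs of $\mathcal{T}'$ is recursive on $S'$. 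Throughout, we use the reversal symmetry $(\delta_1,(\alpha_1,\dots,\alpha_m),\delta_2)\mapsto(\delta_2,(\alpha_m,\dots,\alpha_1),\delta_1)$ of a chained triple, which lets us trade hypotheses on $\delta_2$ for hypotheses on $\delta_1$.

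Assume first that $\delta_2$ is non-separating. Then Lemma \ref{cutting_surfaces_arc} gives $S'\cong S_{g-1}^{p+1}$, and since $2(g-1)+(p+1)<2g+p$ the length $m-1$ of $\mathcal{T}'$ is bounded by the inductive hypothesis when $g\geq 3$, and by Lemma \ref{triple_1p} or Remark \ref{hyp_ext_ch_tr} when $g=2$ (so that $S'\cong S_1^{p+1}$); a one-line computation in each of the ranges $p=1$, $p=2$, $p\geq 3$ shows that the resulting bound on $m$ is exactly the claimed one. Assume now that $\delta_2$ is separating; as noted right after the definition of a recursive arc, $\delta_2$ is then recursive, and Lemma \ref{cutting_surfaces_arc} presents the component $S'$ carrying $\mathcal{T}'$ as $S_{g_1}^{p_1}$ with $g_1+g_2=g$, $p_1+p_2=p+1$, $g_i\geq 0$, and $p_i\geq 2$ whenever $g_i=0$, and moreover $\mathcal{T}'$ has a recursive extreme arc on $S'$. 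I would now split on the value of $g_1$: if $g_1=0$, then $p_1\geq 4$ because spheres with at most three boundary components carry no chained triple of positive length, and Lemma \ref{triple_0p} gives $m-1\leq p_1-2$; if $g_1=1$, then Lemmas \ref{triple_11}, \ref{R_trple_12}, \ref{triple_1p} apply according as $p_1$ equals $1$, $2$, or is at least $3$, the recursive extreme arc of $\mathcal{T}'$ being what makes Lemma \ref{R_trple_12} applicable when $p_1=2$; and if $g_1\geq 2$, then the inductive hypothesis applies since $2g_1+p_1\leq 2g+p-1<2g+p$ (indeed either $g_1<g$, or $g_1=g$ forces $g_2=0$, hence $p_2\geq 2$ and $p_1\leq p-1$). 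In each of these sub-cases one checks by hand that $m$ does not exceed the asserted bound.

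The computations just described are routine except at one genuinely tight configuration, which I expect to be the main obstacle: when $\delta_2$ is separating with $g_2=0$ and $p_2=2$, the component $S'$ carrying $\mathcal{T}'$ is $S_{g}^{p-1}$, so the genus does not drop, and the crude estimate $m-1\leq 2g+(p-1)-1$ is one too large. Here one has to use that $\mathcal{T}'$ carries a recursive extreme arc on $S'$: if $p-1\geq 3$ the inductive hypothesis already yields the sharper $m-1\leq 2g+(p-1)-2$, while if $p=3$ (so $p-1=2$) one invokes precisely the ``furthermore'' clause of the inductive hypothesis applied to $S_{g}^{2}$, namely $m-1\leq 2g$, to conclude $m\leq 2g+1=2g+p-2$. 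Running the same dichotomy once more with $p=2$ and feeding in the ``furthermore'' clause at this same spot yields $m\leq 2g$ whenever $\delta_2$ is recursive, which is the final assertion of the lemma. The only subtlety that requires real care is keeping track of which of the two extreme arcs of $\mathcal{T}'$ is recursive after each cut, and this is exactly what Lemma \ref{key_property_of_chained_triples} (3)--(4) together with the reversal symmetry are for.
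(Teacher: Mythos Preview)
Your overall strategy—induct on $2g+p$, cut along $\delta_2$, and invoke Lemma~\ref{key_property_of_chained_triples}—is exactly the paper's, and most of your case analysis is sound. There is, however, one genuine gap, and it lies precisely in the ``furthermore'' clause you flag as the crux.

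Consider $p=2$ with $\delta_2$ recursive and \emph{separating}. Among the splittings $S_g^2\setminus\mathrm{Int}N(\delta_2)\cong S_{g_1}^{p_1}\sqcup S_{g_2}^{p_2}$ is the one with $(g_2,p_2)=(0,2)$, i.e.\ $\delta_2$ bounds an annulus and the component $S'$ carrying $\mathcal{T}'$ is $S_g^{1}$. Here $2g_1+p_1=2g+1<2g+2$, so your induction applies, but it only yields $m-1\le 2g$, hence $m\le 2g+1$—one too large. Your phrase ``feeding in the furthermore clause at this same spot'' cannot help: there is no furthermore clause for $p'=1$, and $\delta'_2$ being recursive on $S_g^1$ is vacuous. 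This is not a cosmetic omission: the furthermore clause for $(g,2)$ is exactly what you need later to close the $p=3$ case (both in your separating sub-case $S'\cong S_g^2$ and in the non-separating non-recursive sub-case below), so the gap propagates.

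The paper resolves this with an extra argument you are missing: if $\delta_2$ bounds an annulus on $S_g^2$, one first shows $\delta_1$ must be recursive (any non-recursive arc disjoint from $\delta_2$ lies in the annulus, and $\alpha_1$ would then have to meet $\delta_2$), then uses the reversal symmetry to cut along $\delta_1$ instead. Either $\delta_1$ is non-separating (already handled), or $\delta_1$ is separating but cannot bound an annulus (since $\delta_1\not\simeq\delta_2$), so the annulus configuration is eliminated and the remaining piece has genus $\le g-1$.

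A secondary issue: in your non-separating case you write ``Lemma~\ref{cutting_surfaces_arc} gives $S'\cong S_{g-1}^{p+1}$'', but that holds only when $\delta_2$ is recursive. If $p\ge 2$ and $\delta_2$ joins two distinct boundary components, then $S'\cong S_g^{p-1}$. For $p\in\{2,3\}$ this again lands you on $S_g^1$ or $S_g^2$, and the latter needs the furthermore clause (via Lemma~\ref{key_property_of_chained_triples}(4), $\delta'_2$ is recursive there). The paper's own case analysis is aware of this possibility even though Lemma~\ref{cutting_surfaces_arc} does not state it.
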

\begin{proof}
We give a proof by induction. 
Assume that $m \leq 3$. 

{\bf Case of $(g, p)=(2, 1)$}. 
In the case where $\delta_2$ is non-separating, $S_{2}^{1} \setminus \mathrm{Int}N(\delta_2)$ is homeomorphic to $S_{1}^{2}$ and contains a chained triple of length $m-1$ by Lemma \ref{key_property_of_chained_triples} (2). 
By Remark \ref{hyp_ext_ch_tr}, we have $m-1 \leq 3$ i.e., $m \leq 4 = 2g + p - 1$. 
In the case where $\delta_2$ is separating, a connected component of $S_{2}^{1} \setminus \mathrm{Int}N(\delta_2)$ is homeomorphic to $S_{1}^{1}$ and contains a chained triple of length $m-1$. 
By Lemma \ref{triple_11}, we have $m-1 \leq 2$ i.e., $m \leq 3 < 2g+p-1$. 

{\bf Case of $g \geq 2$, $p=2$ and $\delta_2$ is recursive}. 
In order to prove the assertion we further divide the proof into two cases: (i) $\delta_2$ is non-separating and (ii) $\delta_2$ is separating. 

(i) Suppose that $\delta_2$ is non-separating. 
Then $S_{g}^{2} \setminus \mathrm{Int}N(\delta_2)$ is homeomorphic to $S_{g-1}^{3}$ and contains a chained triple of length $m-1$ by Lemma \ref{key_property_of_chained_triples} (2). 
By Lemma \ref{triple_1p} and the induction hypothesis, we have $m - 1 \leq 2(g-1) + 3 -2$ i.e., $m \leq 2g$. 

(ii) Suppose that $\delta_2$ is separating. 
We claim that either $\delta_1$ or $\delta_2$ does not bound an annulus. 
To see this, assume that $\delta_2$ bounds an annulus, i.e., $S_{g}^{2} \setminus \mathrm{Int}N(\delta_2) \cong S_{g}^{1} \cup S_{0}^{2}$. 
Since any arc that is not recursive and does not intersect $\delta_2$ should be contained in the annulus and since $\alpha_m$ must intersect such an arc, the inequality $m \geq 3$ implies that $\delta_1$ is not such an arc; $\delta_1$ should be recursive.  
Moreover, by (i) we may assume that $\delta_1$ is separating. 
Then $\delta_1$ does not bound an annulus, because $\delta_1$ and $\delta_2$ are not isotopic and that $\delta_1$ is boundary parallel on $S_{g}^{2} \setminus \mathrm{Int}N(\delta_2)$ $(\cong S_{g}^{1})$. 
Hence, either $\delta_1$ or $\delta_2$ does not bound an annulus. 
So we may assume that $\delta_2$ does not bound an annulus. 
Pick a connected component $S'$ of $S_{g}^{2} \setminus \mathrm{Int}N(\delta_2)$ which contains a chained triple $(\delta_1, (\alpha_1, \ldots, \alpha_{m-1}), \delta_2')$ obtained from Lemma \ref{key_property_of_chained_triples} (2). 
Since $\delta_2$ does not bound an annulus, the component $S'$ is homeomorphic to a surface of genus $\leq g-1$ with at most two boundary components. 
Moreover, Lemma \ref{key_property_of_chained_triples} (2) implies that the arc $\delta'_2$ derived from $\alpha_m$ is recursive, because $\delta_2$ is separating. 
In the case where the number of boundary components of $S'$ is $2$, by Lemma \ref{R_trple_12} and the induction hypothesis we have $m -1 \leq 2(g-1)$, i.e., $m \leq 2g - 1$. 
Since a hyperbolic extension of a surface preserves chained triple, the case where the number of boundary components of $S'$ is $1$ can be reduced to the case where the number of boundary components of $S'$ is $2$. 
Thus we have $m \leq 2g-1$. 

{\bf Case of $(g,p)=(2,2)$}. 
By Lemma \ref{key_property_of_chained_triples} (2), a connected component $S'$ of $S_2^2 \setminus \mathrm{Int}N(\delta_2)$ contains a chained triple of length $m-1$. 
Then $S'$ is homeomorphic to $S_{2}^{1}$; otherwise the genus of $S'$ is at most $1$ and the number of boundary components of $S'$ is at most $3$. 
In both cases we have $m -1 \leq 4$ by Lemmas \ref{triple_03}, \ref{triple_1p} and the assertion in the case where $(g,p)=(2, 1)$. 
Hence $m \leq 5 = 2g+p-1$. 

{\bf Case of $g \geq 3$ and $p=1$}. 
In the case where $\delta_2$ is non-separating, $S_{g}^{1} \setminus \mathrm{Int}N(\delta_2)$ is homeomorphic to $S_{g-1}^{2}$ and contains a chained triple of length $m-1$. 
By the induction hypothesis, we have $m-1 \leq 2(g-1)+2-1$ i.e., $m \leq 2g = 2g + p -1$.  
In the case where $\delta_2$ is separating, a connected component of $S_{g}^{1} \setminus \mathrm{Int}N(\delta_2)$ is homeomorphic to $S_{g'}^{1}$ ($g' \leq g-1$) and contains a chained triple of length $m-1$. 
The induction hypothesis gives $m-1 \leq 2(g-1)$, i.e., $m \leq 2g-1 < 2g+p-1$. 

{\bf Case of $g \geq 3$ and $p=2$}. 
A connected component $S'$ of $S_g^2 \setminus \mathrm{Int}N(\delta_2)$ contains a chained triple of length $m-1$. 
Then either $S'$ is homeomorphic to $S_{g}^{1}$ or the genus of $S'$ is at most $g-1$ and the number of boundary components of $S'$ is at most $3$. 
By the induction hypothesis, we have $m -1 \leq 2g$. 
Hence, $m \leq 2g+1 = 2g+p-1$. 

{\bf Case of $g \geq 2$ and $p \geq 3$}. 
A connected component $S'$ of $S_g^p \setminus \mathrm{Int}N(\delta_2)$ contains a chained triple of $(\delta_1, (\alpha_1, \ldots, \alpha_{m-1}), \delta_2')$. 
Suppose that $\delta_2$ is separating. 
Then $\delta_2'$ is recursive. 
When the genus of $S'$ is $g$, the induction hypothesis gives $m-1 \leq 2g + (p-1) -2$, i.e., $m \leq 2g + p - 2$. 
When the genus of $S'$ is at most $g-1$, the induction hypothesis gives $m - 1 \leq 2(g-1) + (p+1) - 2$, i.e., $m \leq 2g + p -2$. 
Next, let us consider the case where $\delta_2$ is non-separating. 
Then, $S'$ is homeomorphic to $S_{g-1}^{p+1}$. 
The induction hypothesis gives $m-1 \leq 2(g-1)+(p+1)-2$. 
Thus, the assertion holds. 
\end{proof}

We will also use the following sequence on surfaces. 

\begin{definition}
A quadruple $(\delta_1, (\alpha_1, \ldots, \alpha_m), \delta_2, \delta_3)$ of arcs $\delta_1$, $ \delta_2$ and $\delta_3$, and a sequence $(\alpha_1, \ldots, \alpha_m)$ of closed curves on a surface $S$ is said to be {\it $Y$-chained} if the quadruple satisfies the following conditions: 
\begin{enumerate}
 \item[$\bullet$] $(\delta_1, (\alpha_1, \ldots, \alpha_m), \delta_2)$ is a chained triple on $S$, 
 \item[$\bullet$] $(\delta_1, (\alpha_1, \ldots, \alpha_m), \delta_3)$ is a chained triple on $S$, 
 \item[$\bullet$] $\delta_2, \delta_3$ are disjoint and non-isotopic, and 
 \item[$\bullet$] there is a boundary component $C$ of $S$ such that $\partial \delta_2 \cup \partial \delta_3 \subset C$. 
\end{enumerate}
\end{definition}

We prepare a lemma for $Y$-chained quadruples, which is similar to Lemma \ref{key_property_of_chained_triples}. 

\begin{lemma}
Suppose $m \geq 3$. 
Suppose also that $(\delta_1, (\alpha_1, \ldots, \alpha_m), \delta_2, \delta_3)$ is a $Y$-chained quadruple on a surface $S$. 
Then, there is an arc $\delta_1'$ on $S \setminus \mathrm{Int}N(\delta_1)$ derived from $\alpha_1$ such that $(\delta_1', (\alpha_2, \ldots, \alpha_m), \delta_2, \delta_3)$ is $Y$-chained quadruple on $S \setminus \mathrm{Int}N(\delta_1)$. 
 \label{Y_lemma}
\end{lemma}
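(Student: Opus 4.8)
\textbf{Proof proposal for Lemma \ref{Y_lemma}.}

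The plan is to cut $S$ along a neighborhood of $\delta_1$ and track what happens to the closed curves $\alpha_2,\dots,\alpha_m$ and to the arcs $\delta_2,\delta_3$. First I would observe that, since $(\delta_1,(\alpha_1,\dots,\alpha_m),\delta_2)$ is a chained triple with $m\geq 3$, Lemma \ref{key_property_of_chained_triples} (with the roles of $\delta_1$ and $\delta_2$ interchanged, i.e.\ cutting along $\delta_1$ rather than $\delta_2$) applies: passing to the component $S'$ of $S\setminus\mathrm{Int}N(\delta_1)$ that contains $\alpha_2$, we obtain an arc $\delta_1'$ derived from $\alpha_1$ (a component of $\alpha_1\cap S'$ meeting $\alpha_2$) such that $(\delta_1',(\alpha_2,\dots,\alpha_m),\delta_2)$ is a chained triple on $S'$; part (2) of that lemma is exactly what gives this. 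By the same token, applied to the second chained triple $(\delta_1,(\alpha_1,\dots,\alpha_m),\delta_3)$, we get that $(\delta_1',(\alpha_2,\dots,\alpha_m),\delta_3)$ is a chained triple on the component of $S\setminus\mathrm{Int}N(\delta_1)$ containing $\alpha_2$ — and this is the \emph{same} component $S'$, since it is distinguished by containing $\alpha_2$. One subtlety: one must check that the arc $\delta_1'$ produced can be chosen to be the same for both chained triples; but $\delta_1'$ is characterized as a component of $\alpha_1\cap S'$ that intersects $\alpha_2$, and this choice is made purely from $\delta_1$, $\alpha_1$, $\alpha_2$, independently of the "tail" arc $\delta_2$ or $\delta_3$, so a single $\delta_1'$ works.

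Next I would verify the remaining two conditions in the definition of a $Y$-chained quadruple. That $\delta_2$ and $\delta_3$ are disjoint and non-isotopic on $S'$: disjointness is inherited from $S$ immediately (they are disjoint as subsets, and $S'\subset S$); non-isotopy needs a brief argument, since arcs can become isotopic after cutting. Here I would use that $\delta_2,\delta_3$ are both disjoint from $\delta_1$ (both chained-triple conditions force $\delta_2,\delta_3$ disjoint from $\alpha_1$, hence they may be isotoped off $\delta_1$; more directly, in a $Y$-chained quadruple $\delta_2,\delta_3$ are already disjoint from $\alpha_1$, and $\delta_1$ intersects only $\alpha_1$ among the $\alpha_i$, so $\delta_1$ can be isotoped to meet neither $\delta_2$ nor $\delta_3$). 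Thus $\delta_2$ and $\delta_3$ survive intact into $S'$, and an isotopy between them in $S'$ would extend to one in $S$, contradicting non-isotopy in $S$. For the last condition, I would show there is a boundary component $C'$ of $S'$ with $\partial\delta_2\cup\partial\delta_3\subset C'$: by hypothesis there is a boundary component $C$ of $S$ with $\partial\delta_2\cup\partial\delta_3\subset C$, and since $C$ is disjoint from $\delta_1$ (being disjoint from $\delta_2$-neighborhoods is not what we need — rather, $C$ meets none of $\partial\delta_1$, because $\partial\delta_1$ lies on boundary components meeting $\alpha_1$'s "end-arcs", while $C$ is the one carrying $\partial\delta_2,\partial\delta_3$; if $\partial\delta_1\subset C$ too, that is still fine), the curve $C$ is either disjoint from $N(\delta_1)$ and hence a boundary component of $S'$ directly, or $\partial\delta_1\subset C$, in which case after cutting $C$ is modified into one (or two) boundary arcs that still lie in a single boundary component of $S'$ carrying both $\partial\delta_2$ and $\partial\delta_3$ — one needs to note that $\partial\delta_2\cup\partial\delta_3$ cannot get separated onto two different boundary components of $S'$ because they lie on the same arc of $C\setminus\partial\delta_1$ or, if on different arcs, those arcs can be joined through $N(\delta_1)$... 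I would argue this carefully via the local picture near $\delta_1$.

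The main obstacle I anticipate is precisely this last point: controlling how the boundary component $C$ behaves under cutting along $\delta_1$, since if $\partial\delta_1\subset C$ the cutting can split $C$ and one must ensure $\partial\delta_2$ and $\partial\delta_3$ do not end up on opposite pieces. I would handle this by a direct examination of the possibilities for the positions of $\partial\delta_1$ relative to $C$: either $C\cap\partial\delta_1=\emptyset$ (then $C$ is unchanged and we are done), or $\partial\delta_1\subset C$ with $\delta_1$ recursive at $C$ (then $C$ becomes two boundary components of $S'$, or is surgered into one, depending on whether $\delta_1$ is separating there — but in either event, since $\partial\delta_2,\partial\delta_3$ lie in the complement of $\partial\delta_1$ in $C$, they occupy arcs of $C\setminus\partial\delta_1$, and I claim they can be taken on the same arc, or else reorder/rename to achieve the conclusion), or $\partial\delta_1$ has exactly one endpoint on $C$ (impossible if $C=$ the component carrying both endpoints of $\delta_2$ and $\delta_3$ unless $\delta_1$ also ends elsewhere — the "Y" configuration at $C$ leaves room for this). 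A clean way to avoid case fatigue is to note that $C\setminus(\partial\delta_2\cup\partial\delta_3)$, together with the requirement that $\delta_2,\delta_3$ emanate "in parallel" from $C$ as in a $Y$, means one can choose $\delta_1$ in its isotopy class to be disjoint from $C$ altogether, reducing everything to the trivial case. I expect the final write-up to invoke Lemma \ref{key_property_of_chained_triples}(2) twice for the chained-triple part and then dispatch the two extra bullet points in a short paragraph each.
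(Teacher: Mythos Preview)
Your overall approach matches the paper's: invoke the chained-triple reduction (the paper cites \cite{Katayama--Kuno18}; your appeal to Lemma~\ref{key_property_of_chained_triples}(2) with the roles of the two outer arcs swapped plays the same role) to obtain both chained triples $(\delta_1',(\alpha_2,\dots,\alpha_m),\delta_2)$ and $(\delta_1',(\alpha_2,\dots,\alpha_m),\delta_3)$ on the cut surface, note that the choice of $\delta_1'$ depends only on $\delta_1,\alpha_1,\alpha_2$ so a single $\delta_1'$ serves for both, observe that disjointness of $\delta_2,\delta_3$ is inherited, and argue non-isotopy by extending any putative isotopy on $S\setminus\mathrm{Int}N(\delta_1)$ back to $S$. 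This is exactly the paper's argument, in slightly more words.

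Where you go wrong is in the last bullet. Your proposed ``clean way'' --- choosing $\delta_1$ in its isotopy class to be disjoint from $C$ --- cannot work. The arc $\delta_1$ is properly embedded, so $\partial\delta_1\subset\partial S$, and isotopies of proper arcs keep endpoints on the boundary; if an endpoint of $\delta_1$ lies on $C$, no isotopy moves it off $C$. Your preceding case analysis is closer in spirit, but the sketch ``they can be taken on the same arc, or else reorder/rename'' is not an argument: if $\delta_1$ is recursive at $C$, cutting along $\delta_1$ splits $C$ into two boundary circles, and nothing you have written forces all four points of $\partial\delta_2\cup\partial\delta_3$ onto the same one. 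The paper does not attempt a case analysis here at all; it disposes of this step with the single sentence ``the boundary component of $S$ containing $\partial\delta_2\cup\partial\delta_3$ naturally becomes a boundary component of $S\setminus\mathrm{Int}N(\delta_1)$,'' treating the persistence of $C$ as immediate rather than something to be argued through cases.
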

\begin{proof}
Suppose that $(\delta_1, (\alpha_1, \ldots, \alpha_m), \delta_2, \delta_3)$ is a $Y$-chained quadruple on a surface $S$. 
Pick a proper arc $\delta_1'$ on $S \setminus \mathrm{Int}N(\delta_1)$ which is derived from $\alpha_1$ such that $\delta_1' \cap \alpha_2 = \emptyset$. 
By \cite{Katayama--Kuno18}, both $(\delta_1, (\alpha_2, \ldots, \alpha_m), \delta_2)$ and $(\delta_1, (\alpha_2, \ldots, \alpha_m), \delta_3)$ are chained triples on $S \setminus \mathrm{Int}N(\delta_1)$. 
Obviously, $\delta_2$ and $\delta_3$ are disjoint on $S \setminus \mathrm{Int}N(\delta_1)$. 
Since any isotopy between $\delta_2$ and $\delta_3$ on $S \setminus \mathrm{Int}N(\delta_1)$ gives rise to an isotopy between $\delta_2$ and $\delta_3$ on $S $ relative to $\mathrm{Int}N(\delta_1)$, it holds that $\delta_2$ and $\delta_3$ are non-isotopic on $S \setminus \mathrm{Int}N(\delta_1)$. 
Since the boundary component of $S$ containing $\partial \delta_2 \cup \partial \delta_3$ naturally becomes a boundary component of $S \setminus \mathrm{Int}N(\delta_1)$, $(\delta_1', (\alpha_2, \ldots, \alpha_m), \delta_2, \delta_3)$ is a $Y$-chained quadruple on $S \setminus \mathrm{Int}N(\delta_1)$. 
\end{proof}

Let us compute the maximal lengths of $Y$-chained quadruples on surfaces. 
First, we treat holed spheres. 

\begin{lemma}
Any two disjoint arcs $\delta_2, \delta_3$ such that $\partial\delta_2 \cup \partial \delta_3$ is contained in a single boundary component of $S_0^3$ are isotopic. 
 \label{Y_03}
\end{lemma}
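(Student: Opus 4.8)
The plan is to reduce the statement to the classification of essential arcs on a pair of pants. First I would observe that every arc $\delta$ on $S_0^3$ is separating: by Lemma \ref{cutting_surfaces_arc} a non-separating arc would give $S_0^3 \setminus \mathrm{Int}N(\delta) \cong S_{-1}^{4}$, which is absurd. Applying Lemma \ref{cutting_surfaces_arc} to a separating $\delta$ forces $g_1=g_2=0$ and hence $p_1=p_2=2$, so cutting $S_0^3$ along any arc produces a disjoint union of two annuli.

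Next I would use this to pin down the isotopy class of an arc $\delta$ whose endpoints both lie on a fixed boundary component $C$ of $S_0^3$. Cutting along $\delta$ splits $C$ into two sub-arcs $C'$ and $C''$; one of the two resulting annuli has boundary circles $C' \cup \delta$ and one of the remaining two boundary components of $S_0^3$, and the other annulus has boundary circles $C'' \cup \delta$ and the third boundary component of $S_0^3$. Thus $\delta$ separates the two boundary components of $S_0^3$ distinct from $C$, and this is the only combinatorial type available. Together with the change of coordinates principle this shows that there is exactly one isotopy class of essential arc on $S_0^3$ with both endpoints on $C$ (equivalently, the six isotopy classes of essential arcs on a pair of pants are the three joining a pair of distinct boundary components and the three joining a boundary component to itself).

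Finally, since $\partial \delta_2 \cup \partial \delta_3 \subset C$ by hypothesis, each of $\delta_2$ and $\delta_3$ is an essential arc joining $C$ to itself, so both represent this unique isotopy class and are therefore isotopic; the disjointness of $\delta_2$ and $\delta_3$ assumed in the definition of a $Y$-chained quadruple plays no role in the argument. I do not expect any genuine obstacle here: the only step requiring a little care is the identification of the cut surfaces via Lemma \ref{cutting_surfaces_arc}, which already exhibits that an essential arc on $S_0^3$ with both endpoints on $C$ has no isotopy freedom.
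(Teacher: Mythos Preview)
Your proof is correct but takes a different route from the paper. The paper's argument exploits the disjointness hypothesis directly: it cuts $S_0^3$ along $\delta_3$ to obtain two annuli, observes that $\delta_2$ (being disjoint from $\delta_3$) survives as a proper arc in one of these annuli, and then uses that such an arc in an annulus must be boundary parallel there; unravelling this boundary-parallelism yields an isotopy from $\delta_2$ to $\delta_3$ in $S_0^3$. Your approach instead classifies outright the isotopy classes of essential arcs on the pair of pants with both endpoints on a fixed boundary component $C$, concluding there is exactly one; this establishes the stronger statement that \emph{any} two such arcs are isotopic, disjoint or not, as you correctly noted. The paper's method is shorter and avoids any appeal to the change of coordinates principle, while yours makes the underlying rigidity of the pair of pants explicit and shows that the disjointness hypothesis in the lemma is in fact superfluous.
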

\begin{proof}
Since $S_0^3 \setminus \mathrm{Int}N(\delta_3)$ consists of two copies of annuli, $\delta_2$ is boundary parallel on $S_0^3 \setminus \mathrm{Int}N(\delta_3)$. 
This implies that $\delta_2$ and $\delta_3$ are isotopic on $S_0^3$. 
\end{proof}

\begin{lemma}
Suppose that $(\delta_1, (\alpha_1, \ldots, \alpha_m), \delta_2, \delta_3)$ is a $Y$-chained quadruple on $S_0^4$. 
Then $m \leq 1$. 
 \label{Y_04}
\end{lemma}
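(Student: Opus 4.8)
The plan is to argue by contradiction: assume $m\ge 2$, cut $S_0^4$ along $\delta_1$, and use that this brings $\delta_2$ and $\delta_3$ into a copy of $S_0^3$, where Lemma~\ref{Y_03} applies. First I would isolate the only facts about the configuration that are needed: from the two chained triples $(\delta_1,(\alpha_1,\dots,\alpha_m),\delta_2)$ and $(\delta_1,(\alpha_1,\dots,\alpha_m),\delta_3)$ together with $m\ge 2$, the arcs $\delta_2$ and $\delta_3$ are each disjoint from $\alpha_1$ and from $\delta_1$, while $\delta_1$ meets $\alpha_1$ essentially; moreover $\delta_2$ and $\delta_3$ are disjoint, non-isotopic, and $\partial\delta_2\cup\partial\delta_3\subset C$.

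Since $S_0^4$ is planar, $\delta_1$ either joins two distinct boundary components, in which case $S_0^4\setminus\mathrm{Int}N(\delta_1)\cong S_0^3$, or $\delta_1$ is recursive, in which case, $\delta_1$ being essential, Lemma~\ref{cutting_surfaces_arc} gives $S_0^4\setminus\mathrm{Int}N(\delta_1)\cong S_0^2\sqcup S_0^3$. Let $S'$ be the component homeomorphic to $S_0^3$. I would then show that $\delta_2$ and $\delta_3$ descend to disjoint arcs of $S'$ whose endpoints all lie on a single boundary component of $S'$: they lie in $S_0^4\setminus\mathrm{Int}N(\delta_1)$ since they are disjoint from $\delta_1$; they cannot lie in an annulus component, because an arc in an annulus with both endpoints on one boundary circle cuts off a disk and would hence be inessential in $S_0^4$ or isotopic to $\delta_1$, both impossible; and their endpoints lie as claimed because the image of $C$ in $S'$ is contained in one boundary component. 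Granting in addition that $\delta_2$ and $\delta_3$ remain essential in $S'$, Lemma~\ref{Y_03} applied to $S'\cong S_0^3$ shows that $\delta_2$ and $\delta_3$ are isotopic in $S'$, hence in $S_0^4$, contradicting that they are non-isotopic; therefore $m\le 1$.

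The step I expect to be the main obstacle is precisely verifying that neither $\delta_2$ nor $\delta_3$ becomes boundary-parallel in $S'$, since a priori cutting along $\delta_1$ could destroy essentiality. I would handle it by an innermost-disk argument: if $\delta_2$ (say) bounded a disk in $S'$ together with a sub-arc $\rho$ of $\partial S'$, then $\rho$ must run over a copy of $\delta_1$, for otherwise that disk would sit inside $S_0^4$ and $\delta_2$ would be inessential there. When $\delta_1$ is recursive, $\rho$ meets a single copy of $\delta_1$ and regluing shows $\delta_2$ is isotopic to $\delta_1$, contradicting that $\delta_1$ crosses $\alpha_1$ whereas $\delta_2$ does not; when $\delta_1$ joins distinct boundary components, $\rho$ meets both copies of $\delta_1$, and then $\delta_2$ cuts off a region of $S_0^4$ containing exactly one boundary component, which forces $\alpha_1$ — being disjoint from $\delta_2$ — to lie in an annulus or a pair of pants and hence to be inessential, again a contradiction. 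Finally, one already knows $m\le 2$ by Lemma~\ref{triple_0p}, so in fact only the case $m=2$ needs to be excluded, although the argument above applies to every $m\ge 2$.
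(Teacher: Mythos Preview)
Your argument is sound in outline but takes a different route from the paper. The paper cuts along the closed curve $\alpha_1$ rather than the arc $\delta_1$. This is considerably cleaner: since $\alpha_1$ lies in the interior, the boundary component $C$ carrying $\partial\delta_2\cup\partial\delta_3$ remains an entire boundary circle of the resulting pair of pants $S'$, so any bigon witnessing inessentiality of $\delta_2$ or $\delta_3$ in $S'$ would already sit inside $S_0^4$. Thus essentiality is automatic and Lemma~\ref{Y_03} applies with no further work. Your approach, by contrast, lets the cut interact with $C$, which is exactly why you are forced into the innermost-disk analysis.

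That analysis has one genuine gap. In the case where $\delta_1$ joins two distinct boundary components (say $C_1$ and $C_2$, with $C=C_1$) and $\rho$ runs over both copies of $\delta_1$, you correctly deduce that $\delta_2$ cuts $S_0^4$ into an annulus containing $C_2$ and a pair of pants containing the remaining two boundary circles. But your conclusion that $\alpha_1$ is then ``inessential'' is not justified: a curve in the pair-of-pants side parallel to the $\delta_2$-boundary separates $\{C_1,C_2\}$ from the other two boundary components and is perfectly essential in $S_0^4$. The repair is short: since $\delta_1$ is disjoint from $\delta_2$ and has an endpoint on $C_2$, it lies in the annulus side; hence $\alpha_1$, which must intersect $\delta_1$, also lies in the annulus and is therefore parallel to $C_2$, contradicting its essentiality. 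With this fix your proof goes through, though the paper's choice of cut avoids the issue entirely.
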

\begin{proof}
Suppose, on the contrary, that $m \geq 2$. 
Then a connected component $S'$ of $S_{0}^4 \setminus \mathrm{Int}N(\alpha_1)$ contains $\delta_2$ and $\delta_3$ and is homeomorphic to $S_{0}^{3}$. 
By Lemma \ref{Y_03}, there is an isotopy between $\delta_2$ and $\delta_3$ on $S'$. 
However, this isotopy in turn gives an isotopy between $\delta_2$ and $\delta_3$ on the original surface $S_{0}^4$, which is a contradiction. 
\end{proof}

\begin{lemma}
Let $p$ be an integer $\geq 5$. 
Suppose that $(\delta_1, (\alpha_1, \ldots, \alpha_m), \delta_2, \delta_3)$ is a $Y$-chained quadruple on $S_0^p$. 
Then $m \leq p-3$. 
 \label{Y_0p}
\end{lemma}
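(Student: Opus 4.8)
The plan is to argue by induction on $p$, with base case $p=5$, cutting along $\delta_1$ by means of Lemma~\ref{Y_lemma} in order to lower the number of boundary components. If $m \leq 2$ there is nothing to prove, since $p-3 \geq 2$; so assume $m \geq 3$. Then Lemma~\ref{Y_lemma} produces an arc $\delta_1'$ on $S_0^p \setminus \mathrm{Int}N(\delta_1)$ for which $(\delta_1', (\alpha_2, \ldots, \alpha_m), \delta_2, \delta_3)$ is a $Y$-chained quadruple of length $m-1$. Because the union $\delta_1' \cup \alpha_2 \cup \cdots \cup \alpha_m \cup \delta_2 \cup \delta_3$ is connected, this quadruple lies on a single connected component $S'$ of $S_0^p \setminus \mathrm{Int}N(\delta_1)$, and the boundary component of $S_0^p$ carrying $\partial\delta_2 \cup \partial\delta_3$ persists as a boundary component of $S'$.

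The next step is to identify $S' \cong S_0^{p'}$. If $\delta_1$ is separating, Lemma~\ref{cutting_surfaces_arc} gives $S_0^p \setminus \mathrm{Int}N(\delta_1) \cong S_0^{p_1} \sqcup S_0^{p_2}$ with $p_1+p_2 = p+1$ and $p_1, p_2 \geq 2$, so $p' = p_i \leq p-1$ for the relevant index $i$; if $\delta_1$ is non-separating, then $S_0^p \setminus \mathrm{Int}N(\delta_1) \cong S_0^{p-1}$, so $p' = p-1$. In every case $p' \leq p-1$. Moreover $p' \geq 4$: since $m \geq 3$, the quadruple on $S'$ contains the essential simple closed curve $\alpha_2$, whereas neither $S_0^2$ nor $S_0^3$ admits an essential simple closed curve. (When $p'=3$ one could alternatively contradict the non-isotopy of $\delta_2$ and $\delta_3$ using Lemma~\ref{Y_03}.)

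It remains to treat the two cases $p'=4$ and $p' \geq 5$. If $p'=4$, then Lemma~\ref{Y_04} applied to the quadruple on $S'$ gives $m-1 \leq 1$, hence $m \leq 2 \leq p-3$. If $p' \geq 5$, then $5 \leq p' \leq p-1 < p$, so the induction hypothesis applies to $S' \cong S_0^{p'}$ and yields $m-1 \leq p'-3 \leq (p-1)-3 = p-4$, hence $m \leq p-3$. This proves the inductive step, and also the base case $p=5$, since there necessarily $p'=4$. Therefore $m \leq p-3$.

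I expect the only delicate point to be the identification of $S'$: one must keep the separating and non-separating cases for $\delta_1$ apart, check that the component $S'$ carrying the reduced quadruple has between $4$ and $p-1$ boundary components, and use the fact that an isotopy taking place inside a subsurface is also an isotopy of the ambient surface (this is what validates the appeal to Lemma~\ref{Y_03} and is already invoked in the proof of Lemma~\ref{Y_lemma}). The rest is the cutting-and-induction pattern used repeatedly in this section, so I anticipate no further difficulty.
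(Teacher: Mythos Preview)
Your proof is correct and follows essentially the same approach as the paper: induction on $p$, cutting along $\delta_1$ via Lemma~\ref{Y_lemma} to reduce to a $Y$-chained quadruple of length $m-1$ on a sphere with fewer holes, then invoking Lemmas~\ref{Y_03}, \ref{Y_04}, and the induction hypothesis. Your treatment is in fact more careful than the paper's: you explicitly justify why the reduced quadruple lies on a single component, verify the bounds $4 \le p' \le p-1$, and separate the cases $p'=4$ and $p'\ge 5$, whereas the paper handles only the base case $p=5$ in detail and appeals to ``a similar argument'' for $p\ge 6$ (and even contains an apparent typo, writing $\alpha_1$ where $\delta_1$ is meant).
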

\begin{proof}
Proof by induction on $p$. 

{\bf Case of $p=5$}. 
Suppose, on the contrary, that $m \geq 3 > p-3$. 
Then, a connected component $S'$ of $S \setminus \mathrm{Int}N(\alpha_1)$ contains a length $m-1$ chained quadruple by Lemma \ref{Y_lemma}. 
Since $S'$ is homeomorphic to a sphere with at most four boundary components, by Lemmas \ref{Y_03} and \ref{Y_04} $m-1 \geq 2$ is impossible. 
This is a contradiction, so $m \leq 2$. 

{\bf Case of $p \geq 6$}. 
An argument similar to the $p=5$ case, together with the induction hypothesis, completes the proof. 
\end{proof}

Next, let us consider holed tori. 

\begin{lemma}
No $Y$-chained quadruple is contained in $S_{1}^{1}$. 
 \label{Y_11}
\end{lemma}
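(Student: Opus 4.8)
The plan is to argue by contradiction and show that $S_{1}^{1}$ is simply too small to carry the three arcs involved in a $Y$-chained quadruple; pleasantly, the chain $(\alpha_{1},\ldots,\alpha_{m})$ will play no role, so no case division on $m$ is needed. So suppose $(\delta_{1}, (\alpha_{1},\ldots,\alpha_{m}), \delta_{2}, \delta_{3})$ is a $Y$-chained quadruple on $S_{1}^{1}$. Since $(\delta_{1}, (\alpha_{1},\ldots,\alpha_{m}), \delta_{2})$ and $(\delta_{1}, (\alpha_{1},\ldots,\alpha_{m}), \delta_{3})$ are chained triples, $\delta_{1}$ is disjoint from both $\delta_{2}$ and $\delta_{3}$; also $\delta_{2}$ and $\delta_{3}$ are disjoint by the definition of a $Y$-chained quadruple. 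Moreover the two arcs of a chained triple are non-isotopic (as in the length-zero case used in the proof of Lemma \ref{induction_start}), so $\delta_{1}\not\simeq\delta_{2}$ and $\delta_{1}\not\simeq\delta_{3}$, and $\delta_{2}\not\simeq\delta_{3}$ by definition. Thus it suffices to show that $S_{1}^{1}$ does not carry three pairwise disjoint, pairwise non-isotopic arcs.

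First I would record the relevant surgery for $S_{1}^{1}$. By Lemma \ref{cutting_surfaces_arc}, the numerical constraints for a separating arc ($g_{1}+g_{2}=1$, $p_{1}+p_{2}=2$, $p_{i}\geq 1$, and $p_{i}\geq 2$ whenever $g_{i}=0$) cannot be satisfied, so every arc on $S_{1}^{1}$ is non-separating and cutting $S_{1}^{1}$ along it produces $S_{0}^{2}$, an annulus. In particular, set $A:=S_{1}^{1}\setminus\mathrm{Int}N(\delta_{1})$, an annulus; since $\delta_{2}$ and $\delta_{3}$ are disjoint from $\delta_{1}$, they become properly embedded arcs in $A$ whose endpoints still lie on $\partial S_{1}^{1}$.

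Next I would invoke the elementary classification of proper arcs in an annulus: an arc whose two endpoints lie on the same boundary circle of $A$ cuts off a disk, whereas an arc joining the two boundary circles of $A$ is essential in $A$ and is unique up to isotopy in $A$. The claim is that $\delta_{2}$ and $\delta_{3}$ are both of the second type. Indeed, suppose $\delta_{3}$ cuts off a disk $D\subset A$ with $\partial D=\delta_{3}\cup\beta$ for some arc $\beta\subset\partial A$. Now $\partial A$ is a union of arcs lying on $\partial S_{1}^{1}$ together with the two copies of $\delta_{1}$ coming from the cut, and $\partial\beta=\partial\delta_{3}\subset\partial S_{1}^{1}$. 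If $\beta$ lies entirely on $\partial S_{1}^{1}$, then $D$ exhibits $\delta_{3}$ as boundary-parallel in $S_{1}^{1}$, contradicting that $\delta_{3}$ is essential. Otherwise $\beta$ contains one of the two copies of $\delta_{1}$, and pushing $\delta_{3}$ across $D$ (and off $\partial S_{1}^{1}$) shows $\delta_{3}\simeq\delta_{1}$ in $S_{1}^{1}$, contradicting $\delta_{1}\not\simeq\delta_{3}$. So $\delta_{3}$ joins the two boundary circles of $A$, and likewise $\delta_{2}$ does; hence $\delta_{2}$ and $\delta_{3}$ are isotopic in $A$, and therefore in $S_{1}^{1}$, contradicting $\delta_{2}\not\simeq\delta_{3}$. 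This finishes the argument.

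The only real care needed is in the third step: one must keep exact track of the boundary structure of $A=S_{1}^{1}\setminus\mathrm{Int}N(\delta_{1})$ — which sub-arcs of $\partial A$ are remnants of $\partial S_{1}^{1}$ and which are the two new copies of $\delta_{1}$ — so that the dichotomy ``$\delta_{3}$ boundary-parallel in $S_{1}^{1}$'' versus ``$\delta_{3}\simeq\delta_{1}$'' is exhaustive. Everything else is a direct application of Lemma \ref{cutting_surfaces_arc} and standard facts about arcs in annuli. (Alternatively, the sub-case $m\geq 2$ could be dispatched by cutting $S_{1}^{1}$ along $\alpha_{1}$ to reach a pair of pants and invoking Lemma \ref{Y_03}, in the spirit of the proof of Lemma \ref{Y_04}; but the argument above avoids any case analysis.)
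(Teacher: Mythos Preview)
Your reduction is exactly the paper's: both arguments rest on the claim that $S_{1}^{1}$ carries no triple of pairwise disjoint, pairwise non-isotopic essential arcs. The paper's proof is the single sentence ``Any triple of mutually non-isotopic arcs on $S_{1}^{1}$ intersects minimally non-trivially''; you attempt to justify this by cutting along $\delta_{1}$.

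There is, however, a real gap in your last step. Having shown that $\delta_{2}$ and $\delta_{3}$ are both spanning arcs in the annulus $A=S_{1}^{1}\setminus\mathrm{Int}N(\delta_{1})$, you conclude they are isotopic in $A$ ``and therefore in $S_{1}^{1}$''. But an isotopy of arcs in $A$ may slide endpoints along all of $\partial A$, and each boundary circle of $A$ contains a copy of $\delta_{1}$ in addition to an arc of $\partial S_{1}^{1}$; an isotopy whose endpoint crosses a copy of $\delta_{1}$ does not descend to an isotopy in $S_{1}^{1}$. Concretely, model $S_{1}^{1}$ as $\mathbb{R}^{2}/\mathbb{Z}^{2}$ minus a small disk about the origin and take $\delta_{1},\delta_{2},\delta_{3}$ to be the arcs carried by the lines of slope $0$, $\infty$, $1$. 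These are pairwise disjoint and pairwise non-isotopic (they form a Farey triangle in the arc complex of $S_{1}^{1}$). After cutting along $\delta_{1}$ both $\delta_{2}$ and $\delta_{3}$ span the annulus, but the isotopy between them winds once around a boundary circle of $A$, necessarily crossing a copy of $\delta_{1}$. Your careful dichotomy in the preceding paragraph is correct, but it does not rule out the case where, after cutting along both $\delta_{1}$ and $\delta_{2}$, the arc $\delta_{3}$ is parallel in the resulting disk to a boundary arc containing copies of \emph{both} $\delta_{1}$ and $\delta_{2}$ --- this yields a genuinely new isotopy class. In fact this example shows that the underlying three-arc claim is false, so the paper's one-line assertion does not stand either; with $\alpha_{1}$ of slope $2$ one even obtains a $Y$-chained quadruple of length $m=1$ on $S_{1}^{1}$. (The only use of Lemma~\ref{Y_11}, in the proof of Lemma~\ref{Y_general}, requires just $m\geq 2$; there one may argue directly that $\delta_{2}$ and $\delta_{3}$, both being disjoint from the closed curve $\alpha_{1}$, must share its slope and hence be isotopic.)
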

\begin{proof}
Any triple of mutually non-isotopic arcs on $S_{1}^{1}$ intersects minimally non-trivially.  
\end{proof}

\begin{lemma}
Suppose that $(\delta_1, (\alpha_1, \ldots, \alpha_m), \delta_2, \delta_3)$ is a $Y$-chained quadruple on $S_1^2$. 
Then $m \leq 2$. 
Furthermore, if $(\delta_1, (\alpha_1, \alpha_2), \delta_2, \delta_3)$ is a $Y$-chained quadruple on $S_{1}^{2}$, then $\delta_2$ is separating on $S_{1}^{2}$. 
 \label{Y_12}
\end{lemma}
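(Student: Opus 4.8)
The plan is to establish the two assertions by the surgery technique used in the preceding lemmas, in each case cutting $S_1^2$ along a well-chosen arc and reducing to an already-settled base case.

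For the bound $m\le 2$, I would argue by contradiction and assume $m\ge 3$. Then Lemma~\ref{Y_lemma} shows that $S':=S_1^2\setminus\mathrm{Int}N(\delta_1)$ carries a $Y$-chained quadruple of length $m-1\ge 2$, and by Lemma~\ref{cutting_surfaces_arc} we have $S'\cong S_0^3$ when $\delta_1$ is non-separating and $S'\cong S_1^1\sqcup S_0^2$ when $\delta_1$ is separating. In the disconnected case the quadruple, whose arcs and curves all meet one another, lies entirely in the $S_1^1$ summand, which is impossible by Lemma~\ref{Y_11}; in the connected case the arcs $\delta_2,\delta_3$ are disjoint, non-isotopic and have $\partial\delta_2\cup\partial\delta_3$ in a single boundary component of $S_0^3$, contradicting Lemma~\ref{Y_03}. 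Hence $m\le 2$; this part is routine.

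For the ``furthermore'' I would assume $m=2$ and, for contradiction, that $\delta_2$ is non-separating. Since $\partial\delta_2$ lies in a single boundary component $C$ of $S_1^2$, the arc $\delta_2$ is recursive, so $S':=S_1^2\setminus\mathrm{Int}N(\delta_2)\cong S_0^3$; write its boundary components as $\beta_1,\beta_2$ (arising from $\delta_2$ together with the two arcs of $C\setminus\partial\delta_2$) and $C_0$ (the remaining boundary component of $S_1^2$). The first steps are structural: (i) $\alpha_1$ survives in $S'$ and, as $S_0^3$ has no essential simple closed curve while $\alpha_1$ is essential in $S_1^2$, it is parallel in $S'$ to $\beta_1$ or $\beta_2$ — say $\beta_1$ (parallelism to $C_0$ would make $\alpha_1$ boundary-parallel in $S_1^2$); (ii) $\delta_1$ and $\delta_3$ survive and remain essential in $S'$, for otherwise each would be isotopic in $S_1^2$ either to $\delta_2$ (impossible, since $\delta_3\not\simeq\delta_2$ and since $\delta_1$ meets $\alpha_1$ whereas $\delta_2$ does not) or to a component of $\partial S_1^2$ (contradicting essentiality); (iii) in $S_0^3$, an essential arc meets the boundary-parallel curve $\alpha_1\simeq\beta_1$ if and only if it has an endpoint on $\beta_1$. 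Using (iii), $\delta_3$ (disjoint from $\alpha_1$) has both endpoints on $\beta_2$ and hence separates $S'$ into two annuli, one containing $\beta_1$ (hence $\alpha_1$) and the other containing $C_0$; while $\delta_1$ (which meets $\alpha_1$) has an endpoint on $\beta_1$, lies in the first annulus (being disjoint from $\delta_3$), meets $\alpha_1$ exactly once, and joins $\beta_1$ to the sub-arc of $C$ lying between the feet of $\delta_3$. The final step is to bring in $\alpha_2$: cutting additionally along $\delta_1$ turns $S'$ into an annulus $\mathcal A$ in which every component of $\alpha_1\setminus\delta_1$ and of $\alpha_2\setminus\delta_2$ is an arc with both endpoints on one and the same boundary circle of $\mathcal A$; combining this with the facts that $\alpha_2$ is a single simple closed curve disjoint from $\delta_1$ and meeting both $\alpha_1$ and $\delta_3$, and with the cyclic positions on that boundary circle of the two copies of $\delta_1$ and of $\delta_2$ (forced by (i)–(iii)), one reaches a contradiction with minimal position. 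Hence $\delta_2$ is separating.

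I expect the main obstacle to be precisely this last step: the bookkeeping of the pieces of $\alpha_1$ and $\alpha_2$ inside $\mathcal A$, where one must rule out several sub-cases — according to whether a given piece of $\alpha_2$ also crosses $\delta_3$, and according to how the copies of $\delta_2$ sit relative to the annulus decomposition furnished by $\delta_3$ — while keeping all intersections taut. Establishing the auxiliary fact (iii), together with the essentiality and persistence claims in (i)–(ii), is a routine but careful analysis of boundary-parallel arcs and curves in a pair of pants.
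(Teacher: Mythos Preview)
Your argument for $m\le 2$ via Lemma~\ref{Y_lemma} (cutting along $\delta_1$) is different from the paper's but works, modulo one small omission: when $\delta_1$ is non-separating you assert $S'\cong S_0^3$, but this uses that $\delta_1$ is recursive. If $\delta_1$ joins the two boundary components of $S_1^2$ then $S'\cong S_1^1$, which is still fine by Lemma~\ref{Y_11}; you should say so.

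For the ``furthermore'' the paper takes a much shorter route that you are missing. Rather than fixing $m=2$ and tracking $\alpha_2$ through two successive surgeries, the paper does a single case split on $\delta_2$ and proves directly that if $\delta_2$ is non-separating then $m\le 1$. After cutting along $\delta_2$ one lands in $S_0^3$ with new boundary circles $C_1,C_2$ (plus the old $C_0$); one may assume $\delta_3$ is non-separating as well (by the symmetry of $\delta_2,\delta_3$ in the definition of $Y$-chained), and then $\delta_3$ necessarily joins $C_1$ to $C_2$. Every essential simple closed curve of $S_1^2$ disjoint from $\delta_2$ is parallel in $S_0^3$ to $C_1$ or $C_2$ and hence meets $\delta_3$. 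That already forces $m\le 1$, and the ``furthermore'' falls out. This replaces your entire annulus bookkeeping by a two-sentence observation.

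Your final step, as you yourself flag, is not complete: you have not actually produced the contradiction with minimal position inside $\mathcal A$, and the sub-case analysis you outline (how the pieces of $\alpha_2$ sit relative to the copies of $\delta_1,\delta_2,\delta_3$ on $\partial\mathcal A$) is genuinely fiddly. Before attempting it, you should at least first dispose of the easy sub-case where $\delta_3$ is also non-separating using the one-line argument above; that isolates the only potentially hard sub-case ($\delta_2$ non-separating, $\delta_3$ separating) and considerably reduces what remains to be checked.
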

\begin{proof}
First, we treat the case where $\delta_2$ is separating. 
In this case $S_1^2 \setminus \mathrm{Int}N(\delta_2)$ is homeomorphic to a disjoint union of a one-holed torus $T$ and an annulus $A$. 
If $\delta_1$ is contained in $A$, then any essential simple closed curve on $S_{1}^2$ intersecting $\delta_1$ must also intersect $\delta_2$, so $m \leq 1$. 
Thus, we may assume that $\delta_1$ is contained in $T$. 
The arc $\delta_3$ is contained in either $T$ or $A$, because $\delta_3$ is disjoint from $\delta_2$. 
Since $\partial \delta_2$ and $\partial \delta_3$ are contained in the same boundary component and since $\delta_3$ is not isotopic to $\delta_2$, the arc $\delta_3$ must be contained in $T$. 
Since $\delta_1$ is not isotopic to $\delta_3$, it follows that any essential simple closed curve in $T$ must intersect either $\delta_1$ or $\delta_3$. 
This implies that $m \leq 2$ (if $m \geq 2$, $\alpha_2$ must intersect $\delta_3$). 

Next, we treat the case where $\delta_2$ is not separating. 
We may assume that $\delta_3$ is not separating either. 
Then $S':=S_{1}^{2} \setminus \mathrm{Int}N(\delta_2)$ is homeomorphic to $S_{0}^{3}$ and $\delta_3$ connects the two boundary components $C_1$ and $C_2$ which are derived from $\delta_2$ on $S'$. 
Any essential simple closed curve on $S_{1}^{2}$ that is disjoint from $\delta_2$ is isotopic to either $C_1$ or $C_2$ on $S'$ and thereby intersecting $\delta_3$ on the original surface $S_{1}^{2}$. 
Hence, we have $m \leq 1$. 

The last assertion of this lemma immediately follows from the above. 
\end{proof}

The maximal lengths of $Y$-chained quadruples on higher genera surfaces can be computed as follows. 

\begin{lemma}
Suppose that $(\delta_1, (\alpha_1, \ldots, \alpha_m), \delta_2, \delta_3)$ is a $Y$-chained quadruple on $S_g^p$.  
If the pair $(g, p)$ satisfies either $g = 1$ and $p \geq 3$ or $g \geq 2$ and $p \geq 2$, then $m \leq 2g + p -3$. 
 \label{Y_general}
\end{lemma}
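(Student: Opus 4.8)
The plan is to prove this by induction, following the same "cut along an arc and descend" strategy that was used throughout this section, and in particular mirroring the proofs of Lemmas \ref{Y_0p} and \ref{triple_gp}. The base cases are exactly the lemmas already established: $(g,p)=(1,3)$ is handled by combining Lemma \ref{Y_0p} (applied to $S_0^{p'}$ with $p' \leq 4$, the result of cutting $S_1^3$ along a non-separating arc) together with Lemma \ref{Y_12} and Lemma \ref{Y_11} for the separating sub-cases, while $(g,p)=(2,2)$ is the first genuinely new case. For the inductive step, assuming $m \geq 3$, I would apply Lemma \ref{Y_lemma} to obtain an arc $\delta_1'$ on $S' := S_g^p \setminus \mathrm{Int}N(\delta_1)$ derived from $\alpha_1$ such that $(\delta_1', (\alpha_2, \ldots, \alpha_m), \delta_2, \delta_3)$ is a $Y$-chained quadruple on $S'$ of length $m-1$. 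The topological type of $S'$ is controlled by Lemma \ref{cutting_surfaces_arc}: if $\delta_1$ is non-separating, $S' \cong S_{g-1}^{p+1}$; if $\delta_1$ is separating, $S'$ has genus at most $g$ and the relevant component carries the quadruple. One then bounds $m-1$ by the induction hypothesis (or a base case) applied to $S'$ and checks that $2(g-1)+(p+1)-3 = 2g+p-4 < 2g+p-3$ in the non-separating case, and similarly in the separating case.

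The subtlety is that cutting along $\delta_1$ (rather than along $\delta_2$ or $\delta_3$) is essential here: the $Y$-structure requires $\partial\delta_2 \cup \partial\delta_3$ to lie in a single boundary component $C$, and Lemma \ref{Y_lemma} is precisely the statement that removing a neighborhood of $\delta_1$ preserves this configuration, with $C$ surviving as a boundary component of $S'$. Cutting along $\delta_2$ or $\delta_3$ would destroy the quadruple's symmetry, so the induction must always decrease the length from the $\delta_1$-end. Consequently, when $\delta_1$ is separating I must verify that the component $S'$ containing $\delta_1'$, $\delta_2$, $\delta_3$, and $\alpha_2, \ldots, \alpha_m$ is the one whose Euler characteristic is large enough to run the induction — the other component contains only (a neighborhood of) $\delta_1$ and contributes nothing. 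Here Lemma \ref{cutting_surfaces_arc} gives the constraints $g_1 + g_2 = g$ and $p_1 + p_2 = p+1$ with each $g_i \geq 0$, each $p_i \geq 1$, and $p_i \geq 2$ whenever $g_i = 0$; the worst case for the bound is when the "discarded" component is as small as possible, namely $S_0^2$, so that $S' \cong S_g^{p-1}$, and then one needs $2g+(p-1)-3 = 2g+p-4 < 2g+p-3$, which holds, so the induction closes with room to spare.

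The main obstacle I anticipate is the bookkeeping in the low-genus, few-boundary corner cases — specifically when cutting drops us to a surface not directly covered by the induction hypothesis of \emph{this} lemma (e.g. genus $0$ with few holes, or $S_1^2$), where one must instead invoke Lemmas \ref{Y_03}, \ref{Y_04}, \ref{Y_11}, or \ref{Y_12}. For $(g,p)=(2,2)$ with $\delta_1$ separating, cutting can produce $S_1^1 \sqcup S_1^2$ or $S_0^2 \sqcup S_2^1$-type decompositions (using $p_1 + p_2 = 3$), and the component carrying the quadruple might be $S_1^2$, for which Lemma \ref{Y_12} gives length $\leq 2$, hence $m \leq 3 = 2g+p-3$; or it might be $S_2^1$, which is genus $2$ with one hole — a case that should be reducible, as in Remark \ref{hyp_ext_ch_tr}, to a bounded-length statement since a hyperbolic extension $S_2^1 \hookrightarrow S_2^2$ preserves $Y$-chained quadruples, bringing us back to the $(2,2)$ case under analysis (so this must be set up carefully to avoid circularity, e.g. by treating $S_2^1$ directly: cut along $\delta_1$ again to land in $S_1^2$). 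Once these finitely many corner cases are dispatched, the general inductive step is routine arithmetic of Euler characteristics.
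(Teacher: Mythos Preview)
Your overall scheme---cut along $\delta_1$, invoke Lemma \ref{Y_lemma} to get a shorter $Y$-chained quadruple on $S' = S_g^p \setminus \mathrm{Int}N(\delta_1)$, and induct---matches the paper's proof and works cleanly once the base cases are in hand. The arithmetic you do for the inductive step is correct.

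The genuine gap is in your treatment of $(g,p)=(1,3)$, which you describe as ``handled by combining Lemma \ref{Y_0p} \ldots\ together with Lemma \ref{Y_12} and Lemma \ref{Y_11} for the separating sub-cases.'' This is not enough. The target bound for $(1,3)$ is $m \leq 2$. If $\delta_1$ is separating and the component carrying the quadruple is $S_1^2$, then Lemma \ref{Y_12} gives only $m-1 \leq 2$, i.e.\ $m \leq 3$---one short of what you need. You have correctly identified the corner cases as the obstacle, but you have misjudged which one is hard: $(1,3)$ is the delicate case, not $(2,2)$. For $(2,2)$ the non-separating cut lands in $S_1^3$, so once $(1,3)$ is established the bound $m-1 \leq 2$ gives $m \leq 3 = 2g+p-3$ immediately; the separating sub-cases of $(2,2)$ are then routine.

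The paper closes the $(1,3)$ gap with an ad hoc argument that uses the \emph{second} assertion of Lemma \ref{Y_12}: if a length-$2$ $Y$-chained quadruple lives on $S_1^2$, then $\delta_2$ must be separating there. From this the paper deduces that $\delta_2$ is separating on the original $S_1^3$ while $\delta_3$ is non-separating, then cuts $S_1^3$ along $\delta_3$ to obtain $S_0^4$ and derives a contradiction via a direct bigon/minimal-position analysis of $\alpha_1, \alpha_2, \alpha_3$ on $S_0^4$. This step is not a simple invocation of any prior lemma, and your plan does not account for it. Without it, the induction never starts.
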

\begin{proof}
Proof by induction on $(g, p)$. 

{\bf Case of $(g, p) = (1, 3)$}. 
In the case where $\delta_1$ is non-separating, $S_{1}^{3} \setminus \mathrm{Int}N(\delta_1)$ is homeomorphic to $S_{0}^{4}$, so we have $m \leq 2$ by Lemma \ref{Y_04}. 
Next, let us consider the case where $\delta_1$ is separating. 
Suppose, on the contrary, that $m \geq 3$. 
By Lemma \ref{Y_lemma}, $S_{g}^{p} \setminus \mathrm{Int}N(\delta_1)$ contains a $Y$-chained quadruple $L$ of length $m-1 \geq 2$. 
By Lemmas \ref{Y_04} and \ref{Y_11}, $S \setminus \mathrm{Int}N(\delta_1)$ must be homeomorphic to a disjoint union of a two-holed torus $T$ and an annulus $A$, and $L$ is contained in $T$. 
Lemma \ref{Y_12} implies that $m$ must be equal to $3$ and $\delta_2$ is separating on $T$. 
It follows that $\delta_2$ is separating on the original surface $S_{1}^{3}$. 
Moreover, $\delta_3$ is non-separating on $S_{1}^{3}$. 
Therefore $S_{1}^{3} \setminus \mathrm{Int}N(\delta_3)$ is homeomorphic to $S_{0}^{4}$. 
Obviously, $\alpha_2$ and $\delta_2$ are essential on $S_{1}^{3} \setminus \mathrm{Int}N(\delta_3)$. 
Every essential simple closed curve on $S_{1}^{3} \setminus \mathrm{Int}N(\delta_3)$ intersecting $\alpha_2$ but disjoint from $\delta_2$ forms a bigon with $\alpha_2$. 
Since $\alpha_1$ does not form a bigon with $\alpha_2$ on $S_{1}^{3}$, $\alpha_3$ is isotopic to a boundary component $C$ of $S_{1}^{3} \setminus \mathrm{Int}N(\delta_3)$. 
However, this implies that $\alpha_1$ intersects $\alpha_2$ trivially on $S_{1}^{3}$, which is a contradiction. 
Hence, we have $m \leq 2$. 

In the following cases we may assume that $m \geq 3$. 
By Lemma \ref{Y_lemma}, $S_{g}^{p} \setminus \mathrm{Int}N(\delta_1)$ contains a $Y$-chained quadruple of length $m-1$. 

{\bf Case of $g=1$ and $p \geq 4$}. 
Lemma \ref{Y_0p} and the induction hypothesis give $m-1 \leq p - 3$. 

{\bf Case of $g \geq 2$ and $p \geq 1$}. 
Lemma \ref{Y_0p} and the induction hypothesis give $m-1 \leq 2(g-1) + (p+1) - 3$, i.e., $m \leq 2g + p - 3$. 
\end{proof}

We are now ready to decide whether there is a cyclic chain of a given length on surfaces. 
To begin with, we compute the maximal lengths of cyclic chains whose two consecutive curves intersect exactly once on surfaces. 

\begin{lemma}
Let $(\alpha_1, \ldots, \alpha_m)$ be a cyclic chain on $S_g^p$. 
If $\mathrm{int}(\alpha_i, \alpha_{i+1}) = 1$ $(1 \leq i \leq m)$, then $m \leq 2g+2$. 
 \label{int_1_case}
\end{lemma}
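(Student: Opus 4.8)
The plan is to bypass the arc-cutting induction used for the chained-triple lemmas and argue by linear algebra on first homology. First I would orient $\alpha_1,\dots,\alpha_m$ arbitrarily and form the $m\times m$ matrix $M=\big([\alpha_i]\cdot[\alpha_j]\big)_{1\le i,j\le m}$ of algebraic intersection numbers. Two consecutive curves $\alpha_i,\alpha_{i+1}$ are in minimal position with $\mathrm{int}(\alpha_i,\alpha_{i+1})=1$, so they meet transversally in a single point and $[\alpha_i]\cdot[\alpha_{i+1}]=\varepsilon_i\in\{+1,-1\}$; since $(\alpha_1,\dots,\alpha_m)$ is a cyclic chain, it has no chord, so $[\alpha_i]\cdot[\alpha_j]=0$ for all non-consecutive $i,j$ mod $m$. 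Hence $M$ is the skew-symmetric ``cyclic band'' matrix with $M_{i,i+1}=\varepsilon_i=-M_{i+1,i}$ (indices mod $m$) and all other entries $0$.

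Next I would bound $\operatorname{rank}M$ from below. Deleting the last row and column of $M$ deletes exactly the two edges of the cycle incident to the index $m$ and leaves the tridiagonal skew-symmetric matrix $M'$ of the linear chain $(\alpha_1,\dots,\alpha_{m-1})$, with $\pm1$ on the two off-diagonals and zeros elsewhere. A tridiagonal skew matrix of size $k$ has rank exactly $2\lfloor k/2\rfloor$: its leading $2\lfloor k/2\rfloor\times 2\lfloor k/2\rfloor$ principal block has a unique perfect matching, so its Pfaffian is $\pm\varepsilon_1\varepsilon_3\cdots=\pm1\neq0$, while an odd-size skew-symmetric matrix is singular. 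Taking $k=m-1$ gives $\operatorname{rank}M\ \ge\ \operatorname{rank}M'\ =\ 2\lfloor (m-1)/2\rfloor\ \ge\ m-2$.

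Finally, the algebraic intersection pairing on $H_1(S_g^p;\mathbb{Q})$ has radical spanned by the boundary classes, hence rank $2g$, and $M$ is the Gram matrix of this pairing restricted to $\langle[\alpha_1],\dots,[\alpha_m]\rangle$; since the Gram matrix of a bilinear form on a subspace has rank at most that of the form, $2g\ge\operatorname{rank}M\ge m-2$, i.e.\ $m\le 2g+2$. The only slightly technical point is the rank computation for $M$; the rest is routine. I note two optional rephrasings: one can run the same count through the genus of a regular neighborhood $N$ of $\alpha_1\cup\dots\cup\alpha_m$ (using $2g(N)\ge\operatorname{rank}M$ and $g(N)\le g$), which fits the subsurface language of the surrounding lemmas; and since $M$ is itself an odd-size skew matrix when $m$ is odd, one in fact gets $\operatorname{rank}M=m-1$ and the sharper bound $m\le 2g+1$ in that case.
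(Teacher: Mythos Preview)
Your proof is correct and takes a genuinely different route from the paper's. The paper argues purely topologically: it shows that the regular neighborhood $N=N(\alpha_1\cup\cdots\cup\alpha_m)$ has at most $3$ boundary components when $m$ is odd and at most $4$ when $m$ is even (by starting from the linear chain $N(\alpha_1\cup\cdots\cup\alpha_{m-1})$, whose boundary count is known, and attaching two bands for $\alpha_m$), then reads off $g(N)=\frac{2+m-n}{2}\ge\frac{m}{2}-1$ from the Euler characteristic and finishes with $g(N)\le g$. Your argument replaces this band-attachment and boundary-count by the rank of the algebraic intersection form: the principal $(m-1)\times(m-1)$ tridiagonal minor $M'$ has rank $2\lfloor(m-1)/2\rfloor\ge m-2$ via its Pfaffian, and the form on $H_1(S_g^p;\mathbb{Q})$ has rank $2g$, forcing $m\le 2g+2$. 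Both proofs are short; yours avoids the parity case split on boundary components and, as you observe, immediately yields the sharper $m\le 2g+1$ when $m$ is odd (the paper's computation actually gives this too, since $n\le 3$ in the odd case, but the paper does not isolate it). The paper's version has the advantage of staying in the subsurface language used throughout Section~\ref{cyc_chain_surf}; your closing remark that one can rephrase via $2g(N)\ge\operatorname{rank}M$ is exactly the bridge between the two arguments.
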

\begin{proof}
We will prove that the genus $g'$ of a subsurface $N(\alpha_1 \cup \cdots \cup \alpha_m)$ is at least $\frac{m}{2} - 1$. 
This fact directly implies that $m \leq 2g+2$, because the genus of a subsurface does not exceed that of the ambient surface. 
First, we show that the number $n$ of boundary components of $N(\alpha_1 \cup \cdots \cup \alpha_m)$ is either at most $3$ or at most $4$ according to whether $m$ is odd or even. 
Consider the case where $m$ is odd. 
The number of the boundary components of a sub-subsurface $N(\alpha_1 \cup \cdots \cup \alpha_{m-1})$ is $1$. 
Note that the subsurface $N(\alpha_1 \cup \cdots \cup \alpha_m)$ can be obtained from $(\alpha_1, \ldots, \alpha_{m-1})$ by attaching two bands to its two boundary components so that the bands connect the boundary components. 
Since these increase at most $1$ boundary component, $n$ is at most $3$. 
Next, we consider the case where $m$ is even. 
The number of boundary components of a sub-subsurface $N(\alpha_1 \cup \cdots \cup \alpha_{m-1})$ is $2$. 
Hence, $n$ is at most $4$. 

Using the Euler characteristic we obtain the equality $2- 2g' - n = - m$, i.e., $g' = \frac{2+m-n}{2}$. 
Thus $g' \geq \frac{m}{2} - 1$. 
\end{proof}

Note that a cyclic chain of length $m$ on a surface gives a shorter cyclic chain as follows. 

\begin{lemma}
Let $m$ be an integer greater than $3$, $S$ a surface with negative Euler characteristic. 
If $S$ has a cyclic chain of length $m$, then $S$ has a cyclic chain of length $m-1$. 
\label{chain_shorter}
\end{lemma}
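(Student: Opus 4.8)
\emph{Plan.} The idea is to produce the shorter chain by an explicit Dehn-twist surgery on the given one. Let $(\alpha_1, \ldots, \alpha_m)$ be a cyclic chain on $S$ with $m \geq 4$, and put $\beta := T_{\alpha_1}(\alpha_m)$, the image of the curve $\alpha_m$ under the Dehn twist about $\alpha_1$. I would show that $(\beta, \alpha_2, \alpha_3, \ldots, \alpha_{m-1})$ is a cyclic chain of length $m-1$. Since $\beta$ is the image of the essential curve $\alpha_m$ under a homeomorphism, it is again essential, so everything reduces to computing geometric intersection numbers and then reading off the combinatorial axioms.

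The only analytic ingredient is the standard estimate $\mathrm{int}(T_c^{k}(a), b) \geq |k|\,\mathrm{int}(a,c)\,\mathrm{int}(b,c) - \mathrm{int}(a,b)$ for $k \neq 0$, together with the trivial observation that a Dehn twist about $c$ fixes the isotopy class of any curve disjoint from $c$. First I would record, using that a cyclic chain has no chord (so non-consecutive curves are disjoint), that the only curves of the chain meeting $\alpha_1$ are its neighbours $\alpha_2$ and $\alpha_m$, and the only curves meeting $\alpha_m$ are its neighbours $\alpha_{m-1}$ and $\alpha_1$; here $m \geq 4$ is what makes $\alpha_1, \alpha_2, \alpha_{m-1}, \alpha_m$ four distinct curves and the word ``neighbour'' unambiguous. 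Hence $\mathrm{int}(\beta, \alpha_j) = \mathrm{int}(\alpha_m, \alpha_j) = 0$ for $3 \leq j \leq m-2$; $\mathrm{int}(\beta, \alpha_{m-1}) = \mathrm{int}(\alpha_m, \alpha_{m-1}) > 0$ (because $\alpha_1$ is disjoint from $\alpha_{m-1}$); $\mathrm{int}(\beta, \alpha_2) \geq \mathrm{int}(\alpha_1, \alpha_m)\,\mathrm{int}(\alpha_1, \alpha_2) - \mathrm{int}(\alpha_m, \alpha_2) = \mathrm{int}(\alpha_1, \alpha_m)\,\mathrm{int}(\alpha_1, \alpha_2) > 0$; and $\mathrm{int}(\beta, \alpha_1) = \mathrm{int}(\alpha_m, \alpha_1) > 0$.

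With these numbers in hand the verification is bookkeeping. In the cyclic order $\beta, \alpha_2, \ldots, \alpha_{m-1}$ the consecutive pairs are $(\beta, \alpha_2)$, the pairs $(\alpha_i, \alpha_{i+1})$ with $2 \leq i \leq m-2$, and $(\alpha_{m-1}, \beta)$, each of which has positive geometric intersection by the previous paragraph and the chain hypothesis; and every non-consecutive pair is either $(\alpha_i, \alpha_j)$ with $j > i+1$, disjoint because it was already non-consecutive in the original chain, or $(\beta, \alpha_j)$ with $3 \leq j \leq m-2$, disjoint by the computation above. So the new sequence is a semi-cyclic chain with no chord. Finally, $\mathrm{int}(\beta, \alpha_1) > 0$ while $\alpha_3, \ldots, \alpha_{m-1}$ are all disjoint from $\alpha_1$, and $\mathrm{int}(\beta, \alpha_2) > 0$, so $\beta$ is isotopic to none of $\alpha_2, \ldots, \alpha_{m-1}$; together with the fact that $\alpha_2, \ldots, \alpha_{m-1}$ are pairwise non-isotopic, this shows the new chain really consists of $m-1$ distinct curves.

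I do not foresee a serious obstacle here: the proof is a one-line surgery plus an intersection-number computation. The one thing to watch is the role of the hypothesis $m \geq 4$, which is used throughout to keep the four curves $\alpha_1, \alpha_2, \alpha_{m-1}, \alpha_m$ distinct and to license the ``non-consecutive, hence disjoint'' deductions; and the extreme case $m = 4$ — where the output chain has length $3$ and so has no non-consecutive pairs at all — should be checked to follow from exactly the same computations.
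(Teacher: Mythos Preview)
Your proposal is correct and is essentially the same argument as the paper's, carried out with more detail. The paper's proof is the single line ``$(\alpha_1,\ldots,\alpha_{m-2},T_{\alpha_m}(\alpha_{m-1}))$ is a cyclic chain of length $m-1$'', which is your Dehn-twist surgery up to a cyclic relabeling of the indices; your added intersection-number computations simply make explicit what the paper leaves to the reader.
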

\begin{proof}
Let $(\alpha_1, \ldots, \alpha_m)$ be a cyclic chain on $S$, $T_{\alpha_m}$ be a Dehn twist along a closed curve $\alpha_m$. 
Then $(\alpha_1, \ldots, \alpha_{m-2}, T_{\alpha_m}(\alpha_{m-1}))$ is a cyclic chain of length $m-1$. 
\end{proof}

Let us compute the maximal lengths of cyclic chains on surfaces. 

\begin{lemma}
The complement graph $C_m^c$ of the cyclic graph on $m$ vertices is a full subgraph of $\mathcal{C}(S_{g}^{p})$ if and only if 
\begin{eqnarray*}
m \leq \left\{ \begin{array}{ll}
p & (g = 0, \ p \geq 5) \\
3 & (g, p)=(1, 1) \\
5 & (g, p)=(1, 2) \\
p + 2 & (g=1, \ p \geq 3)  \\
2g + 2 & (g \geq 2, \ p= 0) \\
2g + p + 1 & (g \geq 2, \ 1 \leq p \leq 2). \\
2g + p & (g \geq 2, \ p \geq 3). \\
\end{array} \right.
\end{eqnarray*}
 \label{cyclic_full_subgraph}
\end{lemma}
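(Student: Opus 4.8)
Since $C_m^c \leq \mathcal{C}(S_g^p)$ is equivalent to the existence of a cyclic chain of length $m$ on $S_g^p$ (the Proposition above), the statement amounts to computing the maximal length of a cyclic chain on $S_g^p$, and I would prove the two inequalities separately. For the ``if'' part it suffices, by Lemma \ref{chain_shorter}, to exhibit one cyclic chain realizing the claimed maximal length for each $(g,p)$, the cases $m\le 3$ being handled directly (a single essential curve; two intersecting curves; three pairwise intersecting curves). The required chains are built by the usual recipe: a chain of curves running through the $g$ handles, with consecutive ones meeting once, concatenated with curves each encircling two adjacent punctures, the whole family arranged cyclically; for spheres one keeps only the puncture-encircling curves, and for closed surfaces one uses the cyclic chain of $2g+2$ curves meeting once. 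In each case one checks that consecutive curves are not disjoint, non-consecutive curves are disjoint, and all curves are essential and pairwise non-isotopic.

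For the ``only if'' part, let $(\alpha_1,\dots,\alpha_m)$ be a cyclic chain on $S=S_g^p$. If $\mathrm{int}(\alpha_i,\alpha_{i+1})=1$ for every $i$, then Lemma \ref{int_1_case} gives $m\le 2g+2$, which is at most the claimed bound in every case except $(g,p)=(1,1)$; in that case $m\ge 4$ would force the non-consecutive curves $\alpha_1,\alpha_3$ to be disjoint, hence isotopic on $S_1^1$, contradicting that they are distinct vertices of $\mathcal{C}(S_1^1)$, so $m\le 3$. Otherwise some consecutive pair intersects at least twice, and I would cut $S$ along a suitably chosen curve $\alpha_j$ of the chain. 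The curves $\alpha_i$ with $i\ne j,j\pm 1$ survive as an essential, pairwise non-isotopic linear chain of length $m-3$ on $S\setminus\mathrm{Int}N(\alpha_j)$, and choosing a component $\delta_1$ of $\alpha_{j+1}\cap(S\setminus\mathrm{Int}N(\alpha_j))$ meeting $\alpha_{j+2}$ and a component $\delta_2$ of $\alpha_{j-1}\cap(S\setminus\mathrm{Int}N(\alpha_j))$ meeting $\alpha_{j-2}$ produces a chained triple $(\delta_1,(\alpha_{j+2},\dots,\alpha_{j-2}),\delta_2)$ of length $m-3$. Since cutting a surface along an essential curve yields either $S_{g-1}^{p+2}$ or a disjoint union $S_{g_1}^{p_1}\sqcup S_{g_2}^{p_2}$ with $g_1+g_2=g$, $p_1+p_2=p+2$, and (by essentiality of $\alpha_j$) at least three boundary components on any genus-zero piece, the bounds of Lemmas \ref{triple_03}--\ref{triple_gp} applied to the component containing the triple give the claimed inequality. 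When $g=0$ and $p\ge 5$ this already finishes the argument, because the constraint $p_i\le p-1$ forces $m-3\le (p-1)-2$.

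The main obstacle is the remaining ``off-by-one'' cases, which occur exactly when $g\ge 1$ and $p\ge 3$ (together with a few separating subcases of Case B, such as a piece homeomorphic to $S_1^2$): there the plain chained-triple bound overshoots the claimed value by exactly one. To remove this surplus I would exploit the fact that the chain is cyclic rather than merely linear: the endpoints of $\delta_1$ and $\delta_2$ lie only on the one or two boundary circles created by cutting along $\alpha_j$, and by choosing $\alpha_j$ appropriately relative to the pair of curves meeting at least twice one arranges that the resulting chained triple either satisfies condition $(*)$ — so that Lemmas \ref{induction_start}, \ref{sep_triple_0p}, \ref{star_trple_12} and \ref{R_trple_12} lower the bound by one — or extends to a $Y$-chained quadruple, whose length bounds in Lemmas \ref{Y_04}, \ref{Y_0p} and \ref{Y_general} are precisely one smaller than the plain chained-triple bounds; in the residual configurations the extremal case is impossible outright, since it would force $\alpha_j$ or some $\alpha_i$ to be boundary parallel. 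Organizing this dichotomy according to the location of the intersection of size $\ge 2$ and the separating/non-separating type of the cutting curve, and checking that one of these refinements always applies, is the technical heart of the argument.
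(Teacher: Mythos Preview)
Your overall strategy matches the paper's: construct explicit cyclic chains for the ``if'' direction (with Lemma~\ref{chain_shorter} giving the shorter ones), and for the ``only if'' direction reduce via Lemma~\ref{int_1_case} and cutting along a curve of the chain to the chained-triple bounds, using the $Y$-chained quadruple lemmas to close the off-by-one gap when $p\ge 3$.

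The one place where your proposal diverges from the paper and does not quite work as written is the dichotomy you set up for the off-by-one cases. You propose that after cutting the resulting triple either satisfies condition~$(*)$ or extends to a $Y$-chained quadruple. But condition~$(*)$ has no reason to hold after cutting along a closed curve: when $\alpha_j$ is non-separating, both $\delta_1$ (coming from $\alpha_{j+1}$) and $\delta_2$ (coming from $\alpha_{j-1}$) will in general have endpoints on each of the two new boundary circles, so $(*)$ fails, and in that situation there is no second arc to build a $Y$-quadruple from either. The paper's dichotomy is different and exhaustive. It chooses the cutting curve to be $\alpha_m$ with $\mathrm{int}(\alpha_{m-1},\alpha_m)\ge 2$, so that $\alpha_{m-1}$ contributes \emph{at least two} arcs $\delta_2,\delta_3$ on the cut surface, one of which (say $\delta_3$) meets $\alpha_{m-2}$. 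If $\delta_2$ also meets $\alpha_{m-2}$, then $(\delta_1,(\alpha_2,\dots,\alpha_{m-2}),\delta_2,\delta_3)$ is a $Y$-chained quadruple and Lemmas~\ref{Y_0p}, \ref{Y_12}, \ref{Y_general} give the sharp bound. If $\delta_2$ does \emph{not} meet $\alpha_{m-2}$, then $\delta_2$ is disjoint from the entire chained triple $(\delta_1,(\alpha_2,\dots,\alpha_{m-2}),\delta_3)$ and is isotopic to neither $\delta_1$ nor $\delta_3$; one cuts once more along $\delta_2$, reducing the ambient surface by one further unit of complexity, after which the ordinary chained-triple bounds of Lemmas~\ref{triple_0p}--\ref{triple_gp} already suffice. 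Condition~$(*)$ and the lemmas built on it are used only internally in the proof of Lemma~\ref{triple_gp}, not at this stage. Replacing your $(*)$-branch with this ``cut along the extra arc'' branch makes your argument go through and brings it in line with the paper's.
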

\begin{proof}
{\bf Case of g = 0 and $p \geq 5$}. 
From \cite[Figure 7]{Katayama--Kuno18}, we can see that $S_{0}^{p}$ contains a cyclic chain of length $p$. 
By Lemma \ref{chain_shorter}, we can obtain full embeddings of shorter cyclic graphs into $\mathcal{C}(S_{0}^{p})$. 
On the other hand, $S_{0}^{p}$ does not contain a linear chain of length $\geq p$ by \cite[Theorem 2.2]{Katayama--Kuno18}. 
Since any cyclic chain of length $m$ induces a linear chain of length $m-1$, we have $C_{m}^c \not \leq \mathcal{C}(S_{0}^{p})$ for each $m \geq p+1$. 

{\bf Case of $(g, p)=(1, 1)$}. 
The left picture in Figure \ref{S11_C3} shows $C_3^c \leq \mathcal{C}(S_{1}^{1})$. 
\begin{figure}
\includegraphics[clip, scale=0.40]{./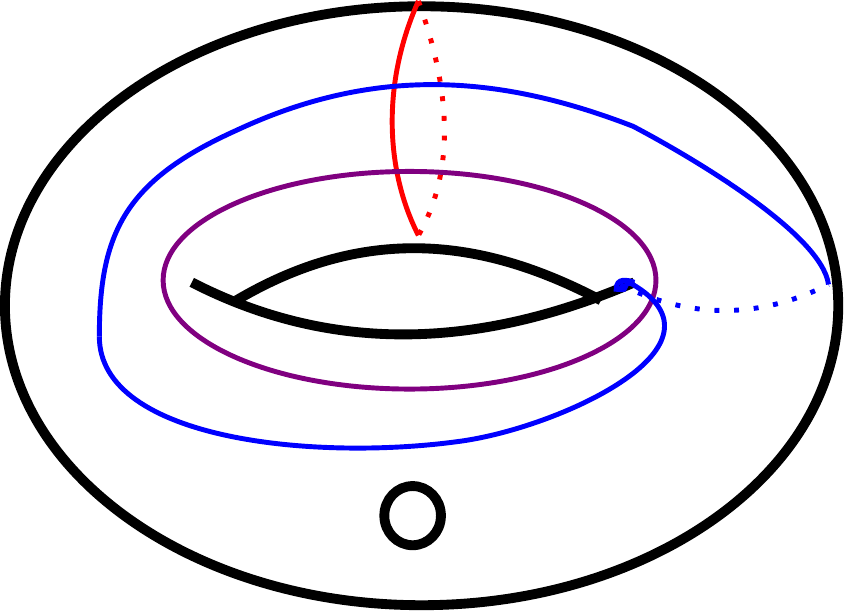}
\hspace{5mm} \includegraphics[clip, scale=0.40]{./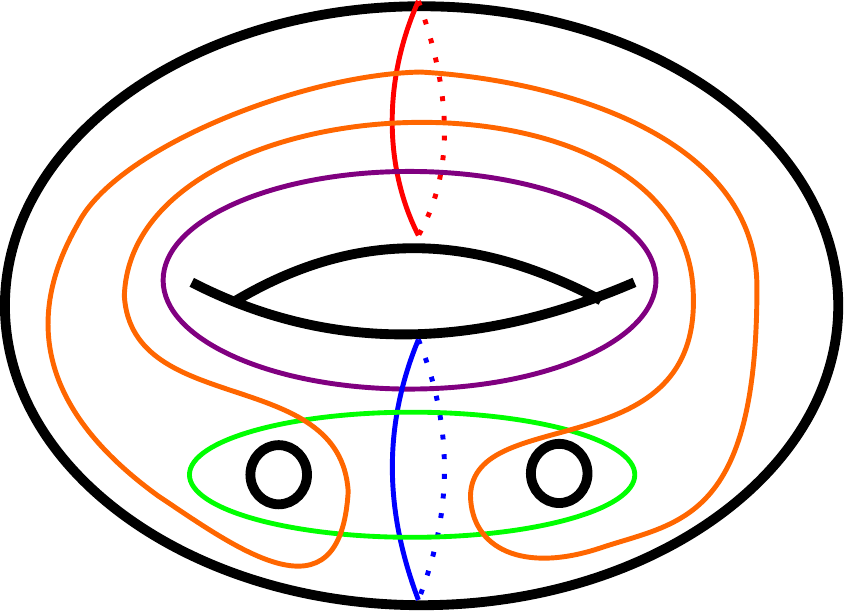}
\caption{Longest cyclic chains on tori with at most two boundary components. \label{S11_C3}}
\end{figure}
Note that a cyclic chain of length $\geq 4$ contains a pair of non-isotopic disjoint closed curves. 
A one-holed torus does not contain such a pair. 

{\bf Case of $(g, p)=(1, 2)$}. 
The right picture in Figure \ref{S11_C3} shows $C_5^c \leq \mathcal{C}(S_{1}^{2})$. 
By Lemma \ref{cahin_shorter}, we can obtain full embeddings of $C_3^c$ and $C_4^c$ into $\mathcal{C}(S_{1}^{2})$. 
A cyclic chain of length $\geq 6$ contains a triple of pairwise non-isotopic disjoint closed curves. 
A two-holed torus does not contain such a triple. 

{\bf Case of $g=1, \ p \geq 3$}. 
Figure \ref{S1p_Cn} shows $C_{p+2}^c \leq \mathcal{C}(S_{1}^{p})$. 
\begin{figure}
\includegraphics[clip, scale=0.40]{./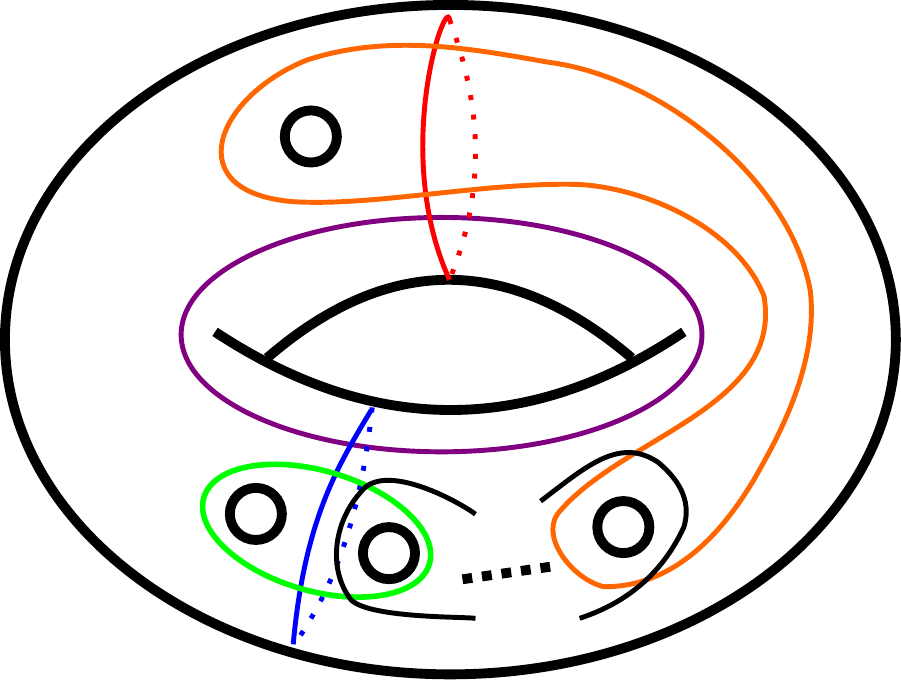}
\caption{Longest cyclic chains on tori with at least three boundary components. \label{S1p_Cn}}
\end{figure}
Suppose that $( \alpha_1, \ldots, \alpha_m )$ is a cyclic chain on $S_{1}^{p}$. 
We may assume that $m \geq p+2 \geq 5$. 
By Lemma \ref{int_1_case}, $( \alpha_1, \ldots, \alpha_m )$ contains a pair of closed curves whose intersection number is greater than $1$. 
Hence, without loss of generality we may assume that the intersection number of $\alpha_{m-1}$ and $\alpha_m$ is greater than $1$. 
Thus, a connected component $S'$ of $S_{1}^{p} \setminus \mathrm{Int}N(\alpha_m)$ contains a quadruple $L := (\delta_1, (\alpha_2, \ldots, \alpha_{m-2}), \delta_2, \delta_3)$. 
The quadruple $L$ is $Y$-chained or not according to whether $\delta_2$ intersects $\alpha_{m-2}$ or not. 

(i) Suppose that $\delta_2$ intersects $\alpha_{m-2}$. 
Then, $L$ is $Y$-chained on $S'$. 
By Lemmas \ref{Y_0p}, \ref{Y_12} and \ref{Y_general} we have $m-3 \leq (p+2) - 3$ (resp.\ $m-3 \leq 2$) if $p \geq 4$ (resp.\ $p = 3$). 

(ii) Suppose that $\delta_2$ is disjoint from $\alpha_2$. 
Then $\delta_2$ is disjoint from a chained triple $(\delta_1, (\alpha_2, \ldots, \alpha_{m-2}), \delta_3)$. 
Since $\delta_2$ is isotopic to neither $\delta_1$ nor $\delta_3$, a connected component of $S_{1}^{p} \setminus \mathrm{Int}(N(\alpha_m) \cup N(\delta_2))$ contains $(\delta_1, (\alpha_2, \ldots, \alpha_{m-2}), \delta_3)$. 
By Lemmas \ref{triple_0p}, \ref{triple_11} and \ref{triple_1p}, we have $m-3 \leq p-1$. 
Thus, we have $m \leq p+2$. 

{\bf Case of $g \geq 2$ and $p = 0$}. 
From \cite[Figure 7]{Katayama--Kuno18}, we can see that $S_{g}$ contains a cyclic chain of length $2g+2$. 
On the other hand, $S_{0}^{p}$ does not contain a linear chain of length $\geq 2g+2$ by \cite[Theorem 2.2]{Katayama--Kuno18}. 
Hence, $C_{m}^c \not \leq \mathcal{C}(S_{0}^{p})$ for each $m \geq 2g+3$. 

{\bf Case of $g \geq 2, \ 1 \leq p \leq 2$}. 
The left picture in Figure \ref{Sg1_Cn} shows $C_{2g+2}^c \leq \mathcal{C}(S_{g}^{1})$ and the right picture shows $C_{2g+3}^c \leq \mathcal{C}(S_{g}^{2})$. 
Suppose that $( \alpha_1, \ldots, \alpha_m )$ is a cyclic chain on $S_{g}^{p}$. 
We may assume that $m \geq 2g+p+1 > 3$. 
Then a connected component $S'$ of $S_{g}^{p} \setminus \mathrm{Int}N(\alpha_m)$ contains a chained triple $(\delta_{a}, \{ \alpha_2, \ldots, \alpha_{m-2} \}, \delta_{b})$. 
Hence, by Lemma \ref{triple_gp} we have $m-3 \leq 2(g-1)+(p+2)-2$ i.e., $m \leq 2g+p+1$. 

{\bf Case of $g \geq 2, \ p \geq 3$}. 
Figure \ref{Sgp_Cn} shows $C_{2g+p}^c \leq \mathcal{C}(S_{g}^{p})$. 
Suppose that $( \alpha_1, \ldots, \alpha_m )$ is a cyclic chain on $S_{g}^{p}$. 
We may assume that $m \geq 2g + p > 2g+2 = 6$. 
By Lemma \ref{int_1_case}, $( \alpha_1, \ldots, \alpha_m )$ contains a pair of closed curves whose intersection number is greater than $1$. 
Hence, without loss of generality we may assume that the intersection number of $\alpha_{m-1}$ and $\alpha_m$ is greater than $1$. 
Then a connected component $S'$ of $S_{g}^{p} \setminus \mathrm{Int}N(\alpha_m)$ contains a quadruple $L := (\delta_1, (\alpha_2, \ldots, \alpha_{m-2}), \delta_2, \delta_3)$. 
The quadruple $L$ is $Y$-chained or not according to whether $\delta_2$ intersects $\alpha_{m-2}$ or not. 

(i) Suppose that $\delta_2$ intersects $\alpha_{m-2}$. 
Then $L$ is $Y$-chained on $S'$. 
By Lemmas \ref{Y_0p} and \ref{Y_general}, we have $m-3 \leq 2(g-1) + (p+2) - 3$, i.e., $m \leq 2g + p$. 

(ii) Suppose that $\delta_2$ is disjoint from $\alpha_2$. 
Then $\delta_2$ is disjoint from a chained triple $(\delta_1, (\alpha_2, \ldots, \alpha_{m-2}),  \delta_3)$. 
Since $\delta_2$ is neither isotopic to $\delta_1$ nor $\delta_3$, a connected component of $S_{g}^{p} \setminus \mathrm{Int}N(\alpha_m) \cup N(\delta_2)$ contains $(\delta_1, (\alpha_2, \ldots, \alpha_{m-2}), \delta_3)$. 
From Lemmas \ref{triple_0p}, \ref{triple_11}, \ref{triple_1p} and \ref{triple_gp}, it holds that $m-3 \leq 2(g-2)+(p+3)-2$, i.e., $m \leq 2g+p$. 
\end{proof}

\begin{figure}
\includegraphics[clip, scale=0.35]{./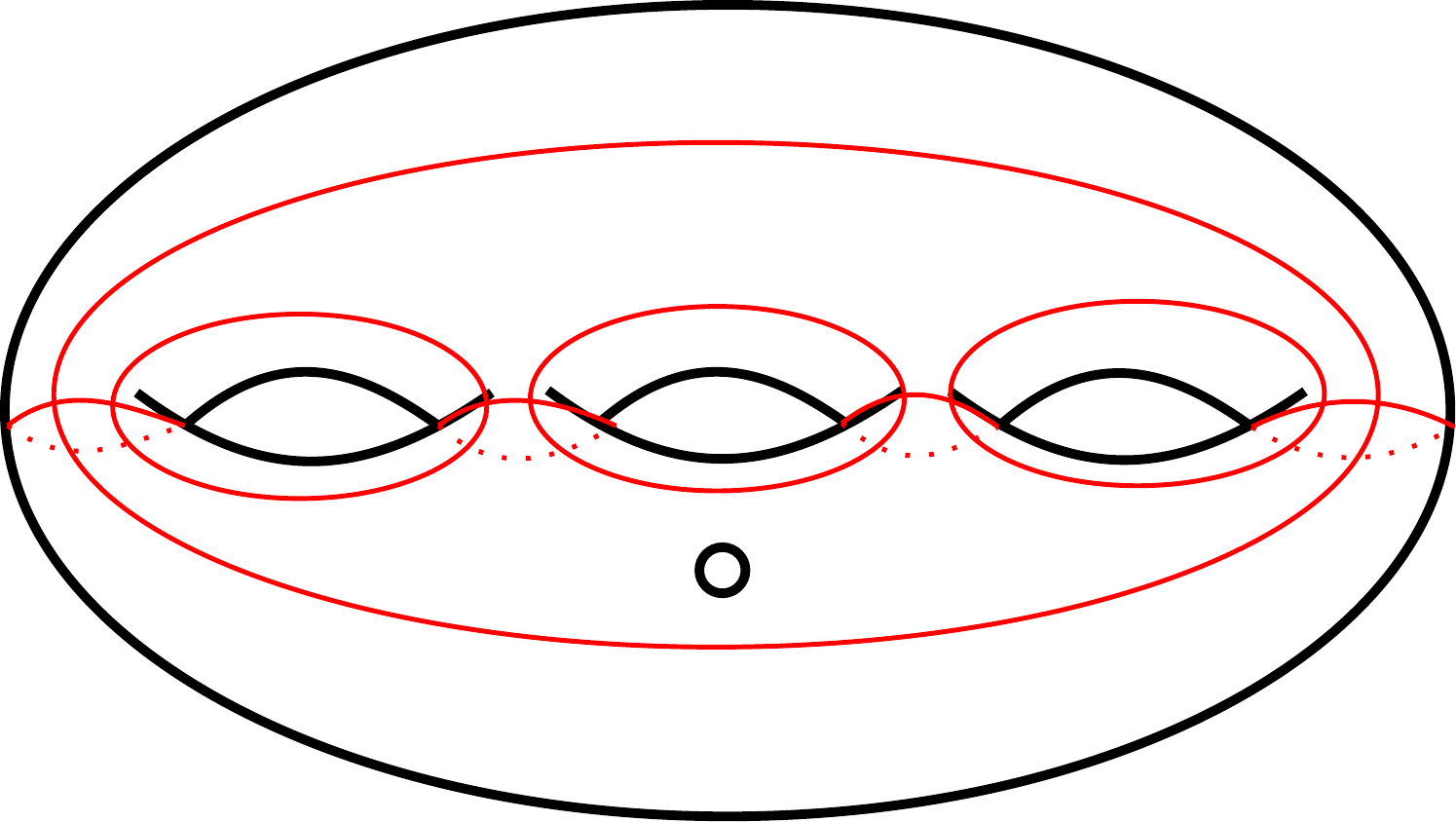}
\hspace{5mm} \includegraphics[clip, scale=0.45]{./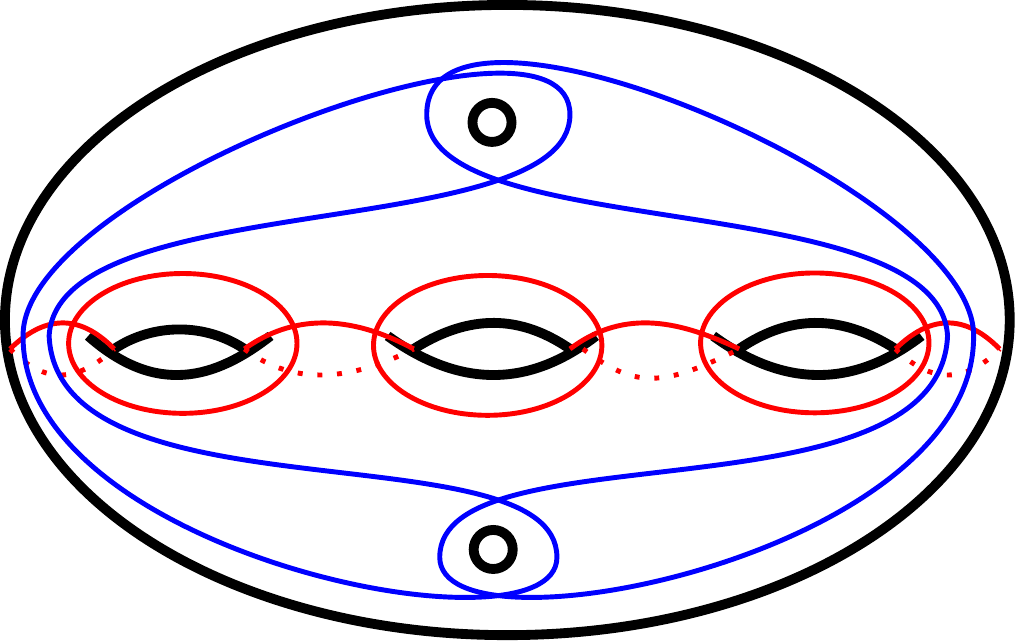}
\caption{Longest cyclic chains on surfaces of genus $\geq 2$ with at most two boundary components. \label{Sg1_Cn}}
\end{figure}

\begin{figure}
\centering
\includegraphics[clip, scale=0.45]{./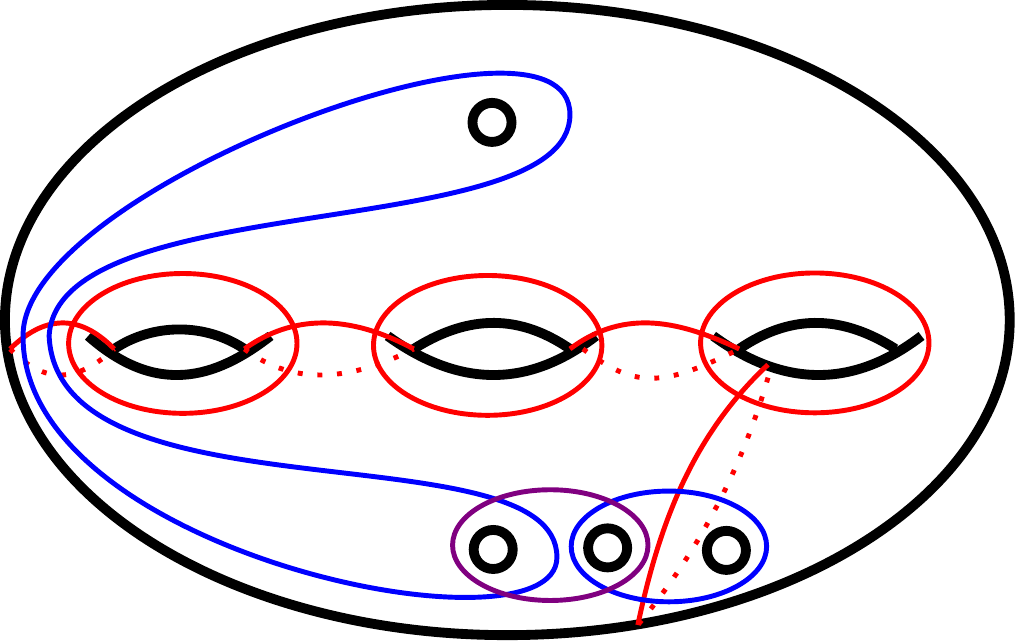}
\caption{Longest cyclic chains on surfaces of genus $\geq 2$ with at least three boundary components.  \label{Sgp_Cn}}
\end{figure}

For a pair of graphs $\Lambda_1$ and $\Lambda_2$, the {\it graph-join} $*_{i=1}^{2} \Lambda_i = \Lambda_1 * \Lambda_2$ is a graph such that the vertex set is a disjoint union of $V(\Lambda_1)$ and $V(\Lambda_2)$, and that the edge set is a disjoint union of $E(\Lambda_1)$, $E(\Lambda_2)$ and $\{ [u, v] \mid u \in V(\Lambda_1), \ v \in V(\Lambda_2) \}$. 

\begin{lemma}
$C_m^c * K_1$ is a full subgraph of $\mathcal{C}(S_{g}^{p})$ if and only if 
\begin{eqnarray*}
m \leq \left\{ \begin{array}{ll}
3 & (g, p) \in \{(0, 5), (1, 2) \} \\
p-1 & (g=0, \ p \geq 6) \\
p + 2 & (g = 1, \ p \geq 3) \\
2g + 1 & (g \geq 2, \ p = 0)  \\
2g + p & (g \geq 2, \ p \geq 1) \\
\end{array} \right.
\end{eqnarray*}
 \label{cyclic_plus_full_subgraph}
\end{lemma}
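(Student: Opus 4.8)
The plan is to restate the lemma in the language of curve graphs and then ``cone off'' the $K_1$-vertex. A full embedding $C_m^c * K_1 \hookrightarrow \mathcal{C}(S_g^p)$ is precisely the data of a cyclic chain $(\alpha_1,\ldots,\alpha_m)$ on $S_g^p$ together with an essential simple closed curve $\beta$ that is disjoint from every $\alpha_i$ and isotopic to none of them; this is the $C_m^c * K_1$ analogue of the Proposition characterizing cyclic chains by $C_m^c \le \mathcal{C}(S)$. The key reduction is that, since $\beta$ is disjoint from each $\alpha_i$ while consecutive $\alpha_i$ intersect, the whole chain lies in one connected component $S'$ of $S_g^p\setminus\mathrm{Int}N(\beta)$; each $\alpha_i$ stays essential in $S'$ (an isotopy in $S'$ extends to $S_g^p$, and $\alpha_i\not\simeq\beta$), and $(\alpha_1,\ldots,\alpha_m)$ remains a cyclic chain on $S'$ because intersection numbers and isotopy classes are unchanged by cutting along a disjoint curve. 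Hence $C_m^c\le\mathcal{C}(S')$, and Lemma \ref{cyclic_full_subgraph} bounds $m$ in terms of the homeomorphism type of $S'$.

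For the ``only if'' direction I would enumerate the possible types of $S'$. For $g=0$ every essential $\beta$ separates, so $S_0^p\setminus\mathrm{Int}N(\beta)\cong S_0^{p_1}\sqcup S_0^{p_2}$ with $p_1+p_2=p+2$ and $p_i\ge 3$; the chain sits in a piece with at most $p-1$ boundary circles, and Lemma \ref{cyclic_full_subgraph} (together with the fact that $S_0^4$ carries no two disjoint essential curves, so cyclic chains there have length $\le 3$) gives $m\le p-1$. If $\beta$ is non-separating then $S'\cong S_{g-1}^{p+2}$, and Lemma \ref{cyclic_full_subgraph} yields $m\le 2g+p$ when $g\ge 1, p\ge 1$ (using the $S_1^{p+2}$ case when $g=2$) and $m\le 2g+1$ when $p=0$ (via $S_{g-1}^2$, e.g.\ $S_1^2$ when $g=2$). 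If $\beta$ is separating with $g\ge 1$, then at least one of the two pieces has genus $\le g-1$ or fewer boundary circles than $p$ plus the $\beta$-circle, and in each subcase Lemma \ref{cyclic_full_subgraph} produces a bound that does not exceed the claimed one; for instance for $g=1, p\ge 3$ a separating $\beta$ leaves either $S_1^{p_1}$ with $p_1\le p-1$ or $S_0^{p_2}$ with $p_2\le p+1$, so $m\le p+1<p+2$ (except the degenerate small pieces, all of which give $m\le 5\le p+2$). The exceptional surfaces $S_0^5$ and $S_1^2$ are handled identically: any essential $\beta$ leaves a component homeomorphic to $S_0^3$, $S_0^4$, or $S_1^1$, each carrying only cyclic chains of length $\le 3$.

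For the ``if'' direction I would realize each equality case by putting a maximal cyclic chain on a subsurface and using a boundary circle as $\beta$. Explicitly: for $g=0, p\ge 6$ take a separating curve cutting off $S_0^{p-1}$ and a cyclic chain of length $p-1$ there (Lemma \ref{cyclic_full_subgraph}); for $g=1, p\ge 3$ take a non-separating $\beta$ with $S_1^p\setminus\mathrm{Int}N(\beta)\cong S_0^{p+2}$ carrying a cyclic chain of length $p+2$; for $g\ge 2, p=0$ take non-separating $\beta$ with complement $S_{g-1}^2$ (chain of length $2g+1$); for $g\ge 2, p\ge 1$ take non-separating $\beta$ with complement $S_{g-1}^{p+2}$ (chain of length $2g+p$); and for $S_0^5$, $S_1^2$ take $\beta$ cutting off $S_0^4$, resp.\ $S_1^1$, and place three pairwise-intersecting curves there. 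In every instance the chain curves are essential in the subsurface, hence not isotopic to its boundary, hence not isotopic to $\beta$, so the resulting subgraph is full; one could alternatively draw pictures in the spirit of Figures \ref{S1p_Cn}--\ref{Sgp_Cn}.

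The main obstacle I anticipate is the bookkeeping in the ``only if'' direction: one must list all homeomorphism types of the component $S'$ containing the chain, which needs the cutting formulas for separating and non-separating simple closed curves, the constraints forcing an essential curve to bound pieces of sufficient complexity, and careful treatment of the low-complexity pieces $S_0^3, S_0^4, S_1^1, S_1^2$ where Lemma \ref{cyclic_full_subgraph} gives small (or no) bounds — and then verify in each subcase that the resulting bound agrees with the table, with equality attained exactly in the non-separating case (or, for $g=0$, in the balanced separating case).
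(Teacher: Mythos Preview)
Your proposal is correct and follows essentially the same strategy as the paper: for the ``only if'' direction you cut along the $K_1$-curve $\beta$, observe that the cyclic chain survives in a single component $S'$, and bound $m$ via Lemma~\ref{cyclic_full_subgraph} applied to the possible homeomorphism types of $S'$; for the ``if'' direction you realize the configuration by placing a maximal cyclic chain on a suitable subsurface and taking a boundary curve as $\beta$. The paper phrases the ``if'' constructions as annular or hyperbolic extensions rather than as explicit choices of $\beta$, but the content is the same; your uniform use of a non-separating $\beta$ when $g\ge 1$ (so that $S'\cong S_{g-1}^{p+2}$) is in fact slightly more direct, since for $g=1$, $p\ge 4$ it hits the sharp value $p+2$ immediately, whereas the paper's hyperbolic extension $S_1^{p-1}\hookrightarrow S_1^p$ only yields $p+1$ and leans on an external reference to close the gap.
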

\begin{proof}
We will reduce the ``if part" of Lemma \ref{cyclic_plus_full_subgraph} to Lemma \ref{cyclic_full_subgraph}. 
Suppose $g \geq 2$ and $p \leq 1$. 
Then, any cyclic chain of length $m$ on $S_{g-1}^{p+2}$ induces a cyclic chain of length $m$ on $S_{g}^{p}$ via an annular extension $S_{g-1}^{p+2} \rightarrow S_{g}^{p}$. 
Hence, the disjoint union of the core of the attached annulus and the induced cyclic chain of length $m$ induces a full embedding $C_{m}^c * K_1 \leq \mathcal{C}(S_{g}^{p})$. 
Consequently, $C_m^c \leq \mathcal{C}(S_{g-1}^{p+2})$ implies $C_m^c * K_1 \leq \mathcal{C}(S_{g}^{p})$.  
Next, we consider the case where $p \geq 2$ (and $g \geq 0$). 
Use a hyperbolic extension $S_{g}^{p-1} \rightarrow S_{g}^{p}$.  
Then $C_{m}^c \leq \mathcal{C}(S_{g}^{p-1})$ implies $C_{m}^{c} * K_1 \leq \mathcal{C}(S_{g}^{p})$. 
Hence, Lemma \ref{cyclic_full_subgraph} together with \cite[Figure 7]{Katayama--Kuno18} completes the ``if part" of Lemma \ref{cyclic_plus_full_subgraph}. 

Now, we prove the ``only if part" of Lemma \ref{cyclic_plus_full_subgraph}. 

{\bf Case of $g=0, \ p \geq 5$}. 
Suppose that $C_{m}^c * K_1 \leq \mathcal{C}(S_{0}^{p})$. 
Then, a cyclic chain of length $m$ is contained in a connected component $S'$ of the surface $S_{0}^{p} \setminus \mathrm{Int}N(\alpha)$, where $\alpha$ is a closed curve corresponding to the graph $K_1$. 
Since $S'$ is a sphere with $\leq p-1$ holes, $m$ must be not more than $p-1$. 

{\bf Case of $(g, p)=(1, 2)$}. 
Suppose that $C_{m}^c * K_1 \leq \mathcal{C}(S_{1}^{2})$. 
Then, a cyclic chain of length $m$ is contained in a connected component $S'$ of the surface $S_{1}^{2} \setminus \mathrm{Int}N(\alpha)$, where $\alpha$ is a closed curve corresponding to the graph $K_1$. 
Since $S'$ is homeomorphic to either $S_{0}^{4}$ or $S_{1}^{1}$, we have $m \leq 3$. 

{\bf Case of $g=1, \ p \geq 3$}. 
Suppose that $C_{m}^c * K_1 \leq \mathcal{C}(S_{1}^{p})$. 
Then, a cyclic chain of length $m$ is contained in a connected component $S'$ of the surface $S_{1}^{2} \setminus \mathrm{Int}N(\alpha)$, where $\alpha$ is a closed curve corresponding to the graph $K_1$. 
The component $S'$ is homeomorphic to either $S_{0}^{p'}$ or $S_{1}^{p''}$, where $p' \leq p+2$ and $p'' \leq p-1$. 
Hence, by Lemma \ref{cyclic_full_subgraph}, we have $m \leq p+2$. 

{\bf Case of $g \geq 2, \ p=0$}. 
Suppose that $C_{m}^c * K_1 \leq \mathcal{C}(S_{2}^{0})$. 
Then, a cyclic chain of length $m$ is contained in a connected component $S'$ of the surface $S_{g}^{0} \setminus \mathrm{Int}N(\alpha)$, where $\alpha$ is a  closed curve corresponding to the graph $K_1$. 
The component $S'$ is homeomorphic to $S_{g'}^{p'}$, where $g' = g-1$ and $p' \leq 2$. 
Hence, we have $m \leq 2(g-1) + 2 + 1 = 2g+1$ by Lemma \ref{cyclic_full_subgraph}. 

{\bf Case of $g \geq 2, \ p \geq 1$}. 
Suppose that $C_{m}^c * K_1 \leq \mathcal{C}(S_{g}^{p})$. 
Then, a cyclic chain of length $m$ is contained in a connected component $S'$ of the surface $S_{g}^{p} \setminus \mathrm{Int}N(\alpha)$, where $\alpha$ is a  closed curve corresponding to the graph $K_1$. 
The component $S'$ is homeomorphic to $S_{g'}^{p'}$ with one of the following: 
\begin{enumerate}
 \item[(a)] $g' = g$ and $1 \leq p' \leq p-1$, 
 \item[(b)] $g' \leq g-1$ and $p' \leq p+2$.  
\end{enumerate} 
Hence, we have $m \leq 2(g-1)+(p+2) = 2g+p$ by Lemma \ref{cyclic_full_subgraph}. 
\end{proof} 

We denote the path graph on $m$ vertices by $P_m$ (the geometric realization is homeomorphic to the unit interval). 

\begin{lemma}
Suppose that $p \geq 5$. 
If $P_{m}^c * K_2 \leq \mathcal{C}(S_{0}^{p})$, then $m \leq p-3$. 
 \label{0p_path_plus}
\end{lemma}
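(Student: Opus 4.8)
The plan is to translate the graph-theoretic hypothesis into a configuration of curves on $S_0^p$ and then count boundary components. A full embedding $P_m^c * K_2 \leq \mathcal{C}(S_0^p)$ restricts on the $P_m^c$-factor to a full embedding $P_m^c \leq \mathcal{C}(S_0^p)$; since the non-edges of $P_m^c$ are exactly the consecutive pairs in the path $P_m$, this is the same as a linear chain $(\alpha_1, \ldots, \alpha_m)$ on $S_0^p$ (consecutive curves in minimal position and not disjoint, non-consecutive curves disjoint, all $m$ pairwise non-isotopic). On the $K_2$-factor it gives two essential simple closed curves $\beta_1, \beta_2$ that are disjoint and non-isotopic; and since each vertex of $K_2$ is joined to each vertex of $P_m^c$, each $\beta_i$ is disjoint from (and non-isotopic to) every $\alpha_j$.

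Next I would cut $S_0^p$ along $\beta_1$ and then along $\beta_2$. Because $\beta_1$ is essential on the planar surface $S_0^p$, it separates $S_0^p$ into two holed spheres, and essentiality forces each of them to contain at least two of the original boundary components, hence at least three boundary components once the new copy of $\beta_1$ is included. The curve $\beta_2$, being disjoint from $\beta_1$, lies in one of these two pieces, say $Q$, and it is still essential in $Q$: it bounds no disk in $S_0^p$ (hence none in $Q$), it is not isotopic to a boundary component of $S_0^p$ (hence not to an ``old'' boundary of $Q$), and it is not isotopic to the boundary of $Q$ produced by $\beta_1$ because $\beta_1$ and $\beta_2$ are non-isotopic. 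Therefore $\beta_2$ separates $Q$ into two holed spheres, each again with at least three boundary components. Hence $S_0^p \setminus \mathrm{Int}N(\beta_1 \cup \beta_2)$ has exactly three components $S_0^{q_1}, S_0^{q_2}, S_0^{q_3}$ with each $q_i \geq 3$ and $q_1 + q_2 + q_3 = p + 4$, since cutting along each $\beta_i$ creates two new boundary circles.

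Finally, each $\alpha_j$ is disjoint from $\beta_1 \cup \beta_2$ and so lies in one of the three components; since consecutive curves of a linear chain intersect, all of $\alpha_1, \ldots, \alpha_m$ lie in a single component, say $S_0^{q_1}$. Then $q_1 = (p+4) - q_2 - q_3 \leq (p+4) - 3 - 3 = p - 2$, and since $S_0^{q_1}$ carries a linear chain of length $m$ we conclude $m \leq q_1 - 1 \leq p - 3$ by the bound on lengths of linear chains on holed spheres (\cite[Theorem 2.2]{Katayama--Kuno18}, the same fact used in the proof of Lemma \ref{cyclic_full_subgraph}). The only delicate points are the bookkeeping that $\beta_2$ remains essential after cutting along $\beta_1$ and that all three pieces really have at least three boundary components --- both of which rely on essentiality of the $\beta_i$ together with the non-isotopy of $\beta_1$ and $\beta_2$ --- and the observation that the chain cannot be distributed among several components; everything else is an Euler-characteristic count, which I expect to be routine.
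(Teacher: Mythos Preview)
Your argument is correct and follows essentially the same approach as the paper: translate the full embedding into a linear chain $(\alpha_1,\ldots,\alpha_m)$ together with two disjoint curves $\beta_1,\beta_2$, cut along $\beta_1\cup\beta_2$, observe that the chain lands in a single component which is a sphere with at most $p-2$ holes, and invoke \cite[Theorem 2.2]{Katayama--Kuno18}. The paper simply asserts that the relevant component has at most $p-2$ boundary circles, whereas you spell out the two-step cutting and the count $q_1+q_2+q_3=p+4$ with each $q_i\ge 3$; this extra care (including the check that $\beta_2$ stays essential after the first cut) is fine and makes the bound $q_1\le p-2$ transparent.
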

\begin{proof}
Suppose that $P_{m}^c * K_2 \leq \mathcal{C}(S_{0}^{p})$. 
Then, there is a linear chain $( \alpha_1, \ldots, \alpha_{m} )$ and closed curves $\beta_1$, $\beta_2$ on $S_{0, p}$ such that each $\beta_i$ is isotopic to none of $\{ \alpha_1, \ldots, \alpha_{m}, \beta_{j} \}$ and is disjoint from $\{ \alpha_1, \ldots, \alpha_{m}, \beta_j \}$ for $j \neq i$. 
A connected component $S'$ of $S_{0, p} \setminus \mathrm{Int}N(\beta_1) \cup \mathrm{Int}N(\beta_2)$ contains the linear chain $( \alpha_1, \ldots, \alpha_{m} )$. 
Note that the connected component $S'$ is a sphere with at most $p-2$ holes. 
Hence, by \cite[Theorem 2.2]{Katayama--Kuno18}, the length of any linear chain on $S_{0, p} \setminus \mathrm{Int}N(\beta_1) \cup \mathrm{Int}N(\beta_2))$ is at most $p-3$. 
Therefore, $m \leq p-3$. 
\end{proof}

\begin{lemma}
Suppose that $p \geq 2$. 
If $P_{m}^c * K_2 \leq \mathcal{C}(S_{1}^{p})$, then $m \leq p$. 
 \label{1p_path_plus}
\end{lemma}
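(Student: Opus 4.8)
\medskip

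The plan is to mimic the strategy of Lemma \ref{0p_path_plus}: given a full embedding $P_m^c * K_2 \leq \mathcal{C}(S_1^p)$, cut the surface along the two disjoint closed curves $\beta_1, \beta_2$ coming from the $K_2$ factor and locate the whole linear chain $(\alpha_1, \ldots, \alpha_m)$ inside a single connected component $S'$ of $S_1^p \setminus (\mathrm{Int}N(\beta_1) \cup \mathrm{Int}N(\beta_2))$. First I would record that, because each $\beta_i$ is disjoint from and non-isotopic to every $\alpha_j$ and to $\beta_j$ ($j \neq i$), the pair $\{\beta_1, \beta_2\}$ is a collection of disjoint essential closed curves, so cutting along it yields a (possibly disconnected) surface each of whose components has non-negative genus and whose total genus is at most $1$. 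The component $S'$ containing the chain is thus homeomorphic either to $S_0^{p'}$ with $p' \leq p+2$ (when the genus has been used up by another component, or when one or both $\beta_i$ are non-separating) or to $S_1^{p''}$ with $p'' \leq p$ (when $S'$ retains the handle). I would enumerate these cases carefully using Lemma \ref{cutting_surfaces_arc}'s analogue for closed curves: cutting along a non-separating curve drops the genus by one and adds two boundary components, while cutting along a separating curve splits the surface.

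\medskip

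Once $S'$ is identified, I would invoke \cite[Theorem 2.2]{Katayama--Kuno18}, which bounds the length of a linear chain on a surface: on $S_0^{p'}$ a linear chain has length at most $p'-2 \leq p$, and on $S_1^{p''}$ a linear chain has length at most $p''+1 \leq p+1$. The genus-one case is therefore the delicate one, since the naive bound $p+1$ is one more than the desired $p$. To close this gap I would argue that if $S'$ is homeomorphic to $S_1^{p}$, then in fact \emph{both} $\beta_1$ and $\beta_2$ must be boundary-parallel on $S_1^p$ (they cannot be non-separating, else the genus would drop, and a separating non-boundary-parallel curve on $S_1^p$ would split off a one-holed torus or reduce the boundary count, making $S'$ have fewer than $p$ holes or genus $0$). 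But two disjoint boundary-parallel curves that are non-isotopic to each other force $p \geq 2$ distinct boundary components to be ``used'', and more to the point the presence of the extra curve $\beta_2$ inside $S_1^p$ that is disjoint from the whole chain means the chain actually lives on a proper essential subsurface, namely a component of $S_1^p \setminus \mathrm{Int}N(\beta_2)$ with strictly fewer holes or smaller genus — bringing us back to the case already handled and yielding $m \leq p$.

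\medskip

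The main obstacle I anticipate is exactly this genus-one bookkeeping: unlike the sphere case, where every way of cutting along two curves strictly lowers the complexity and \cite[Theorem 2.2]{Katayama--Kuno18} is directly applicable, here one has to rule out the scenario $S' \cong S_1^p$ with a length-$(p+1)$ linear chain, and the cleanest route seems to be a careful isotopy/subsurface argument showing the two $K_2$-curves cannot both be ``invisible'' (i.e.\ boundary-parallel) to the chain inside a full $S_1^p$. An alternative fallback, should that argument prove awkward, is to use Lemma \ref{triple_1p} and Lemma \ref{triple_gp} on the chained-triple obstructions rather than \cite[Theorem 2.2]{Katayama--Kuno18} directly: a linear chain of length $m$ together with two disjoint auxiliary curves can be repackaged as a chained triple on a cut-open surface, and the genus-one bound $m \leq p$ for holed tori ($p \geq 3$) from Lemma \ref{triple_1p}, combined with Lemmas \ref{triple_11} and \ref{star_trple_12} for the low-$p$ cases, delivers the same conclusion. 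I would present whichever of these two routes produces the shorter case analysis, most likely the second, since those lemmas are already tailored to the ``chain plus extra arcs'' configuration that $P_m^c * K_2$ describes.
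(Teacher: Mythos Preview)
Your approach is essentially the paper's: cut $S_1^p$ along the two $K_2$--curves $\beta_1,\beta_2$, locate the linear chain in a single component $S'$, and invoke \cite[Theorem~2.2]{Katayama--Kuno18}; the paper does exactly this in two lines without enumerating the possible $S'$. Your worry about a genus-one component with $p'' = p$ is unnecessary --- a quick count shows that once two disjoint, non-isotopic, essential curves are removed, any genus-one component has at most $p-2$ holes (the complementary pieces each carry at least three boundary circles), so the extremal case is actually a sphere with up to $p+1$ holes, which already gives $m\le p$; thus neither the boundary-parallel discussion nor the chained-triple fallback is needed.
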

\begin{proof}
We may assume that $m \geq p+1 \geq 3$. 
Suppose that $P_{m}^c * K_2 \leq \mathcal{C}(S_{1}^{p})$. 
Then, there is a linear chain $( \alpha_1, \ldots, \alpha_{m} )$ and closed curves $\beta_1$, $\beta_2$ on $S_{1, p}$ such that $\beta_i$ is isotopic to none of $\{ \alpha_1, \ldots, \alpha_{m}, \beta_{j} \}$ and is disjoint from $\{ \alpha_1, \ldots, \alpha_{m}, \beta_{j} \}$ for $i=1, 2$ and $j \neq i$. 
A connected component of $S_{1}^{p} \setminus \mathrm{Int}N(\beta_1) \cup \mathrm{Int}N(\beta_2)$ contains the linear chain $( \alpha_1, \ldots, \alpha_{m} )$. 
From results in \cite[Theorem 2.2]{Katayama--Kuno18}, it follows that the length of any linear chain on $S_{0, p} \setminus \mathrm{Int}N(\beta_1) \cup \mathrm{Int}N(\beta_2))$ is at most $p$. 
Therefore, we have $m \leq p$. 
\end{proof}

\begin{lemma}
Suppose that $m \geq 1$, $g \geq 2$ and $p \geq 0$. 
If $P_{2g+p+1}^c * K_m \leq \mathcal{C}(S_{g}^{p})$, then $m \leq g-2$. 
 \label{2g_p_1}
\end{lemma}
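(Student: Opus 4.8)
The plan is to turn the graph-theoretic hypothesis into two geometric ingredients and then cut the surface along the clique. A full embedding $P_{2g+p+1}^{c} * K_{m} \leq \mathcal{C}(S_{g}^{p})$ supplies, on one hand, a linear chain $(\alpha_{1},\ldots,\alpha_{2g+p+1})$ realizing $P_{2g+p+1}^{c}$ (consecutive curves intersect, non-consecutive ones are disjoint), and on the other hand pairwise disjoint, pairwise non-isotopic essential curves $\beta_{1},\ldots,\beta_{m}$ realizing $K_{m}$, where each $\beta_{j}$ is disjoint from and non-isotopic to every $\alpha_{i}$. First I would cut $S_{g}^{p}$ along $\mathrm{Int}N(\beta_{1})\cup\cdots\cup\mathrm{Int}N(\beta_{m})$. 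Since consecutive $\alpha_{i}$ intersect, the entire chain lies in a single component $S'\cong S_{g'}^{p'}$, and because each $\alpha_{i}$ is disjoint from every $\beta_{j}$, the mutual intersection pattern of the chain is unaffected by the cutting. Moreover, no $\alpha_{i}$ can become inessential in $S'$: a boundary-parallel $\alpha_{i}$ in $S'$ would be isotopic either to some $\beta_{j}$ (contradicting fullness/injectivity of the embedding) or to $\partial S_{g}^{p}$ (contradicting essentiality). Hence $S'$ carries a linear chain of length $2g+p+1$, so if $\ell(\Sigma)$ denotes the maximal length of a linear chain on a surface $\Sigma$, we have $\ell(S')\geq 2g+p+1$.

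Next I would run an Euler-characteristic bookkeeping. Because the $\beta_{j}$ are essential and pairwise non-isotopic, no component of $S_{g}^{p}\setminus\bigcup_{j}\mathrm{Int}N(\beta_{j})$ is a disk, a once-punctured disk, or an annulus (an annular or once-punctured-disk component would force some $\beta_{j}$ to be boundary-parallel or a pair of $\beta_{j}$'s to be isotopic). Thus every component has negative Euler characteristic. Since cutting along simple closed curves preserves $\chi$, the identity $\sum_{\mathrm{components}}\chi = \chi(S_{g}^{p}) = 2-2g-p$ gives $\chi(S')\geq 2-2g-p$, that is $2g'+p'\leq 2g+p$, with equality precisely when $S'$ is the only component.

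The crux is the maximal-chain dichotomy from \cite[Theorem 2.2]{Katayama--Kuno18}: $\ell(S_{h}^{q})=2h+q+1$ once $h\geq 2$, whereas $\ell(S_{h}^{q})\leq 2h+q$ for $h\leq 1$ (indeed $\ell(S_{0}^{q})=q-1$ and $\ell(S_{1}^{q})\leq q+2$). If $g'\leq 1$ then $\ell(S')\leq 2g'+p'\leq 2g+p<2g+p+1\leq\ell(S')$, a contradiction, so $g'\geq 2$. For $g'\geq 2$ we then get $2g+p+1\leq\ell(S')=2g'+p'+1$, hence $2g'+p'\geq 2g+p$; combined with the bookkeeping inequality this forces $2g'+p'=2g+p$. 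By the equality case, $S'$ is the only component, so $\bigcup_{j}\beta_{j}$ is non-separating and cutting along it lowers the genus by exactly $m$, giving $S'\cong S_{g-m}^{p+2m}$ and $g'=g-m$. Finally $g'\geq 2$ yields $m\leq g-2$, as required.

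The main obstacle I expect is pinning down the dichotomy of the third paragraph exactly at the threshold $g'=2$ versus $g'=1$: one must know that $\ell(S_{g}^{p})=2g+p+1$ for \emph{all} $g\geq 2$ (not only the closed value $2g+1$) and the complementary bound $\ell(S_{1}^{q})\leq q+2$, since low-genus surfaces such as $S_{1}^{2}$ behave irregularly. I would secure these from the linear-chain count of \cite[Theorem 2.2]{Katayama--Kuno18}, corroborated by Lemma \ref{cyclic_full_subgraph} (a length-$N$ cyclic chain yields a length-$(N-1)$ linear chain, and the maximal-disjointness bound $3h+q-3$ excludes the over-long candidates). A secondary point, handled via Lemma \ref{cutting_surfaces_arc} and the fullness of the embedding, is verifying that the chain persists as a genuine linear chain after cutting and that the genus truly drops by $m$ in the connected case.
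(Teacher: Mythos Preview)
Your proof is correct and takes a genuinely different route from the paper's. The paper proceeds by induction on $g$: it cuts along a single $\beta_{1}$, observes that the component $S'$ carrying the chain must (by \cite[Theorem~2.2]{Katayama--Kuno18}) be exactly $S_{g-1}^{p+2}$, and then applies the inductive hypothesis to $P_{2g+p+1}^{c}*K_{m-1}\leq\mathcal{C}(S_{g-1}^{p+2})$; the base case $g=2$ is handled by noting that no component of $S_{2}^{p}\setminus\mathrm{Int}N(\beta_{1})$ admits a chain of length $p+5$. By contrast, you cut along \emph{all} of $\beta_{1},\ldots,\beta_{m}$ at once and replace the induction by a single Euler-characteristic count together with the sharp dichotomy $\ell(S_{h}^{q})\leq 2h+q$ for $h\leq 1$ versus $\ell(S_{h}^{q})\leq 2h+q+1$ for $h\geq 2$. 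This is cleaner and isolates exactly why the bound is $g-2$: the only way the long chain survives is if the cut surface stays connected with $g'=g-m\geq 2$, which is immediate from $\chi(S')=\chi(S_{g}^{p})$ and $p'=p+2m$. The cost is that you must verify the genus-$1$ bound $\ell(S_{1}^{q})\leq q+2$ uniformly, including the sporadic cases $q\leq 2$ (where it follows from the complexity bound rather than \cite[Theorem~2.2]{Katayama--Kuno18}); you correctly flag this. Both arguments ultimately rest on the same linear-chain bounds from \cite{Katayama--Kuno18}, but your packaging avoids the inductive scaffolding and would adapt more readily to analogous statements.
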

\begin{proof}
We give a proof by induction on the genus $g$. 
Suppose that $P_{2g+p+1}^c * K_m \leq \mathcal{C}(S_{g}^{p})$. 
Then, there are mutually non-isotopic closed curves $\alpha_1, \ldots, \alpha_{2g+p+1}$, $\beta_1, \ldots, \beta_{m}$ satisfying the following conditions: (i) $\{ \alpha_1, \ldots, \alpha_{2g+p+1} \}$ is a linear chain of length $2g+p+1$ and (ii) $\beta_j$ is disjoint from the other curves $\alpha_1, \ldots, \alpha_{2g+p+1}$ and $\beta_1, \ldots, \beta_{j-1}, \beta_{j+1}, \ldots, \beta_{m}$. 

{\bf Case of $g=2, \ p \geq 0$}. 
Suppose, on the contrary, that $m \geq 1$. 
By conditions (i) and (ii), there is a connected component $S'$ of $S_{2}^{p} \setminus \mathrm{Int}N(\beta_1)$ containing a linear chain $( \alpha_1, \ldots, \alpha_{p+5})$. 
However, \cite[Theorem 2.2]{Katayama--Kuno18} implies that the length of any linear chain on $S'$ is at most $p+4$.  
Therefore, the initial assumption $m \geq 1$ must be false.  

{\bf Case of $g \geq 3, \ p \geq 0$}. 
Assuming $m \geq g-2 \geq 1$, we will prove that $m = g-2$. 
A connected component $S'$ of $S_{g}^{p} \setminus \mathrm{Int}N(\beta_1)$ contains a linear chain $(\alpha_1, \ldots, \alpha_{2g+p+1})$. 
By \cite[Theorem 2.2]{Katayama--Kuno18}, we may assume that $S'$ is homeomorphic to $S_{g-1}^{p+2}$. 
Hence, $P_{2g+p+1}^c * K_{m-1} \leq \mathcal{C}(S_{g-1}^{p+2})$. 
The induction hypothesis implies $m-1 = (g-1)-2$, as desired. 
\end{proof}

\begin{lemma}
Suppose that $m \geq 1$, $g \geq 2$ and $p \geq 0$. 
If $P_{2g+p}^c * K_m \leq \mathcal{C}(S_{g}^{p})$, then $m \leq g-1$. 
 \label{2g_p}
\end{lemma}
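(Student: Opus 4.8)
The plan is to argue by induction on $g$, following the same scheme as the proof of Lemma~\ref{2g_p_1}. Throughout I unwind the hypothesis $P_{2g+p}^c * K_m \leq \mathcal{C}(S_g^p)$ into the data of a linear chain $(\alpha_1, \ldots, \alpha_{2g+p})$ together with closed curves $\beta_1, \ldots, \beta_m$, all pairwise non-isotopic, such that each $\beta_j$ is disjoint from every $\alpha_i$ and from every $\beta_k$ with $k \neq j$, and I invoke \cite[Theorem 2.2]{Katayama--Kuno18} repeatedly to bound the lengths of linear chains on the surfaces that arise. The basic move is: since every $\alpha_i$ and every $\beta_k$ $(k \neq j)$ is disjoint from $\beta_j$, cutting $S_g^p$ along $\beta_j$ leaves the (connected) linear chain inside a single component $S'$; because an isotopy in $S'$ pushes forward to an isotopy in $S_g^p$, the surviving $\alpha_i$'s and $\beta_k$'s remain pairwise non-isotopic in $S'$; and essentiality of $\beta_j$ forces every component of $S_g^p \setminus \mathrm{Int}N(\beta_j)$ to be neither a disk nor an annulus.

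For the base case $g = 2$ I want $m \leq 1$, so I suppose $m \geq 2$. Cutting $S_2^p$ along $\beta_1$, the linear chain of length $2g+p = p+4$ lies in a component $S'$; by \cite[Theorem 2.2]{Katayama--Kuno18}, together with the fact that $\beta_1$ is essential, no component of a separating cut and no genus-$0$ component can carry a chain this long, so $\beta_1$ is non-separating and $S' \cong S_1^{p+2}$. As $S'$ is the unique component, it also contains $\beta_2, \ldots, \beta_m$ (still pairwise non-isotopic and non-isotopic to the chain), whence $P_{p+4}^c * K_{m-1} \leq \mathcal{C}(S_1^{p+2})$ with $m-1 \geq 1$. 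Now cut $S_1^{p+2}$ along $\beta_2$: using essentiality of $\beta_2$ to discard disk and annulus pieces, \cite[Theorem 2.2]{Katayama--Kuno18} shows every component of $S_1^{p+2} \setminus \mathrm{Int}N(\beta_2)$ has maximal linear-chain length at most $p+3$, so none of them carries the chain $(\alpha_1, \ldots, \alpha_{p+4})$ — a contradiction. Hence $m \leq 1$.

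For the inductive step I assume $g \geq 3$ and, since there is nothing to prove otherwise, I assume $m \geq g-1 \geq 2$. Cut $S_g^p$ along $\beta_1$; the chain of length $2g+p$ survives in a component $S'$. By \cite[Theorem 2.2]{Katayama--Kuno18}, together with essentiality of $\beta_1$, the only possibility is that $\beta_1$ is non-separating and $S' \cong S_{g-1}^{p+2}$: any separating cut, any genus-$g$ piece, and any component of genus $\leq g-2$ has maximal linear-chain length strictly less than $2g+p$. Since $S'$ is the unique component it contains $\beta_2, \ldots, \beta_m$ as well, so $P_{2(g-1)+(p+2)}^c * K_{m-1} \leq \mathcal{C}(S_{g-1}^{p+2})$ with $m-1 \geq g-2 \geq 1$. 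The induction hypothesis then gives $m-1 \leq (g-1) - 1 = g-2$, hence $m = g-1$; in particular $m \leq g-1$.

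The main obstacle is the base case. Unlike in Lemma~\ref{2g_p_1}, where the chain length $2g+p+1$ exceeds the largest linear-chain length available after a single cut (so one cut already forces a contradiction), here the chain length $2g+p$ exactly matches the extremal value realized on $S_{g-1}^{p+2}$. Consequently a single cut does not suffice and one must cut a second time, which requires a careful inventory of how an essential simple closed curve can decompose $S_1^{p+2}$ — ruling out disk and annulus pieces and bounding the genus and the number of boundary components of the pieces — in order to apply \cite[Theorem 2.2]{Katayama--Kuno18} and reach the contradiction.
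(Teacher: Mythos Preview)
Your overall strategy mirrors the paper's, but there is a genuine gap in both the base case and the inductive step: the assertion that after cutting along $\beta_1$ the \emph{only} component that can carry a linear chain of length $2g+p$ is $S_{g-1}^{p+2}$ is false. By \cite[Theorem~2.2]{Katayama--Kuno18} the maximal length of a linear chain on $S_{g'}^{p'}$ with $g'\ge 2$ is $2g'+p'+1$. Hence if $p\ge 2$ and $\beta_1$ is a \emph{separating} curve that cuts off a pair of pants, the complementary piece is $S_g^{\,p-1}$, and its maximal linear chain length is $2g+(p-1)+1 = 2g+p$, so it \emph{can} carry the chain. (Likewise, for $g\ge 3$ a separating $\beta_1$ bounding a one-holed torus yields $S_{g-1}^{\,p+1}$, whose maximal chain length is again $2(g-1)+(p+1)+1=2g+p$.) Your claim that ``any separating cut'' fails is therefore incorrect.

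The genus-$(g-1)$ pieces such as $S_{g-1}^{p+1}$ can still be absorbed by your induction after passing to a subchain, so that omission is minor. The genus-$g$ piece $S_g^{\,p-1}$, however, cannot: the induction is on $g$, and the situation there is $P_{2g+(p-1)+1}^c * K_{m-1}\le\mathcal{C}(S_g^{\,p-1})$, which is precisely the hypothesis of Lemma~\ref{2g_p_1} (not of the lemma being proved). This is why the paper, after one cut, explicitly splits into the case ``$S'$ has genus $g$ with $\le p-1$ holes'' (handled via Lemma~\ref{2g_p_1}) and the case ``$S'$ has genus $\le g-1$'' (handled by the induction hypothesis). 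Your argument needs the same appeal to Lemma~\ref{2g_p_1}; the ``two successive cuts'' device you use in the base case does not close this gap in the inductive step, since cutting $S_g^{\,p-1}$ again can produce $S_{g-1}^{\,p+1}$, and you would only obtain $m-2\le g-2$, i.e.\ $m\le g$, which is not sharp.
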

\begin{proof}
Suppose that $P_{2g+p} * K_m \leq \mathcal{C}(S_{g}^{p})$. 
Then, we have mutually non-isotopic closed curves $\alpha_1, \ldots, \alpha_{2g+p}, \beta_1, \ldots, \beta_{m}$ satisfying the following conditions: 
\begin{enumerate}
 \item[(i)] $(\alpha_1, \ldots, \alpha_{2g+p})$ is a linear chain and 
 \item[(ii)] $\beta_j$ is disjoint from the other curves $\alpha_1, \ldots, \alpha_{2g+p}$, $\beta_1, \ldots, \beta_{j-1}, \beta_{j+1}, \ldots, \beta_{m}$. 
\end{enumerate}
We give a proof by induction on the genus $g$. 

{\bf Case of $g=2, \ p \geq 0$}. 
Suppose, on the contrary, that $m \geq 2 \ (> g-1)$. 
From properties (i) and (ii), it follows that there is a connected component $S'$ of $S_{2, p} \setminus \mathrm{Int}N(\beta_1 \cup \beta_2)$ containing a linear chain $\{ \alpha_1, \ldots, \alpha_{p+4} \}$. 
We may assume that $S'$ is homeomorphic to either $S_{2}^{p-2}$ or $S_{1}^{p+1}$. 
However, neither of the surfaces contain a linear chain of length $p+4$. 
Therefore, the initial assumption $m \geq 2$ must be false, and so $m \leq 1$. 

{\bf Case of $g \geq 3, \ p \geq 0$}. 
Assuming $m \geq g-1 \geq 2$, we will prove that $m = g-1$. 
From properties (i) and (ii), it follows that there is a connected component $S'$ of $S_{g}^{p} \setminus \mathrm{Int}N(\beta_1)$ containing a linear chain $(\alpha_1, \ldots, \alpha_{2g+p})$. 
In the case where $S'$ is a surface of genus $g$ with $\leq p-1$ holes, Lemma \ref{2g_p_1} implies $m-1 \leq g-2$, i.e., $m \leq g-1$. 
Assume that $S'$ is a surface of genus $\leq g-1$ with $\leq p+2$ holes. 
Since $P_{2g+p}^c * K_{m-1} \leq \mathcal{C}(S')$, we have that $m-1 \leq (g-1)-1$ by the induction hypothesis. 
In every case, we have the desired inequality $m \leq g-1$. 
\end{proof}

\section{Right-angled Artin subgroups in mapping class groups \label{raags_in_mcgs}}

For a simple finite graph $\Gamma$, the {\it right-angled Artin group} $A(\Gamma)$ is the group with the following finite presentation: 
$$ A(\Gamma) := \langle V(\Gamma) \mid v_iv_j = v_jv_i \ \mbox{if} \ \{ v_i, v_j \} \in E(\Gamma) \rangle .$$
Here, $V(\Gamma)$ is the vertex set of $\Gamma$ and $E(\Gamma)$ is the edge set of $\Gamma$. 
The main purpose of this section is to prove Theorems \ref{cyc_mcg} and \ref{cyc_plus_z_mcg}. 
Deciding whether a right-angled Artin group can be embedded in the mapping class group of a surface is supposed to be difficult; at the moment the authors do not have an algorithm that solves the problem. 
Koberda in \cite{Koberda} proved that, for any family of mutually non-isotopic essential simple closed curves on a surface, sufficiently high powers of Dehn twists about the curves generate a right-angled Artin group in the mapping class group of the surface. 
This theorem is phrased as follows. 

\begin{theorem}[Koberda's embedding theorem]
Let $S$ be a surface of negative Euler characteristic and let $\Gamma$ be a finite graph. 
If $\Gamma \leq \mathcal{C}(S)$, then $A(\Gamma) \hookrightarrow \mathrm{Mod}(S)$. 
\end{theorem}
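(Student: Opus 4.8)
The plan is to exhibit the embedding explicitly by taking high powers of Dehn twists. Fix a full embedding $\Gamma \leq \mathcal{C}(S)$ and choose a family $\{\gamma_v\}_{v\in V(\Gamma)}$ of essential simple closed curves realizing it: the $\gamma_v$ are pairwise non-isotopic, and $\gamma_u$ can be isotoped off $\gamma_v$ if and only if $\{u,v\}\in E(\Gamma)$. For a positive integer $N$ define $\phi_N \colon A(\Gamma) \to \mathrm{Mod}(S)$ on generators by $\phi_N(v) = T_{\gamma_v}^{N}$. Since Dehn twists about disjoint curves commute, every defining relation of $A(\Gamma)$ holds in the image, so $\phi_N$ is a well-defined homomorphism; the content of the theorem is that $\phi_N$ is injective once $N$ is large enough.

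To prove injectivity I would run a ping-pong argument on the space $\mathcal{PML}(S)$ of projective measured laminations, organized by the graph-of-groups structure of $A(\Gamma)$ together with an induction on $|V(\Gamma)|$. The base case is $\Gamma$ complete, where $A(\Gamma)\cong\mathbb{Z}^{|V(\Gamma)|}$ and one checks directly (via the action on $H_1$ of a suitable finite cover, or via canonical reduction systems) that the twists about pairwise disjoint non-isotopic curves generate a free abelian group of the expected rank. For the inductive step, pick a vertex $v$ with $\mathrm{st}(v)\neq V(\Gamma)$ and use the splitting $A(\Gamma)=A(\Gamma\setminus v)*_{A(\mathrm{lk}(v))}\bigl(A(\mathrm{lk}(v))\times\langle v\rangle\bigr)$. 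The factor subgroups are handled by induction, and the amalgamation is detected geometrically: a high power $T_{\gamma_v}^{N}$ has strong north--south dynamics on $\mathcal{PML}(S)$, contracting everything with positive intersection number with $\gamma_v$ toward $[\gamma_v]$, whereas the curves $\gamma_u$ with $u\in\mathrm{lk}(v)$ lie in the subsurface $S\setminus N(\gamma_v)$ and act on $\mathcal{PML}(S\setminus N(\gamma_v))$. Choosing ping-pong neighborhoods around $[\gamma_v]$ on one side and around the appropriate sublaminations on the other, a reduced word in the amalgam moves a well-chosen marking and hence is nontrivial in $\mathrm{Mod}(S)$. Equivalently, one argues with Masur--Minsky subsurface projections: if $w$ is a word in RAAG normal form whose leftmost syllable is $v^{k}$ with $k\neq0$, then the annular subsurface projection distance $d_{\gamma_v}(\mu,\phi_N(w)\mu)$ is at least $|k|N-C$ for a constant $C=C(S,\Gamma)$ independent of $N$, so taking $N>C$ forces $\phi_N(w)\neq 1$.

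One reduction streamlines this. If the curves $\{\gamma_v\}$ do not fill $S$, pass to the (possibly disconnected) subsurface $Z$ they fill: the induced map $\mathrm{Mod}(Z)\to\mathrm{Mod}(S)$ has kernel contained in the central subgroup generated by boundary twists, and this meets $\phi_N(A(\Gamma))$ trivially because no nontrivial element of $A(\Gamma)$ can map to a power of a single boundary Dehn twist. Hence it suffices to treat the case where the configuration fills.

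The main obstacle is uniformity in $N$: one must show that the interaction between the various twist powers --- in particular the effect of $T_{\gamma_u}^{N}$ on the projection to the annulus about an intersecting curve $\gamma_v$ --- is bounded by a constant depending only on the fixed configuration and not on $N$, so that a single $N$ works simultaneously for all words. This is precisely where the Behrstock inequality and the bounded geodesic image theorem (or, in the $\mathcal{PML}$ formulation, the quantitative north--south dynamics of Dehn twists) are needed, and where the combinatorics of RAAG normal forms must be invoked to guarantee that the chosen leftmost syllable survives every shuffle of the word. Once these ingredients are assembled, injectivity of $\phi_N$ for $N$ large follows, which is the assertion.
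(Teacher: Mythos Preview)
The paper does not prove this statement; it is quoted as Koberda's theorem and invoked as a black box with a citation to \cite{Koberda}. There is therefore no ``paper's own proof'' to compare against.

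As for the content of your sketch: the overall plan---send generators to high powers of Dehn twists and verify injectivity via dynamics on $\mathcal{PML}(S)$---is exactly the mechanism Koberda uses. Your organizing principle, however, differs from his. Koberda does \emph{not} proceed by induction on $|V(\Gamma)|$ through the amalgamated-product splitting $A(\Gamma)=A(\Gamma\setminus v)\ast_{A(\mathrm{lk}(v))}A(\mathrm{st}(v))$; rather he runs a single, simultaneous ping-pong with all the generators at once, using carefully chosen neighborhoods of the fixed laminations in $\mathcal{PML}(S)$ and the north--south dynamics of powers of twists (more generally, of pure mapping classes). The advantage of his direct argument is precisely that it sidesteps the uniformity issue you flag: one chooses the ping-pong sets first, then a single large $N$ making every generator contract appropriately, rather than trying to reconcile thresholds produced at different stages of an induction.

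Your inductive route is not wrong in spirit, but as written it is only a strategy, not a proof. Two points would need real work. First, the ping-pong lemma for amalgamated free products requires disjoint ``table'' sets satisfying specific inclusion conditions relative to the edge group $A(\mathrm{lk}(v))$; you have not specified these sets in $\mathcal{PML}(S)$ or checked the conditions. Second, the induction hypothesis only hands you injectivity of $\phi_{N'}$ on $A(\Gamma\setminus v)$ for \emph{some} $N'$; to combine this with the new vertex you need the stronger statement that injectivity holds for all sufficiently large $N$, and that the ping-pong sets can be chosen compatibly. The alternative route you mention via annular subsurface projections and the Behrstock inequality (essentially the Clay--Leininger--Mangahas argument) is cleaner for exactly this reason: it gives quantitative lower bounds that are manifestly uniform in $N$, and pairs naturally with RAAG normal forms. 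If you want to turn the sketch into a proof, that is the version I would pursue.
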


Kim--Koberda in \cite{Kim--Koberda} introduced combinatorial embeddings of right-angled Artin groups into the mapping class groups of surfaces and proved that such embeddings are normal (Lemma \ref{KK_normal_form} below). 
We say that a group homomorphism $\psi \colon A(\Lambda) \rightarrow \mathrm{Mod}(S)$ satisfies {\it condition (KK)} if $\psi(v)$ is a product of mutually commutative Dehn twists for every $v \in V(\Lambda)$. 
Note that, if $\psi$ satisfies condition (KK), the support curves of Dehn twists appearing in $\psi(v)$ induce a clique in $\mathcal{C}(S)$ if $\psi$ satisfies condition (KK). 

\begin{lemma} \label{KK_normal_form}
Let $S$ be a surface of negative Euler characteristic and without a boundary and let $\Gamma$ be a finite graph. 
If $A(\Gamma) \hookrightarrow \mathrm{Mod}(S)$, then there is an embedding $A(\Gamma) \hookrightarrow \mathrm{Mod}(S)$ that satisfies condition (KK). 
\end{lemma}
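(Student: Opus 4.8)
The plan is to begin with an arbitrary embedding $\phi \colon A(\Gamma) \hookrightarrow \mathrm{Mod}(S)$ and to deform it, one vertex image at a time, into one whose vertex images are multitwists, running an induction on the complexity $\xi(S) = 3g-3+n$ of $S$. (This is where the hypothesis that $S$ has no boundary is convenient, since canonical reduction systems and the pure structure behave most cleanly there.)

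First I would reduce to the situation in which every $\phi(v)$ is a pure mapping class. For each $N \ge 1$ the assignment $v \mapsto v^N$ extends to an injective endomorphism of $A(\Gamma)$, so precomposing $\phi$ with it keeps us inside the class of embeddings; since the level-$N$ pure mapping classes form a finite-index subgroup of $\mathrm{Mod}(S)$ by Ivanov's theorem, a sufficiently divisible $N$ makes $\phi(v)$ pure for all of the finitely many generators $v$. A pure mapping class $f$ has a canonical reduction system $\sigma(f)$, fixes it componentwise, and restricts on each complementary subsurface to either the identity or a pseudo-Anosov map; accordingly I would write $f = \tau_f \pi_f$, where $\tau_f$ is a multitwist supported on $\sigma(f)$ and $\pi_f$ is a product of pseudo-Anosov pieces supported on the ``active'' complementary components, so that $\tau_f$ and $\pi_f$ commute and have disjoint support. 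The point is that $f$ already has the form demanded by condition (KK) exactly when $\pi_f$ is trivial, so the whole game is to delete the pseudo-Anosov pieces.

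Next, suppose some vertex $v$ has $\phi(v)$ pseudo-Anosov on a complementary subsurface $W$ (if none does, all $\phi(v)$ are multitwists and we are finished). The subsurface $W$ is an invariant of $\phi(v)$, hence preserved by its centraliser; since $A(\mathrm{lk}(v)) \times \langle v \rangle$ (the centraliser of $v$, where $\mathrm{lk}(v)$ is the link of $v$ in $\Gamma$) maps into the centraliser of $\phi(v)$, each neighbour $w$ of $v$ must preserve $W$ and, being pure, must restrict on $W$ to a power of one fixed primitive pseudo-Anosov of $W$ (the Ivanov--McCarthy description of such centralisers, see \cite{Ivanov--McCarthy}) — so the neighbours that act nontrivially on $W$ all go to powers of a single pseudo-Anosov. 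I would then replace that pseudo-Anosov by a high power of a Dehn twist $T_c$ about a single curve $c \subset W$ (and the corresponding pieces of its neighbours by the matching powers of $T_c$), leave everything supported off $W$ untouched, and apply the inductive hypothesis to $\mathrm{Mod}(W)$ and to the surfaces obtained by cutting $S$ along $\sigma(\phi(v))$ — all of strictly smaller complexity — in order to straighten the remaining pseudo-Anosov pieces as well. Reassembling over the curve-graph combinatorics of $S$ then produces a homomorphism $A(\Gamma) \to \mathrm{Mod}(S)$ all of whose vertex images are multitwists, i.e.\ one satisfying condition (KK).

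The hard part will be to check that this reassembled homomorphism is still injective. Deleting a single pseudo-Anosov factor in favour of a Dehn-twist power is harmless in isolation, but one must confirm that doing it on every complementary subsurface at once does not collapse a nontrivial word of $A(\Gamma)$; this needs a careful record of how a reduced word ``sees'' each piece of the canonical decomposition — essentially the nested-complexity accounting of Kim--Koberda \cite{Kim--Koberda} — together with a ping-pong argument (on Teichm\"uller space, or on the projective measured laminations of the relevant subsurface) ensuring that the twist curves $c$ are chosen generically enough that the local free-product structure survives removal of the pseudo-Anosov factor. I would also have to dispose of the base cases of the induction, namely small surfaces and $A(\Gamma) \cong \mathbb{Z}$, where one simply replaces $\phi(v)$ by any Dehn-twist power.
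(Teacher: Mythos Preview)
The paper does not prove this lemma at all; it is quoted as a result of Kim--Koberda \cite{Kim--Koberda} and simply cited. So your proposal is to be compared with their argument rather than with anything written here.

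Your reduction to pure elements and the decomposition $\phi(v)=\tau_v\pi_v$ into commuting multitwist and pseudo-Anosov factors is correct and is the standard opening move. The genuine gap is the inductive step. You propose to cut $S$ along the canonical reduction system $\sigma(\phi(v))$ of one chosen vertex $v$ and to invoke the inductive hypothesis on the complementary pieces; but the images $\phi(w)$ for vertices $w$ \emph{not adjacent} to $v$ have no reason to preserve $\sigma(\phi(v))$, so they do not descend to mapping classes of the cut surface, and there is no well-posed RAAG-embedding problem of smaller complexity to hand to the induction. Your centraliser analysis only constrains the neighbours of $v$, and nothing in the sketch propagates the straightening across non-edges of $\Gamma$. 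The final paragraph then concedes that injectivity is ``the hard part'' and defers to ``the nested-complexity accounting of Kim--Koberda,'' which is circular: that accounting is precisely the content of the lemma you are trying to prove.

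The argument in \cite{Kim--Koberda} avoids induction on the surface entirely. After passing to pure powers one collects the Nielsen--Thurston pieces of \emph{all} the $\phi(v)$ simultaneously; Koberda's theorem \cite{Koberda}, in its general form for mixed collections of Dehn twists and subsurface pseudo-Anosovs, says that sufficiently high powers of these pieces generate a right-angled Artin group $A(\Lambda')$ on the disjointness graph of their supports. The map $A(\Gamma)\to A(\Lambda')$ sending $v$ to the product of the generators corresponding to its pieces is injective because, composed with the Koberda embedding $A(\Lambda')\hookrightarrow\mathrm{Mod}(S)$, it recovers $\phi$ precomposed with the $N$th-power endomorphism of $A(\Gamma)$. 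Replacing each pseudo-Anosov vertex of $\Lambda'$ by a clique of curves filling its supporting subsurface and applying Koberda's theorem once more then yields the embedding satisfying condition~(KK). The single global invocation of \cite{Koberda} is what replaces your attempted induction and supplies injectivity for free.
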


For an embedding $\psi \colon A(\Lambda) \hookrightarrow \mathrm{Mod}(S)$ satisfying condition (KK) and a vertex $u$ of $\Lambda$, $\mathrm{supp}(\psi(u))$ denotes the subset of $V(\mathcal{C}(S))$ corresponding to the supports of Dehn twists that appears in $\psi(u)$. 
To simplify the notation, we write $\mathrm{supp}(\psi)$ for $\cup_{u \in V(\Lambda)} \mathrm{supp}(\psi(u))$. 

\begin{theorem}
Let $m$ be an integer $\geq 5$, $S$ a surface of negative Euler characteristic and without a boundary, and $\Lambda$ a graph isomorphic to $C_m^c$. 
Suppose that there is an embedding $\psi \colon A(\Lambda) \hookrightarrow \mathrm{Mod}(S)$ with condition (KK). 
Then there is a full subgraph $\Delta \leq \mathcal{C}(S)$ that is induced by some vertices in $\mathrm{supp}(\psi)$ and has exactly one of the following properties: 
\begin{enumerate}
 \item[(1)] $\Delta \cong C_n^c$ (for some $n \geq m$), 
 \item[(2)] $\Delta \cong P_{n}^c$ (for some $n \geq m+1$), 
 \item[(3)] $\Delta \cong P_{m-1}^c * K_m$. 
 Furthermore, for each vertex $v$ of $\Delta$, there is a vertex in $\mathrm{supp}(\psi)$ that is not adjacent to $v$. 
\end{enumerate}
 \label{cyc_path_mcg_cor}
\end{theorem}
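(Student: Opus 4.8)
The plan is to analyze the graph-theoretic structure of $\mathrm{supp}(\psi)$ inside $\mathcal{C}(S)$ when $\psi \colon A(C_m^c) \hookrightarrow \mathrm{Mod}(S)$ satisfies condition (KK), and to extract one of the three configurations. Write the vertices of $\Lambda \cong C_m^c$ as $w_1, \dots, w_m$ so that $w_i$ and $w_j$ are adjacent in $\Lambda$ precisely when $i \not\equiv j \pm 1 \pmod m$; thus $(w_1, w_2, \dots, w_m, w_1)$ is the ``cyclic word'' of non-commuting pairs. First I would examine, for each $i$, the set $\mathrm{supp}(\psi(w_i))$: since $\psi$ satisfies condition (KK), each $\mathrm{supp}(\psi(w_i))$ is a clique in $\mathcal{C}(S)$, and $w_i, w_j$ commute in $A(\Lambda)$ if and only if $\psi(w_i)$ and $\psi(w_j)$ commute, which (because $\psi$ is injective and the twists are powers high enough that commutation is detected geometrically, as in the Kim--Koberda framework) forces $\mathrm{supp}(\psi(w_i))$ and $\mathrm{supp}(\psi(w_j))$ to be \emph{joined} by edges in $\mathcal{C}(S)$ when $w_iw_j=w_jw_i$, and to \emph{not} be completely joined (some curve in one intersects some curve in the other) when $w_i$ and $w_j$ do not commute. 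The key reduction is to pick, for each consecutive non-commuting pair $(w_i, w_{i+1})$, a single curve $c_i \in \mathrm{supp}(\psi(w_i))$ that actually intersects some curve in $\mathrm{supp}(\psi(w_{i+1}))$, and to track how these choices interact around the cycle.

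The main case division is whether one can choose a \emph{single} representative curve $c_i \in \mathrm{supp}(\psi(w_i))$ for each $i$ such that consecutive $c_i$'s intersect and non-consecutive ones are disjoint and pairwise non-isotopic: this yields a cyclic chain of length $m$ on $S$, hence $C_m^c \leq \mathcal{C}(S)$, and taking $\Delta$ to be the induced full subgraph on $\{c_1, \dots, c_m\}$ gives a graph which is either $C_m^c$ (giving alternative (1) with $n=m$) or, if there happen to be extra adjacencies among the $c_i$ forced by unexpected disjointness, a full subgraph isomorphic to some $C_n^c$ with $n\ge m$ or $P_n^c$ with $n \ge m+1$ — here one argues that the induced subgraph on the vertex set of a cyclic chain, being a graph on $m$ vertices that contains $C_m^c$, is a complement of a disjoint union of paths and cycles, and as long as it is not a complete graph it contains one of $C_n^c$ ($n\ge m$) or $P_n^c$ ($n\ge m+1$) as a full subgraph; it cannot be complete since that would make $A(\Lambda)$ abelian, contradicting injectivity of $\psi$. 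The problematic case is when \emph{no} such single-curve choice exists: this happens when for some $i$, every curve in $\mathrm{supp}(\psi(w_i))$ that meets $\mathrm{supp}(\psi(w_{i+1}))$ is forced (by the commuting relations with the other $w_j$) to also meet something in $\mathrm{supp}(\psi(w_{i-1}))$ in an incompatible way, or the supports are genuinely multi-curves whose intersection pattern cannot be diagonalized. In that regime I would instead build the clique $K_m$ directly: take the curves realizing a linear chain from the ``broken'' cycle — i.e. delete one non-commuting edge of $C_m^c$ to get $P_m^c$, realize the corresponding linear chain $P_{m-1}^c$ by representatives, and then show that the \emph{remaining} curves in $\bigcup_i \mathrm{supp}(\psi(w_i))$ — of which there must be at least $m$, since each multi-curve support that could not be reduced to a singleton contributes extra vertices — form a clique $K_m$ joined to that $P_{m-1}^c$, with each such clique vertex non-adjacent to some vertex of $\mathrm{supp}(\psi)$ (namely a curve in the support of the vertex whose support it came from, on the ``other side'' of a non-commuting pair). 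This produces alternative (3).

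The step I expect to be the main obstacle is the bookkeeping that guarantees, in the non-diagonalizable case, that one genuinely gets a \emph{full} copy of $P_{m-1}^c * K_m$ with the stated non-adjacency refinement, rather than merely a non-full subgraph or a smaller clique: one must carefully count the curves available in the supports, use condition (KK) to know each support is itself a clique, use injectivity of $\psi$ to forbid the total collapse to a complete graph, and invoke the cyclic (as opposed to merely linear) structure of $C_m^c$ to produce the $m$-th clique vertex that a purely linear-chain analysis would miss. I would organize this by induction on $|\mathrm{supp}(\psi)|$, each time either reducing to a single-curve support (handled by the cyclic-chain case) or peeling off one extra support curve into the clique $K_m$ while preserving the linear chain $P_{m-1}^c$; the base case, where every support is a singleton, is exactly the cyclic-chain case and gives alternatives (1) or (2). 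Throughout, the facts that a full subgraph is determined by its vertex set and that $\leq$ is transitive (from Section~\ref{cyc_chain_surf}) let me pass freely between the realized curves and the abstract graphs $C_n^c$, $P_n^c$, $P_{m-1}^c * K_m$.
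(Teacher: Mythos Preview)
Your proposal has a genuine gap in the ``non-diagonalizable'' case. The dichotomy you set up --- either single-curve representatives $c_i$ form a cyclic chain, or the leftover curves assemble into a clique $K_m$ joined to a copy of $P_{m-1}^c$ --- is not established, and in fact both halves of your fallback argument fail as stated. First, the count ``there must be at least $m$ extra curves, since each multi-curve support that could not be reduced to a singleton contributes extra vertices'' is unsupported: a single support of size two blocks diagonalization but yields only one extra curve, not $m$. Second, and more seriously, there is no reason the ``remaining'' curves should be pairwise disjoint: curves coming from supports of \emph{consecutive} (non-commuting) generators can intersect, so they need not span a clique in $\mathcal{C}(S)$ at all. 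Your induction on $|\mathrm{supp}(\psi)|$ does not repair this, because peeling off one curve does not bring you closer to a configuration where the residual supports are mutually disjoint.

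The paper's proof is organized quite differently. Rather than trying to choose one curve per generator, it invokes \cite[Proposition~3.9]{Katayama--Kuno18} to obtain, for each of the $m$ full embeddings $\iota_i \colon P_{m-1}^c \hookrightarrow C_m^c$, an induced full embedding $\tilde\iota_i \colon P_{m-1}^c \hookrightarrow \Gamma[\psi]$ with image $J_i$; these $J_i$ are actual full copies of $P_{m-1}^c$ sitting inside the support graph, not merely candidate curve systems. The argument then splits on whether the $J_i$ are pairwise joined in $\Gamma[\psi]$. If they are, one reads off $P_{m-1}^c * K_m$ directly (each $J_i$ contributes a $K_2$, and $2(m-1)\ge m$). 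If some pair $J_1, J_i$ is \emph{not} joined, two delicate path-building claims (working in the complement graph $\Gamma[\psi]^c$, using that the restriction of the comparison map $\phi$ to each $J_k^c$ is a graph embedding into $\Lambda^c \cong C_m$) produce either a full $C_n^c$ or a full $P_n^c$ with $n\ge m$, or else force enough of the $J_k$ to be pairwise joined that a counting argument ($2x+y=m$, with $m\ge 5$) again yields $P_{m-1}^c * K_m$. The mechanism you are missing is precisely this use of the $m$ pre-built linear chains $J_i$ and the map $\phi$ back to $\Lambda$; your single-representative heuristic does not give enough control to carry out the combinatorics.
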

\begin{proof}
By Lemma \ref{KK_normal_form}, we may assume that the embedding $\psi \colon A(\Lambda) \hookrightarrow \mathrm{Mod}(S)$ satisfies condition (KK). 
We name the vertices $u_1, u_2, \ldots, u_m$ of $\Lambda \cong C_{m}^c$ to indicate that $u_i$ is not adjacent to $u_{i+1}$ ($1 \leq i \leq m-1$) and that $u_m$ is not adjacent to $u_1$. 
Let $\Gamma[\psi]$ denote the full subgraph induced by $\mathrm{supp}(\psi)$ in $\mathcal{C}(S)$. 
There are $m$ mutually distinct full embeddings $\iota_i \colon P_{m-1}^c \rightarrow C_m^c$, up to inversion. 
By \cite[Proposition 3.9]{Katayama--Kuno18}, each $\iota_i$ induces a full embedding $\tilde{\iota}_i \colon P_{m-1}^c \rightarrow \Gamma[\psi]$. 
We denote the image of $\tilde{\iota}_i$ by $J_i$. 
Since $J_i \cong P_{m-1}^c$, 
\begin{enumerate}
 \item[($*$)] for each vertex $v$ of $J_i$, there is a vertex of $J_i$ that is not adjacent to $v$ in $\Gamma[\psi]$. 
\end{enumerate}
Assuming that neither assertion (1) nor (2) holds, we shall prove that assertion (3) holds. 
First, consider the case where $J_1, \ldots, J_m$ induces a full subgraph isomorphic to $*^{m}P_{m-1}^c$ in $\Gamma[\psi]$. 
Since $m \geq 5$, $P_{m-1}^c$ contains a full subgraph isomorphic to $K_2$. 
This implies that $P_{m-1}^c * K_{2m-2} \leq P_{m-1} * (*^{m-1} P_{m-1}^c) = *^{m} P_{m-1}$. 
Hence, we have an isomorphic copy $\Delta$ of $P_{m-1}^c * K_{2m-2}$ in $\Gamma[\psi]$ ($m \geq 5$ implies $2m-2 \geq m$). 
Moreover, since each $J_i$ has the above property $(*)$, for each vertex $v$ of $\Delta$, there is a vertex of $*_{i=1}^{m} J_i \leq \Gamma[\psi]$ that is not adjacent to $v$ in $\Gamma[\psi]$. 
Therefore, assertion (3) holds. 

Now, let us consider the case where some pair of $J_1, \ldots, J_m$ does not induce a graph-join in $\Gamma[\psi]$. 
Let $\phi$ be the correspondence between $\Lambda$ and $\Gamma[\psi]$ such that $\phi \circ \tilde{\iota}_i = \iota_i$ given in \cite[Proposition 3.9]{Katayama--Kuno18}. 
Then, the restriction of $\phi$ to $\cup_{i=1}^m J_i$ is a (surjective) map onto $\Lambda$. 
To simplify the notation, let $\phi$ denote this restriction map $\phi |_{\cup_{i=1}^m J_i}$ (we will use the restriction rather than the original correspondence). 
Moreover, $\phi$ is a graph embedding of $J_i^c \cong P_{m-1}$ into $\Lambda$. 
Also, by changing indices, if necessary, we may assume that $J_1$ and $J_i$ ($i \neq 1$) does not induce a graph-join. 
In other words, there is a pair of vertices of $J_1$ and $J_i$ that are not adjacent in $\Gamma[\psi]$. 
To prove assertion (3), we have to prove the following claim. 

\begin{claim}
The full subgraph induced by $J_1$ and $J_i$ in $\Gamma[\psi]$ contains a full subgraph isomorphic to $P_{m}^c$. 
\label{longer_2}
\end{claim}
\begin{proof}[Proof of Claim \ref{longer_2}.]
Let $u_0$ be the element of a singleton set $V(\iota_{1}(P_{m-1})) \setminus V(\iota_{i}(P_{m-1}))$. 
Pick a vertex $v_0$ of $J_1$ so that $\phi (v_0) = u_0$. 
Since $\phi \circ \tilde{\iota}_i = \iota_i$, it follows that $u_0 \not\in \phi(V(J_i))$. 
Therefore, we have that $v_0 \not\in J_i$. 
In the following, we use both $\Gamma[\psi]$ and its complement graph, $\Gamma[\psi]^c$, for convenience.  
Pick a sequence $J:=(v_0, v_1, v_2, \ldots, v_s, v_{s+1}, \ldots, v_{s+t})$ of vertices in $\Gamma[\psi]$ satisfying the following conditions. 
\begin{enumerate}
 \item[(a)] $v_0, v_1, v_2, \ldots, v_s \in V(J_1)$ and $v_{s+1}, v_{s+1}, \ldots, v_{s+t} \in V(J_i)$, 
 \item[(b)] $v_{j}$ and $v_{j+1}$ are adjacent in $\Gamma[\psi]^c$ (for all $0 \leq j \leq s+t-1$),  
 \item[(c)] the vertices $v_0, v_1, v_2, \ldots, v_s, v_{s+1}, \ldots, v_{s+t}$ are mutually distinct, 
 \item[(d)] $\phi(\{ v_0, v_1, v_2, \ldots, v_s, v_{s+1}, \ldots, v_{s+t} \}) = V(\Lambda)$ and $\phi(v_0) \not\in \phi(J_i)$, 
 \item[(e)] $s=0$, or $J_i$ and the full subgraph induced by $\{ v_0, v_1, \ldots, v_{s-1} \}$ make a graph-join in $\Gamma[\psi]$, 
 \item[(f)] $J_1$ and the full subgraph induced by $\{ v_{s+2}, \ldots, v_{s+t} \}$ make a graph-join in $\Gamma[\psi]$. 
\end{enumerate}
Before proving the existence of a sequence of vertices satisfying conditions (a), (b), (c), (d), (e) and (f), we prove that such a sequence induces a full subgraph isomorphic to $P_m^c$ in $\Gamma[\psi]$. 
Assume that we have a sequence $J$ as above. 
Conditions (a) and (d) imply $r:=s+t+1 \geq m$. 
Hence, from conditions (b) and (c), we obtain a graph embedding $\tau$ of $P_{r}$ into $\Gamma[\psi]^c$ that maps the vertices of $P_r$ to $J$.  
Now, we prove that this embedding is full. 
Pick an edge $e$ of $\Gamma[\psi]^c$ (not $\Gamma[\psi]$) joining two vertices of $J$. 
Consider the case where $\partial e \subset \{v_0, v_1, \ldots, v_s \}$. 
Since $J_1^c$ is a full subgraph of $\Gamma[\psi]^c$, we have that $e \in E(J_1^c)$. 
Then, conditions (a) and (b) imply $\partial e = \{ v_j, v_{j+1} \}$ for some $0 \leq j \leq s$. 
Hence, $e$ is contained in $\tau(P_{r})$. 
Similarly, in the case where $\partial e \subset V(J_i^c)$, $e$ is contained in $\tau(P_r)$. 
The remaining case is that $\partial e = \{ v_x, v_y \}$ for some $0 \leq x \leq s$ and $s+1 \leq y \leq s+t$. 
Notice that $0 \leq x < s$ together with condition (e) implies $s=0$ and $x=0$, because $e \in \Gamma[\psi]^c$. 
Therefore we have $x=s$. 
Similarly, condition (f) implies $y=s+1$. 
Thus, $e$ is contained in $\tau(P_{r})$, and therefore $\tau$ is a full embedding. 
Consequently, $\tau$ induces a full embedding of $P_r^c$ into $\Gamma[\psi]$ whose image is $J$. 

Now, let us construct a sequence satisfying the desired conditions. 
Pick a vertex $v_{\star}$ of $J_1^c \cong P_{m-1}$ such that $v_{\star}$ is adjacent to some vertex of $J_i^c$ in $\Gamma[\psi]^c$, and that the shortest path from $v_0$ to $v_{\star}$ in $J_1^c$ has no vertex adjacent to a vertex of $J_i^c$ in $\Gamma[\psi]^c$ other than $v_0$ and $v_{\star}$ ($v_{\star}$ might be $v_0$). 
Let $v_1, \ldots, v_{s-1}$ be the vertices of $J_1^c$ such that $(v_0, v_1, \ldots, v_{s-1}, v_{\star})$ is the shortest path from $v_0$ to $v_{\star}$ in $J_1^c$. 
It follows that $s+1 \leq m-1$. 
We put $v_s:= v_{\star}$. 
Since $\phi(\{ v_0, v_1, \ldots, v_{s-1}, v_s \})$ induces a path-graph on $\leq m-1$ vertices in $\Lambda^c = C_m$ and since $\phi(v_0) \not\in \phi(V(J_i))$, at least one end point $v^{\mathrm{end}}$ of the path-graph $J_i^c$ satisfies $\phi(v^{\mathrm{end}}) \not\in \{ \phi(v_0), \phi(v_1), \ldots, \phi(v_{s-1}), \phi(v_{s}) \}$. 
We should note that $\phi(v^{\mathrm{end}})$ is adjacent to $\phi(v_0)$ in $\Lambda^c$. 
Let $v^{\star}$ be the vertex of $J_i^c$ which is adjacent to $v_{s}$ in $\Gamma[\psi]^c$ and is nearest to $v^{\mathrm{end}}$ in $J_i^c$. 
Put $v_{s+1}:= v^{\star}$ and pick the vertices $v_{s+2}, \ldots, v_{s+t-1}$ of $J_i^c$ in such a that $(v_{s+1}, v_{s+2}, \ldots, v_{s+t-1}, v^{\mathrm{end}})$ is the shortest path from $v^{\mathrm{end}}$ to $v_{s+1}$. 
Let us show that the sequence 
$$(v_0, v_1, \ldots, v_s, v_{s+1}, \ldots, v_{s+t-1}, v_{s+t}:= v^{\mathrm{end}})$$ 
satisfies conditions (a), (b), (c), (d), (e) and (f). 

Conditions (a) and (b): Obvious. 

Condition (c): Since $( v_0, v_1, \ldots, v_s )$ is the shortest path from $v_0$ to $v_s$ in $J_1^c$, any two vertices in $\{v_0, v_1, \ldots, v_s \}$ are distinct. 
Similarly, any two vertices in $\{ v_{s+1}, \ldots, v_{s+t} \}$ are distinct. 
Moreover, since the only edge joining $\{ v_1, \ldots, v_s \}$ to $\{ v_{s+1}, \ldots, v_{s+t} \}$ in $\Gamma[\psi]^c$ is $[v_s, v_{s+1}]$, we have that $\{v_1, \ldots, v_s \} \cap \{ v_{s+1}, \ldots, v_{s+t} \} = \emptyset$. 
Furthermore, $v_0$ is not contained in $\{ v_{s+1}, \ldots, v_{s+t} \}$, because $\phi(v_0)$ is not contained in $\phi(\{ v_{s+1}, \ldots, v_{s+t} \}) \subset \phi(V(J_i))$. 
Therefore, the vertices in 
$$\{v_0, v_1, \ldots, v_s, v_{s+1}, \ldots, v_{s+t-1}, v_{s+t} \}$$ 
are mutually distinct. 

Condition (d): By definition, we have $\phi(v_0) = u_0 \not\in \phi(J_i)$. 
So we have to prove that $\phi(\{ v_0, v_1, v_2, \ldots, v_s, v_{s+1}, \ldots, v_{s+t} \}) = V(\Lambda)$. 
Since $\phi(v_0) = u_0$, it is enough to show that $\phi(\{ v_1, v_2, \ldots, v_{s+t} \}) = V(\iota_{i}(P_{m-1}^c)) \ ( = V(\Lambda) \setminus \{ u_0 \} )$. 
Since $v_{s+t}$ is an end point of $J_i$ and since $v_{0} \in V(\Lambda^c \setminus V(\iota_i(P_{m-1}^c))$, $\phi(v_{s+t})$ is not adjacent to $\phi(v_0)$ in $\Lambda$. 
In addition, $\phi(v_1)$ is not adjacent to $\phi(v_0)$, because the restriction of $\phi$ to $J_1^c$ is a graph homomorphism and that $v_1$ is not adjacent to $v_0$. 
Therefore, $\phi(v_1)$ and $\phi(v_{s+t})$ are the end points of the path graph $\iota_i(P_{m-1})$. 
Hence, the path $(\phi(v_1), \phi(v_2) \ldots, \phi(v_{s+t}))$ joins the end points of $\iota_i(P_{m-1})$ in $\Lambda^c \cong C_m$. 
Since the restriction of $\phi$ to $J_1^c$ is a graph embedding and since $\phi(J_i) \ (= \iota(P_{m-1}))$ does not contain $u_0 = \phi(v_0)$, it holds that the path $(\phi(v_1), \phi(v_2) \ldots, \phi(v_{s+t}))$ in $\Lambda^c$ does not pass $\phi(v_0)$. 
Thus, $(\phi(v_1), \phi(v_2) \ldots, \phi(v_{s+t}))$ covers $V(\iota(P_{m-1}^c))$, as desired. 

Conditions (e) and (f): It is enough to show that $v_{s+t}$ is adjacent to $v_0$ in $\Gamma[\psi]$. 
To this end, we shall show that $(v_0, v_1, \ldots, v_s, v_{s+1}, \ldots, v_{s+t})$ induces a full subgraph isomorphic to $C_{s+t+1}^c$ in $\Gamma[\psi]$ (assertion (1)) by assuming that $v_{s+t}$ is not adjacent to $v_0$ in $\Gamma[\psi]$ (hence, they are adjacent in $\Gamma[\psi]^c$). 
This assumption together with the fact that $(v_0, v_1, \ldots, v_s, v_{s+1}, \ldots, v_{s+t})$ satisfies conditions (b) and (c) implies that there is a natural graph embedding $\alpha \colon C_{s+t+1} \rightarrow \Gamma[\psi]^c$. 
Moreover, it follows that $\alpha$ is full from the choice of paths $(v_0, v_1, \ldots, v_{s})$ and $(v_{s+t}, v_{s+t-1}, \ldots, v_{s+1})$. 
Hence, $(v_0, v_1, \ldots, v_s, v_{s+1}, \ldots, v_{s+t})$ induces a full subgraph isomorphic to $C_{s+t+1}$ in $\Gamma[\psi]^c$. 
However, by assumption, $\Gamma[\psi]$ has no full subgraph isomorphic to $C_{s+t+1}^c$, and hence $v_{s+t}$ is adjacent to $v_0$ in $\Gamma[\psi]$. 
Thus, the sequence $(v_0, v_1, \ldots, v_s, v_{s+1}, \ldots, v_{s+t})$ satisfies conditions (e) and (f). 

As shown above, the sequence $(v_0, v_1, \ldots, v_s, v_{s+1}, \ldots, v_{s+t})$ with the above conditions induces a full subgraph isomorphic to $P_{s+t+1}^c$ ($s+t+1 \geq m$) in $\Gamma[\psi]$. 
\end{proof}

We also need to prove the following claim. 

\begin{claim}
If $J_1^c \cup J_i^c \cup J_j^c$ ($j \neq 1$ and $j \neq i$) induces a connected full subgraph in $\Gamma[\psi]^c$, then the full subgraph of $\Gamma[\psi]$ induced by $J_1 \cup J_i \cup J_j$ contains a full subgraph isomorphic to $P_{m+1}^c$. 
\label{longer_3}
\end{claim}
\begin{proof}[Proof of Claim \ref{longer_3}.]
Suppose that $J_1^c \cup J_i^c \cup J_j^c$ induces a connected full subgraph in $\Gamma[\psi]^c$. 
By changing indices, if necessary, we may assume that $J_1^c$ and $J_i^c$ are connected by an edge in $\Gamma[\psi]^c$, and that $J_1^c$ and $J_j^c$ are connected by an edge in $\Gamma[\psi]^c$. 
Let $(v_0, v_1, \ldots, v_s, v_{s+1}, \ldots, v_{s+t})$ be the sequence of vertices in $V(J_1) \cup V(J_i)$ constructed in the proof of Claim \ref{longer_2}. 
If $s+t+1 \geq m+1$; then, the claim holds. 
Thus, we may assume that $s+t+1 = m$. 
Then, condition (c) implies that the restriction of $\phi$ to the sequence $(v_0, v_1, \ldots, v_s, v_{s+1}, \ldots, v_{s+t})$ is a graph embedding of $P_m^c$ into $C_m^c$. 
Moreover, as in the proof of Claim \ref{longer_2}, $\phi(v_{s+t})$ is adjacent to $\phi(v_0)$ in $\Gamma[\psi]^c$. 

(i) Case where $J_1$ has a vertex $v_a$ such that $\phi(v_a) = \phi(v_{s+t})$. 
A key observation is that $\phi(E(J_1^c) \cup E(J_{i}^c)) = E(\Lambda^c) = E(C_m)$.  
We claim that $(v_a, v_0, v_1, \ldots, v_s, v_{s+1}, \ldots, v_{s+t})$ induces a full subgraph isomorphic to $P_{s+t+2}^c$ ($s+t+2 \geq m+1$) in $\Gamma[\psi]$. 
To see this, we will show the following: 
\begin{enumerate}
 \item[$\bullet$] $v_a$ is adjacent to $v_0$ in $\Gamma[\psi]^c$, 
 \item[$\bullet$] $v_a \not \in \{v_0, \ldots, v_{s+t} \}$, and 
 \item[$\bullet$] each vertex in $\{ v_1, \ldots, v_{s+t} \}$ is adjacent to $v_a$ in $\Gamma[\psi]$. 
\end{enumerate}
These facts together with the fact that the sequence $(v_0, \ldots, v_{s+t})$ induces a full subgraph isomorphic to $P_{m}^c$ imply that the longer sequence $(v_a, v_0, \ldots, v_{s+t})$ induces a full subgraph isomorphic to $P_{m+1}^c$ in $\Gamma[\psi]$. 
First, we show that $v_a$ is not adjacent to $v_0$ in $\Gamma[\psi]$. 
Since $\phi(v_a) = \phi(v_{s+t})$ and since $\phi(v_{s+t})$ is adjacent to $\phi(v_0)$ in $\Lambda^c$, $\phi(v_a)$ is adjacent to $\phi(v_0)$ in $\Lambda^c$. 
The restriction of $\phi$ to $J_1^c$ is a full embedding of $J_1^c$ into $\Lambda^c$, because $J_1^c \cong P_{m-1}$ and $\phi \circ \tilde{\iota_1} = \iota_1$. 
Therefore, $v_a$ must be adjacent to $v_0$. 
Next, we show that $v_a \not \in \{v_0, \ldots, v_{s+t} \}$. 
The path $(\phi(v_0), \ldots, \phi(v_{s}) )$ ($s+1 \leq m-1$) in $\Lambda^c$ cannot reach $\phi(v_{s+t})$, because $\phi(v_0)$ is adjacent to $\phi(v_{s+t})$ in $\Lambda^c$ and $\phi(v_1) \neq \phi(v_{s+t})$. 
Hence, we have $v_a \not\in \{ v_0, v_1, \ldots, v_{s} \}$. 
In addition, no vertex in $\{ v_{s+1}, v_{s+2}, \ldots, v_{s+t-1} \}$ is mapped to $\phi(v_{s+t})$. 
Therefore, $v_{a} \not\in \{ v_{s+1}, v_{s+2}, \ldots, v_{s+t-1}  \}$. 
By assumption, $\Gamma[\psi]$ has no full subgraph isomorphic to $C_{m}^c$, and hence $v_{a} \neq v_{s+t}$ (if not, the path $(v_0, v_1, \ldots, v_{s+t}=v_a)$ induces $C_{s+t+1} = C_m$ in $\Gamma[\psi]^c$). 
As a result, we have $v_{a} \not\in \{ v_0, v_1, \ldots, v_s, v_{s+1}, \ldots, v_{s+t} \}$. 
Next, we show that each vertex in $\{ v_1, \ldots, v_s, v_{s+1}, \ldots, v_{s+t} \}$ is adjacent to $v_a$ in $\Gamma[\psi]$. 
Since $(\phi(v_0), \ldots, \phi(v_{s+t}))$ induces an embedding of $P_{m}$ into $\Lambda^c \cong C_m$, no vertex in $(\phi(v_1), \ldots, \phi(v_{s+t-2}))$ is adjacent to $\phi(v_a)$ in $\Lambda^c$. 
In other words, each vertex in $(\phi(v_1), \ldots, \phi(v_{s+t-2}))$ is adjacent to $\phi(v_a)$ in $\Lambda$. 
Since $\phi$ is a graph homomorphism from the full subgraph induced by $J_1^c \cup J_i^c$ in $\Gamma[\psi]^c$ to $\Lambda^c$, no vertex in $(v_1, \ldots, v_{s+t-2})$ is adjacent to $v_a$ in $\Gamma[\psi]^c$. 
Therefore, each vertex in $(v_1, \ldots, v_{s+t-2})$ is adjacent to $v_a$ in $\Gamma[\psi]$. 
Note that $(v_a, v_0, v_1, \ldots, v_{s+t-2})$ induces a full subgraph isomorphic to $P_{m-1}^c$ in $\Gamma[\psi]$. 
This implies that $v_a$ is adjacent to $v_{s+t-1}$ in $\Gamma[\psi]$, because there is no full subgraph isomorphic to $C_m^c$ in $\Gamma[\psi]$. 
Also, $v_a$ is adjacent to $v_{s+t}$, because $\psi$ satisfies condition (KK) and $\phi(v_a) = \phi(v_{s+t})$. 
Thus each vertex in $\{ v_1, \ldots, v_{s+t} \}$ is adjacent to $v_a$ in $\Gamma[\psi]$. 
The above argument shows that $(v_a, v_0, \ldots, v_{s+t})$ induces a full subgraph isomorphic to $P_{m+1}^c$ in $\Gamma[\psi]$

(ii) Case where $J_1$ has no vertex mapped to $\phi(v_{s+t})$. 
In this case, it holds that $\phi(v_{s+t}) \not\in \phi(J_1^c)$ and $\phi(v_0) \not\in \phi(J_i^c)$. 
Hence, $\phi(E(J_1^c) \cup E(J_{i}^c)) \subsetneq E(\Lambda^c) = E(C_m)$. 
Note that $J_j^c$ has an edge $[v_b, v_c]$ such that $\phi(v_b) = \phi(v_{s+t})$ and $\phi(v_c) = \phi(v_0)$. 
Suppose that $J_{j}^c$ contains a vertex mapped to $\phi(v_{s+t-1})$. 
Accordingly, we have $\phi(E(J_1^c)) \cup \phi(E(J_j^c)) = E(\Lambda^c)$. 
By applying Claim \ref{longer_2} and the argument in (i) to $J_1 \cup J_j$ instead of $J_1 \cup J_i$, we have that the full subgraph induced by $J_1 \cup J_j$ contains $P_{m+1}^c$. 
Hence, we may assume that $\phi(J_j^c)$ does not contain $\phi(v_{s+t-1})$. 
Then we have that $v_0$ is an endpoint of $J_1^c$, $v_b$ is an endpoint of $J_j^c$, and that $\phi(E(J_i^c)) \cup \phi(E(J_{j}^c)) = E(\Lambda^c)$. 
So again by Claim \ref{longer_2} and (i), we may assume that there is no edge between $J_i^c$ and $J_j^c$ in $\Gamma[\psi]^c$. 
We can now construct a path $\gamma$ from $v_{s+t} \in J_i^c$ to an endpoint of $J_j^c$ which passes $J_1^c$ and $[v_b, v_c]$ in this order as follows. 

Case (ii)-(A). 
Let $v_e$ be an an endpoint $v_{e}$ of $J_1^c \cong P_{m-1}$ other than $v_0$. 
If $v_b$ is not adjacent to $v_{e}$ in $\Gamma[\psi]^c$, then $\gamma$ can be obtained by connecting $(v_{s+t}, \ldots, v_{s+1}, v_{s}, \ldots, v_{d})$ with a subpath $S$ of $J_j^c$ starting from an internal vertex of $J_j^c$ and ending at $v_b$, where $v_d \in \{v_0, v_1, \ldots, v_s \}$ and no vertex between $v_s$ and $v_d$ is adjacent to a vertex of $J_j^c$ in $\Gamma[\psi]^c$. 

Case (ii)-(B). 
If $v_b$ is adjacent to $v_{e}$ in $\Gamma[\psi]^c$, then $\gamma$ can be obtained by connecting the shortest path from $v_{s+t}$ to $v_e$ in the full subgraph induced by $J_1^c \cup J_i^c$ with the entire $J_j^c$. 

Let us show that $\gamma$ induces a full subgraph $P$ isomorphic to a path graph of length at least $m+1$ in $\Gamma[\psi]^c$. 
By our construction, $\phi(\gamma)$ passes $\phi(v_{s+t})$ twice and all of the vertices in $\Lambda^c$, so the number of vertices contained in $\gamma$ is at least $m+1$. 
It is easy to see that $\gamma$ does not have an overlap. 
Moreover, we can show that $\gamma$ has no shortcut as follows. 

Case (ii)-(A). Both $(v_{s+t}, \ldots, v_{s+1}, v_{s}, \ldots, v_{c})$ and $S$ have no shortcut. 
Since there is no edge between $J_i^c$ and $J_j^c$, no vertex of $S$ is adjacent to $(v_{s+t}, \ldots, v_{s+1})$ in $\Gamma[\psi]^c$. 
By the properties of $v_d$, there is no edge between $S$ and $(v_s, \ldots, v_d)$. 

Case (ii)-(B). Suppose, on the contrary, that there is an edge $e$ connecting a vertex of $J_1^c$ and a vertex of $J_j^c$ other than $v_b$. 
Then $J_1^c \cup J_j^c$ contains a full subgraph isomorphic to $C_n$ for some $n \geq m$, because $[v_e, v_b]$ is an edge in $\Gamma[\psi]^c$. 
This does not happen, so there is exactly one edge, $[v_e, v_b]$, between $J_1^c$ and $J_j^c$. 
This implies that there is no shortcut in $\gamma$, because there is no edge between $J_i^c$ and $J_j^c$. 

By the above argument, $\gamma$ induces a full subgraph isomorphic to $P_{n}$ for some $n \geq m+1$, as desired. 
\end{proof}

We denote by $F$ the full subgraph of $\Gamma[\psi]^c$ induced by $J_1^c, \ldots, J_m^c$. 
Since two of $J_1^c, \ldots, J_m^c$, say $J_1^c$ and $J_i^c$, are connected by an edge in $\Gamma[\psi]^c$, Claim \ref{longer_2} implies that $F$ contains a full subgraph $P_{m} \sqcup \check{F}$, where $\check{F}$ is a full subgraph of $\Gamma[\psi]^c$ induced by $J_2^c, \ldots, J_{i-1}^c, J_{i+1}^c, \ldots, J_m^c$. 
Claim \ref{longer_3} together with the assumption $P_{m+1}^c \not\leq \Gamma[\psi]$ ensures that no triple of $J_1^c, \ldots, J_m^c$ is connected in $\Gamma[\psi]^c$. 
Hence, there is no edge between the isomorphic copy of $P_m$ and $\check{F}$ in the above union. 
Let $x$ denote the number of connected components of $F$ induced by pairs of $J_1^c, \ldots, J_m^c$, and let $y$ denote the number of other connected components of $F$. 
Then, we have $2x + y = m$. 
In order to prove that there is a full subgraph isomorphic to $P_{m} \sqcup (\sqcup^{m} K_1)$ in $F$, we will find $\sqcup^{m} K_1$ in $\check{F}$. 
Note that $\check{F}$ has $x-1$ connected components that contain full subgraphs isomorphic to $P_m$ and $y$ connected components that do not contain $P_m$ (but contain $P_{m-1}$). 
Since $m \geq 5$, $P_m$ contains a full subgraph isomorphic to $\sqcup^{3} K_1$. 
On the other hand, $P_{m-1}$ contains a full subgraph isomorphic to $\sqcup^{2} K_1$. 
Therefore, $\check{F}$ contains a full subgraph isomorphic to $\sqcup^{3(x-1) + 2y} K_1$. 
The equality $2x + y = m$ implies $3(x-1) + 2y = \frac{3m + y - 6}{2}$. 
We now claim that $\frac{3m + y - 6}{2} \geq m$. 
To see this, assume that $m=5$. 
Then, at least one component should be induced by a single $J_k$, and hence $y \geq 1$. 
Therefore, we have $\frac{3m + y - 6}{2} \geq m$. 
Next, we assume that $m \geq 6$. 
Then we have $3m-6 \geq 2m$, and hence, $\frac{3m + y - 6}{2} \geq m$. 
Thus, $\frac{3m + y - 6}{2} \geq m$ for all $m \geq 5$. 
Now it follows that $P_{m} \sqcup (\sqcup^{m} K_1) \leq P_{m} \sqcup (\sqcup^{\frac{3m + y - 6}{2}} K_1) \leq P_{m} \sqcup \check{F} \leq F \leq \Gamma[\psi]^c$. 
Namely, we have $P_m^c * K_m \leq \Gamma[\psi]$. 
Property $(*)$ implies that for each vertex $v$ of the full subgraph $P_{m}^c * K_m \leq \Gamma[\psi]$, there is a vertex of $\Gamma[\psi]$, which is not adjacent to $v$ in $\Gamma[\psi]$, and therefore assertion (3) holds. 
\end{proof}

The above theorem immediately implies the following. 

\begin{corollary}
Let $m$ be an integer $\geq 5$, $S$ a surface with negative Euler characteristic and without boundary, and $\Gamma$ a finite graph with $P_{m+1}^c \not \leq \mathcal{C}(S)$. 
Suppose that there is an embedding $\psi \colon A(C_m^c) \times A(K_1) \hookrightarrow \mathrm{Mod}(S)$. 
Then at least one of the following holds: 
\begin{enumerate}
 \item[(1)] $C_n * K_1 \leq \mathcal{C}(S)$ (for some $n \geq m$), 
 \item[(2)] $P_{m-1}^c * K_{m+1} \leq \mathcal{C}(S)$. 
\end{enumerate}
 \label{path_plus_mcg_cor}
\end{corollary}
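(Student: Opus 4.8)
The plan is to restrict $\psi$ to the sub-right-angled Artin group $A(C_m^c)$ of $A(C_m^c)\times A(K_1)=A(C_m^c*K_1)$, apply Theorem~\ref{cyc_path_mcg_cor} to this restriction, and then enlarge the full subgraph that it produces by a single vertex coming from the $A(K_1)$-factor. First I would apply Lemma~\ref{KK_normal_form} to replace $\psi$ by an embedding $A(C_m^c*K_1)\hookrightarrow\mathrm{Mod}(S)$ satisfying condition (KK); after this normalization each standard generator is sent to a multitwist, and the supports of distinct standard generators are disjoint as sets of isotopy classes. Let $w$ be the standard generator of $C_m^c*K_1$ corresponding to the $K_1$-factor, so that $w$ is adjacent in $C_m^c*K_1$ to every vertex of $C_m^c$. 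Since $\psi$ is injective we have $\psi(w)\neq 1$, so I may fix a curve $c\in\mathrm{supp}(\psi(w))$.

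The main point is to check that $c$ plays the role of a cone vertex over $\mathrm{supp}(\psi|_{A(C_m^c)})$, that is, $c\notin\mathrm{supp}(\psi|_{A(C_m^c)})$ and $c$ is adjacent in $\mathcal{C}(S)$ to every curve of $\mathrm{supp}(\psi|_{A(C_m^c)})$. Fix a standard generator $u$ of $A(C_m^c)$. Then $\psi(w)$ and $\psi(u)$ commute, being images of commuting generators of a direct product, and both are multitwists. A standard computation with commuting multitwists --- conjugate $\psi(u)$ by $\psi(w)$, use uniqueness of the multitwist expression to deduce that $\psi(w)$ fixes each support curve of $\psi(u)$, and recall that a multitwist fixes a curve only when that curve is disjoint from, or is a component of, its support --- shows that every support curve of $\psi(w)$ is disjoint from or isotopic to every support curve of $\psi(u)$. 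Condition (KK) makes $\mathrm{supp}(\psi(w))$ and $\mathrm{supp}(\psi(u))$ disjoint as sets of isotopy classes, which rules out the isotopic case, so $c$ is disjoint from and distinct from every curve of $\mathrm{supp}(\psi(u))$. Letting $u$ range over the generators of $A(C_m^c)$ gives the claim.

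Finally I would apply Theorem~\ref{cyc_path_mcg_cor} to $\psi|_{A(C_m^c)}$, an embedding of $A(C_m^c)$ satisfying condition (KK) with $m\geq 5$: it produces a full subgraph $\Delta\leq\mathcal{C}(S)$, induced by vertices of $\mathrm{supp}(\psi|_{A(C_m^c)})$, isomorphic to $C_n^c$ for some $n\geq m$, to $P_n^c$ for some $n\geq m+1$, or to $P_{m-1}^c*K_m$. The middle alternative is impossible under the hypothesis $P_{m+1}^c\not\leq\mathcal{C}(S)$: since $P_{m+1}^c$ is the full subgraph of $P_n^c$ spanned by $m+1$ consecutive vertices of the underlying path, $P_n^c\leq\mathcal{C}(S)$ would force $P_{m+1}^c\leq\mathcal{C}(S)$. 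In either of the two surviving cases, the second paragraph shows that the vertex set $V(\Delta)\cup\{c\}$ induces the full subgraph $\Delta*K_1$ of $\mathcal{C}(S)$ (a full subgraph is determined by its vertex set); this is $C_n^c*K_1$ with $n\geq m$ when $\Delta\cong C_n^c$, and $P_{m-1}^c*K_m*K_1=P_{m-1}^c*K_{m+1}$ when $\Delta\cong P_{m-1}^c*K_m$, which are precisely the two alternatives of the statement. The step needing real care is the commuting-multitwist verification of the second paragraph, namely that the $K_1$-curve $c$ misses the entire support of $\psi|_{A(C_m^c)}$; everything else is bookkeeping with full subgraphs.
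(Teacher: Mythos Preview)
Your overall strategy coincides with the paper's: normalize to condition~(KK), restrict to the $A(C_m^c)$ factor, invoke Theorem~\ref{cyc_path_mcg_cor}, and then cone the resulting full subgraph $\Delta$ with a support curve $c$ of the $K_1$ generator. The elimination of the $P_n^c$ alternative via the hypothesis $P_{m+1}^c\not\leq\mathcal{C}(S)$ is also handled correctly.

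The gap is in the second paragraph. You assert that ``Condition (KK) makes $\mathrm{supp}(\psi(w))$ and $\mathrm{supp}(\psi(u))$ disjoint as sets of isotopy classes,'' and earlier that after normalization ``the supports of distinct standard generators are disjoint as sets of isotopy classes.'' Neither the paper's definition of condition~(KK) nor Lemma~\ref{KK_normal_form} says this: condition~(KK) only requires that each generator map to a multitwist, and nothing prevents two commuting multitwists from sharing a support curve (e.g.\ $\psi(w)=T_a$ and $\psi(u)=T_aT_b$ with $a,b$ disjoint). So your deduction that $c\notin\mathrm{supp}(\psi|_{A(C_m^c)})$, and hence $c\notin V(\Delta)$, is unjustified.

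The paper repairs exactly this point by exploiting the extra information in Theorem~\ref{cyc_path_mcg_cor}. From commutation one only gets that $c$ is \emph{disjoint from or equal to} every curve in $\mathrm{supp}(\psi|_{A(C_m^c)})$; equivalently, $c$ is adjacent in $\mathcal{C}(S)$ to every such curve distinct from $c$. Now use that in case~(1) every vertex of $C_n^c$ ($n\geq 5$) has a distinct non-adjacent vertex inside $\Delta$, and in case~(3) the ``Furthermore'' clause guarantees that every vertex of $\Delta$ has a distinct non-adjacent vertex in $\mathrm{supp}(\psi|_{A(C_m^c)})$. Either way, $c$ cannot lie in $V(\Delta)$, and then $V(\Delta)\cup\{c\}$ induces $\Delta * K_1$ as you want. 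Replacing your disjoint-supports claim with this argument makes the proof complete.
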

\begin{proof}
We may assume that $\psi$ satisfies condition (KK). 
Restricting $\psi$ to $A(C_m^c)$ and $A(K_1)$, we have two embeddings $\psi_1 := \psi |_{A(C_m^c)} \colon$ $A(C_m^c) \hookrightarrow \mathrm{Mod}(S)$ and $\psi_2 := \psi |_{A(K_1)} \colon A(K_1) \hookrightarrow \mathrm{Mod}(S)$ with condition (KK). 
Applying Theorem \ref{cyc_path_mcg_cor} to $\psi_1$, we have that there is a full subgraph $\Delta$ of $\mathcal{C}(S)$ that is induced by some vertices in $\mathrm{supp}(\psi_1(V(C_m^c)))$ and is isomorphic to one of the following: 
\begin{enumerate}
 \item[$(1')$] $C_n$ (for some $n \geq m$) or 
 \item[$(2')$] $P_{m-1} * K_m$. 
 Furthermore, for each vertex $v$ of $P_{m-1}*K_m$, there is a vertex in $\mathrm{supp}(\psi_1(V(C_m^c)))$ that is not adjacent to $v$. 
\end{enumerate}
We only treat the case where $\Delta$ is isomorphic to $P_{m-1} * K_m$, because another case can be treated similarly. 
Suppose that $\Delta$ is isomorphic to $P_{m-1} * K_m$. 
Pick a vertex $u$ of $\mathrm{supp}(\psi_2(V(K_1)))$. 
Then every element of $\mathrm{supp}(\psi_1(V(C_m^c)))$ is adjacent to $u$ in $\mathcal{C}(S)$, because the images of $\psi_1$ and $\psi_2$ are commutative. 
So we have $u \not \in V(\Delta)$, and hence, $P_{m-1}^c * K_{m+1} \cong P_{m-1}^c * K_m * \{u \} \leq \mathcal{C}(S)$. 
Thus, assertion (2) holds. 
\end{proof}

Using the above results, we can decide whether $A(C_m^c)$ is embedded in the mapping class group of a surface.  

\begin{theorem}
$A(C_m^c)$ is embedded in $\mathrm{Mod}(S_{g, p})$ if and only if $m$ satisfies 
\begin{eqnarray*}
m \leq \left\{ \begin{array}{ll}
0 & ((g, p) \in \{ (0, 0), (0, 1), (0, 2), (0, 3)\}) \\
3 & ((g, p) \in \{ (0, 4), (1, 0), (1, 1) \}) \\
5 & ((g, p)=(1, 2), (0, 5)) \\
2g + 2 & (g \geq 2, \ p = 0) \\
2g + p + 1 & (g \geq 2, \ 1 \leq p \leq 2) \\
2g + p & (\mbox{otherwise}). \\
\end{array} \right.
\end{eqnarray*}
 \label{cyc_mcg}
\end{theorem}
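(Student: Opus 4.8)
The plan is to read off both directions from Koberda's embedding theorem together with the combinatorial machinery of Sections~\ref{cyc_chain_surf} and~\ref{raags_in_mcgs}, so that everything reduces to Lemma~\ref{cyclic_full_subgraph}, Lemma~\ref{1p_path_plus} and Lemmas~\ref{2g_p_1}--\ref{2g_p}. For the ``if'' part, whenever $\chi(S_{g,p})<0$ I would exhibit a full embedding $C_m^c\leq\mathcal{C}(S_{g,p})$ for each $m$ up to the claimed bound and then apply Koberda's embedding theorem to get $A(C_m^c)\hookrightarrow\mathrm{Mod}(S_{g,p})$. For $g=0,\ p\geq5$, for $g=1,\ p\geq1$, and for $g\geq2$ this full embedding is exactly the content of Lemma~\ref{cyclic_full_subgraph}, and a direct comparison of the piecewise formulas shows that the bound there agrees with the bound asserted here. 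The few surfaces not covered by Lemma~\ref{cyclic_full_subgraph} are treated by hand: if $(g,p)\in\{(0,0),(0,1),(0,2),(0,3)\}$ the bound is $0$ and $A(C_0^c)$ is trivial; if $(g,p)=(0,4)$ then $m\leq3$ and $C_m^c=\overline{K_m}$ is realized by $m$ pairwise intersecting curves, so again Koberda applies since $\chi<0$; and if $(g,p)=(1,0)$ then $A(C_m^c)$ for $m\leq3$ is one of $1,\ \mathbb{Z},\ F_2,\ F_3$, each of which embeds into $\mathrm{Mod}(S_1)\cong SL(2,\mathbb{Z})$.

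For the ``only if'' part I would first dispose of the low-complexity surfaces. When $(g,p)\in\{(0,0),(0,1),(0,2),(0,3)\}$ the group $\mathrm{Mod}(S_{g,p})$ is finite while $A(C_m^c)$ is infinite for $m\geq1$, so $m=0$. When $(g,p)\in\{(0,4),(1,0),(1,1)\}$ the group $\mathrm{Mod}(S_{g,p})$ is virtually free, hence contains no copy of $\mathbb{Z}^2$; since $C_m^c$ has an edge as soon as $m\geq4$, an embedding $A(C_m^c)\hookrightarrow\mathrm{Mod}(S_{g,p})$ forces $m\leq3$.

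For every remaining pair $(g,p)$ we have $\chi(S_{g,p})<0$, no boundary, and the claimed bound is at least $5$, so it suffices to treat $m\geq5$. Given an embedding $A(C_m^c)\hookrightarrow\mathrm{Mod}(S_{g,p})$ with $m\geq5$, Lemma~\ref{KK_normal_form} lets me assume condition~(KK), and Theorem~\ref{cyc_path_mcg_cor} then yields a full subgraph of $\mathcal{C}(S_{g,p})$ isomorphic to $C_n^c$ with $n\geq m$, to $P_n^c$ with $n\geq m+1$, or to $P_{m-1}^c*K_m$. In the first case Lemma~\ref{cyclic_full_subgraph} bounds $n$, so $m\leq n\leq$ the asserted bound. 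In the second case $P_n^c\leq\mathcal{C}(S_{g,p})$ is a linear chain of length $n$, and \cite[Theorem~2.2]{Katayama--Kuno18} bounds $n$ by a quantity one checks to exceed the asserted bound by at most $1$, whence $m\leq n-1\leq$ the bound. In the third case I split on the genus: for $g=0$ the subgraph $P_{m-1}^c$ is a linear chain of length at most $p-1$, giving $m\leq p$; for $g=1$ the inequality $m\geq5$ gives $K_m\supseteq K_2$, so $P_{m-1}^c*K_2\leq\mathcal{C}(S_1^p)$ and Lemma~\ref{1p_path_plus} gives $m-1\leq p$; and for $g\geq2$, passing to $P_k^c*K_m\leq P_{m-1}^c*K_m$ with $k=\min\{m-1,\,2g+p+1\}$ and applying Lemmas~\ref{2g_p_1} and~\ref{2g_p} forces $m-1\leq 2g+p$ (hence $m\leq 2g+p+1$, which is at most the asserted bound when $p\leq2$), while for $p\geq3$ the same lemmas additionally exclude $m-1=2g+p$, producing the sharper bound $m\leq 2g+p$. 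Collecting the three cases gives $m\leq$ the asserted bound in every remaining pair.

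The heavy lifting --- Theorem~\ref{cyc_path_mcg_cor} together with the chained-triple and $Y$-chained-quadruple counts --- is already in place, so the main obstacle here is the arithmetic bookkeeping: lining up the three alternatives of Theorem~\ref{cyc_path_mcg_cor} with the six-case bound. The point that needs care is why an extra puncture raises the answer by exactly one when $g\geq2$ and $p\leq2$ --- it is Lemma~\ref{2g_p} that forbids the configuration $P_{2g+p}^c*K_m$ with large $m$ and thereby rules out $m=2g+p+1$ precisely when $p\geq3$ but not when $p\leq2$ --- and, relatedly, checking that the linear-chain estimate of \cite[Theorem~2.2]{Katayama--Kuno18} used in the $P_n^c$ alternative never overshoots the target.
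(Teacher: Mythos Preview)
Your proposal is correct and follows essentially the same approach as the paper: both directions rest on Koberda's embedding theorem together with Lemma~\ref{cyclic_full_subgraph} for the ``if'' part, and on Lemma~\ref{KK_normal_form} plus Theorem~\ref{cyc_path_mcg_cor} for the ``only if'' part in the generic cases, with ad hoc arguments for the low-complexity surfaces. The only real difference is organizational: the paper dispatches the cases $(g,p)=(0,5)$, $g=0,\,p\geq 6$, and $g\geq 2,\,p=0$ by citing \cite[Proposition~4.1]{Katayama--Kuno18} directly, and handles $(g,p)=(1,2)$ via the topological-complexity obstruction $\mathbb{Z}^3\not\hookrightarrow\mathrm{Mod}(S_{1,2})$, whereas you run all high-complexity cases uniformly through the trichotomy of Theorem~\ref{cyc_path_mcg_cor}. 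Your treatment is thus slightly more streamlined; the trade-off is that you leave the case-by-case verification of the linear-chain bound from \cite[Theorem~2.2]{Katayama--Kuno18} (your ``one checks to exceed the asserted bound by at most~$1$'') implicit, while the paper invokes it explicitly where needed to eliminate alternative~(2) of Theorem~\ref{cyc_path_mcg_cor} before turning to the other two.
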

\begin{proof}
{\bf Case of $(g, p) \in \{ (0, 0), (0, 1), (0, 2), (0, 3)\}$}. 
In this case, the mapping class group $\mathrm{Mod}(S_{g, p})$ is a finite group, and so the right-angled Artin group embedded in $\mathrm{Mod}(S_{g, p})$ should be trivial. 

{\bf Case of $(g, p)=(1, 1)$}. 
Note that $A(C_3^c) \cong F_3$ is embedded in $\mathrm{Mod}(S_{1, 1}) \cong \mathrm{SL}(2, \mathbb{Z})$. 
However, if $m \geq 4$, then $A(C_m^c)$ contains $\mathbb{Z}^2$. 
On the other hand, $\mathrm{SL}(2, \mathbb{Z})$ does not contain $\mathbb{Z}^2$. 
Hence, the assertion holds. 

{\bf Case of $(g, p) \in \{ (0, 5),  (1, 2) \}$}. 
The assertion in the case where $(g, p) = (0, 5)$, was proved in \cite[Proposition 4.1]{Katayama--Kuno18}. 
Suppose that $(g, p) = (1, 2)$. 
Koberda's embedding theorem together with Lemma \ref{cyclic_full_subgraph} shows that $A(C_5^c)$ is embedded in $\mathrm{Mod}(S_{1, 2})$. 
Next, we prove that $A(C_m^c) \hookrightarrow \mathrm{Mod}(S_{1, 2})$ implies $m \leq 5$. 
Note that $A(C_6^c)$ contains a subgroup isomorphic to $\mathbb{Z}^3$. 
On the other hand, the topological complexity of $S_{1, 2}$ is two. 
Hence, $A(C_6^c)$ is cannot be embedded in $\mathrm{Mod}(S_{1, 2})$. 

{\bf Case of $g \geq 2, \ p = 0$}. 
This is shown in \cite[Proposition 4.1]{Katayama--Kuno18}. 

{\bf Case of $g \geq 2, \ 1 \leq p \leq 2$}. 
Koberda's embedding theorem together with Lemma \ref{cyclic_full_subgraph} shows that $A(C_{2g+p+1}^c)$ is embedded in $\mathrm{Mod}(S_{g, p})$. 
Suppose that $A(C_m^c) \hookrightarrow \mathrm{Mod}(S_{g, p})$. 
Assuming $m \geq 2g+p+1$, we shall prove $m = 2g+p+1$. 
By \cite{Katayama--Kuno18}, $P_{m+1}^c$ is not a full subgraph of $\mathcal{C}(S_{g}^{p})$. 
Hence, from Theorem \ref{cyc_path_mcg_cor}, we have either $C_{m}^c \leq \mathcal{C}(S_{g, p})$ or $P_{m-1}^c * K_m \leq \mathcal{C}(S_{g}^{p})$. 
Lemma \ref{cyclic_full_subgraph} together with Lemma \ref{2g_p} implies that $m = 2g+p+1$. 

{\bf Case of $g=0, \ p \geq 6$}. 
See \cite[Proposition 4.1]{Katayama--Kuno18}. 

In the rest of the cases, we can give proofs similar to the case where $g \geq 2, \ 1 \leq p \leq 2$. 

{\bf Case of $g=1, \ p \geq 3$}. 
Koberda's embedding theorem together with Lemma \ref{cyclic_full_subgraph} shows that $A(C_{p+2}^c)$ is embedded in $\mathrm{Mod}(S_{1, p})$. 
Next, we suppose that $A(C_m^c) \hookrightarrow \mathrm{Mod}(S_{1, p})$. 
Furthermore, assume that $m \geq p+2 \geq 5$. 
Since $P_{p+3}^c \not \leq \mathcal{C}(S_{1}^{p})$ by \cite[Theorem 2.2]{Katayama--Kuno18}, either $C_{m}^c \leq \mathcal{C}(S_{1, p})$ or $P_{m-1}^c * K_m \leq \mathcal{C}(S_{1, p})$ holds. 
Hence, $m = p+2$. 

{\bf Case of $g \geq 2, \ p \geq 3$}. 
Koberda's embedding theorem together with Lemma \ref{cyclic_full_subgraph} shows that $A(C_{2g+p}^c)$ is embedded in $\mathrm{Mod}(S_{g, p})$. 
Suppose that $A(C_m^c) \hookrightarrow \mathrm{Mod}(S_{g, p})$. 
To the contrary, assume that $m \geq 2g+p+1$. 
Then, one of the following holds: $C_{m}^c \leq \mathcal{C}(S_{g, p})$ or $P_{2g+p+1}^c \leq \mathcal{C}(S)$ or $P_{2g+p}^c * K_m \leq \mathcal{C}(S_{g}^{p})$. 
However, this is impossible by \cite{Katayama--Kuno18} and Lemmas \ref{cyclic_full_subgraph} and \ref{2g_p}. 
Thus, $m \leq 2g+p$. 
\end{proof}

Similarly, we can decide whether $A(C_m^c) \times \mathbb{Z}$ is embedded in the mapping class group of a surface. 

\begin{theorem}
$A(C_m^c) \times \mathbb{Z}$ is embedded in $\mathrm{Mod}(S_{g, p})$ if and only if $m$ satisfies 
\begin{eqnarray*}
m \leq \left\{ \begin{array}{ll}
0 & (g, p) \in \{ (0, 4), (1, 1) \} \\
3 & (g, p) \in \{ (0, 5), (1, 2) \} \\
p-1 & (g=0, \ p \geq 6) \\
p + 2 & (g=1, \ p \geq 3) \\
2g + 1 & (g \geq 2, \ p=0)  \\
2g + p & (g \geq 2, \ p \geq 1). \\
\end{array} \right.
\end{eqnarray*}
 \label{cyc_plus_z_mcg}
\end{theorem}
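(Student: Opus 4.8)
The plan is to derive both implications from the combinatorial results of Sections~\ref{cyc_chain_surf} and~\ref{raags_in_mcgs}, exploiting the isomorphism $A(C_m^c)\times\mathbb{Z}\cong A(C_m^c*K_1)$ and following the template of the proof of Theorem~\ref{cyc_mcg}. For the ``if part'', whenever $(g,p)$ and $m$ lie in the asserted range (and $m$ is large enough that $C_m^c$ is a genuine simple graph), Lemma~\ref{cyclic_plus_full_subgraph} gives $C_m^c*K_1\leq\mathcal{C}(S_{g,p})$, and Koberda's embedding theorem then yields $A(C_m^c)\times\mathbb{Z}\cong A(C_m^c*K_1)\hookrightarrow\mathrm{Mod}(S_{g,p})$. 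For $(g,p)\in\{(0,4),(1,1)\}$ the only assertion is that $\mathbb{Z}=A(C_0^c)\times\mathbb{Z}$ embeds, which holds since $\mathrm{Mod}(S_{g,p})$ contains an infinite-order Dehn twist.

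For the ``only if part'' I would first dispose of the exceptional small surfaces by a topological-complexity argument. The groups $\mathrm{Mod}(S_{0,4})$ and $\mathrm{Mod}(S_{1,1})$ are virtually free (topological complexity $1$), hence contain no copy of $\mathbb{Z}^2$; since $A(C_m^c)\times\mathbb{Z}$ contains $\mathbb{Z}^2$ as soon as $C_m^c$ has a vertex, i.e.\ for every $m\geq1$, no such embedding exists and $m\leq0$. The surfaces $S_{0,5}$ and $S_{1,2}$ have topological complexity $2$, so $\mathrm{Mod}$ contains no copy of $\mathbb{Z}^3$; since $A(C_m^c)\times\mathbb{Z}$ contains $\mathbb{Z}^3$ as soon as $C_m^c$ has an edge, i.e.\ for all $m\geq4$, we obtain $m\leq3$.

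For the remaining cases ($g=0$, $p\geq6$; $g=1$, $p\geq3$; $g\geq2$, $p=0$; $g\geq2$, $p\geq1$), suppose $A(C_m^c)\times\mathbb{Z}\hookrightarrow\mathrm{Mod}(S_{g,p})$ with $m$ exceeding the claimed bound; note that then $m\geq5$, and by Lemma~\ref{KK_normal_form} we may assume the embedding satisfies condition (KK). In each of these regimes the claimed bound is at least one less than the maximal length of a linear chain on $S_g^p$ recorded in \cite[Theorem~2.2]{Katayama--Kuno18}, so $m+1$ exceeds that maximum and $P_{m+1}^c\not\leq\mathcal{C}(S_{g,p})$. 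Hence Corollary~\ref{path_plus_mcg_cor} applies (with $A(K_1)=\mathbb{Z}$) and gives either $C_n^c*K_1\leq\mathcal{C}(S_{g,p})$ for some $n\geq m$, or $P_{m-1}^c*K_{m+1}\leq\mathcal{C}(S_{g,p})$. The first alternative contradicts Lemma~\ref{cyclic_plus_full_subgraph}, since $n\geq m$ exceeds the bound on the length of a full copy of $C^c*K_1$ there. For the second alternative, I would pass to a full copy of $P_k^c$ inside $P_{m-1}^c$ for an appropriate $k$ (and weaken $K_{m+1}$ to $K_2$ where needed), and then apply Lemma~\ref{0p_path_plus}, \ref{1p_path_plus}, \ref{2g_p_1}, or \ref{2g_p} respectively in the four regimes; each forces $m$ far below its assumed value, a contradiction. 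This gives $m\leq$ the claimed bound in every case, completing the proof.

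The step I expect to be the main obstacle is making Corollary~\ref{path_plus_mcg_cor} genuinely applicable case by case, i.e.\ confirming $P_{m+1}^c\not\leq\mathcal{C}(S_{g,p})$ from the linear-chain bounds of \cite{Katayama--Kuno18} for the relevant range of $m$, and then pairing each of its two output alternatives with exactly the correct lemma among \ref{cyclic_plus_full_subgraph}, \ref{0p_path_plus}, \ref{1p_path_plus}, \ref{2g_p_1} and \ref{2g_p}. The case $g\geq2$, $p=0$ is the most delicate: there the relevant linear-chain bound is $2g+1$ and one must invoke Lemma~\ref{2g_p_1} rather than Lemma~\ref{2g_p}; the small cases $(0,4),(1,1),(0,5),(1,2)$ fall outside the $m\geq5$ hypothesis of Corollary~\ref{path_plus_mcg_cor} and therefore have to be treated by the complexity argument instead.
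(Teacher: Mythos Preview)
Your proposal is correct and follows essentially the same route as the paper: Koberda's theorem plus Lemma~\ref{cyclic_plus_full_subgraph} for the ``if'' direction, topological-complexity counts for the four small surfaces, and for the remaining regimes Corollary~\ref{path_plus_mcg_cor} (after verifying $P_{m+1}^c\not\leq\mathcal{C}(S_{g,p})$ from the linear-chain bounds of \cite{Katayama--Kuno18}) combined with Lemmas~\ref{cyclic_plus_full_subgraph}, \ref{0p_path_plus}, \ref{1p_path_plus}, \ref{2g_p_1}, \ref{2g_p}. The only cosmetic difference is that in the case $g\geq2$, $p\geq1$ you invoke Lemma~\ref{2g_p} where the paper cites Lemma~\ref{2g_p_1}; your choice is in fact the more direct one, since the second alternative of Corollary~\ref{path_plus_mcg_cor} produces $P_{m-1}^c$ with $m-1\geq 2g+p$.
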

\begin{proof}
Koberda's embedding theorem together with Lemma \ref{cyclic_plus_full_subgraph} and \cite[Proposition 4.1]{Katayama--Kuno18} implies the ``if part" of the theorem. 
Thus, we will prove the ``only if part". 

{\bf Case of $(g, p) \in \{ (0, 4), (1, 1) \}$}. 
If $m \geq 1$, then $A(C_m^c) \times \mathbb{Z}$ contains $\mathbb{Z}^2$. 
On the other hand, $\mathrm{Mod}(S_{0, 4})$ and $\mathrm{Mod}(S_{1, 1})$ do not contain $\mathbb{Z}^2$. 
Hence, $A(C_m^c) \times \mathbb{Z}$ is embedded in neither $\mathrm{Mod}(S_{0, 4})$ nor $\mathrm{Mod}(S_{1, 1})$ if $m \geq 1$. 

{\bf Case of $(g, p) \in \{ (0, 5), (1, 2) \}$}. 
Since the topological complexity of $S_{g, p}$ is two, the right-angled Artin group $A(C_4^c) \times \mathbb{Z}$ is not embedded in $\mathrm{Mod}(S_{g, p})$. 

{\bf Case of $g=0, \ p \geq 6$}. 
Suppose that $A(C_{m}^c) \times \mathbb{Z}$ is embedded in $\mathrm{Mod}(S_{0, p})$. 
We may assume that $m \geq p-1 \geq 5$. 
Then, there is an embedding $A(C_m^c) \times \mathbb{Z} \hookrightarrow \mathrm{Mod}(S_{0, p})$ satisfying condition (KK). 
As shown in \cite{Katayama--Kuno18}, we have $P_{m+1}^c \not\leq \mathcal{C}(S_{0}^{p})$. 
Then, from Corollary \ref{path_plus_mcg_cor}, either $C_m^c * K_1 \leq \mathcal{C}(S_{0, p})$ or $P_{p-2}^c * K_{m+1} \leq \mathcal{C}(S_{0}^{p})$ holds. 
Hence, by Lemmas \ref{cyclic_plus_full_subgraph} and \ref{0p_path_plus}, we have $m \leq p-1$. 

The remaining cases can be treated similarly. 

{\bf Case of $g=1, \ p \geq 3$}. 
Suppose that $A(C_{m}^c) \times \mathbb{Z}$ is embedded in $\mathrm{Mod}(S_{1, p})$. 
We may assume that $m \geq p+2 \geq 5$. 
Then there is an embedding $A(C_m^c) \times \mathbb{Z} \hookrightarrow \mathrm{Mod}(S_{0, p})$ satisfying condition (KK). 
By \cite{Katayama--Kuno18}, we have $P_{m+1}^c \not\leq \mathcal{C}(S_{g}^{p})$. 
From Corollary \ref{path_plus_mcg_cor}, it follows that either $C_m^c * K_1 \leq \mathcal{C}(S_{1}^{p})$ or $P_{p+1}^c * K_{m+1} \leq \mathcal{C}(S_{1}^{p})$. 
By Lemmas \ref{cyclic_plus_full_subgraph} and \ref{1p_path_plus}, we have $m \leq p+2$. 

{\bf Case of $g \geq 2, \ p=0$}. 
From Lemma \ref{2g_p_1}, it routinely follows that $P_{2g} * K_{2g+1} \not\leq \mathcal{C}(S_{g})$. 
In addition, $C_m^c * K_1 \leq \mathcal{C}(S_{g})$ only if $m \leq 2g+1$. 

{\bf Case of $g \geq 2, \ p \geq 1$}. 
Suppose that $A(C_{m}^c) \times \mathbb{Z}$ is embedded in $\mathrm{Mod}(S_{g, p})$. 
Assume to the contrary that $m \geq 2g+p+1$. 
There is an embedding $A(C_m^c) \times \mathbb{Z} \hookrightarrow \mathrm{Mod}(S_{g, p})$ satisfying condition (KK). 
Since $P_{2g+p+2}^c \not\leq \mathcal{C}(S_{g}^{p})$ by \cite[Theorem 2.2]{Katayama--Kuno18}, either $C_m^c * K_1 \leq \mathcal{C}(S_{g}^{p})$ or $P_{2g+p}^c * K_{2g+p+2} \leq \mathcal{C}(S_{g}^{p})$ holds. 
From Lemma \ref{2g_p_1}, it follows that $P_{2g+p}^c * K_{2g+p+1} \not\leq \mathcal{C}(S_{g}^{p})$. 
In addition, $C_m^c * K_1 \not\leq \mathcal{C}(S_{g}^{p})$. 
This is a contradiction. 
Thus, we have $m \leq 2g+p$. 
\end{proof}

\begin{remark}
Apart from a few exceptions, it holds that $A(C_m^c) \hookrightarrow \mathrm{Mod}(S_{g, p})$ if and only if $C_m^c \leq \mathcal{C}(S_{g}^{p})$. 
Similarly, $A(C_m^c) \times \mathbb{Z} \hookrightarrow \mathrm{Mod}(S_{g, p})$ if and only if $C_m^c * K_1 \leq \mathcal{C}(S_{g}^{p})$. 
\label{iff_raag}
\end{remark}

We end this section by noting a result for braid groups. 

\begin{theorem}[\cite{Katayama--Kuno18}]
\label{cyc_plus_z_braid}
Suppose $n \geq 3$. 
$A(C_{n+1}^c) \times \mathbb{Z} \hookrightarrow B_{n}$. 
\end{theorem}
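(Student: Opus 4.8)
The plan is to combine Koberda's embedding theorem with the fact that $B_n$ has infinite cyclic center. Write $B_n=\mathrm{Mod}(S_{0,n}^1)$. Since the curve graph of a surface is unchanged when a puncture is replaced by a boundary circle, we have $\mathcal{C}(S_{0,n}^1)\cong\mathcal{C}(S_0^{n+1})$, the curve graph of the $(n+1)$-holed sphere. By the $g=0$ case of Lemma \ref{cyclic_full_subgraph} there is a cyclic chain of length $n+1$ on $S_0^{n+1}$ — concretely the curves of \cite[Figure 7]{Katayama--Kuno18} — and hence $C_{n+1}^c\le\mathcal{C}(S_0^{n+1})=\mathcal{C}(S_{0,n}^1)$.

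Koberda's embedding theorem, applied to this full subgraph, then produces an embedding $\psi\colon A(C_{n+1}^c)\hookrightarrow B_n$ (carrying the $i$-th standard generator to a sufficiently high power of the Dehn twist about the $i$-th curve of the chain). To adjoin the extra $\mathbb{Z}$ factor, use the Dehn twist $T_\partial$ about a curve parallel to $\partial S_{0,n}^1$: for $n\ge 3$ this generates the infinite cyclic center $Z(B_n)$, so $T_\partial$ has infinite order and commutes with every element of $B_n$. Thus $\Psi\colon A(C_{n+1}^c)\times\mathbb{Z}\to B_n$ defined by $\Psi(w,k)=\psi(w)\,T_\partial^{\,k}$ is a well-defined homomorphism.

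Injectivity of $\Psi$ is then formal. If $\psi(w)T_\partial^{\,k}=1$ then $\psi(w)=T_\partial^{-k}$ is central in $B_n$, hence central in the subgroup $\psi\bigl(A(C_{n+1}^c)\bigr)$, so $w\in Z\bigl(A(C_{n+1}^c)\bigr)$; but the center of a right-angled Artin group $A(\Gamma)$ is generated by the vertices of $\Gamma$ adjacent to every other vertex, and in $C_{n+1}^c$ the only non-neighbours of a vertex $v_i$ are $v_{i-1}$ and $v_{i+1}$, so no such vertex exists and $Z\bigl(A(C_{n+1}^c)\bigr)$ is trivial. Hence $w=1$, whence $T_\partial^{-k}=1$ and $k=0$; so $\Psi$ is an embedding.

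The only step that is not a formality is the realization $C_{n+1}^c\le\mathcal{C}(S_{0,n}^1)$ — the construction of the length-$(n+1)$ cyclic chain on the holed sphere and the verification that it has no chord — which is exactly (the $g=0$ case of) Lemma \ref{cyclic_full_subgraph} and which we are entitled to assume here. Everything else (the identification of curve graphs, the use of the central boundary twist, and the triviality of the center of $A(C_{n+1}^c)$) is routine, so this is where I expect no difficulty once Lemma \ref{cyclic_full_subgraph} is in hand.
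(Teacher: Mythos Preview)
The paper does not give a proof of this statement; it is quoted from \cite{Katayama--Kuno18} with no argument supplied here, so there is nothing in the present paper to compare your proof against. Your approach---realize $C_{n+1}^c$ in the curve graph, apply Koberda's theorem, and then adjoin the central boundary twist---is the natural one, and for $n\ge 4$ it is correct as written.

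There is, however, a genuine gap at $n=3$. Your appeal to the $g=0$ case of Lemma~\ref{cyclic_full_subgraph} requires $p\ge 5$, i.e.\ $n+1\ge 5$; when $n=3$ the relevant surface is $S_0^4$, whose curve graph has no edges (any two essential simple closed curves on a four-holed sphere intersect), so $C_4^c$, which has two edges, is not a full subgraph of $\mathcal{C}(S_0^4)$. In fact the statement itself fails for $n=3$: since $C_4^c$ consists of two disjoint edges, $A(C_4^c)\cong\mathbb{Z}^2*\mathbb{Z}^2$ contains $\mathbb{Z}^2$, hence $A(C_4^c)\times\mathbb{Z}$ contains $\mathbb{Z}^3$; but $B_3/Z(B_3)\cong\mathrm{PSL}(2,\mathbb{Z})$ is virtually free, so every abelian subgroup of $B_3$ has rank at most $2$. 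Thus the hypothesis should really read $n\ge 4$. This does no harm to the applications in Section~5: in each of the ``remaining cases'' of Theorem~\ref{braid_closed_mcg} the desired inequality is automatically satisfied when $n=3$, so the embedding is only ever invoked for $n\ge 4$, where your argument goes through.
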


\section{Proofs of Theorems \ref{main_1} and \ref{main_2}}

In this section, we prove Theorems \ref{main_1} and \ref{main_2}. 
Let us start with the following lemma. 

\begin{lemma}[\cite{Baik--Kim--Koberda}]
Let $A$ be a right-angled Artin group and $G$ a group. 
If $A$ is embedded in $G$, then $A$ is also embedded in each finite index subgroup of $G$.  
\end{lemma}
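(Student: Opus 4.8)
The plan is to reduce the statement to the internal property that a right-angled Artin group embeds into each of its own finite-index subgroups, and then to produce such an embedding by a power endomorphism. For the reduction, identify $A$ with its image in $G$ and let $H \le G$ be of finite index; then $A_0 := A \cap H$ has index $[A:A_0] \le [G:H] < \infty$ in $A$, so it is enough to embed $A$ into $A_0$. Passing to the normal core $N := \bigcap_{a \in A} a A_0 a^{-1}$, which is still of finite index in $A$ and satisfies $N \le A_0$, we are reduced to embedding $A = A(\Gamma)$ into an arbitrary finite-index \emph{normal} subgroup $N \trianglelefteq A(\Gamma)$.

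Next I construct the embedding. Put $m := [A(\Gamma) : N]$ and let $q \colon A(\Gamma) \to A(\Gamma)/N$ be the quotient map. Since powers of commuting elements commute, the rule $v \mapsto v^m$ on the vertices of $\Gamma$ respects the defining relations and so extends to an endomorphism $\phi_m \colon A(\Gamma) \to A(\Gamma)$. For each generator $v$ the order of $q(v)$ divides $|A(\Gamma)/N| = m$, hence $q(\phi_m(v)) = q(v)^m = 1$; therefore $\phi_m\bigl(A(\Gamma)\bigr) \le N$. It thus remains only to show that $\phi_m$ is injective, for then $A(\Gamma) \cong \phi_m(A(\Gamma)) \le N \le A_0 \le H$, as required.

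The injectivity of $\phi_m$ is the crux, and I would establish it through reduced (geodesic) words. Recall that every nontrivial element of $A(\Gamma)$ admits a representative word $w = x_1 \cdots x_k$ over $V(\Gamma)^{\pm 1}$ containing no \emph{cancelling pair} --- no pair of positions $i < j$ with $x_j = x_i^{-1}$ such that every $x_l$ with $i < l < j$ is supported on a vertex adjacent in $\Gamma$ to the vertex underlying $x_i$ --- and that such a word witnesses nontriviality. The word $\phi_m(w)$ is obtained from $w$ by replacing each letter by $m$ consecutive copies of itself, and I would verify that it too has no cancelling pair: a cancelling pair in $\phi_m(w)$ could only pair the last copy of some block $x_i$ with the first copy of a later block $x_j$, because any intervening copy of $x_i$ is supported on the vertex of $x_i$, which is not adjacent to itself; but then $(x_i, x_j)$ would already be a cancelling pair in $w$, a contradiction. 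Hence $\phi_m(w)$ is geodesic and nonempty, so $\phi_m$ carries nontrivial elements to nontrivial elements.

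The only genuine obstacle I anticipate is the bookkeeping in this last step --- in particular handling the degenerate case in which the two blocks in question coincide, and confirming that no cancelling pair can straddle the interior of a block --- but this is a short combinatorial verification. (One could instead cite the known fact that $\langle v^m : v \in V(\Gamma)\rangle$ is again a right-angled Artin group on $\Gamma$, proved by induction on $|V(\Gamma)|$ using the splitting $A(\Gamma) = A(\Gamma - v) *_{A(\mathrm{lk}\,v)} \bigl(A(\mathrm{lk}\,v) \times \langle v\rangle\bigr)$ and Bass--Serre theory; the direct argument above is more self-contained.)
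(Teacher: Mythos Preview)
The paper does not supply its own proof of this lemma; it simply attributes the result to Baik--Kim--Koberda and moves on. Your argument is correct and is the standard one: reduce to finite-index subgroups of $A(\Gamma)$ itself via $A\cap H$, pass to the normal core, and use the power endomorphism $\phi_m\colon v\mapsto v^m$, whose image lands in any normal subgroup of index $m$ and whose injectivity follows from the combinatorics of reduced words. The key point you identify --- that a vertex is not adjacent to itself in a simple graph, so copies of $x_i$ inside a block obstruct any putative cancelling pair from reaching past them --- is exactly what makes the verification go through. The residual worries you flag are harmless: two letters in the same block carry the same sign and hence cannot form an inverse pair, and a pair straddling the interior of a block is ruled out by the non-self-adjacency observation you already made. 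So there is nothing to compare against in the paper itself, but your proof stands on its own.
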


The following is the ``only if part" of Theorem \ref{main_1}. 

\begin{theorem}
If the braid group $B_{n}$ is virtually embedded in $\mathrm{Mod}(S_{g, p})$, then 
\begin{eqnarray*}
n \leq \left\{ \begin{array}{ll}
1 & ((g, p) \in \{ (0, 0), (0, 1), (0, 2), (0, 3) \}) \\
2 & ((g, p) \in \{ (0, 4), (1, 0), (1, 1) \}) \\
3 & ((g, p) \in \{ (0, 5), (1, 2) \}) \\
p-2 & (g=0, \ p \geq 6) \\
p + 1 & (g=1, \ p \geq 3) \\
2g & (g \geq 2, \ p=0)  \\
2g + p - 1 & (g \geq 2, \ p \geq 1). \\
\end{array} \right.
\end{eqnarray*}
 \label{braid_closed_mcg}
\end{theorem}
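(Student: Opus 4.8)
The plan is to obtain every upper bound from a single mechanism: a right-angled Artin group that sits inside $B_n$ but is too big to fit in $\mathrm{Mod}(S_{g,p})$. Concretely, by Theorem \ref{cyc_plus_z_braid} we have $A(C_{n+1}^c)\times\mathbb{Z}\hookrightarrow B_n$ whenever $n\geq 3$, and by the lemma of Baik--Kim--Koberda quoted above a right-angled Artin group embeds in $B_n$ if and only if it embeds in every finite index subgroup of $B_n$; hence if some finite index subgroup of $B_n$ injects into $\mathrm{Mod}(S_{g,p})$ then $A(C_{n+1}^c)\times\mathbb{Z}\hookrightarrow\mathrm{Mod}(S_{g,p})$. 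Theorem \ref{cyc_plus_z_mcg} records exactly when the latter happens. So I would argue case by case: letting $N$ be one more than the claimed bound, I would check that $A(C_{N+1}^c)\times\mathbb{Z}$ does not embed in $\mathrm{Mod}(S_{g,p})$, conclude that $B_N$ is not virtually embedded, and then invoke Corollary \ref{braid_braid} together with transitivity of virtual embeddings to deduce that no $B_n$ with $n\geq N$ is virtually embedded, i.e.\ $n\leq N-1$.

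For the four ``generic'' ranges --- $g=0,\,p\geq 6$; $g=1,\,p\geq 3$; $g\geq 2,\,p=0$; $g\geq 2,\,p\geq 1$ --- the bound asserted for $n$ is in each instance exactly one less than the threshold for $m$ appearing in Theorem \ref{cyc_plus_z_mcg} (for example $n\leq p-2$ against $m\leq p-1$ when $g=0,p\geq 6$, and $n\leq 2g$ against $m\leq 2g+1$ when $g\geq 2,p=0$). Taking $N$ equal to the braid bound plus one, the relevant group is $A(C_{N+1}^c)\times\mathbb{Z}$ with $N+1$ equal to the Theorem \ref{cyc_plus_z_mcg} threshold plus one, which that theorem forbids; since $N\geq 5$ in all these ranges, Theorem \ref{cyc_plus_z_braid} applies. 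The one point that must not be fumbled is that the $\mathbb{Z}$-factor is essential: using only $A(C_{n+1}^c)\hookrightarrow B_n$ together with Theorem \ref{cyc_mcg} would produce merely $n\leq$ threshold rather than $n\leq$ threshold $-1$, which would be too weak (and in fact false for these surfaces).

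The remaining small cases I would dispose of by hand. If $(g,p)\in\{(0,0),(0,1),(0,2),(0,3)\}$ then $\mathrm{Mod}(S_{g,p})$ is finite while $B_2\cong\mathbb{Z}$ has only infinite finite-index subgroups, so $B_2$ is not virtually embedded and $n\leq 1$. If $(g,p)\in\{(0,4),(1,0),(1,1)\}$ then $\mathrm{Mod}(S_{g,p})$ is virtually free (it is $\mathrm{SL}(2,\mathbb{Z})$ in the last two cases), hence contains no copy of $\mathbb{Z}^2$ even after passing to a finite index subgroup, whereas every finite index subgroup of $B_3$ meets $PB_3\cong F_2\times\mathbb{Z}$ in a finite index subgroup and so contains $\mathbb{Z}^2$; thus $B_3$ is not virtually embedded and $n\leq 2$. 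If $(g,p)\in\{(0,5),(1,2)\}$ I would run the generic argument with $N=4$: $A(C_5^c)\times\mathbb{Z}\hookrightarrow B_4$ by Theorem \ref{cyc_plus_z_braid}, but Theorem \ref{cyc_plus_z_mcg} shows this group does not embed in $\mathrm{Mod}(S_{g,p})$ (it would require $m=5\leq 3$), so $n\leq 3$; alternatively one notes that $B_4$ contains $\mathbb{Z}^3$ while the maximal rank of an abelian subgroup of $\mathrm{Mod}(S_{g,p})$ is the topological complexity $2$.

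The genuinely hard work is not in this theorem but in its inputs: Theorem \ref{cyc_plus_z_mcg} (and behind it the curve-complex computations of Sections \ref{cyc_chain_surf}--\ref{raags_in_mcgs}) is where all the difficulty lies, and once it is available the present statement is a matter of matching the shift $m=n+1$ against the list of thresholds and treating a handful of degenerate surfaces separately.
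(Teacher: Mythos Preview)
Your proposal is correct and follows essentially the same approach as the paper: both reduce the problem to Theorem \ref{cyc_plus_z_mcg} via the embedding $A(C_{n+1}^c)\times\mathbb{Z}\hookrightarrow B_n$ (Theorem \ref{cyc_plus_z_braid}) and the Baik--Kim--Koberda lemma, with the low-complexity surfaces handled by the same ad hoc rank/finiteness arguments. The only cosmetic difference is that you phrase the generic case contrapositively (fixing $N$ and invoking Corollary \ref{braid_braid} for monotonicity), whereas the paper argues directly from an assumed virtual embedding of $B_n$ to the inequality $n+1\leq$ threshold.
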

\begin{proof}
Suppose that $B_{n}$ is virtually embedded in $\mathrm{Mod}(S_{g, p})$. 
First, we treat the cases where surfaces have complexities less than $3$. 

{\bf Case of $(g, p) \in \{ (0, 0), (0, 1), (0, 2), (0, 3) \}$}. 
In this case $\mathrm{Mod}(S_{g, p})$ is a finite group, so there is no virtual embedding of $B_2$ into $\mathrm{Mod}(S_{g, p})$. 

{\bf Case of $(g, p) \in \{ (0, 4), (1, 0), (1, 1) \}$}. 
Since the mapping class group of $S_{g, p}^{b}$ is Gromov hyperbolic and since every finite index subgroup of $B_3$ contains a free abelian group of rank $2$, $B_3$ has no finite index subgroup embedded in the mapping class group. 

{\bf Case of $(g, p) \in \{ (0, 5), (1, 2) \}$}. 
Since $\mathbb{Z}^3$ is not embedded in $\mathrm{Mod}(S_{g, p})$, $B_4$ has no finite index subgroup embedded in $\mathrm{Mod}(S_{g, p})$. 
Hence, the desired inequality holds. 

In the remaining cases, we may assume that $n \geq 3$. 
Then, since $A(C_{n+1}^c) \times \mathbb{Z}$ is embedded in all finite index subgroups of $B_n$, $A(C_{n+1}^c) \times \mathbb{Z}$ is embedded in $\mathrm{Mod}(S_{g, p})$. 
We use Theorem \ref{cyc_plus_z_mcg} to obtain the desired inequalities. 

{\bf Case of $g=0$ and $p \geq 6$}. 
We have $n+1 \leq p-1$. 

{\bf Case of $g=1$ and $p \geq 3$}. 
We have $n+1 \leq p+2$. 

{\bf Case of $g \geq 2$ and $p=0$}.  
We have $n+1 \leq 2g+1$. 

{\bf Case of $g \geq 2$ and $p \geq 1$}. 
We have $n+1 \leq 2g+p$. 
\end{proof}

The following lemma is convenient. 

\begin{lemma}
Suppose $n \geq 4$. 
Suppose also that $B_n$ is virtually embedded in $\mathrm{Mod}(S_{g, p}^{b})$. 
Then, $A(C_{n+1}^c)$ is embedded in $\mathrm{Mod}(S_{g, p+b})$. 
 \label{cap_raag}
\end{lemma}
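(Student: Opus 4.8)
The goal is to pass from a virtual embedding of $B_n$ into $\mathrm{Mod}(S_{g,p}^{b})$ to an honest embedding of the right-angled Artin group $A(C_{n+1}^c)$ into the mapping class group of the capped surface $S_{g,p+b}$, where every boundary component has been filled in with a punctured disk. The plan is to combine three ingredients: (i) Theorem \ref{cyc_plus_z_braid}, which gives $A(C_{n+1}^c)\times\mathbb{Z}\hookrightarrow B_n$ for $n\geq 3$; (ii) the lemma of Baik--Kim--Koberda, which says that a right-angled Artin group that embeds in a group embeds in every finite index subgroup of that group; and (iii) the capping homomorphism $\mathrm{Mod}(S_{g,p}^{b})\to\mathrm{Mod}(S_{g,p+b})$ obtained by gluing a once-punctured disk to each boundary component, whose kernel is generated by the boundary Dehn twists.

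First I would assemble the easy part. By hypothesis there is a finite index subgroup $H\leq B_n$ and an injection $H\hookrightarrow\mathrm{Mod}(S_{g,p}^{b})$. Since $A(C_{n+1}^c)\times\mathbb{Z}$ embeds in $B_n$ (as $n\geq 4\geq 3$), the Baik--Kim--Koberda lemma gives an embedding $A(C_{n+1}^c)\times\mathbb{Z}\hookrightarrow H$, hence an embedding $j\colon A(C_{n+1}^c)\times\mathbb{Z}\hookrightarrow\mathrm{Mod}(S_{g,p}^{b})$. Composing with the capping homomorphism $\kappa\colon\mathrm{Mod}(S_{g,p}^{b})\to\mathrm{Mod}(S_{g,p+b})$ yields a homomorphism $\kappa\circ j$, and the whole point is to show its restriction to the direct factor $A(C_{n+1}^c)$ is injective. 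The kernel of $\kappa$ is the free abelian group generated by the Dehn twists about curves parallel to the $b$ boundary components of $S_{g,p}^{b}$; in particular $\ker\kappa$ is central in the image of $\kappa$ and, more to the point, it is a subgroup of a finitely generated free abelian group of rank $\leq b$.

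The main obstacle is precisely this injectivity verification, and here is how I would handle it. Because $A(C_{n+1}^c)$ has trivial center when $n+1\geq 4$ (indeed $C_{n+1}^c$ has no universal vertex for $n\geq 3$, so $A(C_{n+1}^c)$ is directly indecomposable and center-free), any normal abelian subgroup of $A(C_{n+1}^c)\times\mathbb{Z}$ must lie in the $\mathbb{Z}$ factor — this is the standard fact that the center of a direct product is the product of the centers, combined with centrality of $\ker\kappa$ in $\mathrm{im}(\kappa)$ forcing $j^{-1}(\ker\kappa)$ to be central in $A(C_{n+1}^c)\times\mathbb{Z}$. Thus $j^{-1}(\ker\kappa)\leq \{1\}\times\mathbb{Z}$, so $(\kappa\circ j)|_{A(C_{n+1}^c)\times\{1\}}$ is injective, and restricting further to the $A(C_{n+1}^c)$ factor gives the desired embedding $A(C_{n+1}^c)\hookrightarrow\mathrm{Mod}(S_{g,p+b})$. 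One should be slightly careful about the degenerate cases where $S_{g,p+b}$ fails to have enough complexity for the capping statement to apply cleanly, but the hypothesis $n\geq 4$ together with the earlier existence results forces $S_{g,p}^{b}$ (and hence $S_{g,p+b}$) to be complicated enough — in the trivial cases the conclusion is vacuous since no such virtual embedding exists. I would also double-check that the capping homomorphism's kernel description used here is exactly the $b=1$ and $b\geq 2$ cases of the boundary-twist kernel, which is standard (cf. the capping homomorphism in Farb--Margalit) and consistent with Proposition \ref{annular_kernel} in spirit.
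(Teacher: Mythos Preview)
Your argument is correct and follows essentially the same route as the paper: embed $A(C_{n+1}^c)$ into $\mathrm{Mod}(S_{g,p}^b)$ via Theorem~\ref{cyc_plus_z_braid} and the Baik--Kim--Koberda lemma, then observe that the capping kernel is central while $A(C_{n+1}^c)$ has trivial center (no vertex of $C_{n+1}^c$ is universal, so Servatius' theorem applies). Two small remarks: you twice write that $\ker\kappa$ is ``central in the image of $\kappa$'', which is a slip --- you mean central in the \emph{domain} $\mathrm{Mod}(S_{g,p}^b)$ (boundary Dehn twists commute with everything that fixes $\partial S$ pointwise), and this is exactly what forces $j^{-1}(\ker\kappa)$ to be central in $A(C_{n+1}^c)\times\mathbb{Z}$; also, carrying the extra $\mathbb{Z}$ factor is harmless but unnecessary, since the paper simply embeds $A(C_{n+1}^c)$ itself and uses its center-freeness directly.
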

\begin{proof}
The kernel of the capping homomorphism $\mathrm{Mod}(S_{g, p}^{b}) \rightarrow \mathrm{Mod}(S_{g, p+b})$ is generated by the center. 
Since $C_{n+1}^c$ has no vertex adjacent to the other vertices, the center of $A(C_{n+1}^c)$ is trivial. 
This is an immediate corollary of Servatius' centralizer theorem \cite{Servatius} which implies the centralizer of a vertex $v$ is generated by $v$ and the vertices adjacent to $v$. 
Hence, we obtain an embedding of $A(C_{n+1}^c)$ into $\mathrm{Mod}(S_{g, p+b})$. 
\end{proof}

The following is the ``only if part" of Theorem \ref{main_2}. 

\begin{theorem}
Suppose that $b \geq 1$. 
Suppose also that the braid group $B_{n}$ is virtually embedded in $\mathrm{Mod}(S_{g, p}^{b})$. 
Then,   
\begin{eqnarray*}
n \leq \left\{ \begin{array}{ll}
2g + p + b & (g \geq 1, \ p+b \leq 2) \\
2g + p + b - 1 & (\mbox{otherwise}). \\
\end{array} \right.
\end{eqnarray*}
 \label{braid_boundary_mcg}
\end{theorem}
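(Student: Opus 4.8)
The plan is to reduce the statement with boundary to the already-proved closed/punctured case (Theorem~\ref{braid_closed_mcg}) via the capping homomorphism, together with the right-angled Artin group obstruction $A(C_{n+1}^c)$, exactly as in Lemma~\ref{cap_raag}. First I would dispose of the small-complexity cases $\chi_{g,p}^{b}$ close to $0$: when $\mathrm{Mod}(S_{g,p}^{b})$ is finite, virtually hyperbolic, or has topological complexity $\leq 2$, the presence of $\mathbb{Z}^2$ or $\mathbb{Z}^3$ inside finite-index subgroups of $B_n$ (for $n=3$, resp.\ $n=4$) rules out a virtual embedding, giving the stated bound for $n\leq 3$; this mirrors the first three cases of the proof of Theorem~\ref{braid_closed_mcg}. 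In particular this handles all pairs $(g,p,b)$ with $g=0$ and $p+b$ small, and with $g\geq 1$ and $p+b$ small where the bound reads $n\le 2g+p+b$ with the right-hand side equal to $3$ or $4$.

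For the generic range I would assume $n\geq 4$ and invoke Lemma~\ref{cap_raag}: since $B_n$ is virtually embedded in $\mathrm{Mod}(S_{g,p}^{b})$, and $A(C_{n+1}^c)\times\mathbb{Z}\hookrightarrow B_n$ embeds in every finite-index subgroup (Theorem~\ref{cyc_plus_z_braid} together with the Baik--Kim--Koberda lemma), the group $A(C_{n+1}^c)$ embeds in $\mathrm{Mod}(S_{g,p+b})$ through the capping map, whose kernel is central and therefore meets the centerless group $A(C_{n+1}^c)$ trivially. Now I apply Theorem~\ref{cyc_mcg} to the closed/punctured surface $S_{g,p+b}$: writing $q:=p+b$, the embedding $A(C_{n+1}^c)\hookrightarrow\mathrm{Mod}(S_{g,q})$ forces $n+1\leq 2g+q+1=2g+p+b+1$ when $g\geq 2$ and $q\geq 1$ (hence $n\leq 2g+p+b$), $n+1\leq q+2=p+b+2$ when $g=1$ and $q\geq 3$, and $n+1\leq 2g+q=2g+p+b$ in the case $g\geq 2$, $q\geq 3$, i.e.\ $n\leq 2g+p+b-1$; the borderline values $q=2$ for $g\geq 1$, and $q\le 2$ for $g=1$, fall into the small-complexity discussion above or give $n\le 2g+p+b$ directly from the corresponding line of Theorem~\ref{cyc_mcg}. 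Collating these inequalities against the case division $g\geq 1,\ p+b\leq 2$ versus "otherwise" yields precisely the two bounds in the statement; one only has to check that when $b\ge 1$ and we are in the "otherwise" branch, the surface $S_{g,p+b}$ indeed has $p+b\ge 3$ (so the relevant line of Theorem~\ref{cyc_mcg} is the one giving $2g+p+b-1$), and that the residual genus-$0$ cases with $p+b\ge 6$ use the $A(C^c_{n+1})\times\mathbb Z$ version, Theorem~\ref{cyc_plus_z_mcg}, giving $n+1\le (p+b)-1$ as in Theorem~\ref{braid_closed_mcg}.

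The main obstacle I anticipate is bookkeeping at the boundary of the case division rather than any genuinely new idea: one must be careful that capping sends $S_{g,p}^{b}$ to $S_{g,p+b}$ and not to something with fewer punctures, that the exceptional small cases listed in Theorems~\ref{cyc_mcg} and \ref{cyc_plus_z_mcg} (namely $(g,p)\in\{(1,1),(1,2),(0,5),(0,4)\}$ etc.) are matched correctly with the hypothesis $b\ge 1$, and that for $g=1$, $p+b=2$ the bound $n\le 2g+p+b=2g+2$ is not beaten by a sharper Artin-group estimate — here one should note that $A(C_{2g+3}^c)\times\mathbb Z$ would embed in a finite-index subgroup of $B_{2g+2}$ only in a way consistent with Theorem~\ref{cyc_plus_z_mcg}, which on $S_{1,2}$ allows $m\le 3$, hence $n+1\le 3$ is the wrong comparison and one instead uses $A(C^c_{n+1})\hookrightarrow\mathrm{Mod}(S_{1,2})$, permitted up to $n+1\le 5$, i.e.\ $n\le 4=2g+p+b$; so the bound is attained and sharp. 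Assembling all lines and confirming sharpness against the constructions of Section~\ref{const_embeddings} completes the proof.
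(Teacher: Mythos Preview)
Your plan is essentially the paper's own approach: handle the low-complexity surfaces by elementary rank/hyperbolicity obstructions, and for $n\ge 4$ push $A(C_{n+1}^c)$ through the capping map (Lemma~\ref{cap_raag}) and read off the bound from Theorem~\ref{cyc_mcg} applied to $S_{g,p+b}$. The case split you describe matches the paper's almost line for line; the paper additionally treats $g=0,\ b=1$ by the identification $\mathrm{Mod}(S_{0,p}^1)\cong B_p$ and the genus-$0$, $p+b=4$ case via the centerless RAAG $A(P_4^c)$, but both of these can equally well be absorbed into your scheme, so this is a cosmetic difference.

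There is one genuine slip in your bookkeeping: for the genus-$0$ cases you propose to use the product $A(C_{n+1}^c)\times\mathbb{Z}$ together with Theorem~\ref{cyc_plus_z_mcg}, obtaining $n+1\le (p+b)-1$. This is both unjustified and too strong. It is unjustified because Lemma~\ref{cap_raag} only guarantees that the \emph{centerless} factor $A(C_{n+1}^c)$ survives the capping map; the extra $\mathbb{Z}$ in $B_n$ is central and may well die in the central kernel of capping, so you cannot conclude $A(C_{n+1}^c)\times\mathbb{Z}\hookrightarrow \mathrm{Mod}(S_{0,p+b})$. It is too strong because $n\le p+b-2$ would contradict the constructions of Section~\ref{const_embeddings}, which realize $PB_{p+b-1}\hookrightarrow \mathrm{Mod}(S_{0,p}^b)$. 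The fix is simply to drop the $\times\mathbb{Z}$ here and apply Theorem~\ref{cyc_mcg} to $S_{0,p+b}$ exactly as in the positive-genus cases: for $p+b\ge 5$ this gives $n+1\le p+b$, i.e.\ $n\le p+b-1$, which is the desired bound. With that correction your argument goes through and coincides with the paper's proof.
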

\begin{proof}
Suppose that $B_n$ is virtually embedded in $\mathrm{Mod}(S_{g, p}^{b})$. 

{\bf Case of $g=0, \ p \geq 0, \ b=1$}. 
In this case $\mathrm{Mod}(S_{g, p}^{1})$ is isomorphic to $B_p$. 
The inequality $n \leq p$ $(= 2g+p+b-1)$ immediately follows from the fact that $\mathbb{Z}^{l} \hookrightarrow B_m$ if and only if $l \leq m-1$. 

{\bf Case of $g=0$ and $p \geq 0, \ b \geq 2$}. 
Suppose that $p+b \leq 3$. 
Then, $(p, b) \in  \{ (0, 2), (1, 2), (0, 3) \}$, and so $\mathrm{Mod}(S_{0, p}^{b})$ is a free abelian group. 
Since $B_3$ has no finite index abelian subgroup, $B_3$ is not embedded in $\mathrm{Mod}(S_{0, p}^{b})$ even virtually. 
Therefore $n \leq 2$ $(= 2g+p+b-1)$. 
Next, consider the case where $p+b = 4$. 
Since $\mathrm{Mod}(S_{0, 4})$ is virtually free, $\mathrm{Mod}(S_{0, p}^{b})$ is a central extension of a virtually free group. 
This implies that any centerless right-angled Artin subgroup of $B_n$ (embedded in $\mathrm{Mod}(S_{0, p}^{b})$) must be free. 
Since $B_4$ contains $A(P_4^{c})$ which is centerless but not free, we have $n \leq 3$. 
Suppose that $p+b \geq 5$. 
Assuming $n \geq 4$, we shall prove that $n \leq p+b-1$. 
By Lemma \ref{cap_raag}, $A(C_{n+1}^c)$ is embedded in $\mathrm{Mod}(S_{0, p+b})$. 
Theorem \ref{cyc_mcg} implies that $n+1 \leq p+b$. 

{\bf Case of $g = 1, \ p = 0, \ b \leq 2$}. 
In this case $\mathbb{Z}^{b+2}$ cannot be embedded in $\mathrm{Mod}(S_{1}^{b})$. 
Hence, no finite index subgroup of $B_{b+3}$ is embedded in $\mathrm{Mod}(S_{1}^{b})$. 

{\bf Case of $g \geq 2, \ p = 0, \ b \leq 2$}. 
We may assume that $n \geq 4$. 
By Lemma \ref{cap_raag}, $A(C_{n+1}^c)$ is embedded in $\mathrm{Mod}(S_{g, b})$. 
Since $1 \leq b \leq 2$, we have $n+1 \leq 2g+b+1$ by Theorem \ref{cyc_mcg}. 
When $b \geq 3$, we have $n+1 \leq 2g+b$. 

{\bf Case of $g \geq 1, \ p = 1, \ b = 1$}. 
We may assume that $n \geq 4$. 
By Lemma \ref{cap_raag} and Theorem \ref{cyc_mcg}, we have $n+1 \leq 2g+p+b+1$. 

{\bf Case of $g \geq 1, \ p+b \geq 3$}. 
We may assume that $n \geq 4$. 
By Lemma \ref{cap_raag} and Theorem \ref{cyc_mcg}, we have $n+1 \leq 2g+p+b$. 
\end{proof}

Below we summarize topological methods for constructing virtual embeddings of braid groups into the mapping class groups of surfaces in Theorems \ref{topological_condition} and \ref{ex_case}. 

\begin{theorem}
Suppose that either $b \neq 1$ or $p \neq 1$ holds. 
Let $S$ denote the surface $S_{g, p}^{b}$. 
Then the inequality in Theorems \ref{main_1} and \ref{main_2} holds if and only if there is at least one of the following: 
\begin{enumerate}
 \item[(1)] an extension $D \rightarrow S$ which is either trivial or hyperbolic, 
 \item[(2)] an extension $\tilde{D} \rightarrow S$ which is trivial, annular, or hyperbolic,   
 \item[(3)] a composition of extensions $D \rightarrow P \rightarrow S$, where the middle surface $P$ is a sphere, the first extension is hyperbolic and the second is pseudo-annular. 
\end{enumerate}
Here, $D$ is the defining surface $S_{0, n}^{1}$ of $B_n$ and $\tilde{D}$ is a double branched cover of $D$, which is homeomorphic to either $S_{k}^{2}$ $(k=\frac{n}{2}-1)$ or $S_{l}^{1}$ $(l=\frac{n-1}{2})$ according to whether $n$ is even or odd. 
 \label{topological_condition}
\end{theorem}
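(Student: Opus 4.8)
The statement is an equivalence, which I prove in the two directions.

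Assume first that one of (1), (2), (3) holds; I claim each gives a virtual embedding of $B_{n}$ into $\mathrm{Mod}(S)$. Since $\mathrm{Mod}(D)=B_{n}$, case (1) produces an embedding $\mathrm{Mod}(D)\hookrightarrow\mathrm{Mod}(S)$ by Proposition~\ref{hyp_emb} (trivially when the extension is trivial). In case (2), Birman--Hilden theory embeds $B_{n}$ into $\mathrm{Mod}(\tilde D)$ as the symmetric mapping class subgroup; the extension $\tilde D\to S$ induces a homomorphism $\mathrm{Mod}(\tilde D)\to\mathrm{Mod}(S)$ whose kernel is computed by Propositions~\ref{annular_kernel} and~\ref{hyp_emb}, and one checks that this kernel meets the symmetric subgroup trivially exactly as in the proof of Proposition~\ref{for_ex_case}. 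In case (3), $B_{n}=\mathrm{Mod}(D)\hookrightarrow\mathrm{Mod}(P)$ by Proposition~\ref{hyp_emb}; this embedding carries the finite index subgroup $PB_{n}$ into $\mathrm{PMod}(P)$, and Proposition~\ref{pse_ann_emb} embeds $\mathrm{PMod}(P)$ into $\mathrm{PMod}(S)\le\mathrm{Mod}(S)$. Thus in every case $B_{n}$ is virtually embedded in $\mathrm{Mod}(S)$, so the inequality is forced by Theorems~\ref{braid_closed_mcg} and~\ref{braid_boundary_mcg}.

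Conversely, assume the inequality holds; I argue by cases on $(g,p,b)$, producing for each admissible $n$ one of the configurations (1)--(3). It suffices to realize the maximal value $n=N(g,p,b)$ allowed by the inequality: for smaller $n$ one precomposes with the hyperbolic extension $S_{0,n}^{1}\hookrightarrow S_{0,N}^{1}$ of Corollary~\ref{braid_braid}, and a composite of hyperbolic extensions is hyperbolic, so the resulting configuration is again of type (1) or (3) (the Birman--Hilden configuration (2) is instead checked directly for each $n$, which is only easier as $n$ decreases). When $\mathrm{Mod}(S)$ is finite or Gromov hyperbolic of small complexity, i.e. in the first two lines of Theorems~\ref{main_1} and~\ref{main_2} and the analogous low-complexity boundary cases, the claim is immediate. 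When $N\le 2g+2$ one uses the hyperelliptic double cover: for $S=S_{g}^{1}$ and $S=S_{g}^{2}$ the cover $\tilde D$ is homeomorphic to $S$ itself, giving the trivial extension in (2); for the remaining genus-$g$ surfaces one glues to $\tilde D$ along its boundary a copy of $S_{0,p}^{b+1}$, or a disjoint union $S_{0,p_{1}}^{b_{1}}\sqcup S_{0,p_{2}}^{b_{2}}$ with $p_{i}+b_{i}\ge 3$, and a short Euler characteristic computation shows this glued surface is an annulus or has negative Euler characteristic, so the extension is annular or hyperbolic as (2) requires.

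The remaining cases --- genus zero with $p\ge 5$ and $b=0$, all surfaces with $b=1$, genus one with $p\ge 2$, and the surfaces with large $p+b$ --- are handled by configuration (3): one pushes $D=S_{0,n}^{1}$ hyperbolically into a punctured sphere $P$ and includes $\overline P$ admissibly in $S$, arranging the annuli and once-punctured disks glued at the circles at infinity of $\overline P$ so that the once-punctured disks produce the punctures of $S$, the handle-forming annuli produce its genus, and the free-ended annuli together with the residual boundary of $\overline P$ produce $\partial S$; reading off the combinatorics then gives $n=N(g,p,b)$, and for the pure mapping class group bookkeeping one combines this with Lemma~\ref{p_to_b_sphere}. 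The core difficulty lies in the borderline subcases, where $N(g,p,b)$ is as large as Theorems~\ref{braid_closed_mcg}--\ref{braid_boundary_mcg} permit (for instance $n=2g+p+b-1$ when $p+b\ge 3$, or $n=p+1$ for $S_{0,p}^{2}$): there $P$ is very tightly constrained, since its numbers of boundary components and punctures must simultaneously make $D\hookrightarrow P$ hyperbolic and make the pseudo-annular extension $P\to S$ realizable, and exhibiting a sphere $P$ with these properties whose combinatorics reproduce $N(g,p,b)$ on the nose is the delicate step. The hypothesis $b\ne 1$ or $p\ne 1$ enters precisely here: for $S_{g,1}^{1}$ no such $P$ exists, and the embedding $B_{2g+2}\hookrightarrow\mathrm{Mod}(S_{g,1}^{1})$ of Proposition~\ref{for_ex_case} comes instead from a capping homomorphism, which is why that surface must be excluded.
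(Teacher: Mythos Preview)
Your forward direction is correct and matches the paper's argument. The backward direction follows the same strategy as the paper---a case-by-case construction of the extensions (1), (2), (3)---but it remains a sketch rather than a proof, and in at least one place the sketch is misleading.

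The most concrete issue is your handling of configuration (2). You write that ``for the remaining genus-$g$ surfaces one glues to $\tilde D$ along its boundary a copy of $S_{0,p}^{b+1}$, or a disjoint union $S_{0,p_{1}}^{b_{1}}\sqcup S_{0,p_{2}}^{b_{2}}$ with $p_{i}+b_{i}\ge 3$,'' and claim an Euler-characteristic check shows the glued piece is an annulus or hyperbolic. For the closed surface $S_{g}$ ($g\ge 2$, so $N=2g$ and $\tilde D\cong S_{g-1}^{2}$) this does not work as written: with $p=b=0$ the piece $S_{0,p}^{b+1}$ is a disk, and the two-component option with $p_{i}+b_{i}\ge 3$ forces total Euler characteristic $\le -2$, whereas $\chi(S_{g})-\chi(S_{g-1}^{2})=0$. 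What is actually needed is a single annulus identifying the two boundary circles of $S_{g-1}^{2}$; the paper makes this explicit. More generally, your condition ``$N\le 2g+2$'' sweeps in many surfaces (e.g.\ $S_{g,p}$ with small $p$) for which the paper instead uses the pseudo-annular route (3), and you do not verify that (2) can in fact be arranged for all of them.

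For configuration (3) you identify the correct mechanism (choose a punctured sphere $P$, then distribute annuli and once-punctured disks at the circles at infinity to realise the genus, punctures, and boundary of $S$), but you explicitly concede that ``exhibiting a sphere $P$ \ldots\ is the delicate step'' without doing it. The paper carries this out concretely: for instance $P=S_{0,2g+p-1}^{1}$ when $S=S_{g,p}$ with $g\ge 2$, $p\ge 1$, and $P=S_{0,2g+p+b-1}^{1}$ when $g\ge 1$, $p+b\ge 3$, with the count of handle-forming annuli and once-punctured disks specified. Those explicit choices of $P$ are the content you are missing; once they are written down the verification that $D\to P$ is hyperbolic and that $P\to S$ is pseudo-annular is routine, but it does need to be written. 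Your case list also contains overlaps (``all surfaces with $b=1$'' is listed under (3) even though $S_{g}^{1}$ was already handled under (2)), which should be cleaned up.
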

\begin{proof}
Suppose that there is at least one of (1), (2) or (3). 
Then we can find a virtual embedding of the braid group $B_n$ described in Section \ref{const_embeddings}. 
Hence, Theorems \ref{braid_closed_mcg} and \ref{braid_boundary_mcg} imply that the inequality on $n$ in Theorems \ref{main_1} and \ref{main_2} holds. 

We shall prove the ``only if part" of this theorem. 

{\bf Case of $S \cong S_{0, p}$}. 
Suppose $n \leq p-2$. 
Then, there is a hyperbolic extension of $D$ to $S$, so we have (1). 

{\bf Case of $S \cong S_{1, p}$}. 
Suppose $n \leq p+1$. 
Consider a pseudo-annular extension $S_{0, p+1}^{1} \rightarrow S$. 
This is obtained from the compactification $S_{0}^{p+2}$ of $S_{0, p+1}^{1}$, a single annulus and $p$ copies of a once-punctured disks. 
Then, this pseudo-annular extension together with a hyperbolic extension $D \rightarrow S_{0, p+1}^{1}$ satisfies (3). 

{\bf Case of $S \cong S_{g}$ with $g \geq 2$}. 
Suppose $n \leq 2g$. 
In this case we have an annular extension $S_{g-1}^{2} \rightarrow S$. 
Since $n \leq 2g$, the natural numbers $k$ and $l$ defined in the assertion are both less than $g$. 
Hence, we have an extension $\tilde{D} \rightarrow S_{g-1}^{2}$ which is either trivial (this case occurs if $n = 2g$) or hyperbolic. 
So we have (2). 

{\bf Case of $S \cong S_{g, p}$ with $g\geq 2$ and with $p \geq 1$}. 
Suppose $n \leq 2g+p-1$. 
Consider a pseudo-annular extension $S_{0, 2g+p-1}^{1} \rightarrow S$ obtained from the compactification $S_{0}^{2g+p}$, $g$ copies of an annulus and $p$ copies of a once-punctured disk. 
Since $n \leq 2g+p-1$, there is a hyperbolic extension $D \rightarrow S_{0, 2g+p-1}^{1}$. 
So we have $(3)$. 

{\bf Case of $S \cong S_{0, p}^{1}$}. 
Suppose $n \leq p$. 
Then we can find an extension $D \rightarrow S$ that is either trivial or hyperbolic. 
So we have (1). 

{\bf Case of $S \cong S_{0, p}^{b}$ with $b \geq 2$}. 
Suppose $n \leq p+b-1$. 
Then, there is a pseudo-annular extension $S_{0, p+b-1}^{1} \rightarrow S$ obtained from $S_{0}^{p+b}$, $b-1$ copies of an annulus, and $p$ copies of a once-punctured disks. 
Since $n \leq p+b-1$, there is a hyperbolic (or trivial) extension $D \rightarrow S_{0, p+b-1}^{1}$. 
So we have (3). 

{\bf Case of $S \cong S_{g}^{b}$ with $g \geq 1$ and with $1 \leq b \leq 2$}. 
Suppose $n \leq 2g+b$. 
Then, $S$ is a double branched covering of $S_{0, 2g+b}^{1}$. 
From a hyperbolic (resp.\ trivial) extension $D \rightarrow S_{0, 2g+b}^{1}$, we have a hyperbolic (resp.\ trivial) extension $\tilde{D} \rightarrow S$. 
So we have (2). 

{\bf Case of $S \cong S_{g, p}^{b}$ with $g \geq 1$ and with $p+b \geq 3$}. 
Suppose $n \leq 2g+p+b-1$. 
Consider a pseudo-annular extension $S_{0, 2g+p+b-1}^{1} \rightarrow S$ obtained from $S_{0}^{2g+p+b}$, $g$ copies of an annulus and $p$ copies of a once-punctured disk. 
A combination of this pseudo-annular extension and a hyperbolic extension $D \rightarrow S_{0, 2g+p+b-1}^{1}$ satisfies (3) and completes the proof. 
\end{proof}

Note that Theorem \ref{topological_condition} proves the ``if part" of Theorems \ref{main_1} and \ref{main_2} when $S \not \cong S_{g, 1}^{1}$. 

\begin{theorem}
Suppose that $g \geq 1$. 
Then $n \leq 2g+2$ if and only if one of the following holds: 
\begin{enumerate}
 \item[(1)] $\tilde{D} \cong S_{g}^{2} \overset{\mathrm{cap}}{\rightarrow} S_{g, 1}^{1}$ or
 \item[(2)] there is a hyperbolic extension $\tilde{D} \rightarrow S_{g}^{2}$ together with $S_{g}^{2} \overset{\mathrm{cap}}{\rightarrow} S_{g, 1}^{1}$. 
\end{enumerate}
 Here, $\tilde{D}$ is a Birman--Hilden cover of $S_{0, n}^{1}$, which is homeomorphic to either $S_{k}^{2}$ $(k=\frac{n}{2}-1)$ or $S_{l}^{1}$ $(l=\frac{n-1}{2})$ according to whether $n$ is even or odd, and $S_{g}^{2} \overset{\mathrm{cap}}{\rightarrow} S_{g, 1}^{1}$ is an admissible extension obtained from $S_{g}^{2}$ and a once-punctured disk by gluing along a boundary component of $S_{g}^{2}$ and the boundary of the once-punctured disk. 
 \label{ex_case}
\end{theorem}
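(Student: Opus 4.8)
The plan is to prove the two implications separately, using the Birman--Hilden embedding $B_n \hookrightarrow \mathrm{Mod}(\tilde D)$ onto the symmetric mapping class group as the common input and Theorem \ref{braid_boundary_mcg} to do the bookkeeping in one direction.

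For "(1) or (2) $\Rightarrow n \le 2g+2$", I would first promote either topological datum to an honest embedding of the whole group $B_n$. Start from $B_n \hookrightarrow \mathrm{Mod}(\tilde D)$; in case (2) post-compose with the injection $\mathrm{Mod}(\tilde D)\hookrightarrow\mathrm{Mod}(S_g^2)$ given by Proposition \ref{hyp_emb}, while in case (1) one has $\tilde D\cong S_g^2$ already. In both cases $B_n\hookrightarrow\mathrm{Mod}(S_g^2)$, and I then post-compose with the capping homomorphism $\varphi\colon\mathrm{Mod}(S_g^2)\to\mathrm{Mod}(S_{g,1}^1)$, whose kernel is generated by the Dehn twist $T_c$ about the curve $c$ parallel to the capped boundary component. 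The key point --- discussed below --- is that the restriction of $\varphi$ to the image of $B_n$ is injective, so that $B_n\hookrightarrow\mathrm{Mod}(S_{g,1}^1)$. In particular $B_n$ is virtually embedded in $\mathrm{Mod}(S_{g,1}^1)$, and Theorem \ref{braid_boundary_mcg} applied with $p=b=1$ (so $g\ge1$, $p+b\le2$) yields $n\le 2g+p+b=2g+2$.

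For the converse, assume $n\le 2g+2$ and split on the parity of $n$. If $n$ is even, put $k=\tfrac n2-1\le g$, so $\tilde D\cong S_k^2$: when $k=g$, i.e.\ $n=2g+2$, conclusion (1) holds; when $k\le g-1$, I would embed $\tilde D=S_k^2$ in $S_g^2$ with connected complement, which an Euler characteristic count plus a count of boundary circles identifies as $S_{g-k-1}^4$ (so $\chi=-2(g-k)<0$), giving an admissible hyperbolic extension $\tilde D\to S_g^2$; followed by the cap $S_g^2\overset{\mathrm{cap}}{\rightarrow}S_{g,1}^1$ this is conclusion (2). If $n$ is odd, put $l=\tfrac{n-1}2\le g$, so $\tilde D\cong S_l^1$; embed $S_l^1$ in $S_g^2$ with connected complement $S_{g-l}^3$ (so $\chi=-1-2(g-l)<0$), obtaining a hyperbolic extension $\tilde D\to S_g^2$ which, followed by the cap, is again conclusion (2). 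The degenerate small values of $n$ are covered by the same recipes.

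The step I expect to be the main obstacle is the injectivity of $\varphi$ on the image of $B_n$. In case (1) this is precisely the argument in the proof of Proposition \ref{for_ex_case}: the hyperelliptic involution of the Birman--Hilden cover $S_g^2\to S_{0,2g+2}^1$ interchanges the two boundary components of $S_g^2$ (the monodromy of the cover around $\partial S_{0,2g+2}^1$ is a product of an even number of transpositions, hence trivial), so $T_c$ is not symmetric and no nonzero power of it lies in $\mathrm{SMod}(S_g^2)=B_{2g+2}$, whence $\langle T_c\rangle\cap B_{2g+2}=\{1\}$. In case (2) one needs instead that $c$ is not isotopic into the subsurface $\tilde D$: admissibility of the hyperbolic extension forbids any component of $\partial\tilde D$ from being parallel to a component of $\partial S_g^2$, so a curve in $\tilde D$ isotopic to $c$ would be impossible, and hence $T_c^k$ with $k\ne0$ is not supported on $\tilde D$; thus $\langle T_c\rangle$ meets the image of $\mathrm{Mod}(\tilde D)\hookrightarrow\mathrm{Mod}(S_g^2)$ trivially. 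The remaining verifications --- that the stated complementary surfaces really glue back to $S_g^2$ and have negative Euler characteristic --- are routine.
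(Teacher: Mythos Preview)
Your proof is correct and follows essentially the same route as the paper. The one notable difference is in the direction ``(1) or (2) $\Rightarrow n\le 2g+2$'': the paper simply observes that in either case $B_n\hookrightarrow\mathrm{Mod}(S_g^2)$ and then applies Theorem \ref{braid_boundary_mcg} directly to $S_g^2$ (with $g\ge1$, $p=0$, $b=2$, so $p+b\le 2$ and the bound is $n\le 2g+2$). You instead push the embedding further through the capping map into $\mathrm{Mod}(S_{g,1}^1)$ before invoking Theorem \ref{braid_boundary_mcg}, which forces you to supply the extra injectivity argument for $\varphi|_{B_n}$; this works, but it is not needed to extract the numerical bound. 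For the converse direction your argument matches the paper's, with more explicit detail on the parity split and on identifying the complementary surfaces.
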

\begin{proof}
Suppose that (1) or (2) holds. 
Then the entire braid group $B_n$ is embedded in $\mathrm{Mod}(S_{g}^{2})$. 
Hence, Theorem \ref{braid_boundary_mcg} implies $n \leq 2g+2$. 

Next, let us assume that $n \leq 2g+2$ and $\tilde{D} \not\cong S_{g}^{2}$ (so (1) does not hold). 
Since $\tilde{D} \not\cong S_{g}^{2}$, we have $n \leq 2g+1$. 
So we can find a hyperbolic extension $\tilde{D} \rightarrow S_{g}^{2}$. 
\end{proof}

Recall that $B_n$ is virtually embedded in $\mathrm{Mod}(S_{g, 1}^{1})$ only if $n \leq 2g+2$ by Theorem \ref{braid_boundary_mcg}. 
Let us prove the converse in accordance with Theorem \ref{ex_case}. 

\begin{theorem}
$B_n$ is virtually embedded in $\mathrm{Mod}(S_{g, 1}^{1})$ if $n \leq 2g+2$. 
 \label{ex_case_2}
\end{theorem}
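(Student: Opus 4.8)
The plan is to deduce the statement directly from Proposition~\ref{for_ex_case} together with Corollary~\ref{braid_braid}, so that in fact one obtains an honest embedding of $B_n$, from which the virtual embedding is immediate.

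First I would observe that a composition of injective homomorphisms is injective, and that an injective homomorphism is a virtual embedding (with finite index subgroup the whole group). Hence it suffices to exhibit an embedding $B_n \hookrightarrow \mathrm{Mod}(S_{g,1}^1)$. By Corollary~\ref{braid_braid} there is an embedding $B_n \hookrightarrow B_{2g+2}$ for every $n \leq 2g+2$, so it is enough to embed $B_{2g+2}$ into $\mathrm{Mod}(S_{g,1}^1)$; but this is exactly Proposition~\ref{for_ex_case} (which requires $g\geq 1$, as assumed here). Composing the two embeddings finishes the proof.

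To connect this with the framework of Theorem~\ref{ex_case}, one can argue alternatively as follows. Given $n \leq 2g+2$, Theorem~\ref{ex_case} yields either the identification $\tilde D \cong S_g^2$ (which forces $n = 2g+2$) or a hyperbolic extension $\tilde D \to S_g^2$; in both cases the Birman--Hilden embedding $B_n \hookrightarrow \mathrm{Mod}(\tilde D)$ followed by the map induced by the extension (injective by Proposition~\ref{hyp_emb} in the second case) gives an embedding $\rho \colon B_n \hookrightarrow \mathrm{Mod}(S_g^2)$. Then I would compose with the capping homomorphism $\varphi \colon \mathrm{Mod}(S_g^2) \to \mathrm{Mod}(S_{g,1}^1)$, whose kernel is infinite cyclic, generated by the Dehn twist about the boundary component that is capped off. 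The one point needing verification --- and it is the same one already settled in the proof of Proposition~\ref{for_ex_case} --- is that $\ker(\varphi)$ meets $\rho(B_n)$ trivially: the hyperelliptic involution swaps the two boundary components of $S_g^2$, hence swaps the corresponding boundary Dehn twists, so no nontrivial power of the generator of $\ker(\varphi)$ is a symmetric mapping class, i.e., none lies in the braid subgroup. Therefore $\varphi\circ\rho$ is injective.

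I do not expect any genuine obstacle: the content is entirely contained in Proposition~\ref{for_ex_case} and Corollary~\ref{braid_braid}, and the only subtlety, namely injectivity of the restricted capping homomorphism, has already been addressed there. Accordingly I would present the short argument through Corollary~\ref{braid_braid} and Proposition~\ref{for_ex_case} as the proof, remarking in passing that it realizes option (1) or (2) of Theorem~\ref{ex_case}.
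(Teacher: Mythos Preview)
Your proposal is correct and follows essentially the same route as the paper: compose the embedding $B_n \hookrightarrow B_{2g+2}$ from Corollary~\ref{braid_braid} with the embedding $B_{2g+2} \hookrightarrow \mathrm{Mod}(S_{g,1}^{1})$ from Proposition~\ref{for_ex_case}. Your additional remarks connecting this to Theorem~\ref{ex_case} and re-explaining why the capping kernel meets the braid subgroup trivially are accurate but not needed, since that point is already absorbed into Proposition~\ref{for_ex_case}.
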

\begin{proof}
Suppose $n \leq 2g+2$. 
Then we have an embedding $B_{2g+2} \hookrightarrow \mathrm{Mod}(S_{g}^{2})$ induced by a double branched covering. 
Moreover, from Proposition \ref{for_ex_case} we have an embedding $B_{2g+2} \rightarrow \mathrm{Mod}(S_{g, 1}^{1})$. 
Now the embedding $B_n \hookrightarrow B_{2g+2}$ induced by a hyperbolic extension completes the proof. 
\end{proof}

Consequently, the ``if part" of Theorems \ref{main_1} and \ref{main_2} is also true. 

Furthermore, our argument allows us to obtain the following results on embeddings of pure braid groups.  

\begin{theorem}
$PB_{n} \hookrightarrow \mathrm{Mod}(S_{g, p})$ if and only if 
\begin{eqnarray*}
n \leq \left\{ \begin{array}{ll}
1 & ((g, p) \in \{ (0, 0), (0, 1), (0, 2), (0, 3) \}) \\
2 & ((g, p) \in \{ (0, 4), (1, 0), (1, 1) \}) \\
- \chi_{g, p} & (g=0, \ p \geq 5) \\
2 - \chi_{g, p} & (g \geq 2, \ p=0)  \\
1 - \chi_{g, p} & (\mbox{otherwise}). \\
\end{array} \right.
\end{eqnarray*}
 \label{pure_main_1}
\end{theorem}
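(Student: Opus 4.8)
The plan is to treat the two implications separately; the forward implication is immediate, and the reverse one follows by checking that the topological constructions of Section \ref{const_embeddings} preserve purity. Observe first that, case by case, the list of inequalities in the statement coincides with the one in Theorem \ref{main_1}.

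For the ``only if'' direction, suppose $PB_n$ embeds in $\mathrm{Mod}(S_{g,p})$. Since $PB_n$ has index $n!$ in $B_n$, this is in particular a virtual embedding of $B_n$ into $\mathrm{Mod}(S_{g,p})$, so the asserted bound on $n$ holds by Theorem \ref{braid_closed_mcg}; nothing further is needed here.

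For the ``if'' direction, assume $n$ satisfies the inequality. If $\mathrm{Mod}(S_{g,p})$ is finite, that is, $(g,p)\in\{(0,0),(0,1),(0,2),(0,3)\}$, then $n\le 1$ and $PB_1$ is trivial; if $(g,p)\in\{(0,4),(1,0),(1,1)\}$, then $n\le 2$ and $PB_n$ is trivial or infinite cyclic, so it embeds because each of these three mapping class groups contains an element of infinite order. In every remaining case the surface $S_{g,p}$ is not the exceptional surface $S_{g,1}^1$ (here $b=0$), so Theorem \ref{topological_condition} supplies one of the configurations (1), (2), (3) appearing in its statement, and I claim each of them produces a genuine embedding of $PB_n=\mathrm{PMod}(S_{0,n}^1)$ into $\mathrm{Mod}(S_{g,p})$. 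In configurations (1) and (2) the induced homomorphism from $\mathrm{Mod}(D)$, respectively $\mathrm{Mod}(\tilde D)$, to $\mathrm{Mod}(S_{g,p})$ is injective on the whole of $B_n$: a trivial or hyperbolic extension induces an injection by Proposition \ref{hyp_emb}, and the Birman--Hilden copy of $B_n$ inside $\mathrm{Mod}(\tilde D)$ meets the kernel $\mathcal{A}$ of an annular extension trivially (Proposition \ref{annular_kernel}), exactly as recorded in the propositions of Section \ref{const_embeddings}. Restricting to $PB_n\le B_n$ then gives the desired embedding.

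The case that needs attention is configuration (3): a composition $D=S_{0,n}^1\to P\to S_{g,p}$ in which $P\cong S_{0,m}^1$ is a sphere, the first extension is hyperbolic, and the second is pseudo-annular. Here the full group $B_n$ need not embed, but $PB_n$ does. Indeed the hyperbolic extension $D\to P$ induces an injection $\mathrm{Mod}(D)\hookrightarrow\mathrm{Mod}(P)$ by Proposition \ref{hyp_emb}, and this injection carries $\mathrm{PMod}(D)=PB_n$ into $\mathrm{PMod}(P)$, because a mapping class of $D$ fixing every puncture extends, by the identity on $P\setminus\mathrm{Int}(D)$, to a mapping class of $P$ that still fixes every puncture of $P$. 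The pseudo-annular extension $P\to S_{g,p}$ then induces an embedding $\mathrm{PMod}(P)\hookrightarrow\mathrm{PMod}(S_{g,p})$ by Proposition \ref{pse_ann_emb}, whose hypotheses hold because $P$ is a sphere; in each instance of configuration (3) arising in the proof of Theorem \ref{topological_condition} one has $m\ge 2$ (the instances with $m\le 1$ force $n\le 1$ and were disposed of above), so that Lemma \ref{p_to_b_sphere} applies. Composing the two inclusions yields $PB_n\hookrightarrow\mathrm{PMod}(S_{g,p})\subseteq\mathrm{Mod}(S_{g,p})$, completing the ``if'' direction and hence the theorem. The step I would scrutinise most is the bookkeeping in configuration (3): verifying, from the explicit surfaces listed in the proof of Theorem \ref{topological_condition}, that $P$ always has at least two punctures and that $D\to P$ genuinely sends $\mathrm{PMod}(D)$ into $\mathrm{PMod}(P)$; both are routine.
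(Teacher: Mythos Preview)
Your proof is correct and is exactly the argument the paper gestures toward when it writes ``our argument allows us to obtain the following results'' without supplying a separate proof: the only-if direction is immediate from Theorem~\ref{braid_closed_mcg} because $PB_n$ has finite index in $B_n$, and the if direction is obtained by observing that each topological configuration in Theorem~\ref{topological_condition} (and the direct treatment of the small surfaces) already yields an honest embedding of $PB_n$, with configuration~(3) requiring the extra observation that the hyperbolic extension $D\to P$ sends $\mathrm{PMod}(D)$ into $\mathrm{PMod}(P)$ so that Proposition~\ref{pse_ann_emb} applies. Your bookkeeping on the sphere $P$ having enough punctures for Lemma~\ref{p_to_b_sphere} is the one point the paper leaves to the reader, and you handle it correctly.
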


\begin{theorem}
Suppose $b \geq 1$. 
Then $PB_{n} \hookrightarrow \mathrm{Mod}(S_{g, p}^{b})$ if and only if 
\begin{eqnarray*}
n \leq \left\{ \begin{array}{ll}
2 - \chi_{g, p}^{b} & (g \geq 1, \ p+b \leq 2) \\
1 - \chi_{g, p}^{b} & (\mbox{otherwise}). \\
\end{array} \right.
\end{eqnarray*}
 \label{pure_main_2}
\end{theorem}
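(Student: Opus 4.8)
The plan is to derive Theorem \ref{pure_main_2} from Theorem \ref{main_2} (equivalently Theorem \ref{braid_boundary_mcg}) together with a close inspection of the constructions used to prove the ``if part" of that theorem. For the ``only if part", recall that $PB_n = \mathrm{PMod}(S_{0,n}^1)$ has index $n!$ in $B_n$. Hence any embedding $PB_n \hookrightarrow \mathrm{Mod}(S_{g,p}^b)$ exhibits $B_n$ as virtually embedded in $\mathrm{Mod}(S_{g,p}^b)$, so Theorem \ref{braid_boundary_mcg} gives $n \leq 2g+p+b$ when $g \geq 1$ and $p+b \leq 2$, and $n \leq 2g+p+b-1$ otherwise. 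Since $-\chi_{g,p}^b = 2g+p+b-2$, these are precisely the inequalities $n \leq 2-\chi_{g,p}^b$ and $n \leq 1-\chi_{g,p}^b$ in the statement.

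For the ``if part", I would go through the proofs of Theorems \ref{topological_condition}, \ref{ex_case} and \ref{ex_case_2}: for each $S = S_{g,p}^b$ with $b \geq 1$ and each $n$ satisfying the relevant inequality, they produce a virtual embedding of $B_n$ into $\mathrm{Mod}(S)$, and I claim each of these restricts to a genuine embedding of $PB_n$. The constructions fall into two kinds. In the first kind the map is already a genuine embedding $B_n \hookrightarrow \mathrm{Mod}(S)$: this is so for a trivial or hyperbolic extension $D \to S$; for a trivial, annular, or hyperbolic extension $\tilde D \to S$ of a Birman--Hilden cover $\tilde D$ of $D$, since by the discussion in Section \ref{const_embeddings} the composite with the Birman--Hilden embedding $B_n \hookrightarrow \mathrm{Mod}(\tilde D)$ is injective on $B_n$; and for the composite $B_{2g+2} \hookrightarrow \mathrm{Mod}(S_g^2) \overset{\mathrm{cap}}{\rightarrow} \mathrm{Mod}(S_{g,1}^1)$ from Theorem \ref{ex_case_2}, which is injective on $B_{2g+2}$ by Proposition \ref{for_ex_case}. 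In every such case, restricting to the subgroup $PB_n \leq B_n$ gives $PB_n \hookrightarrow \mathrm{Mod}(S)$ at once.

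The second kind is the composite $D \to P \to S$ of Theorem \ref{topological_condition}(3), in which $P \cong S_{0,m}^1$ is a holed sphere (for the appropriate $m$), the extension $D = S_{0,n}^1 \to P$ is trivial or hyperbolic, and $P \to S$ is pseudo-annular. Here Proposition \ref{pse_ann_emb} supplies only an embedding of the \emph{pure} mapping class group, $\mathrm{PMod}(P) \hookrightarrow \mathrm{PMod}(S) \hookrightarrow \mathrm{Mod}(S)$, so what must be checked is that the extension $D \to P$ restricts to $PB_n \hookrightarrow PB_m = \mathrm{PMod}(P)$; this is the only real subtlety. It is immediate, however: the embedding $B_n \hookrightarrow B_m$ of Corollary \ref{braid_braid} is induced by an inclusion $S_{0,n}^1 \hookrightarrow S_{0,m}^1$ that is the identity outside $S_{0,n}^1$, so a homeomorphism of $S_{0,n}^1$ fixing each of the $n$ punctures extends to a homeomorphism of $S_{0,m}^1$ fixing each of the $m$ punctures. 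Composing, $PB_n \hookrightarrow PB_m \hookrightarrow \mathrm{PMod}(S) \hookrightarrow \mathrm{Mod}(S)$. What remains is only the bookkeeping of matching each surface $S_{g,p}^b$ ($b \geq 1$) and each admissible $n$ to one of these constructions, which is exactly the case analysis already carried out in the proofs of Theorems \ref{topological_condition}, \ref{ex_case} and \ref{ex_case_2}; no additional surface topology is needed, and the pure-ness verification above is the only extra ingredient.
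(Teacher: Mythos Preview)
Your proposal is correct and matches the paper's intended argument: the paper does not supply a separate proof but simply remarks that ``our argument allows us to obtain'' Theorems \ref{pure_main_1} and \ref{pure_main_2}, and your write-up is precisely the natural filling-in of that remark. The ``only if'' direction via finite index and Theorem \ref{braid_boundary_mcg}, and the ``if'' direction via checking that each construction in Theorems \ref{topological_condition} and \ref{ex_case_2} either already embeds $B_n$ or, in the pseudo-annular case (3), carries $PB_n$ into $\mathrm{PMod}(P)$ and hence (by Proposition \ref{pse_ann_emb}) into $\mathrm{Mod}(S)$, is exactly what the authors have in mind.
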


\end{document}